\documentclass[12pt]{amsart}
\usepackage{fullpage, amsmath, amsthm,amsfonts,amssymb,stmaryrd, mathrsfs}
\usepackage{hyperref}

\usepackage[pdftex]{color}
\usepackage[all]{xy}

\usepackage{latexsym}
\usepackage{amssymb}
\usepackage{amsmath}
\usepackage{amsfonts}
\usepackage{textcomp}
\usepackage{amscd}
\usepackage{amsthm}
\usepackage{mathrsfs}
\usepackage{amsxtra}
\usepackage{stmaryrd}
\usepackage[all]{xy}

\newtheorem{theorem}{Theorem}[subsection]
\newtheorem{lemma}[theorem]{Lemma}
\newtheorem{cor}[theorem]{Corollary}
\newtheorem{conj}[theorem]{Conjecture}
\newtheorem{prop}[theorem]{Proposition}
\theoremstyle{definition}
\newtheorem{defn}[theorem]{Definition}

\newtheorem{remark}[theorem]{Remark}
\newtheorem{convention}[theorem]{Convention}

\numberwithin{equation}{theorem}

\def\limind{\mathop{\oalign{{\rm lim}\cr
\hidewidth$\longrightarrow$\hidewidth\cr}}}
\def\limproj{\mathop{\oalign{{\rm lim}\cr
\hidewidth$\longleftarrow$\hidewidth\cr}}}

\def\Z{{\mathbb {Z}}_p}
\def\Q{{\mathbb {Q}}_p}

\def\G{{\rm GL}_2(\Q)}

\def\H{{\mathrm{H}}}

\def\ra{\rightarrow}
\def\ZZ{{\mathbb Z}}

\def\Qp{{\mathbb{Q}}_p}

\def\Zp{{\mathbb{Z}}_p}

\def\OO{\mathcal{O}}

\def\aa{\textbf{A}}

\def\int{\mathrm{int}}
\def\bd{\mathrm{bd}}

\def\calE{\mathcal{E}}

\def\rig{\mathrm{rig}}

\def\dif{\mathrm{dif}}

\def\D{\mathrm{D}}

\DeclareMathOperator{\rank}{rank}
\DeclareMathOperator{\Ind}{Ind}

\def\Sen{\mathrm{Sen}}

\def\m{(\varphi,\Gamma)}

\def\r{\mathcal{R}}

\begin{document}

\title{Triangulation of refined families}
\author{Ruochuan Liu}
\email{liuruochuan@math.pku.edu.cn}
\address{Beijing International Center for Mathematical Research\\ Peking University\\
Beijing, 100871\\
China}

\begin{abstract}
We prove the global triangulation conjecture for families of refined $p$-adic representations under a mild condition. That is, for a refined  family, the associated family of $\m$-modules admits a global triangulation on a Zariski open and dense subspace of the base that contains all regular non-critical points.
We also determine a large class of points which belongs to the locus of global triangulation. Furthermore, we prove that all the specializations of a refined family are trianguline. In the case of the Coleman-Mazur eigencurve, our results provide the key ingredient for showing its properness in a subsequent work \cite{DL13}.  
\end{abstract}

\maketitle

\setcounter{tocdepth}{2}
\tableofcontents

\section*{Introduction}
In the seminal work \cite{Ki03}, Kisin proved the Fontaine-Mazur conjecture for Galois representations attached to finite slope, overconvergent cuspidal eigenforms. The most significant part of his proof is showing that the dual of these representations satisfy the property that their restrictions on a decomposition group of $p$ have nonzero crystalline periods on which the crystalline Frobenius acts via multiplication with the $U_p$-eigenvalue. Furthermore, he conjectured that this property should characterize the Galois representations coming from finite slope overconvergent $p$-adic modular forms. This beautiful result inspired many important subsequent developments. In $p$-adic Hodge theory, Colmez introduced the notion of \emph{trianguline representations} reformulating this property in the framework of $\m$-modules over the Robba ring \cite{C07}; the notion of trianguline representations plays a key role in his construction of the $p$-adic local Langlands correspondence for $\mathrm{GL}_2(\Q)$. In the direction of Bloch-Kato conjecture, Bellaiche-Chenevier \cite{BC06} and Skinner-Urban \cite{SU06} applied some (different) variants of Kisin's result to construct elements of Selmer groups by deforming certain $p$-adic representations on eigenvarieties. More recently, Emerton \cite{E11} established the local-global compatibility of $p$-adic Langlands for $\mathrm{GL}_2/\mathbb{Q}$. As an application,  he confirmed the conjecture of Kisin. Nowadays, it is widely assumed that the condition of being trianguline at the $p$-adic place characterizes the Galois representations coming from finite slope overconvergent $p$-adic automorphic forms. In addition, it is conjectured\footnote{See for instance \cite[\S4]{BC06}.} that for a family of $p$-adic representations arising on eigenvarieties, the associated family of $\m$-modules admits a \emph{global triangulation} on a Zariski open and dense subspace of the base that contains all non-critical points.

The main objects of this paper are \emph{families of refined $p$-adic representations}. This notion was first introduced by Bellaiche-Chenevier \cite{BC06} for $p$-adic representations of $\mathrm{Gal}({\overline{\mathbb{Q}}_p/\Q})$ to encode the properties of families of Galois representations carried by eigenvarieties. In this paper, we first generalize this notion to $\mathrm{Gal}({\overline{\mathbb{Q}}_p/K})$-representations where $K$ is a finite extension over $\Q$. For technical reasons, we will assume throughout that our refined families are arithmetic families of $p$-adic representations, not just pseudocharacters as in Bellaiche-Chenevier's original definition. The main goal of this paper is to prove the global triangulation conjecture for such families under a mild condition. Namely, we will prove that a family of refined $p$-adic representations admits a global triangulation on a Zariski open and dense subspace of the base that contains all regular non-critical points. We also determine a large class of points which belongs to the locus of global triangulation. Furthermore, we will show that the specializations of refined families are all trianguline. Finally, as an application, we explicitly determine the local behavior of the family of $p$-adic representations carried by the Coleman-Mazur eigencurve. 

Our approach is largely inspired by Kisin's method of interpolating crystalline periods \cite{Ki03}. The first major step is to show that for a weakly refined family, the de Rham periods always coincide with the crystalline periods on which the Frobenius acts via multiplication with the prescribed eigenvalue. Furthermore, both of them form coherent sheaves on the base. To this end, we significantly refine Kisin's construction of finite slope subspaces by removing the ``$Y$-small" assumption. The value of this refinement is that it allows us to interpolate periods over all affinoid subdomains of the base, not only, as in the case of Kisin's original construction,  over $Y$-small affinoid subdomains.

The second major step is to show that for a given refined family, the crystalline periods of its exterior powers give rise to the desired global triangulation. Firstly, it is not difficult to see that the set of points by  which the crystalline periods of exterior powers of the family give rise to a triangulation of the specialization, which is named as the \emph{triangulation locus} of the family, is an analytic subspace of a Zariski open subspace of the base. Therefore, to prove the global triangulation conjecture, it reduces to show that the triangulation locus contains all regular non-critical points. We achieve this by combining the results of the first step and a result of Bellaiche-Chenevier on descent \cite{BC06}.

Indeed, by the recent works of Bellovin \cite{Bel13} and Kedlaya-Porttharst-Xiao \cite{KPX},  one knows that 
for an arithmetic family of $p$-adic representations over a rigid analytic space, the de Rham periods and crystalline periods on which the Frobenius acts via multiplication with an invertible function always form coherent sheaves on the base respectively. On the other hand, as mentioned above, we prove in this paper that for a weakly refined family, the de Rham periods always coincide with the crystalline periods on which the Frobenius acts via multiplication with the prescribed eigenvalue. This fact is a special feature for the eigen-families of $p$-adic representations, and turns out to be important for applications to eigenvarieties. For example, it follows that for the dual of the family of Galois representations carried by the Coleman-Mazur eigencurve, the de Rham periods coincide with the crystalline periods on which the Frobenius acts via multiplication with the $U_p$-eigenvalue.  This fact plays a key role in our subsequent work with Hansheng Diao proving the properness of the Coleman-Mazur eigencurve \cite{DL13}. 

Last but not least,  we should point out that after the work of this paper was finished,  some of our results were also obtained by other authors. In \cite{H}, Hellmann proves that the families of $\mathrm{Gal}(\overline{\mathbb{Q}}_p/\Q)$-representations carried by eigenvarieties for definite unitary groups over imaginary quadratic fields admit global triangulations as in our case. His strategy is to construct a map from the eigenvariety to the moduli space of (rigidified) trianguline $\m$-modules. 
In \cite{KPX}, Kedlaya-Pottharst-Xiao establish the 
finiteness of cohomology for arithmetic families of $\m$-modules. As an application, they show that an arithmetic family of $p$-adic representations that is densely pointwise refined in the sense of Mazur admits a global triangulation over a large subspace of the base. More recently,  in his thesis \cite{Be}, John Bergdall applies the techniques of \cite{KPX} to refined families and gives a new proof of our result that the triangulation locus contains all regular non-critical points in the case of $\mathrm{Gal}(\overline{\mathbb{Q}}_p/\Q)$-representations. 

In the following we will explain the main results of the paper and the idea of proofs in more detail.
\subsection{Finite slope subspaces}

We fix a finite extension $K$ of $\Q$ in $\overline{\mathbb{Q}}_p$, and fix a uniformizer $\pi_K$ of $K$. Let 
$G_K=\mathrm{Gal}(\overline{\mathbb{Q}}_p/K)$. Let $K_0$ be the maximal unramified extension of $\Q$ contained in $K$, and let $f=[K_0:\Q]$. Let $E$ be the Galois closure of $K$ in $\overline{\mathbb{Q}}_p$,  and let $\mathrm{H}_K$ be the set of $\Q$-embeddings of $K$ into $\overline{\mathbb{Q}}_p$ (hence into $E$). For a $K\otimes_{\Q}E$-module $M$, set 
\[
M_{\tau}=M\otimes_{K\otimes_{\Q}E}(K\otimes_{K,\tau}E)
\] 
for any $\tau\in \mathrm{H}_K$.  We may identify $M$ with $\oplus_{\tau\in\mathrm{H}_K}M_\tau$.
For any $m\in M$, let $m_\tau$ denote the projection of $m$ onto $M_\tau$.

We now assume that $X$ is a rigid analytic space over $E$, and let $V_X$ be a family of $p$-adic representations of $G_K$ of dimension $d$ over $X$. Suppose the Sen polynomial \footnote{See Definition \ref{def:Sen-operator} for the definition of Sen polynomial.} for $V_X$ is $TQ(T)$ for some $Q(T)\in (K\otimes_{\Q}\OO(X))[T]$. For $\tau\in\H_K$, following the notation introduced above, $Q(T)_\tau$ denotes the projection of $Q(T)$ onto the $\tau$-isotypic component of $ (K\otimes_{\Q}\OO(X))[T]=K\otimes_{\Q}\OO(X)[T]$. Let $\alpha\in \OO(X)^\times$ be an invertible rigid analytic function on $X$.  We define finite slope subspaces of $X$ with respect to the pair $(\alpha, V_X)$ as follows.

\begin{defn}\label{def:fs-space}
For such a triple $(X,\alpha, V_X)$, we call an analytic subspace\footnote{In this paper, the terminology "analytic subspaces" refers to Zariski closed rigid analytic subspaces.} $X_{fs}\subset X$ a \emph{finite slope subspace of $X$ with respect to the pair $(\alpha,V_X)$} if it satisfies the following conditions.
\begin{enumerate}
\item[(1)]For every integer $j\leq0$ and $\tau\in \mathrm{H}_K$, the subspace $(X_{fs})_{Q(j)_{\tau}}$, which denotes the complement of the vanishing locus of  $Q(j)_\tau$ on $X_{fs}$, is Zariski open and dense in $X_{fs}$.
\item[(2)]For any morphism $g:M(R)\ra X$, where $R$ is an $E$-affinoid algebra, if $g$ factors through $X_{Q(j)_\tau}$ for every integer $j\leq0$ and $\tau\in\mathrm{H}_K$, then it factors through $X_{fs}$ if and only if the natural map
\begin{equation}\label{eq:cris-dR}
   \iota_{n,K}: K\otimes_{K_0}(\mathrm{D}^{\dag}_{\rig}(g^*(V_X))^{\varphi^f=g^*(\alpha),\Gamma=1}\ra \mathrm{D}_{\dif}^{+,fn}(g^*(V_X))^{\Gamma}
   \end{equation}
is an isomorphism for all sufficiently large $n$ (indeed for all $n\geq n(V_R)$)\footnote{See \S1.3 for the definition of the functor $\mathrm{D}_{\dif}^{+,n}$.}.
\end{enumerate}
\end{defn}

The above definition is a $\m$-module theoretical interpretation of Kisin's original definition of finite slope subspaces except that we relax the assumption on $g$. That is, we do not require that $g$ is $\alpha$-small in the sense of \cite[(5.2)]{Ki03}. It is not difficult to see that our finite slope subspace $X_{fs}$ (assuming its existence and uniqueness) coincides with the one introduced by Nakamura \cite{N11} which generalizes of Kisin's finite slope subspaces to finite extensions of $\Q$ (see Remark \ref{rem:fs-same}).

The idea for introducing finite slope subspaces is to cut out the maximal analytic subspace $X_{fs}$ of $X$ such that $Q(j)_\tau$ is not identically $0$ on any component of $X_{fs}$ for any $j\leq0$ and $\tau\in\mathrm{H}_K$, and for
any affinoid subdomain $M(S)$ of $X$, the natural maps
\begin{equation}\label{eq:intro-natural}
\qquad K\otimes_{K_0}\D_{\rig}^{\dag}(V_X|_{M(S)\cap X_{fs}})^{\varphi^f=\alpha,\Gamma=1}
\ra(\D_{\dif}^{+,fn}(V_X|_{M(S)\cap X_{fs}})/(t^k))^{\Gamma}
\end{equation}
are isomorphisms for all sufficiently large $k$.
As it was already pointed out by Kisin \cite[(5.5)(5)]{Ki03}, the ``$Y$-small" assumption in \cite[Proposition 5.4]{Ki03} is due to some technical obstacle to solve a certain Frobenius equation over the relative crystalline period ring. We get over this difficulty by using the relative extended Robba ring which is much bigger than  the relative crystalline period ring and sufficient to solve this equation. 

\begin{theorem}(Theorem \ref{thm:fs})\label{thm:intro-fs}
The rigid analytic space $X$ has a unique finite slope subspace with respect to the pair $(\alpha, V_X)$.
\end{theorem}

More importantly, we will prove that if $k$ is bigger than the valuation of $\alpha$ in $S$, then (\ref{eq:intro-natural}) is an isomorphism. This result is crucial for later applications to refined families. 

\begin{theorem}(Theorem \ref{thm:fs-sheaf})\label{thm:intro-fs-sheaf}
Let $M(S)$ be an affinoid subdomain of $X_{fs}$. Then
for any $n\geq n(V_S)$ and $k>\log_{|\pi_K^{-1}|}|\alpha^{-1}|_{\mathrm{sp}}$ where the spectral norm is taken in $S$, the natural map 
\[
K\otimes_{K_0}\D_{\rig}^{\dag}(V_X|_{M(S)})^{\varphi^f=\alpha,\Gamma=1}
\ra(\D_{\dif}^{+,fn}(V_X|_{M(S)})/(t^k))^{\Gamma}
\]
is an isomorphism. 
As a consequence, the presheaf $M(R)\mapsto \D_{\rig}^{\dag}(V_X|_{M(R)})^{\varphi^f=\alpha,\Gamma=1}$, where $M(R)$ runs through all affinoid subdomains of $M(S)$,
is indeed a coherent sheaf on $M(S)$.
\end{theorem}
We denote by $\mathscr{D}_{\rig}^{\dag}(V_S)^{\varphi^f=\alpha,\Gamma=1}$ this coherent sheaf, and by $\mathscr{D}_{\rig}^{\dag}(V_{X_{fs}})^{\varphi^f=\alpha,\Gamma=1}$ the coherent sheaf on $X_{fs}$ obtained by gluing the sheaves $\mathscr{D}_{\rig}^{\dag}(V_S)^{\varphi^f=\alpha,\Gamma=1}$ for all affinoid subdomains $M(S)$ of $X_{fs}$.

\subsection{Rank 1 $\m$-modules and trianguline representations} 
For a $K_0\otimes_{\Q}K_0$-module $M$, set 
\[
M_{\sigma}=M\otimes_{K_0\otimes_{\Q}K_0}(K_0\otimes_{K_0,\sigma}K_0)
\] 
for any $\sigma\in \mathrm{Gal}(K_0/\Q)$.  We may identify $M$ with $\oplus_{\sigma\in\mathrm{Gal}(K_0/\Q)}M_\sigma$.
Then for any $m\in M$, let $m_\sigma$ denote the $M_\sigma$-component of it.
Now let $S$ be an affinoid algebra over $K_0$, set $S_{K_0}=S\otimes_{\Q}K_0$.  Let $\phi\in \mathrm{Gal}(K_0/\Q)$ be the arithmetic Frobenius.  Using the canonical isomorphism 
\[
S_{K_0}\cong\prod_{0\leq i\leq f-1}S_{\phi^i},
\] 
for any $a\in S$ we equip $S_{K_0}$ with a $1\otimes\phi$-semilinear action $\varphi$ by setting
\[
\varphi(x_1,x_2,\dots, x_{f-1})=(ax_{f-1},x_1,\dots,x_{f-2}).
\]
Let $D_a$ denote this $\varphi$-module. The $\varphi$-action on $D_a$ satisfies $\varphi^f=a\otimes 1$.

Let $K'_0$ be the maximal unramified extension of $\Q$ contained in $K(\mu_{p^\infty})$. Recall that one may identify $\mathbf{B}_{\rig,K}^\dag$ with the Robba ring $\r_{K'_0}$ over $K_0'$ (see \cite[\S2.6]{LB02}). Equip $\r_{K_0'}$ with the induced actions of $\varphi$ and $\Gamma$. Let $\widehat{\mathscr{T}}(S)$ be the set of continuous characters $\delta:K^\times\ra S^\times$. For any $\delta\in\widehat{\mathscr{T}}(S)$, we attach to it a rank 1 $(\varphi,\Gamma)$-module $\r_S(\delta)$ over $\r_{K_0'}\widehat{\otimes}_{\Q}S$ as follows. If $\delta|_{\OO_K^\times}$ is trivial, we set $\r_S(\delta)=D_{\delta(\pi_K)}\otimes_{S_{K_0}}(\r_{K_0'}\widehat{\otimes}_{\Q}S)$; here $D_{\delta(\pi_K)}$ is equipped with the trivial $\Gamma$-action. For general $\delta$, we may write $\delta=\delta'\delta''$ so that $\delta'(\pi_K)=1$ and $\delta''|_{\OO_K^\times}$ is the trivial character. By local class field theory, $\delta'$ can be viewed as an $S^\times$-valued continuous character of the Weil group of $K$; it extends to a character of $G_K$ by continuity. We set $\r_S(\delta)=\D_\rig^\dagger(\delta')\otimes_{\r_{K'_0}\widehat{\otimes}_{\Q}S}\r_S(\delta'')$. For any $(\varphi,\Gamma)$-module $D_S$ over $\r_{K_0'}\widehat{\otimes}_{\Q}S$, set $D_S(\delta)=D_S\otimes_{\r_{K_0'}\widehat{\otimes}_{\Q}S}\r_S(\delta)$.

\begin{defn}\label{def:trianguline}
For $\delta\in\widehat{\mathscr{T}}(S)$, a rank 1 $\m$-module over $\r_{K_0'}\widehat{\otimes}_{\Q}S$ is called of \emph{type $\delta$} if it is isomorphic to $M\otimes_{S_{K_0}}\r_S(\delta)$ for some locally free rank 1 $S_{K_0}$-module $M$ equipped with trivial $\varphi$- and $\Gamma$-actions. We call a $\m$-module $D_S$ over $\r_{K_0'}\widehat{\otimes}_{\Q}S$ \emph{triangulable} if it admits a filtration
\[
0=\mathrm{Fil}_0(D_S)\subset \mathrm{Fil}_1(D_S)\subset\cdots\subset\mathrm{Fil}_{d-1}(D_S)
\subset \mathrm{Fil}_d(D_S)=D_S
\]
by $\m$-submodules over $\r_{K'_0}\widehat{\otimes}S$ such that each successive quotient $\mathrm{Fil}_i(D_S)/\mathrm{Fil}_{i-1}(D_S)$ is of type $\delta_i$ for some $\delta_i\in\widehat{\mathscr{T}}(S)$; any such a filtration $(\mathrm{Fil}_i(D_S))_{1\leq i\leq d}$ is called a \emph{triangulation} of $D_S$, and $(\delta_i)_{1\leq i\leq d}$ are called the \emph{parameters} of this triangulation. We call a locally free $S$-linear representation $V_S$ of $G_K$ \emph{trianguline} if the corresponding $\m$-module $\D_\rig^\dag(V_S)$ is triangulable.
\end{defn}

\subsection{Refined and weakly refined families}
From now on, let $X$ be reduced. For any $x\in X$, let $V_x$ denote the specialization of $V_X$ at $x$. The following definitions generalize Bellaiche-Chenevier's notions of refined and weakly refined families \cite[\S4.2.3]{BC06} to  $p$-adic representations of $G_K$\footnote{Note that the Hodge-Tate weight of $p$-adic cyclotomotic character is normalized to be $-1$ in \cite{BC06}; this is opposite to our normalization.}. As previously mentioned, we will define our refined and weakly refined families to be arithmetic families of $p$-adic representations, not just pseudocharacters as in Bellaiche-Chenevier's original definitions.

\begin{defn}\label{def:weak-refined-family}
A \emph{family of weakly refined $p$-adic representations} of $G_K$ of dimension $d$ over $X$ is a family of $p$-adic representations $V_X$ of $G_K$ of dimension $d$ over $X$ together with the following data
\begin{enumerate}
\item[(1)]$d$ analytic functions $\kappa_1,\dots,\kappa_d\in K\otimes_{\Q}\OO(X)$,
\item[(2)]an analytic function $F\in\OO(X)$,
\item[(3)]a Zariski-dense subset $Z$ of $X$,
\end{enumerate}
subject to the following requirements.
\begin{enumerate}
\item[(a)]For every $x\in X$, the generalized Hodge-Tate weights\footnote{We set the generalized Hodge-Tate weights to be the roots of the Sen polynomial.} of $V_x$ are, with multiplicity, $\kappa_1(x),\dots,\kappa_d(x)$.
\item[(b)]If $z\in Z$, $V_z$ is crystalline.
\item[(c)]If $z\in Z$, $\kappa_1(z)_\tau$ is the biggest Hodge-Tate weight of $D_{\mathrm{dR}}(V_z)_\tau$ for every $\tau\in\mathrm{H}_K$.
\item[(d)]For each $z\in Z$, $D_{\mathrm{crys}}(V_z)$ has a $\varphi$-submodule over $K_0\otimes_{\Q}k(x)$ which is isomorphic to $\displaystyle{D_{F(z)\prod_{\tau\in\H_K}\tau(\pi_K)^{-\kappa_1(z)_\tau}}}$.

\item[(e)]For any non-negative integer $C$, let $Z_C$ be the set
\[
\{z\in Z, \kappa_1(z)_\tau-\kappa_n(z)_\tau>C, \forall n\in\{2,\dots,d\}, \tau\in\H_K\}.
\]
Then $Z_C$ accumulates at any $z\in Z$ for all $C$.
\item[(f)]There exists a continuous character $\chi:\OO_K^\times\ra\OO(X)^\times$ whose derivative at $1$ is $-\kappa_1$ and whose evaluation at any $z\in Z$ is the character $x\mapsto\prod_{\tau\in\H_K}\tau(x)^{-\kappa_1(z)_\tau}$.
\end{enumerate}
\end{defn}

We may also view $\chi$ as a continuous character from $ K^\times$ to $\mathcal{O}(X)^\times$ by setting $\chi(\pi_K)=1$. By twisting $V_X$ with $\chi^{-1}$, we may suppose $\kappa_1=0$.  In this case, we have the following result. 

\begin{theorem}(Theorem \ref{thm:weak-refined-family})\label{thm:intro-weak-refined-family}
The finite slope subspace of $X$ with respect to the pair $(F, V_X)$ is $X$ itself.
\end{theorem}

\begin{defn}\label{def:refined-family}
A \emph{family of refined $p$-adic representations} of $G_K$ of dimension $d$ over $X$ is a family of $p$-adic representations $V_X$ of $G_K$ of dimension $d$ over $X$ together with the following data
\begin{enumerate}
\item[(1)]$d$ analytic functions $\kappa_1,\dots,\kappa_d\in K\otimes_{\Q}\OO(X)$,
\item[(2)]$d$ analytic functions $F_1,\dots,F_d\in\OO(X)$,
\item[(3)]a Zariski-dense subset $Z$ of $X$,
\end{enumerate}
subject to the following requirements.
\begin{enumerate}
\item[(a)] For every $x\in X$, the generalized Hodge-Tate weights of $V_x$ are, with multiplicity, $\kappa_1(x),\dots,\kappa_d(x)$.

\item[(b)]If $z\in Z$, $V_z$ is crystalline.
\item[(c)]If $z\in Z$, then $\kappa_1(z)_\tau>\kappa_2(z)_\tau>\cdots>\kappa_d(z)_\tau$ for any $\tau\in\H_K$.
\item[(d)]For each $z\in Z$, there exists a refinement of $V_z$ such that the associated ordering of the $\varphi^f$-eigenvalues are
\[
    (\prod_{\tau\in\H_K}\tau(\pi_K)^{-\kappa_1(z)_\tau}F_1(z),\dots,
    \prod_{\tau\in\H_K}\tau(\pi_K)^{-\kappa_d(z)_\tau}F_d(z)).
\]
\item[(e)]For any non-negative integer $C$, let $Z_C$ be the set
\[
\{z\in Z, |\kappa_I(z)_\tau-\kappa_J(z)_\tau|>C, \forall I,J\subseteq\{1,\dots,d\},|I|=|J|>0, I\neq J, \tau\in\H_K\},
\]
where $\kappa_I=\sum_{i\in I}\kappa_i$. Then $Z_C$ accumulates at any $z\in Z$ for all $C$.
\item[(f)]For each $1\leq i\leq d$, there exists a continuous character $\chi_i:\OO_K^\times\ra\OO(X)^\times$ whose derivative at $1$ is $-\kappa_i$ and whose evaluation at any $z\in Z$ is the character 
\[
x\mapsto\prod_{\tau\in\H_K}\tau(x)^{-\kappa_i(z)_\tau}.
\]
\end{enumerate}
\end{defn}

For each $1\leq i\leq d$, let $\alpha_i=\prod_{j=1}^iF_j$ and $\eta_i=\prod_{j=1}^i\chi_i$. Let $\delta_i$ be the continuous character $\delta_i: K^\times\ra \mathcal{O}(X)^\times$ defined by $\delta_i(\pi_K)=\alpha_i$ and
$\delta_i|_{\OO_K^\times}=\eta_i$. It is straightforward to see that $\wedge^iV_X$ is weakly refined with $F=\alpha_i$ and $\chi=\eta_i$.  Thus by Theorem \ref{thm:intro-weak-refined-family} and Theorem \ref{thm:intro-fs}, for each $1\leq i\leq d$, we get a coherent sheaf of crystalline periods $\mathscr{D}_\rig^\dag((\wedge^iV_X)(\eta_i^{-1}))^{\varphi^f=\alpha_i, \Gamma=1}$ on $X$. The main result of this paper is the following theorem (see Theorem \ref{thm:good-triangulation} for a more precise version).  Here for any $\OO(X)^\times$-valued character $\Psi$ and $x\in X$, we denote by $\Psi(x)$ the evaluation of $\Psi$ at $x$.

\begin{theorem}\label{thm:intro-good-triangulation}
The families of $\m$-modules 
\[
\mathscr{D}_\rig^\dag((\wedge^iV_X)(\eta_i^{-1}))^{\varphi^f=\alpha_i, \Gamma=1}\otimes_{K_0\otimes\OO_X}\mathscr{D}_\rig^\dag(\eta_i)
\] 
for all $1\leq i\leq d$ give rise to a global triangulation of $V_X$ on a Zariski open and dense subspace of $X$ with parameters $(\delta_i/\delta_{i-1})_{1\leq i\leq d}$. Furthermore, the locus of global triangulation contains all $x\in X$ such that $\D_\rig^\dag(V_x)$ admits a triangulation with parameters
$((\delta_{i}/\delta_{i-1})(x))_{1\leq i\leq d}$
and satisfies 
\[
\dim_{\Q}\D_\rig^\dag((\wedge^iV^{\mathrm{ss}}_x)(\eta_i^{-1}(x)))_\sigma^{\varphi^f=\alpha_i(x),\Gamma=1}=1
\]
for every $1\leq i\leq d-1$ and $\sigma\in\mathrm{Gal}(K_0/\Q)$. In particular, the locus of global triangulation contains all regular non-critical points. 
\end{theorem}

\subsection{Triangulation loci of refined families}
\begin{defn}\label{def:intro-good}
Let $V_X$ be a weakly refined family. For $x\in X$, we say $x$ is \emph{saturated} for the family $V_X$ if the following two conditions hold. 
\begin{enumerate}
\item[(1)]For any $\sigma\in\mathrm{Gal}(K_0/\Q)$, the coherent $\OO_X$-module $\mathscr{D}^{\dag}_{\rig}(V_X(\chi^{-1}))_\sigma^{\varphi^f=F,\Gamma=1}$ is locally free of rank 1 around $x$.
\item[(2)]The image of $\mathscr{D}^{\dag}_{\rig}(V_X(\chi^{-1}))^{\varphi^f=F,\Gamma=1}$ in $\D_\rig^\dag(V_x(\chi^{-1}(x)))$ generates a rank 1 saturated $\m$-submodule.
\end{enumerate}
We denote by $X_s$ the set of saturated points, and call it the \emph{saturated locus} of $V_X$. 
\end{defn}

Using Theorem \ref{thm:intro-fs-sheaf}, it is not difficult to show that $X_s$ is a Zariski open and dense subspace of $X$. 
For a refined family $V_X$,  the saturated locus $X_s$ is defined to the intersections of the saturated loci of the weakly refined families $\wedge^iV_X$ for all $1\leq i\leq d$. It follows that $X_s$ is a Zariski open and dense subspace of $X$. 

\begin{defn}
Let $V_X$ be a refined family. The \emph{triangulation locus} of  $V_X$ is defined to be the set of $x\in X_s$ such that  the $\m$-modules
\[
\mathscr{D}_\rig^\dag((\wedge^iV_X)(\eta_i^{-1}))^{\varphi^f=\alpha_i, \Gamma=1}\otimes_{K_0\otimes_{\Q}\OO_X}\D_\rig^\dag(\eta_i(x))
\] 
for all $1\leq i\leq d$ give rise to a triangulation of $\D_\rig^\dag(V_x)$. That is, there exists a triangulation $(\mathrm{Fil}_i(\D_\rig^\dag(V_x)))_{1\leq i\leq d}$ of $\D_\rig^\dag(V_x)$ such that
\[
\mathscr{D}_\rig^\dag((\wedge^iV_X)(\eta_i^{-1}))^{\varphi^f=\alpha_i,\Gamma=1}\otimes_{K_0\otimes_{\Q}\OO_X}\D_\rig^\dag(\eta_i(x))=\wedge^i(\mathrm{Fil}_i(\D_\rig^\dag(V_x)))
\]
for all $1\leq i\leq d$.
\end{defn}

It is obvious that the locus of global triangulation is contained in the triangulation locus. In fact, it turns out that they actually coincide. That is, a refined family admits a global triangulation on the triangulation locus. 

\begin{prop}\label{thm:intro-chain-Zariski}(Proposition \ref{prop:chain-Zariski})
The triangulation locus forms a reduced Zariski closed subspace of $X_s$. Furthermore, for any affinoid subdomain $M(S)$ of the triangulation locus, the sequence of $\m$-modules 
\[
(\D_\rig^\dag((\wedge^iV_X|_{M(S)})(\eta_i^{-1}))^{\varphi^f=\alpha_i, \Gamma=1}\otimes_{K_0\otimes_{\Q}S}
\D^\dag_{\rig}(\eta_i))_{1\leq i\leq d}
\] 
gives rise to  a triangulation of $\D_\rig^\dag(V_X|_{M(S)})$ with parameters $(\delta_i/\delta_{i-1})_{1\leq i\leq d}$.
\end{prop}

To cut out the triangulation locus, we view the associated family of $\m$-modules as a family of vector bundles over the relative half-open annulus $X\times \{0<v_p(T)\leq r\}$ for some $r>0$. For a general base $X$, it is difficult to deal with vector bundles over such a relative annulus.  We get over this difficulty by restricting the family of vector bundles on a closed annulus $v_p(T)\in[r/p^f,r]$, where $r$ is sufficiently small, over $X$.  We then cut out the triangulation locus and construct the global triangulation over this closed annulus. Finally, we use the Frobenius action to extend the global triangulation over the closed annulus to a global triangulation of the original family of $\m$-modules. 

Therefore,  to prove the global triangulation conjecture, it finally reduces to show that all regular non-critical points belong to the triangulation locus. As mentioned before, we prove this using a result of Bellaiche-Chenevier on descent \cite[\S3.2]{BC06}.  On the other hand, although the global triangulation can not be extended to the whole base (because of the existence of critical points as pointed out by Bellaiche-Chenevier \cite[Remark 2.5.9]{BC06}),  it turns out that the specializations of refined families are all trianguline. 
\begin{theorem}\label{thm:intro-trianguline} (Theorem \ref{thm:trianguline})
For any $x\in X$, $V_x$ is trianguline.
\end{theorem}

\subsection{Application to the eigencurve}

Let $\mathcal{C}$ be the eigencurve associated with an absolutely irreducible $2$-dimensional residual representation of $\mathrm{Gal}(\overline{\mathbb{Q}}/\mathbb{Q})$ which is  $p$-modular in the sense of \cite{CM}. Let $\alpha\in\mathcal{O}(\mathcal{C})^\times$ be the function of $U_p$-eigenvalues, and let 
$\kappa: \mathcal{C}\ra \mathcal{W}$
be the map to the weight space. We normalize $\kappa$ in such a way that if $x$ is a classical eigenform of weight $k$, then $\kappa(x)=k-1$. Let 
\[
\epsilon: (\mathbb{Z}/N\ZZ)^\times\times\Zp^\times\ra\OO(\mathcal{C})^\times
\] be the nebentypus-weight character (cf. \cite[\S3.1]{CM}). That is, the diamond operators act on overconvergent eigenforms parametrized by $\mathcal{C}$ through $\epsilon$. 

Following \cite{Ki03}, let $V_{\mathcal{C}}$ be the \emph{dual} of the family of $p$-adic representations of $G_{\mathbb{Q}}$ on $\mathcal{C}$ interpolating the Galois representations attached to classical eigenforms. That is, for any $x\in\mathcal{C}$ and prime $l$ not dividing $pN$, the characteristic polynomial of the geometric Frobenius at $l$ on $V_x$ is 
\[
X^2-a_l(x)X+\epsilon(x)=0,
\]
where $a_l$  denotes the $l$-th coefficient of the $q$-expansion. Let $Z$ be the set of classical points $z\in\mathcal{C}$ such that $V_z$ is crystalline with distinct crystalline Frobenius eigenvalues. Coleman's classicality theorem then ensures that $V_{\mathcal{C}}$ is a weakly refined family together with $\kappa_1=0, \kappa_2=-\kappa$, $F=\alpha$ and $Z$. The following theorem completely determines the local behavior of $V_{\mathcal{C}}$.

\begin{theorem} (Theorem \ref{thm:eigencurve})\label{thm:intro-eigencurve}
For any $x\in\mathcal{C}$,  the coherent sheaf $\mathscr{D}_\rig^\dag(V_{\mathcal{C}})^{\varphi=F,\Gamma=1}$ is locally free of rank 1 around $x$ unless $\kappa(x)=0$ and 
$\dim D_{\mathrm{crys}}(V^{\mathrm{ss}}_x)^{\varphi=F(x)}=2$. In particular, $V^{\mathrm{ss}}_x$ is crystalline in this case. 
If $x$ is not of this form, it is not saturated if and only if it satisfies one of the following two disjoint conditions.
\begin{enumerate}
\item[(1)]The weight $\kappa(x)$ is a positive integer and $v_p(F(x))>\kappa(x)$. As a consequence, $V_x$ belongs to $\mathscr{S}^{\mathrm{ng}}_*\cap\mathscr{S}_*^{\mathrm{HT}}$ in the sense of \cite{C12}; hence $V_x$ is irreducible, Hodge-Tate and non-de Rham. Furthermore, the image of $t^{-\kappa(x)}(\mathscr{D}_\rig^\dag(V_{\mathcal{C}}))^{\varphi=F,\Gamma=1}$ generates a rank 1 saturated $\m$-submodule in $\D_\rig^\dagger(V_x)$.
\item[(2)]The weight $\kappa(x)$ is a positive integer and $v_p(F(x))=\kappa(x)$, and $V_x$ has a rank 1 subrepresentation $V_x'$ which is crystalline with Hodge-Tate weight $-\kappa(x)$. Furthermore, in this case, the image of $\mathscr{D}_\rig^\dag(V_{\mathcal{C}})^{\varphi=F,\Gamma=1}$ in $\D_\rig^\dag(V_x)$ is $k(x)\cdot t^{\kappa(x)}e'$ where $e'$ is a canonical basis of $\D_\rig^\dag(V_x')$.
\end{enumerate}
In case (2), if $x\in Z$, then it is critical. Hence it is decomposable. Suppose $V_x=V_1\oplus V_2$ where $V_1$ has Hodge-Tate weight $0$ and $V_2$ has Hodge-Tate weight $-\kappa(x)$. Then the image of $\mathscr{D}_\rig^\dag(V_{\mathcal{C}})^{\varphi=F,\Gamma=1}$ in $\D_\rig^\dag(V_x)$ is $k(x)\cdot t^{\kappa(x)}e_2$ where $e_2$ is a canonical basis of $\D_\rig^\dag(V_2)$.

\end{theorem}


\section*{Acknowledgements}
The author would like to thank Joel Bellaiche, Rebeca Bellovin, Laurent Berger,  Ga\"{e}tan Chenevier, Pierre Colmez, Matthew Emerton, Kiran S. Kedlaya, Haruzo Hida, Mark Kisin, Benjamin Schraen, Fucheng Tan and Liang Xiao for helpful discussions and communications. The author would also like to express his gratitudes to Brain Conrad and Shanwen Wang for their very careful reading on earlier drafts of this paper.\\

\section*{Notation and conventions}
Let $v_p$ denote the $p$-adic valuation on $\mathbb{C}_p$ normalized as $v_p(p)=1$. Let $|\cdot|$ be the corresponding norm defined by $|x|=p^{-v_p(x)}$.  Fix a finite extension $K$ of $\Q$ in $\mathbb{C}_p$. Let $\OO_K$ be the ring of integers of $K$, and let $\pi_K$ be a fixed uniformizer. Let $v_K$ denote the $p$-adic valuation on $\mathbb{C}_p$ normalized as $v_K(\pi_K)=1$. For any valuation $v$ (norm $|\cdot|$) and a matrix $A = (A_{ij})$, we use $v(A)$ (resp. $|A|$) to denote the minimal valuation (resp. maximal norm) among the entries.

We may view any continuous character of $\OO_K^\times$ as a continuous character of $K^\times$ by pulling back via the projection $K^\times\ra \OO_K^\times$ determined by $\pi_K$. We may further view it as a continuous character of $W_K$, which denotes the Weil group of $K$, via the local reciprocity isomorphism
$W_K^{\mathrm{ab}}\cong K^\times$ where a geometric Frobenius element maps to $\pi_K$.

We choose a compatible sequence of primitive $p$-powers roots of unity $(\varepsilon_n)_{n\geq0}$, i.e. each $\varepsilon_n\in\overline{\mathbb{Q}}_p$ is a primitive $p^n$-th root of $1$, and they satisfy $\varepsilon^p_{n+1}=\varepsilon_{n}$ for all $n\geq0$. Fix $\varepsilon=(\varepsilon_0,\varepsilon_1,\dots)$ to be Fontaine's $p$-adic $\exp(2\pi i)$.  For a finite extension $L$ of $\Q$ in $\mathbb{C}_p$, let $L_n=L(\varepsilon_n)$ for $n\geq 1$, and let $L_\infty=\cup_{n\in\mathbb{N}}L_n$. Let $L_0'$ be the maximal unramified extension of $\Q$ in $L_\infty$. Let $H_L=\mathrm{Gal}(\overline{\mathbb{Q}}_p/L_\infty)$, and let $\Gamma_L=\mathrm{Gal}(L_\infty/L)$. Denote $\Gamma_K$ by $\Gamma$ for simplicity. 

We normalize the Hodge-Tate weight in a way that the $p$-adic cyclotomic character has Hodge-Tate weight 1.

 Let $K_0$ be the maximal unramified extension of $K$ in $\mathbb{C}_p$, and let $f=[K_0:\Q]$. 
For any $\sigma\in \mathrm{Gal}(K_0/\Q)$, let $\H_\sigma$ be the set of $\tau\in \H_K$ such that its restriction on $K_0$ is $\sigma$. 

For $r>0$, put $\rho(r)=\frac{p-1}{pr}$. For $n\geq0$, let $r_n=p^{n-1}(p-1)$. For $s>0$, let $n(s)$ be the maximal integer $n$ such that $r_n\leq s$. 

For an affinoid algebra $S$, we denote by $\mathcal{O}_S$ the unit ball of $S$. For a topological group $G$ and a rigid analytic space $X$ over $\Q$, by a \emph{family of $p$-adic representations of $G$} of dimension $d$ on $X$ we mean a locally free coherent $\mathcal{O}_X$-module $V_X$ of rank $d$ equipped with a continuous $\mathcal{O}_X$-linear $G$-action. When $X=M(S)$ is an affinoid space over $\Qp$, we also call a family of $p$-adic representations of $G$ on $X$ an \textit{$S$-linear $G$-representation}. If $M(R)\subset M(S)$ is an affinoid subdomain and $V_S$ is a family of representation on $M(S)$, write $V_R$ for the base change of $V_S$ from $S$ to $R$. Finally, for every $x\in M(S)$, we write $V_x$ to denote the specialization $V_S\otimes_S k(x)$.

\section{Preliminaries}

\subsection{The $\m$-module functor}
Let $S$ be an affinoid algebra over $\Q$, and let $V_S$ be a finite locally free $S$-linear representation of $G_K$.  The $(\varphi,\Gamma)$-module functors $\D^\dag_K(V_S)$ and $\D_{\rig,K}^\dag(V_S)$ are constructed in \cite{BC07} and \cite{KL}.  However, both of these works do not really verify that $\D^\dag_K(V_S)$ and $\D_{\rig,K}^\dag(V_S)$ are $\varphi$-modules in the sense that they are isomorphic to their $\varphi$-pullbacks respectively. This small gap will be filled in this subsection.  
We follow the notations of \cite{BC07} and \cite{KL}, and refer the reader to them for more details. Recall that Berger-Colmez show that the ring $\widetilde{\mathbf{A}}^{(0,1]}\widehat{\otimes}_{\Zp}\OO_S$ together with the cyclotomic character $\chi:G_K\ra\Zp^{\times}$ satisfy the Tate-Sen axioms (see \cite[APPENDIX D]{Bel13} for a detailed exposition about Tate-Sen axioms) for any $c_1>0, c_2>0$ and $c_3>\frac{1}{p-1}$ \cite[Proposition 4.2.1, Proposition 3.1.4]{BC07}.
For any finite extension $L$ of $\Qp$, $n\in\mathbb{Z}$ and $s>0$, let $\mathbf{A}^{\dag,s}_{L,n}$ denote the subring $\varphi^{-n}(\mathbf{A}^{\dag,p^ns}_L)$ of $\widetilde{\mathbf{A}}^{\dag,s}$ (see \cite[\S1.3]{LB02} for the definitions of $\mathbf{A}_L^{\dag,s}$ and $\widetilde{\mathbf{A}}^{\dag,s}$). The following result then follows from \cite[Proposition 3.3.1]{BC07}.

\begin{prop}
\label{prop:Tate-Sen}
Let $T_S$ be a free $\OO_S$-linear representation of $G_K$ of rank $d$. Let $L$ be a finite Galois extension of $K$ so that $G_L$ acts trivially on $T_S/p^kT_S$, where $k$ is an integer such that 
$\mathrm{val}^{(0,1]}(p^k)>c_1+2c_2+2c_3$. 
Then there exists an integer $n(L) \geq 0$ such that
for any $n\geq n(L)$,
$T_S\otimes_{\OO_S}(\widetilde{\mathbf{A}}^{\dag,\frac{p-1}{p}}\widehat{\otimes}_{\Zp}\OO_S)$ has a
 unique sub-$\mathbf{A}^{\dag,\frac{p-1}{p}}_{L,n}\widehat{\otimes}_{\Zp}\OO_S $-module
$\D^{\dagger,\frac{p-1}{p}}_{L,n}(T_S)$ which is free of rank $d$, fixed by $H_L$, stable under $G_K$,
and has a basis which is $c_3$-fixed by $\Gamma_L$
(that is,  for each $\gamma \in \Gamma_L$, the matrix $W_\gamma$ of $\gamma$ with respect to this basis satisfies $\mathrm{val}^{(0,1]}(W_\gamma-1)>c_3$), and satisfies
\begin{equation}
\label{E:berger-colmez}
\D^{\dagger,\frac{p-1}{p}}_{L,n}(T_S)\otimes_{\aa^{\dag,s}_{L,n}\widehat{\otimes}_{\Zp}\OO_S}
(\widetilde{\mathbf{A}}^{\dag,\frac{p-1}{p}}\widehat{\otimes}_{\Zp}\OO_S)=
T_S\otimes_{\OO_S}(\widetilde{\mathbf{A}}^{\dag,\frac{p-1}{p}}\widehat{\otimes}_{\Zp}\OO_S).
\end{equation}
\end{prop}


\begin{cor}\label{cor:Tate-Sen}
Keep notations as above.
\begin{enumerate}
\item[(1)]We have $\D^{\dagger,\frac{p-1}p}_{L,n+1}(T_S)=\D^{\dagger,s}_{L,n}(T_S)\otimes_{\mathbf{A}^{\dag,\frac{p-1}p}_{L,n}\widehat{\otimes}_{\Z}\OO_S}(\mathbf{A}^{\dag,\frac{p-1}p}_{L,n+1}\widehat{\otimes}_{\Z}\OO_S).$
\item[(2)]By enlarging $L$ and $n(L)$, we may have 
\[
\D^{\dagger,\frac{p-1}p}_{L,n+1}(T_S)=\varphi^{-1}(\D^{\dag,\frac{p-1}p}_{L,n}(T_S)\otimes_{\mathbf{A}^{\dag,\frac{p-1}p}_{L,n}\widehat{\otimes}_{\Z}\OO_S}(\mathbf{A}^{\dag,p-1}_{L,n}
\widehat{\otimes}_{\Z}\OO_S)).
\]
\end{enumerate}
\end{cor}
\begin{proof}
Let $D_1$ and $D_2$ denote the right hand sides of (1) and (2) respectively. By (\ref{E:berger-colmez}), we first see that 
\[
D_1\otimes_{\mathbf{A}^{\dag,\frac{p-1}p}_{L,n+1}\widehat{\otimes}_{\Z}\OO_S}(\widetilde{\mathbf{A}}^{\dag,\frac{p-1}p}
\widehat{\otimes}_{\Z}\OO_S)=
T_S\otimes_{\OO_S}(\widetilde{\mathbf{A}}^{\dag,\frac{p-1}p}\widehat{\otimes}_{\Zp}\OO_S).
\]
Moreover, it is straightforward to see that $D_1$ satisfies all the properties of $\D^{\dagger,\frac{p-1}{p}}_{L,n+1}(T_S)$ given by Proposition \ref{prop:Tate-Sen}. We therefore conclude
$D_1=\D^{\dagger,\frac{p-1}{p}}_{L,n+1}(T_S)$ by the uniqueness of $\D^{\dagger,s}_{L,n+1}(T_S)$. This proves (1).

To prove (2), we enlarge $L$ and $n(L)$ so that Proposition \ref{prop:Tate-Sen} holds both for $(c_1, c_2, c_3)$ and for $(c_1, c_2, c_3'=pc_3)$. We first see that
\[
\varphi(D_2)\otimes_{\mathbf{A}^{\dag,p-1}_{L,n}\widehat{\otimes}_{\Z}\OO_S}(\widetilde{\mathbf{A}}^{\dag,p-1}
\widehat{\otimes}_{\Z}\OO_S)=
T_S\otimes_{\OO_S}(\widetilde{\mathbf{A}}^{\dag,p-1}\widehat{\otimes}_{\Zp}\OO_S).
\]
It follows that
\[
D_2\otimes_{\mathbf{A}^{\dag,\frac{p-1}p}_{L,n+1}\widehat{\otimes}_{\Z}\OO_S}(\widetilde{\mathbf{A}}^{\dag,\frac{p-1}p}
\widehat{\otimes}_{\Z}\OO_S)=
T_S\otimes_{\OO_S}(\widetilde{\mathbf{A}}^{\dag,\frac{p-1}p}\widehat{\otimes}_{\Zp}\OO_S).
\]
Let $e$ be a $c_3'$-fixed basis of $\D^{\dag,\frac{p-1}p}_{L,n}(T_S)$. It is then straightforward to see that the basis $\varphi^{-1}(e)$ of $D_2$ is at least $c_3$-fixed. We therefore conclude that $D_2=\D^{\dag,\frac{p-1}p}_{L,n+1}(T_S)$ by the uniqueness of $\D^{\dag,\frac{p-1}p}_{L,n+1}(T_S)$. 
\end{proof}

In the rest of the paper, following the convention of \cite{BC07}, we fix some constants $c_1>0, c_2>0$ and $c_3>\frac{p-1}{p}$ such that $c_1+2c_2+2c_3<v_p(12p)$. Now let $V_S$ be a free $S$-linear $G_K$-representation of rank $d$. Choose a free $\OO_S$-lattice $T_S$ in $V_S$.
Since the $G_K$-action is continuous, there exists a finite Galois extension
$L$ of $K$ such that $G_L$ carries $T_S$ into itself; hence $T_S$ is $G_L$-stable. We may enlarge $L$ so that $G_L$ acts trivially on $T_S/12pT_S$. We also assume that Corollary \ref{cor:Tate-Sen}(2) holds by further enlarging $L$ and $n(L)$. 

For any $g\in G_K$, it follows that $gT_S$ is also a $G_L$-stable $\OO_S$-lattice of $V_S$. Moreover, $G_L$ acts trivially on $gT_S/12p(gT_S)$ as well. By the uniqueness of $\D^{\dagger, p-1/p}_{L,n}(gT_S)$, we get 
\[
\D^{\dagger, p-1/p}_{L,n}(gT_S)=g\D^{\dagger, p-1/p}_{L,n}(T_S).
\]
Using the fact that $T_S$ and $gT_S$  are commensurable, we therefore deduce that  $\D^{\dagger, p-1/p}_{L,n}(T_S)$ and $g\D^{\dagger, p-1/p}_{L,n}(T_S)$ are commensurable.
 This implies that the sub-$S$-module $\D^{\dagger, p-1/p}_{L,n}(T_S)\otimes_{\OO_S}S$ of
$V_S\otimes_{\OO_S}(\widetilde{\mathbf{A}}^{\dag,r_n}\widehat{\otimes}_{\Z}\OO_S)$ is independent of the choice of $T_S$ and $G_K$-stable for any $n\geq n(L)$.
For $s\geq r_{n(L)}$, we set
\begin{center}
$\D^{\dagger,s}_K(V_S)=(\varphi^{n(L)} (\D^{\dagger, p-1/p}_{L,n(L)}(T_S))\otimes_{\mathbf{A}^{\dag,r_{n(L)}}_L\widehat{\otimes}_{\Zp}\OO_S}\mathbf{B}^{\dag,s}_L\widehat{\otimes}_{\Qp} S)^{H_K}$
\end{center}
which is equipped with a $\Gamma_K$-action. By \cite[Proposition 2.2.1]{BC07} and \cite[Lemme 4.2.5]{BC07}, there exists an $s(L/K)>0$ such that if $s\geq s(L/K)$, then $\D^{\dagger,s}_K(V_S)$ is a locally free $\mathbf{B}^{\dag,s}_K\widehat{\otimes}_{\Q}S$-module of rank $d$. Let $n(V_S)=\max\{n(L),n(s(L/K))\}$, and put $s(V_S)=r_{n(V_S)}$.

\begin{remark}
\label{R:tate-sen}
By Corollary \ref{cor:Tate-Sen}(2), we see that for any integers $n_1,n_2$ such that $n(L)\leq n_1,n_2\leq n(s)$, 
\begin{equation}
\label{E:r-tate-sen}
\varphi^{n_1} (\D^{\dagger,p-1/p}_{L,n_1}(T_S))\otimes_{\mathbf{A}^{\dag, r_{n_1}}_L
\widehat{\otimes}_{\Zp}\OO_S}\mathbf{B}^{\dag,s}_L\widehat{\otimes}_{\Qp} S
=\varphi^{n_2} (\D^{\dagger,p-1/p}_{L,n_2}(T_S))\otimes_{\mathbf{A}^{\dag, r_{n_2}}_L
\widehat{\otimes}_{\Zp}\OO_S}\mathbf{B}^{\dag,s}_L\widehat{\otimes}_{\Qp} S.
\end{equation}
Thus one can replace $n(L)$ with any integer $n$ such that $n(L)\leq n\leq n(s)$ in the construction of $\D_K^{\dag,s}(V_S)$.
\end{remark}

If $S\ra R$ is a map of affinoid algebras over $\Q$, we set $V_{R}=V_S\otimes_SR$.  The following theorem slightly refines \cite[Th\'eor\`eme 4.2.9]{BC07} in the case of affinoid algebras.
\begin{theorem}\label{thm:BC-generalization}
For any $s\geq s(V_S)$, the locally free $\mathbf{B}_K^{\dagger,s}\widehat{\otimes}_{\Qp}S$-module
$\D^{\dagger,s}_K(V_S)$ is well-defined, i.e. its construction is independent of the choices of $T_S$ and $L$. Furthermore, it satisfies the following properties.
\begin{enumerate}
\item[(1)]
The natural map $\D^{\dagger,s}_K(V_S)\otimes_{\mathbf{B}_K^{\dagger,s}\widehat{\otimes}_{\Qp}S} \widetilde{\mathbf{B}}_K^{\dagger,s}\widehat{\otimes}_{\Qp}S \ra V_S\widehat{\otimes}_{\Q}\widetilde{\mathbf{B}}_K^{\dagger,s}$ is an isomorphism.
\item[(2)]
The construction is compatible with base change in $S$. 
\item[(3)]
The construction is compatible with passage
from $K$ to a finite extension $L$, i.e.
$\D_L^{\dagger,s}(V_S)=\D_K^{\dagger,s}(V_S)\otimes_{\mathbf{B}_K^{\dagger,s}\widehat{\otimes}_{\Qp}S}
\mathbf{B}_L^{\dagger,s}\widehat{\otimes}_{\Qp}S$.
\item[(4)]
For any $s'\geq s$, $\D^{\dag,s'}_K(V_S)=\D^{\dag,s}_K(V_S)\otimes_{\mathbf{B}_K^{\dag,s}\widehat{\otimes}_{\Q}S}
\mathbf{B}_K^{\dag,s'}\widehat{\otimes}_{\Q}S$.
\end{enumerate}
\end{theorem}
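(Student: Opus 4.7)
The plan is to reduce every assertion to the uniqueness clause in Proposition \ref{prop:Tate-Sen} together with the compatibility statements collected in Corollary \ref{cor:Tate-Sen}. Once uniqueness of the Tate--Sen descent module is in hand, invariance under the auxiliary choices of $T_S$ and $L$, as well as items (1)--(4), become bookkeeping exercises in tensor products and Galois descent carried out through the functor $M \mapsto (\varphi^{n(L)}(M)\otimes_{\mathbf{A}^{\dag,p^{n(L)-1}(p-1)}_L\widehat{\otimes}_{\Zp}\OO_S}(\mathbf{B}^{\dag,s}_L\widehat{\otimes}_{\Qp}S))^{H_K}$.

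First I would establish well-definedness. Given two $\OO_S$-lattices $T_S,T'_S$ in $V_S$, scaling by a power of $p$ reduces to the case $T_S\subset T'_S$, and then after enlarging $L$ so that Proposition \ref{prop:Tate-Sen} applies to both, uniqueness forces $\D^{\dagger,p-1/p}_{L,n}(T_S)$ and $\D^{\dagger,p-1/p}_{L,n}(T'_S)$ to agree after inverting $p$. Given two admissible extensions $L\subset L'$, an argument parallel to the proof of Corollary \ref{cor:Tate-Sen}(1) identifies $\D^{\dagger,p-1/p}_{L',n}(T_S)$ as the natural base change of $\D^{\dagger,p-1/p}_{L,n}(T_S)$, and one checks that the subsequent passage to $\mathbf{B}^{\dag,s}_L\widehat{\otimes}_{\Qp}S$ followed by $H_K$-invariants erases the distinction. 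With well-definedness in place, items (1), (3), and (4) are essentially automatic: (1) follows from Proposition \ref{prop:Tate-Sen} by applying $\varphi^{n(L)}$, inverting $p$, base changing to $\widetilde{\mathbf{B}}^{\dag,s}_K\widehat{\otimes}_{\Qp}S$, and invoking Galois descent along $H_K$; (3) follows from uniqueness (functoriality in $V_S$ is immediate, and the $K$-to-$L$ identity holds because both sides satisfy the same characterizing conditions with respect to $H_L\subset H_K$); and (4) is Corollary \ref{cor:Tate-Sen}(2) transported through $\varphi^{n(L)}$ and $H_K$-invariants.

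The main obstacle will be item (2), the specialization statement, where one must verify that both $\D^{\dagger,p-1/p}_{L,n}(-)$ and the subsequent step of taking $H_K$-invariants commute with the base change $S\to k(x)$. For the first, after enlarging $L$ so that $G_L$ acts trivially on $T_x/p^kT_x$ for the relevant $k$, I would check that $\D^{\dagger,p-1/p}_{L,n}(T_S)\otimes_{\OO_S}\OO_{k(x)}$ satisfies the characterization of $\D^{\dagger,p-1/p}_{L,n}(T_x)$ in Proposition \ref{prop:Tate-Sen}---the almost-invariance condition passes to the fiber because the transition matrices specialize---and conclude by uniqueness. For the $H_K$-invariants, both sides are locally free of rank $d$ thanks to the hypothesis $s\geq s(V_S)$ and the refinement of \cite[Lemme 4.2.5]{BC07}, so it suffices to verify that the natural map $\D^{\dag,s}_K(V_S)\otimes_S k(x)\to \D^{\dag,s}_K(V_x)$ becomes an isomorphism after further base change to $\widetilde{\mathbf{B}}^{\dag,s}_K\widehat{\otimes}_{\Qp}k(x)$; but this is precisely item (1) applied to both $V_S$ and $V_x$, together with the compatibility of $V_S\widehat{\otimes}_{\Qp}\widetilde{\mathbf{B}}^{\dag,s}_K$ with the specialization $S\to k(x)$.
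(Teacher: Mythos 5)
Your proposal is essentially correct, but it takes a different route from the paper at the level of exposition: in the paper, well-definedness and items (1)--(3) are not reproved at all --- they are quoted from \cite[Theorem 3.11]{KL}, which in turn rests on \cite[Th\'eor\`eme 4.2.9]{BC07} --- and the only new content of the proof is item (4), which is deduced from the observation that at the level of the auxiliary Galois extension $L$ one has $\D^{\dag,s}_L(V_S)=\varphi^{n(L)}(\D^{\dag,p-1/p}_{L,n(L)}(T_S))\otimes(\mathbf{B}^{\dag,s}_L\widehat{\otimes}_{\Qp}S)$ for every $s\geq s(V_S)$, so that $\D^{\dag,s'}_L(V_S)$ is trivially the base change of $\D^{\dag,s}_L(V_S)$, after which one invokes (3) and takes $H_K$-invariants. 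Your treatment of (4) (Corollary \ref{cor:Tate-Sen}(2) ``transported through $\varphi^{n(L)}$ and $H_K$-invariants'') amounts to the same argument, with the small remark that Corollary \ref{cor:Tate-Sen}(2) is not even needed once one observes that both radii are obtained by tensoring up the same Tate--Sen module. What your proposal does differently is to reconstruct the cited parts from Proposition \ref{prop:Tate-Sen} and Corollary \ref{cor:Tate-Sen} directly: this is in substance the Berger--Colmez argument, and it buys self-containedness, whereas the paper's citation buys brevity and delegates the delicate verifications. If you carry your version out in full you should attend to the points those references handle: comparison of two admissible fields $L$, $L'$ should pass through a common Galois overfield; descent from $\D^{\dag,s}_L$ to $\D^{\dag,s}_K$ by $H_K$-invariants genuinely uses $s\geq s(L/K)$ (via \cite[Proposition 2.2.1, Lemme 4.2.5]{BC07}), which you do invoke; and your reduction of (2) to (1) requires that $\widetilde{\mathbf{B}}^{\dag,s}_K\widehat{\otimes}_{\Qp}k(x)$ be faithfully flat over $\mathbf{B}^{\dag,s}_K\widehat{\otimes}_{\Qp}k(x)$, a standard but nontrivial fact that should be stated and sourced rather than left implicit. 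None of these is a genuine gap, but they are exactly the places where the blanket citation in the paper is doing work.
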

\begin{proof}
The statements $(1)$ and $(3)$ are already proved in \cite[Theorem 3.11]{KL} (which in turn is an easy consequence of \cite[Th\'eor\`eme 4.2.9]{BC07}). The assertion $(2)$ follows easily from the construction. For (4), let $T_S$ and $L$ be as above. It follows that
\[
\D^{\dagger,s}_L(V_S)=\varphi^{n(L)} (\D^{\dagger, p-1/p}_{L,n(L)}(T_S))\otimes_{\mathbf{A}^{\dag,r_{n(L)}}_L\widehat{\otimes}_{\Zp}\OO_S}\mathbf{B}^{\dag,s}_L\widehat{\otimes}_{\Qp} S
\]
for any $s\geq s(V_S)$. This implies 
\[
\D^{\dag,s'}_L(V_S)=\D^{\dag,s}_L(V_S)\otimes_{\mathbf{B}_L^{\dag,s}\widehat{\otimes}_{\Q}S}
\mathbf{B}_L^{\dag,s'}\widehat{\otimes}_{\Q}S.
\]
By (3), we get
\[
\D^{\dag,s'}_K(V_S)\otimes_{\mathbf{B}^{\dag,s'}_K\widehat{\otimes}_{\Q}S}\mathbf{B}^{\dag,s'}_L\widehat{\otimes}_{\Q}S
=\D^{\dag,s}_K(V_S)\otimes_{\mathbf{B}^{\dag,s}_K\widehat{\otimes}_{\Q}S}
\mathbf{B}_L^{\dag,s'}\widehat{\otimes}_{\Q}S.
\]
We conclude by taking the $H_K$-invariants on both sides.
\end{proof}

From now on, we assume $s\geq s(V_S)$ unless specified otherwise.

\begin{prop}\label{prop:dagger-phi-compare}
We have $\varphi(\D^{\dagger,s}_K(V_S))\subset\D^{\dag,ps}_K(V_S)$ and the natural map
\[
\varphi(\D^{\dagger,s}_K(V_S))\otimes_{\mathbf{B}^{\dag,s}_K\widehat{\otimes}_{\Q}S,\varphi}
\mathbf{B}^{\dag,ps}_K\widehat{\otimes}_{\Q}S\ra\D^{\dag,ps}_K(V_S)
\]
is an isomorphism.
\end{prop}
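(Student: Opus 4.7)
The strategy is to first establish the identity at the $L$-level---where the explicit descriptions from the construction of $\D^{\dag,s}_K(V_S)$ apply---and then descend to $K$ via faithful flatness.

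Using the Remark following Theorem \ref{thm:BC-generalization}, I would choose an integer $n\geq n(L)$ for which both
\[
\D^{\dag,s}_L(V_S)=\varphi^n(\D^{\dag,(p-1)/p}_{L,n}(T_S))\otimes_{\mathbf{A}^{\dag,p^{n-1}(p-1)}_L\widehat{\otimes}\OO_S}(\mathbf{B}^{\dag,s}_L\widehat{\otimes}S)
\]
and
\[
\D^{\dag,ps}_L(V_S)=\varphi^{n+1}(\D^{\dag,(p-1)/p}_{L,n+1}(T_S))\otimes_{\mathbf{A}^{\dag,p^n(p-1)}_L\widehat{\otimes}\OO_S}(\mathbf{B}^{\dag,ps}_L\widehat{\otimes}S)
\]
hold. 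Applying $\varphi$ to the first formula and extending scalars to $\mathbf{B}^{\dag,ps}_L\widehat{\otimes}S$ yields
\[
\varphi(\D^{\dag,s}_L(V_S))\otimes_{\varphi(\mathbf{B}^{\dag,s}_L)\widehat{\otimes}S}(\mathbf{B}^{\dag,ps}_L\widehat{\otimes}S)=\varphi^{n+1}(\D^{\dag,(p-1)/p}_{L,n}(T_S))\otimes_{\varphi(\mathbf{A}^{\dag,p^{n-1}(p-1)}_L)\widehat{\otimes}\OO_S}(\mathbf{B}^{\dag,ps}_L\widehat{\otimes}S).
\]
On the other hand, Corollary \ref{cor:Tate-Sen}(1) with $s=(p-1)/p$ expresses $\D^{\dag,(p-1)/p}_{L,n+1}(T_S)$ as a base extension of $\D^{\dag,(p-1)/p}_{L,n}(T_S)$; applying $\varphi^{n+1}$ (using $\varphi^{n+1}(\mathbf{A}^{\dag,(p-1)/p}_{L,n})=\varphi(\mathbf{A}^{\dag,p^{n-1}(p-1)}_L)$ and $\varphi^{n+1}(\mathbf{A}^{\dag,(p-1)/p}_{L,n+1})=\mathbf{A}^{\dag,p^n(p-1)}_L$) and then extending to $\mathbf{B}^{\dag,ps}_L\widehat{\otimes}S$ produces the same right-hand side, yielding the $L$-level identity $\varphi(\D^{\dag,s}_L(V_S))\otimes_{\varphi(\mathbf{B}^{\dag,s}_L)\widehat{\otimes}S}(\mathbf{B}^{\dag,ps}_L\widehat{\otimes}S)=\D^{\dag,ps}_L(V_S)$.

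To descend to $K$: since $\varphi$ commutes with $H_K$, taking $H_K$-invariants of the $L$-level inclusion gives $\varphi(\D^{\dag,s}_K(V_S))\subseteq\D^{\dag,ps}_K(V_S)$, so the natural map in the proposition is well-defined. Both sides of that map are locally free $\mathbf{B}^{\dag,ps}_K\widehat{\otimes}S$-modules of rank $d$ by Theorem \ref{thm:BC-generalization}. The $L$-level identity combined with Theorem \ref{thm:BC-generalization}(3) shows the map becomes an isomorphism after base change to $\mathbf{B}^{\dag,ps}_L\widehat{\otimes}S$. Faithful flatness of $\mathbf{B}^{\dag,ps}_K\widehat{\otimes}S\to\mathbf{B}^{\dag,ps}_L\widehat{\otimes}S$---which follows from $\mathbf{B}^{\dag,ps}_L$ being a finite free $\mathbf{B}^{\dag,ps}_K$-module via the finite \'etale extension $L_\infty/K_\infty$---then descends the isomorphism to $K$.

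The main obstacle is bookkeeping: managing the several base rings ($\mathbf{A}^{\dag,*}_{L,n}$, $\varphi(\mathbf{A}^{\dag,*}_L)$, $\mathbf{B}^{\dag,*}_L$, $\mathbf{B}^{\dag,*}_K$) and verifying that the preceding Remark supplies enough flexibility for a common $n$ to describe $\D^{\dag,s}_L(V_S)$ and $\D^{\dag,ps}_L(V_S)$ simultaneously (via $\varphi^n$ and $\varphi^{n+1}$ respectively). A secondary subtlety is the faithful flatness step in the descent, which is standard but not spelled out in the paper.
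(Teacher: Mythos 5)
Your proposal is correct and follows essentially the paper's own proof: both arguments establish the $L$-level identity $\varphi(\D^{\dagger,s}_L(V_S))\otimes_{\varphi(\mathbf{B}^{\dag,s}_L)\widehat{\otimes}_{\Q}S}(\mathbf{B}^{\dag,ps}_L\widehat{\otimes}_{\Q}S)=\D^{\dag,ps}_L(V_S)$ from Corollary \ref{cor:Tate-Sen} (the paper quotes part (3) directly, while you rederive the same displayed identity from part (1) together with the Remark by shifting the level from $n$ to $n+1$), and then pass to $K$ via Theorem \ref{thm:BC-generalization}(3). The only difference is the final descent step, where the paper takes $H_K$-invariants of the identity base-changed to $\mathbf{B}^{\dag,ps}_L\widehat{\otimes}_{\Q}S$, whereas you descend by faithful flatness of $\mathbf{B}^{\dag,ps}_K\widehat{\otimes}_{\Q}S\ra\mathbf{B}^{\dag,ps}_L\widehat{\otimes}_{\Q}S$; both mechanisms are valid and the substance is the same.
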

\begin{proof}
Let $T_S$ be a free $\OO_S$-lattice of $S$, and let $L$ be a finite Galois extension of $K$ such that $T_S$ is $G_L$-stable, and $G_L$ acts trivially on $T_S/12pT_S$. By Corollary (\ref{cor:Tate-Sen})(1), we get
\begin{equation*}
\begin{split}
&\varphi(\D^{\dagger,s}_L(V_S))\otimes_{\mathbf{B}^{\dag,s}_L\widehat{\otimes}_{\Q}S,\varphi}
(\mathbf{B}^{\dag,ps}_L\widehat{\otimes}_{\Q}S)\\
=&\varphi(\varphi^{n(L)}(\D^{\dagger,\frac{p-1}{p}}_{L,n(L)}(T_S))\otimes_{\mathbf{A}^{\dag,r_{n(L)}}_L\widehat{\otimes}_{\Z}\OO_S}(\mathbf{B}^{\dag,s}_L\widehat{\otimes}_{\Q}S))\otimes_{\mathbf{B}^{\dag,s}_L\widehat{\otimes}_{\Q}S,\varphi}
\mathbf{B}^{\dag,ps}_L\widehat{\otimes}_{\Q}S\\
=&\varphi^{n(L)+1}(\D^{\dagger,\frac{p-1}{p}}_{L,n(L)}(T_S))\otimes_{\mathbf{A}^{\dag,r_{n(L)}}_L\widehat{\otimes}_{\Z}\OO_S,\varphi}\mathbf{B}^{\dag,ps}_L\widehat{\otimes}_{\Q}S\\
=&\varphi^{n(L)+1}(\D^{\dagger,\frac{p-1}{p}}_{L,n(L)}(T_S))\otimes_{\mathbf{A}^{\dag,r_{n(L)}}_L\widehat{\otimes}_{\Z}\OO_S,\varphi}(\mathbf{A}^{\dag,r_{n(L)+1}}_L\widehat{\otimes}_{\Z}\OO_S)\otimes_{\mathbf{A}^{\dag,r_{n(L)+1}}_L\widehat{\otimes}_{\Z}\OO_S}\mathbf{B}^{\dag,ps}_L\widehat{\otimes}_{\Q}S\\
=&\varphi^{n(L)+1}(\D_{L,n(L)}^{\dag,\frac{p-1}p}(T_S)\otimes_{\mathbf{A}^{\dag,r_{n(L)}}_{L,n(L)}\widehat{\otimes}_{\Z}\OO_S}\mathbf{A}^{\dag,r_{n(L)+1}}_{L,n(L)+1}\widehat{\otimes}_{\Z}\OO_S)\otimes_{\mathbf{A}^{\dag,r_{n(L)+1}}_L\widehat{\otimes}_{\Z}\OO_S}\mathbf{B}^{\dag,ps}_L\widehat{\otimes}_{\Q}S\\
=&\varphi^{n(L)+1}(\D_{L,n(L)+1}^{\dag,\frac{p-1}p}(T_S))\otimes_{\mathbf{A}^{\dag,r_{n(L)+1}}_L\widehat{\otimes}_{\Z}\OO_S}\mathbf{B}^{\dag,ps}_L\widehat{\otimes}_{\Q}S\\
=&\D^{\dag,ps}_L(V_S).
\end{split}
\end{equation*}
The last step follows from Remark \ref{R:tate-sen}.
By Theorem \ref{thm:BC-generalization}(3), we may rewrite the above equality as
\[
\varphi(\D^{\dagger,s}_K(V_S))\otimes_{\mathbf{B}^{\dag,s}_K\widehat{\otimes}_{\Q}S,\varphi}
\mathbf{B}^{\dag,ps}_L\widehat{\otimes}_{\Q}S=\D^{\dag,ps}_K(V_S)\otimes_{\mathbf{B}^{\dag,ps}_K\widehat{\otimes}_{\Q}S}
\mathbf{B}^{\dag,ps}_L\widehat{\otimes}_{\Q}S.
\]
We conclude by taking $H_K$-invariants on both sides.
\end{proof}

 We set
$\D_{\rig,K}^{\dagger,s}(V_S)= \D_K^{\dagger,s}(V_S)\otimes_{\mathbf{B}_K^{\dagger,s}\widehat{\otimes}_{\Q}S}
(\mathbf{B}_{\rig,K}^{\dagger,s}\widehat{\otimes}_{\Q}S)$.
We put
\[
\mathbf{B}_K^{\dagger}\widehat{\otimes}_{\Qp}S=\cup_{s>0}\mathbf{B}_K^{\dagger,s}\widehat{\otimes}_{\Qp}S,\quad
\widetilde{\mathbf{B}}_K^{\dagger}\widehat{\otimes}_{\Qp}S=\cup_{s>0}\widetilde{\mathbf{B}}_K^{\dagger,s}\widehat{\otimes}_{\Qp}S
\]
and
\[
\mathbf{B}_{\rig,K}^{\dagger}\widehat{\otimes}_{\Qp}S=\cup_{s>0}\mathbf{B}_{\rig,K}^{\dagger,s}\widehat{\otimes}_{\Qp}S, \quad
\widetilde{\mathbf{B}}_{\rig,K}^{\dagger}\widehat{\otimes}_{\Qp}S=\cup_{s>0}\widetilde{\mathbf{B}}_{\rig,K}^{\dagger,s}\widehat{\otimes}_{\Qp}S.
\]
We then set
\[
\D_K^{\dagger}(V_S)=\D_K^{\dagger,s}(V_S)\otimes_{\mathbf{B}_K^{\dagger,s}\widehat{\otimes}_{\Qp}S}\mathbf{B}_K^{\dagger}\widehat{\otimes}_{\Qp}S
=\cup_{s\geq s(V_S)}\D_K^{\dagger,s}(V_S)
\]
and
\[
\D^{\dagger}_{\rig,K}(V_S)=\D_{\rig,K}^{\dagger,s}(V_S)\otimes_{\mathbf{B}_{\rig,K}^{\dagger,s}\widehat{\otimes}_{\Qp}S}
\mathbf{B}_{\rig,K}^{\dagger}\widehat{\otimes}_{\Qp}S=\cup_{s\geq s(V_S)}\D_{\rig,K}^{\dag,s}(V_S).
\]
By Proposition \ref{prop:dagger-phi-compare}, we see that $\D^\dag_K(V_S)$  and $\D_{\rig,K}^\dag(V_S)$ are stable under $\varphi$ and isomorphic to their $\varphi$-pullbacks respectively. That is, the natural morphisms $\varphi^*(\D^{\dag}_K(V_S))
\ra\D^{\dag}_K(V_S)$ and $\varphi^*(\D^{\dag}_{\rig,K}(V_S))\ra\D^{\dag}_{\rig,K}(V_S)$ are isomorphisms.
Thus $\D_K^\dagger(V_S)$ is a $(\varphi,\Gamma)$-module over $\mathbf{B}_K^{\dagger}\widehat{\otimes}_{\Qp}S$
in the sense of \cite{KL}. (See Remark \ref{rem:Drig-phi-Gamma} for the relevant discussion about $\D_{\rig,K}^\dag(V_S)$.)

\begin{remark}
In the case when $V_S$ admits a $G_K$-stable free $\OO_S$-lattice $T_S$, we further have that the $\m$-module $\D_K^\dag(V_S)$ is \emph{globally \'etale} in the sense of \cite{L08}. In fact, if $L$ is a finite Galois extension of $K$ so that $G_L$ acts trivially on $T_S/12pT_S$, 
\[
\mathbf{A}^{\dag}_K(T_S)=\cup_{s\geq s(V_S)}(\varphi^{n(L)}(\D^{\dag,\frac{p-1}{p}}_{L,n(L)}(T_S))\otimes_{\mathbf{A}^{\dag,r_{n(L)}}\widehat{\otimes}_{\Z}\OO_S}
\mathbf{A}_L^{\dag,s}\widehat{\otimes}_{\Z}\OO_S)^{H_K}
\]
is a locally free $\mathbf{A}_L^{\dag}\widehat{\otimes}_{\Z}\OO_S$-lattice of $\D_K^\dag(V_S)$ which satisfies
\[
\varphi^*(\mathbf{A}^{\dag}_K(T_S))=\mathbf{A}^{\dag}_K(T_S).
\]
\end{remark}

\begin{cor}\label{cor:dagger-phi-compare}
Let $a\in\D^{\dag,ps}_{\rig,K}(V_S)$. If $\varphi(a)\in\D^{\dag,ps}_{\rig,K}(V_S)$, then $a\in\D^{\dag,s}_{\rig,K}(V_S)$.
\end{cor}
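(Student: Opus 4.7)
The plan is to reduce the assertion to a scalar descent on the base ring $\mathbf{B}^{\dag,ps}_{\rig,K}\widehat{\otimes}_{\Qp}S$, via a linear-algebra argument using Proposition~\ref{prop:dagger-phi-compare} and Theorem~\ref{thm:BC-generalization}(4). Working locally on $M(S)$ so that $\D^{\dag,s}_{\rig,K}(V_S)$ is free, I pick a basis $e_1,\dots,e_d$ over $\mathbf{B}^{\dag,s}_{\rig,K}\widehat{\otimes}_{\Qp}S$. The rig analogue of Theorem~\ref{thm:BC-generalization}(4), obtained by tensoring up from $\mathbf{B}^{\dag,s}_K$ to $\mathbf{B}^{\dag,s}_{\rig,K}$, shows that the same $e_i$ form a basis of $\D^{\dag,ps}_{\rig,K}(V_S)$ over $\mathbf{B}^{\dag,ps}_{\rig,K}\widehat{\otimes}_{\Qp}S$ and of $\D^{\dag,p^2s}_{\rig,K}(V_S)$ over $\mathbf{B}^{\dag,p^2s}_{\rig,K}\widehat{\otimes}_{\Qp}S$. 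The rig analogue of Proposition~\ref{prop:dagger-phi-compare} (same base change) then gives that $\varphi(e_1),\dots,\varphi(e_d)$ is a basis of $\D^{\dag,ps}_{\rig,K}(V_S)$ over $\mathbf{B}^{\dag,ps}_{\rig,K}\widehat{\otimes}_{\Qp}S$, and hence also a basis of $\D^{\dag,p^2s}_{\rig,K}(V_S)$ over $\mathbf{B}^{\dag,p^2s}_{\rig,K}\widehat{\otimes}_{\Qp}S$.

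Writing $a=\sum_i a_i e_i$ with $a_i\in\mathbf{B}^{\dag,ps}_{\rig,K}\widehat{\otimes}_{\Qp}S$, the identity $\varphi(a)=\sum_i\varphi(a_i)\varphi(e_i)$ is the unique expansion of $\varphi(a)$ in the $\varphi(e_i)$-basis of $\D^{\dag,p^2s}_{\rig,K}(V_S)$. Since by hypothesis $\varphi(a)$ already lies in $\D^{\dag,ps}_{\rig,K}(V_S)$, where $\varphi(e_1),\dots,\varphi(e_d)$ form an $\mathbf{B}^{\dag,ps}_{\rig,K}\widehat{\otimes}_{\Qp}S$-basis, uniqueness of the expansion forces $\varphi(a_i)\in\mathbf{B}^{\dag,ps}_{\rig,K}\widehat{\otimes}_{\Qp}S$ for each $i$.

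It remains to establish the scalar descent: if $f\in\mathbf{B}^{\dag,ps}_{\rig,K}\widehat{\otimes}_{\Qp}S$ and $\varphi(f)\in\mathbf{B}^{\dag,ps}_{\rig,K}\widehat{\otimes}_{\Qp}S$, then $f\in\mathbf{B}^{\dag,s}_{\rig,K}\widehat{\otimes}_{\Qp}S$. For $S=\Qp$ this is the classical identity $\mathbf{B}^{\dag,s}_{\rig,K}=\mathbf{B}^{\dag,ps}_{\rig,K}\cap\varphi^{-1}(\mathbf{B}^{\dag,ps}_{\rig,K})$, proved by viewing Robba-ring elements as rigid analytic functions on a half-open annulus: because $\varphi$ is pullback along $T\mapsto(1+T)^p-1$, which multiplies $v_p(T)$ by $p$ in the relevant wild range, the extension of $\varphi(f)$ to $\{0<v_p(T)\le\rho(ps)\}$ is exactly the extension of $f$ to $\{0<v_p(T)\le\rho(s)\}$. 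The statement for affinoid $S$ then passes through the completed tensor product, for instance by invoking the continuous left inverse $\psi$ of $\varphi$ which maps $\mathbf{B}^{\dag,ps}_{\rig,K}$ into $\mathbf{B}^{\dag,s}_{\rig,K}$ and extends continuously to $\widehat{\otimes}_{\Qp}S$.

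The main obstacle is the scalar descent itself: the linear-algebra bookkeeping in the first two paragraphs is essentially forced once the rig versions of Proposition~\ref{prop:dagger-phi-compare} and Theorem~\ref{thm:BC-generalization}(4) are in hand, but the identity $\mathbf{B}^{\dag,s}_{\rig,K}\widehat{\otimes}_{\Qp}S=\mathbf{B}^{\dag,ps}_{\rig,K}\widehat{\otimes}_{\Qp}S\cap\varphi^{-1}(\mathbf{B}^{\dag,ps}_{\rig,K}\widehat{\otimes}_{\Qp}S)$ requires genuine analytic input on the Robba ring plus a verification that the intersection commutes with $\widehat{\otimes}_{\Qp}S$.
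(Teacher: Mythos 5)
Your strategy is the same as the paper's: expand $a$ in a basis, use Proposition \ref{prop:dagger-phi-compare} (in its rig form) to see that the $\varphi(e_i)$ form a basis at level $ps$, conclude $\varphi(a_i)\in\mathbf{B}^{\dag,ps}_{\rig,\cdot}\widehat{\otimes}_{\Qp}S$, and finish by the scalar descent $f,\varphi(f)\in\mathbf{B}^{\dag,ps}_{\rig,\cdot}\widehat{\otimes}_{\Qp}S\Rightarrow f\in\mathbf{B}^{\dag,s}_{\rig,\cdot}\widehat{\otimes}_{\Qp}S$. Your treatment of that last step (via the left inverse $\psi$ of $\varphi$, extended by continuity to the completed tensor product) is fine and is in fact more explicit than the paper, which simply asserts the equivalence.

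The one step that does not go through as written is the opening reduction: ``working locally on $M(S)$ so that $\D^{\dag,s}_{\rig,K}(V_S)$ is free.'' Nothing in the paper (or in \cite{BC07}, \cite{KL} as used here) provides a covering of $M(S)$ by affinoid subdomains on which the fixed-level module $\D^{\dag,s}_{\rig,K}(V_{S_i})$ becomes \emph{free} over $\mathbf{B}^{\dag,s}_{\rig,K}\widehat{\otimes}_{\Qp}S_i$; what is known is only that it is locally free (projective) as a module over that ring, and the $S$-local freeness result (Proposition \ref{prop:phi-gamma-local-free}) applies to $\m$-modules over the full Robba ring, not at a fixed radius $s$. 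The paper sidesteps this exactly at this point: it passes to a finite Galois extension $L$ of $K$ (trivializing the mod-$12p$ action on a lattice), where $\D^{\dag,s}_L(V_S)$ is free of rank $d$ \emph{by construction}, runs your basis argument there, and recovers the statement over $K$ from $\D^{\dag,s}_{\rig,K}(V_S)=(\D^{\dag,s}_{\rig,L}(V_S))^{H_K}$ (Theorem \ref{thm:BC-generalization}(3)). If you replace your localization on $M(S)$ by this passage to $L$ and $H_K$-invariance, the rest of your argument — the uniqueness of the expansion of $\varphi(a)$ in the $\varphi(e_i)$-basis and the $\psi$-based descent on coefficients — coincides with the paper's proof and is correct.
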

\begin{proof}
Let $T_S$ be a free $\OO_S$-lattice of $V_S$, and let $L$ be a finite Galois extension of $K$ so that $G_L$ acts trivially on $T_S/12pT_S$. By its construction $\D^{\dag,s}_L(V_S)$ is a free $\mathbf{B}^{\dag,s}_L\widehat{\otimes}_{\Q}S$-module of rank $d$. Let $e_1,\dots,e_d$ be a basis, and write $a=\sum_{i=1}^da_ie_i$ with $a_i\in\mathbf{B}^{\dag,ps}_{\rig,L}\widehat{\otimes}_{\Q}S$. Since $\D^{\dag,s}_{\rig,K}(V_S)=(\D^{\dag,s}_{\rig,L}(V_S))^{H_K}$, it reduces to show that $a\in\D^{\dag,s}_{\rig,L}(V_S)$. By Proposition \ref{prop:dagger-phi-compare}, $\varphi(e_1),\dots,\varphi(e_d)$ form a $\mathbf{B}^{\dag,ps}_{\rig,L}\widehat{\otimes}_{\Q}S$-basis
of $\D^{\dag,s}_L(V_S)$. Hence $\varphi(a)=\sum_{i=1}^d\varphi(a_i)\varphi(e_i)$ belongs to $\D^{\dag,ps}_{\rig,L}(V_S)$ if and only if $\varphi(a_i)\in\mathbf{B}^{\dag,ps}_{\rig,L}\widehat{\otimes}_{\Q}S$ for all $i$. The latter is equivalent to $a_i\in\mathbf{B}^{\dag,s}_{\rig,L}\widehat{\otimes}_{\Q}S$ for all $i$. This yields the desired result.
\end{proof}


\subsection{Sheafification of the $\m$-module functor}
Following \cite{KL}, we extend the $\m$-module functors to finite locally free $S$-linear representations as follows. From now on, let $V_S$ be a locally free $S$-linear representation of $G_K$ of rank $d$. We choose a finite covering of $M(S)$ by affinoid subdomains $M(S_1),\dots, M(S_m)$ such that $V_{S_i}$ is free over $S_i$ for each $i$. Let $s_0=\max_{1\leq i\leq m}\{s(V_{S_i})\}$. By \cite[Lemma 3.3]{KL} and Theorem \ref{thm:BC-generalization}, for any $s\geq s_0$ and $1\leq i\leq m$, the presheaf $\mathscr{D}_K^{\dag,s}(V_{S_i}): R_i\mapsto \D_K^{\dag,s}(V_{R_i})$, where $M(R_i)$ runs through all affinoid subdomains of $M(S_i)$, is indeed a sheaf for the weak $G$-topology of $M(S_i)$ (hence extends uniquely to the strong $G$-topology).  We glue the sheaves  $\mathscr{D}_K^{\dag,s}(V_{S_i})$ for all $1\leq i\leq m$ to form a sheaf $\mathscr{D}^{\dag,s}_K(V_S)$ on $M(S)$, which is independent of the choice of the covering. It turns out that $\mathscr{D}_K^{\dag,s}(V_S)$ is the sheaf associated to a finite locally free $\mathbf{B}_K^{\dag,s}\widehat{\otimes}_{\Q}S $-module $\D^{\dag,s}_{K}(V_S)$ \cite[Proposition 3.6]{KL}. It is straightforward to see that $\D_K^{\dag,s}(V_S)$ is equipped with a natural $\Gamma$-action, and the construction $V_S\mapsto \D_K^{\dag,s}(V_S)$ satisfies the analogues of the assertions of Theorem \ref{thm:BC-generalization}. 

We then define the functors $\D_{\rig,K}^{\dag,s}(V_S)$, $\D_K^{\dag}(V_S)$
and $\D^{\dag}_{\rig,K}(V_S)$ as in \S1.1. The sheaf property for $\mathscr{D}_K^{\dag,s}(V_S)$ and Proposition \ref{prop:dagger-phi-compare} ensure that $\D_{K}^{\dag}(V_S)$ is isomorphic to its $\varphi$-pullback. Hence $\D_{K}^{\dag}(V_S)$ and $\D_{\rig,K}^{\dag}(V_S)$ are $\m$-modules over $\mathbf{B}_K^{\dag}\widehat{\otimes}_{\Q}S$ and $\mathbf{B}^{\dag}_{\rig,K}\widehat{\otimes}_{\Q}S$ respectively. 

Note that for any affinoid subdomain $M(S')$ of $M(S)$,  one may proceed the above constructions for $V_{S'}$ using the covering $M(S_1)\cap M(S'),\dots, M(S_m)\cap M(S')$. In particular, one can define $\D_{\rig,K}^{\dag,s}(V_{S'})$ for any $s\geq s_0$.

\begin{defn}
For any $s\geq s_0$, define the presheaves $\mathscr{D}_{\rig,K}^{\dag,s}(V_S)$ and $\mathscr{D}_{\rig,K}^{\dag}(V_S)$ on the weak $G$-topology of $M(S)$ by setting
\[
\mathscr{D}_{\rig,K}^{\dag,s}(V_S)(M(S'))=\D_{\rig,K}^{\dag,s}(V_{S'}),
\quad
\mathscr{D}_{\rig,K}^{\dag}(V_S)(M(S'))=\D_{\rig,K}^{\dag}(V_{S'})
\]
for any affinoid subdomain $M(S')$ of $M(S)$.
\end{defn}

\begin{prop}\label{prop:sheaf-rig}
Both $\mathscr{D}_{\rig,K}^{\dag,s}(V_S)$ and $\mathscr{D}_{\rig,K}^{\dag}(V_S)$ are sheaves for the weak $G$-topology of $M(S)$, and hence extend uniquely to the strong $G$-topology. 
\end{prop}
\begin{proof}
We first show that $\mathscr{D}_{\rig,K}^{\dag,s}(V_S)$ is a sheaf. This amounts to checking the sheaf condition for finite coverings of affinoid subdomains by affinoids. Recall that 
\[
\mathscr{D}_{\rig,K}^{\dag,s}(V_S)(M(S'))=\D_{\rig,K}^{\dag,s}(V_{S'})=\D_{\rig,K}^{\dag,s}(V_{S})\otimes_{ \mathbf{B}_{\rig,K}^{\dag,s}\widehat{\otimes}_{\Q}S} \mathbf{B}_{\rig,K}^{\dag,s}\widehat{\otimes}_{\Q}S'
\]
for any affinoid subdomain $M(S')$ of $M(S)$. Since $\D_{\rig,K}^{\dag,s}(V_S)$ is
a finite locally free $\mathbf{B}_{\rig,K}^{\dag,s}\widehat{\otimes}_{\Q}S$-module,  it reduces to show that the presheaf $M(S')\mapsto  \mathbf{B}_{\rig,K}^{\dag,s}\widehat{\otimes}_{\Q}S'$ is a sheaf on the weak $G$-topology of $M(S)$. By definition, $\mathbf{B}_{\rig,K}^{\dag,s}$ is the Fr\'echet completion of $\mathbf{B}_{K}^{\dag,s}$ with respect to the set of valuations $\{\mathrm{val}^{(0,r]}\}_{r\geq s}$. For $r\geq s$, let $\mathbf{B}_K^{[s,r]}$ be the completion of $\mathbf{B}_{K}^{\dag,s}$ with respect to $\max\{\mathrm{val}^{(0,r]}, \mathrm{val}^{(0,s]}\}$. It follows that 
\[
\mathbf{B}_{\rig,K}^{\dag,s}=\limproj_{r}\mathbf{B}_K^{[s,r]}.
\] 
Using a Schauder basis of $S$, we deduce
\begin{equation}\label{eq:intersection}
\mathbf{B}_{\rig,K}^{\dag,s}\widehat{\otimes}_{\Q}S=\limproj_{r}\mathbf{B}_K^{[s,r]}\widehat{\otimes}_{\Q}S.
\end{equation}
Therefore, it suffices to show that
the presheaf defined by $M(S')\mapsto\mathbf{B}^{[s,r]}_K\widehat{\otimes}_{\Q}S'$ is a sheaf on the weak $G$-topology of $M(S)$; this follows from \cite[Lemma 3.3]{KL}.

Note that the presheaf $\mathscr{D}_{\rig,K}^\dag(V_S)$ is the direct limit of the sheaves $\mathscr{D}_{\rig,K}^{\dag,s}(V_S)$ in the category of presheaves. In general, the direct limit of sheaves in the category of presheaves is not necessarily a sheaf. However, using the facts that the coverings are all finite and the connecting maps $\mathscr{D}_{\rig,K}^{\dag,s_1}(V_S)\ra\mathscr{D}_{\rig,K}^{\dag,s_2}(V_S)$ are injective for all $s_1<s_2$, it is straightforward to check that the direct limit of the sheaves $\mathscr{D}_{\rig,K}^{\dag,s}(V_S)$ in the category of presheaves is indeed a sheaf.
\end{proof}

\begin{theorem}\label{thm:rig-locally-free}
The constructions $\D_{\rig,K}^{\dag,s}(V_S)$ and $\D_{\rig,K}^{\dag}(V_S)$ for finite locally free $S$-linear representations $V_S$ have the same properties as for finite free $S$-linear representations given in \S1.1.
\end{theorem}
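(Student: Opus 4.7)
The plan is to leverage the sheaf property from Proposition~\ref{prop:sheaf-rig} together with the observation that essentially every assertion to be transported from the free case is local on $M(S)$. Fix a finite affinoid cover $\{M(S_i)\}_{i=1}^n$ of $M(S)$ trivializing $V_S$, so that by construction $\D^{\dag,s}_{\rig,K}(V_S)$ is the sheaf obtained by gluing the $\D^{\dag,s}_{\rig,K}(V_{S_i})$ via the agreements on overlaps $M(S_i)\cap M(S_j)$ provided by applying Theorem~\ref{thm:BC-generalization}(2) to the free representations on each $M(S_i)$.

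First I will handle the four items of Theorem~\ref{thm:BC-generalization}. Independence from the choice of trivializing cover follows by passing to a common refinement of two different covers: on the refinement the local pieces are canonically identified, and the sheaf property forces the global gluings to agree. The isomorphism (1) with $V_S\widehat{\otimes}\widetilde{\mathbf{B}}^{\dag,s}_K$, the specialization statement (2) at $x\in M(S)$, functoriality and compatibility with extensions $K\to L$ in (3), and the transition $s\to s'$ in (4) are each verified after restriction to a trivializing member of the cover, where the free-case version of the theorem applies directly; each of these properties defines a morphism of coherent sheaves whose being an isomorphism is local.

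Next I will transport Proposition~\ref{prop:dagger-phi-compare} and Corollary~\ref{cor:dagger-phi-compare}. The inclusion $\varphi(\D^{\dag,s}_{\rig,K}(V_S))\subset\D^{\dag,ps}_{\rig,K}(V_S)$ and the corresponding base-change isomorphism from $\varphi(\mathbf{B}^{\dag,s}_K)\widehat{\otimes}_{\Q}S$ to $\mathbf{B}^{\dag,ps}_K\widehat{\otimes}_{\Q}S$ are checked after restricting to each $M(S_i)$, reducing to the free case. For Corollary~\ref{cor:dagger-phi-compare}, membership of $a$ in $\D^{\dag,s}_{\rig,K}(V_S)$ is again a local condition via the sheaf inclusion $\mathscr{D}^{\dag,s}_{\rig,K}(V_S)\subset\mathscr{D}^{\dag,ps}_{\rig,K}(V_S)$; the hypothesis $\varphi(a)\in\D^{\dag,ps}_{\rig,K}(V_S)$ transfers by restriction, so the free-case descent applies on each member of the cover and then glues.

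The main technical point I anticipate is not any single deep step but careful bookkeeping: one must check that formation of $H_K$-invariants and base change along completed tensor products $\widehat{\otimes}_{\Q}$ commute with restriction to affinoid subdomains in the cover, so that the local identifications actually assemble into the global statements. Since the sheaf property in Proposition~\ref{prop:sheaf-rig} was already proved via the acyclicity of $M(S')\mapsto \mathbf{B}^{[s,r]}_K\widehat{\otimes}_{\Q}S'$, this compatibility is available, and I expect the remainder of the argument to be a formal descent along the trivializing cover with no new obstacles beyond the diligent tracking of completed tensor products.
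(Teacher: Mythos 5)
Your proposal is correct and follows essentially the same route as the paper: the paper simply chooses a finite affinoid cover trivializing $V_S$ and invokes the sheaf property of Proposition \ref{prop:sheaf-rig} to reduce each statement to the free case, which is exactly your argument, elaborated with the local verification of each property.
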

\begin{proof}
We choose a finite covering of $M(S)$ by affinoid subdomains such that the restriction of $V_S$ on each piece is free. The theorem then follows from Proposition \ref{prop:sheaf-rig}.
\end{proof}

The following lemma will be used in \S3.

\begin{lemma}\label{lem:phi-rig}
Let $a\in\D_{\rig,K}^\dagger(V_S)$ and $\alpha\in S$.  If $\varphi^m(a)-\alpha a\in\D_{\rig,K}^{\dag,p^ms}(V_S)$, then $a\in\D_{\rig,K}^{\dagger,s}(V_S)$.
\end{lemma}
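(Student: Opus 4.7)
The plan is to reduce to Corollary~\ref{cor:dagger-phi-compare} by an iterative descent on the Robba-ring radius. Since $a \in \D_\rig^\dagger(V_S) = \cup_{t \geq s(V_S)} \D_\rig^{\dag,t}(V_S)$, I will fix some $s' \geq s(V_S)$ with $a \in \D_\rig^{\dag,s'}(V_S)$, and after enlarging $s'$ if necessary, assume $s' \geq s$. The argument will proceed by induction on the smallest integer $k \geq 0$ satisfying $s' \leq p^{k+1}s$.

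For the base case $k=0$, we have $s \leq s' \leq ps$, so both $a$ and $\alpha a$ lie in $\D_\rig^{\dag,ps}(V_S)$; combined with the hypothesis $\varphi(a) - \alpha a \in \D_\rig^{\dag,ps}(V_S)$, this forces $\varphi(a) \in \D_\rig^{\dag,ps}(V_S)$, and Corollary~\ref{cor:dagger-phi-compare} immediately yields $a \in \D_\rig^{\dag,s}(V_S)$. For the inductive step $k \geq 1$, we have $s' > ps$, so $\D_\rig^{\dag,ps}(V_S) \subseteq \D_\rig^{\dag,s'}(V_S)$; hence $\alpha a$ and $\varphi(a) - \alpha a$ both lie in $\D_\rig^{\dag,s'}(V_S)$, giving $\varphi(a) \in \D_\rig^{\dag,s'}(V_S)$. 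Then Corollary~\ref{cor:dagger-phi-compare} with $s'/p$ in place of $s$ (which is legal since $s'/p > s \geq s(V_S)$) delivers $a \in \D_\rig^{\dag,s'/p}(V_S)$; replacing $s'$ by $s'/p$ decreases $k$ by one, so the induction closes.

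The only substantive ingredient is Corollary~\ref{cor:dagger-phi-compare}; everything else is bookkeeping on radii. The only mild subtlety I expect is ensuring that the intermediate radii $s', s'/p, s'/p^2, \ldots$ remain $\geq s(V_S)$ so that each invocation of the corollary is well-defined, but this is automatic from the standing assumption $s \geq s(V_S)$ together with the fact that each intermediate radius exceeds $s$ until we reach the base case.
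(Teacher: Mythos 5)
Your proposal is correct and is essentially the paper's own argument: write $\varphi(a)=(\varphi(a)-\alpha a)+\alpha a$ and iterate Corollary~\ref{cor:dagger-phi-compare} to lower the radius by a factor of $p$ at each step until reaching $s$ (the paper phrases the iteration via $s''=\max\{s'/p,s\}$ rather than your explicit induction on $k$, and also invokes Theorem~\ref{thm:rig-locally-free} to cover the locally free case, but these are only bookkeeping differences).
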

\begin{proof}
Put $b=\varphi^m(a)-\alpha a$. Suppose that $a\in \D_{\rig,K}^{\dagger,s'}(V_{S})$ for some $s'$. If $s'>s$, we get $\varphi^m(a)=b+\alpha a\in \D_{\rig,K}^{\dagger,s'}(V_{S})$. It follows from Corollary \ref{cor:dagger-phi-compare} and Theorem \ref{thm:rig-locally-free} that $a\in \D_{\rig,K}^{\dagger,s''}(V_{S})$ for $s''=\max\{s'/p^m,s\}$. We then conclude $a\in \D_{\rig,K}^{\dagger,s}(V_{S})$
by iterating this argument.
\end{proof}

\begin{defn}
Let $X$ be a rigid analytic space over $\Q$, and let $V_X$ be a locally free coherent $\OO_X$-module equipped with a continuous $\OO_X$-linear $G_K$-action. We choose an admissible covering of $X$ by affinoid subdomains $\{M(S_i)\}_{i\in I}$. We then define the sheaf  $\mathscr{D}_{\rig,K}^{\dag}(V_X)$ by gluing the sheaves  $\mathscr{D}_{\rig,K}^{\dag}(V_{S_i})$
for all $i\in I$; this construction is independent of the choice of the covering $\{M(S_i)\}_{i\in I}$.
\end{defn}

\subsection{Localization maps}
Recall that Fontaine's $p$-adic $2\pi i$ is defined as $t=\log[\varepsilon]$. We equip $K_n[[t]]$ with the induced Fr\'echet topology via the natural identification $K_n[[t]]\cong K_n^{\mathbb{N}}$. We define $K_n((t))\widehat{\otimes}_{\Q}S$ as the inductive limit of  $(t^{-i}K_n[[t]])\widehat{\otimes}_{\Q}S$.
Recall that for any $n\geq n(s)$, there is a continuous $\Gamma$-equivariant injective map 
\[
\iota_n:\mathbf{B}_{K}^{\dagger,s}\rightarrow K_n[[t]],
\] 
which extends to a continuous $\Gamma$-equivariant injective map 
\[
\iota_n:\mathbf{B}_{\rig, K}^{\dagger,s}\ra K_n[[t]].
\]
It is defined as the composite
\[
\xymatrix{
\mathbf{B}_{K}^{\dagger,s}\subset\widetilde{\mathbf{B}}^{\dag,s}\stackrel{\varphi^{-n}}{\to}\widetilde{\mathbf{B}}^{\dag,p^{-n}s}\subset \widetilde{\mathbf{B}}^+\subset\mathbf{B}_{\mathrm{dR}}^+,}
\]
and it factors through $K_n[[t]]$  (see \cite[\S2]{LB02} for more details about $\iota_n$).  In particular we have $\iota_{n+1}\circ\varphi=\iota_n$. The map $\iota_n$ induces a continuous $\Gamma$-equivariant map 
\[
\iota_n: \mathbf{B}_{\rig,K}^{\dagger,s}\widehat{\otimes}_{\Q}S\ra K_n[[t]]\widehat{\otimes}_{\Q}S.
\] We define
\[
\D_{\dif}^{+,K_n}(V_S)=\D_{\rig,K}^{\dagger,s}(V_S)
\otimes_{\mathbf{B}_{\rig,K}^{\dagger,s}\widehat{\otimes}_{\Q}S,\iota_n}(K_n[[t]]\widehat{\otimes}_{\Q}S)
\]
and
\[
\D_{\dif}^{K_n}(V_S)=\D_{\rig,K}^{\dagger,s}(V_S)\otimes_{\mathbf{B}_{\rig,K}^{\dagger,s}\widehat{\otimes}_{\Q}S,\iota_n}
(K_n((t))\widehat{\otimes}_{\Q}S);
\]
it is clear that $\D_{\dif}^{K_n}(V_S)=\D_{\dif}^{+,K_n}(V_S)[1/t]$. We denote by $\iota_n$ the natural map
\[
\D_{\rig,K}^{\dag,s}(V_S)\ra\D_{\dif}^{+,K_n}(V_S),
\]
and call it the \emph{localization map}. It is straightforward to see that $\varphi: \mathbf{B}_{\rig,K}^{\dagger,s}\ra \mathbf{B}_{\rig,K}^{\dagger,ps}$ induces a $K_n[[t]]$-linear morphism $\D_{\dif}^{+,K_n}(V_S)\ra \D_{\dif}^{+,K_{n+1}}(V_S)$ which induces an isomorphism  
\[
\D_{\dif}^{+,K_n}(V_S)\otimes _{K_n[[t]]}K_{n+1}[[t]]\cong \D_{\dif}^{+,K_{n+1}}(V_S).
\]
We define $\D_{\mathrm{Sen}}^{K_n}(V_S)=\D_{\dif}^{+,K_n}(V_S)/(t)$. Finally, we define
\[
\D_\dif^{+,K_n}(V_S)=\cup_{n\geq n(s)}\D_\dif^{+,K_n}(V_S),\quad \D_\dif^{K}(V_S)=\cup_{n\geq n(s)}\D_\dif^{K_n}(V_S),
\quad\D_{\mathrm{Sen}}^{K}(V_S)=\cup_{n\geq n(s)}\D_{\mathrm{Sen}}^{K_n}(V_S).
\]

\begin{convention}
When the base field $K$ is clear, we omit $K$ in all of these functors for simplicity.
\end{convention}

By their constructions and the base change properties of $\m$-module functors, the following proposition is obvious.

\begin{prop}\label{prop:base-change}
The functors $\D^{+,n}_\mathrm{dif}$ and $\D_{\mathrm{Sen}}^n$ are compatible with base change.
\end{prop}


Let $q=\varphi([\varepsilon]-1)/([\varepsilon]-1)$.
The following proposition is a generalization of \cite[Theorem 4.3]{L07}.
\begin{prop}\label{prop:brig-mod-phi-t}
Let $k$ be a positive integer. The following are true.
\begin{enumerate}
\item[(1)]
The localization map $\iota_n:\D^{\dag,s}_{\rig}(V_S)\ra\D_{\dif}^{+,n}(V_S)$ induces an isomorphism
\[
\D^{\dag,s}_{\rig}(V_S)/(\varphi^{n-1}(q))^k\cong \D_{\dif}^{+,n}(V_S)/(t^k).
\]
\item[(2)]
The natural map $\prod_{n\geq n(s)}\iota_n:\D^{\dag,s}_{\rig}(V_S)\ra\prod_{n\geq n(s)}\D_\dif^{+,n}(V_S)$ induces an isomorphism
\[
\D^{\dag,s}_{\rig}(V_S)/(t^k)\cong\prod_{n\geq n(s)}\D_{\dif}^{+,n}(V_S)/(t^k).
\]
\item[(3)]
The natural map $\varphi:\D^{\dag,s}_{\rig}(V_S)/(t^k)\ra\D^{\dag,ps}_{\rig}(V_S)/(t^k)$ is given by $((a_n)_{n\geq n(s)})\ra((a_{n-1})_{n\geq n(s)+1})$ under the isomorphism of $(2)$,
\end{enumerate}
\end{prop}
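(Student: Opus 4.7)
The plan is to bootstrap each assertion from its classical ($S=\Q$) counterpart \cite{L07}, Theorem 4.3. Since $\mathscr{D}^{\dag,s}_{\rig}$ is a sheaf (Proposition \ref{prop:sheaf-rig}) and the same formalism applies to $\mathscr{D}^{+,n}_{\dif}$, I may cover $M(S)$ by affinoid subdomains on which $V_S$ becomes free of rank $d$ and argue locally. Choosing a $\mathbf{B}^{\dag,s}_{\rig,K}\widehat\otimes_{\Q}S$-basis of $\D^{\dag,s}_{\rig}(V_S)$ and its image basis of $\D^{+,n}_\dif(V_S)$ under $\iota_n$, both quotients in each of (1) and (2) become $d$-fold direct sums of the corresponding quotients of the coefficient rings. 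Thus the entire proof reduces to the coefficient-ring case, namely the trivial representation $V_S=S$ over $K$, where $\D^{\dag,s}_{\rig}(V_S)=\mathbf{B}^{\dag,s}_{\rig,K}\widehat\otimes_{\Q}S$ and $\D^{+,n}_\dif(V_S)=K_n[[t]]\widehat\otimes_{\Q}S$.

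For part (1), the classical isomorphism $\mathbf{B}^{\dag,s}_{\rig,K}/((\varphi^{n-1}(q))^k)\cong K_n[[t]]/(t^k)$ gives a strict short exact sequence
\[
0\to \mathbf{B}^{\dag,s}_{\rig,K}\xrightarrow{\,(\varphi^{n-1}(q))^k\,}\mathbf{B}^{\dag,s}_{\rig,K}\to K_n[[t]]/(t^k)\to 0,
\]
strict because the cokernel is finite-dimensional (hence closed image). Since $K_n[[t]]/(t^k)$ is finite-dimensional over $\Q$, completed and ordinary tensor coincide on it: $(K_n[[t]]/(t^k))\otimes_{\Q}S = (K_n[[t]]\widehat\otimes_{\Q}S)/(t^k)$. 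Applying $\widehat\otimes_{\Q}S$ to the displayed sequence and decomposing along a Schauder basis of $S$ shows that multiplication by $(\varphi^{n-1}(q))^k$ remains a strict monomorphism, yielding $(\mathbf{B}^{\dag,s}_{\rig,K}\widehat\otimes_{\Q}S)/((\varphi^{n-1}(q))^k)\cong (K_n[[t]]\widehat\otimes_{\Q}S)/(t^k)$.

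For part (2), the coefficient-level input is the factorization $t=u\prod_{m\geq n(s)}(\varphi^{m-1}(q)/p)$ in $\mathbf{B}^{\dag,s}_{\rig,K}$ (with $u$ a unit), reflecting that the zeros of $t$ lying in the annulus defining $\mathbf{B}^{\dag,s}_{\rig,K}$ are exactly the $\varepsilon_n-1$ for $n\geq n(s)$. An infinite Chinese remainder argument combined with part (1) produces $\mathbf{B}^{\dag,s}_{\rig,K}/(t^k)\cong\prod_{n\geq n(s)}K_n[[t]]/(t^k)$, as in \cite{L07}. To relativize, I expand elements of $\mathbf{B}^{\dag,s}_{\rig,K}\widehat\otimes_{\Q}S$ along a Schauder basis of $S$; injectivity follows coordinatewise from the classical case (using that the zeros of $t$ are determined), and surjectivity onto the infinite product requires a uniform Mittag-Leffler estimate controlling how the classical lifts of each target component can be assembled into a single convergent element of the completed tensor product. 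Combined with (1), this yields (2) for $V_S=S$ and hence in general.

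For part (3), the core input is the ring-level identity $\iota_n\circ\varphi=\iota_{n-1}$ on $\mathbf{B}^{\dag,s}_{\rig,K}$, immediate from $\varphi([\varepsilon])=[\varepsilon]^p$ and $\iota_n([\varepsilon])=\varepsilon_n$, giving $\iota_n(\varphi([\varepsilon]))=\varepsilon_n^p=\varepsilon_{n-1}=\iota_{n-1}([\varepsilon])$. This propagates to $\D^{\dag,s}_{\rig}(V_S)$ by functoriality of the Berger--Colmez construction. Since $n(ps)=n(s)+1$, the image product in the target of $\varphi$ starts at index $n(s)+1$, and tracing through the isomorphism of (2) gives the stated formula $(a_n)_{n\geq n(s)}\mapsto(a_{n-1})_{n\geq n(s)+1}$. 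The main obstacle I expect is the uniform convergence bookkeeping in part (2): transferring an infinite product isomorphism through $\widehat\otimes_{\Q}S$ demands re-examining the classical CRT argument to ensure its constants are uniform enough to assemble Schauder-coefficient lifts into a genuine element of $\mathbf{B}^{\dag,s}_{\rig,K}\widehat\otimes_{\Q}S$.
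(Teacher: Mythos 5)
Your strategy is the same as the paper's: reduce to the free case by local freeness, then to the coefficient rings $\mathbf{B}^{\dag,s}_{\rig,K}\widehat{\otimes}_{\Q}S$ and $K_n[[t]]\widehat{\otimes}_{\Q}S$, prove the $S=\Q$ statements via Berger's results ($t\mid\iota_n(f)\Leftrightarrow\varphi^{n-1}(q)\mid f$, the $k=1$ isomorphism, and $t=\prod_{n\geq n(s)}\varphi^{n-1}(q)/p$), and then transfer across $\widehat{\otimes}_{\Q}S$ by a functional-analytic argument; part (3) is the identity $\iota_{n+1}\circ\varphi=\iota_n$ in both treatments. The only soft spot is your part (2), where you leave the relative surjectivity hanging on a ``uniform Mittag--Leffler estimate'' for assembling Schauder-coefficient lifts. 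No such estimate is needed: the obstacle you flag dissolves by the very mechanism you already used in (1). The ideal $t^k\mathbf{B}^{\dag,s}_{\rig,K}$ is the kernel of the continuous map to the complete Fr\'echet space $\prod_{n\geq n(s)}K_n[[t]]/(t^k)$, so the sequence
\[
0\to t^k\mathbf{B}^{\dag,s}_{\rig,K}\to\mathbf{B}^{\dag,s}_{\rig,K}\to\mathbf{B}^{\dag,s}_{\rig,K}/(t^k)\to0
\]
is strict and, by Hahn--Banach over a discretely valued field, splits as Fr\'echet spaces over $\Q$; applying $\widehat{\otimes}_{\Q}S$ to a split sequence preserves exactness, so surjectivity over $S$ is automatic, exactly as the paper argues (``by a similar argument'' to its treatment of (1)). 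The only residual check is that $(\prod_{n\geq n(s)}K_n[[t]]/(t^k))\widehat{\otimes}_{\Q}S\cong\prod_{n\geq n(s)}(K_n[[t]]/(t^k)\otimes_{\Q}S)$, which is harmless since each factor is finite dimensional over $\Q$ (e.g.\ expand along a Schauder basis of $S$, as you do elsewhere). With that replacement for your estimate, the proof is complete and coincides with the paper's.
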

\begin{proof}
For (1) and (2), since $\D_{\rig}^{\dag,s}(V_S)$ is a finite locally free $\mathbf{B}^{\dag,s}_{\rig,K}\widehat{\otimes}_{\Q}S$-module, it reduces to show that
\begin{equation}\label{eq:brig-mod-phi}
(\mathbf{B}^{\dag,s}_{\rig,K}\widehat{\otimes}_{\Q}S)/(\varphi^{n-1}(q))^k\cong (K_n[[t]]\widehat{\otimes}_{\Q}S)/(t^k)
\end{equation}
and
\begin{equation}\label{eq:brig-mod-t}
(\mathbf{B}_{\rig,K}^{\dag,s}\widehat{\otimes}_{\Q}S)/(t^k)\cong \prod_{n\geq n(s)}(K_n[[t]]\widehat{\otimes}_{\Q}S)/(t^k).
\end{equation}
We first show them for $S=\Q$. By \cite[Proposition 4.8]{LB02}, for $f\in \mathbf{B}^{\dag,s}_{\rig,K}$, $t|\iota_n(f)$ if and only if $\varphi^{n-1}(q)|f$. Note that $t||\iota_n(\varphi^{n-1}(q))$. We thus deduce that the map 
\[
\mathbf{B}^{\dag,s}_{\rig,K}/(\varphi^{n-1}(q))^k\ra K_n[[t]]/(t^k)
\] 
is injective. Furthermore, it is an isomorphism for $k=1$ by \cite[lemme 4.9]{LB02}. It follows that it is an isomorphism for any $k$. Since $t=\prod_{n\geq n(s)}(\varphi^{n-1}(q)/p)$ in $\mathbf{B}_{\rig,K}^{\dag,s}$, we further get
\[
\mathbf{B}_{\rig,K}^{\dag,s}/(t^k)\cong\prod_{n\geq n(s)}\mathbf{B}^{\dag,s}_{\rig,K}/\varphi^{n-1}(q)\cong\prod_{n\geq n(s)} K_n[[t]]/(t^k).
\]

We claim that the exact sequence
\[
0\ra (\varphi^{n-1}(q))^k\mathbf{B}_{\rig,K}^{\dag,s}\ra\mathbf{B}^{\dag,s}_{\rig,K}
\ra\mathbf{B}^{\dag,s}_{\rig,K}/(\varphi^{n-1}(q))^k\ra0
\]
splits as complete Fr\'echet spaces over $\Q$. Note that the quotient  
$\mathbf{B}^{\dag,s}_{\rig,K}/(\varphi^{n-1}(q))^k$ is a finite dimensional $\Q$-vector space. We choose a section of the $\Qp$-linear map $\mathbf{B}^{\dag,s}_{\rig,K}
\ra\mathbf{B}^{\dag,s}_{\rig,K}/(\varphi^{n-1}(q))^k$, and denote its image by $U$. Note that for any $r\geq s$, 
$\mathrm{val}^{(0,r]}$ is indeed a norm on $\mathbf{B}^{\dag,s}_{\rig,K}$.  Since every finite dimensional normed $\Q$-vector space is complete,  we deduce that $U$ is a closed Fr\'echet subspace of $\mathbf{B}^{\dag,s}_{\rig,K}
$. By open mapping theorem for Fr\'echet spaces \cite[Proposition 8.6]{S02},  the map $(\varphi^{n-1}(q))^k\mathbf{B}_{\rig,K}^{\dag,s}\oplus U\ra\mathbf{B}^{\dag,s}_{\rig,K}
$ is an isomorphism of $\Q$-Fr\'echet spaces. This proves the claim.

The claim yields the following exact sequence
\[
0\ra (\varphi^{n-1}(q))^k\mathbf{B}_{\rig,K}^{\dag,s}\widehat{\otimes}_{\Q}S\ra
\mathbf{B}^{\dag,s}_{\rig,K}\widehat{\otimes}_{\Q}S
\ra(\mathbf{B}^{\dag,s}_{\rig,K}/(\varphi^{n-1}(q))^k)\widehat{\otimes}_{\Q}S\ra0;
\]
hence
\begin{equation*}
\begin{split}
(\mathbf{B}^{\dag,s}_{\rig,K}\widehat{\otimes}_{\Q}S)/(\varphi^{n-1}(q))^k&\cong
(\mathbf{B}^{\dag,s}_{\rig,K}/(\varphi^{n-1}(q))^k)\widehat{\otimes}_{\Q}S\\
&\cong K_n[[t]]/(t^k)\otimes_{\Q} S\cong (K_n[[t]]\widehat{\otimes}_{\Q}S)/(t^k),
\end{split}
\end{equation*}
yielding (\ref{eq:brig-mod-phi}). We get (\ref{eq:brig-mod-t}) by a similar argument. We get (3) immediately from the fact that $\iota_{n+1}\circ\varphi=\iota_n$ for all $n\geq n(s)$.
\end{proof}

Note that $\varphi^f$ acts $K_0$-linearly on $\D^{\dag}_{\rig}(V_S)$. We extend the $\varphi^f$-action to  $K\otimes_{K_0}\D^{\dag}_{\rig}(V_S)$ $K$-linearly. 
For $s\geq s(V_S)$ and $n\geq\max\{n(V_S), n(s)/f\}$, we set 
\[
\iota_{n,K}:K\otimes_{K_0}\D^{\dag,s}_{\rig}(V_S)\ra \D_{\dif}^{+,fn}(V_S)
\]
as the $K$-linear extension of $\iota_{fn}$.
Recall that every closed ideal of  $\mathbf{B}^{\dag,s}_{\rig,K}$ is principal \cite[Theorem 2.9.6]{Ke05}. It follows that every closed ideal of $K\otimes_{K_0}\mathbf{B}^{\dag,s}_{\rig,K}$ is principal. Thus the closed ideal $\iota_{n,K}^{-1}((t))$ of $K\otimes_{K_0}\mathbf{B}^{\dag,s}_{\rig,K}$ is principal; we fix a generator $q_{n,K}$ of it. It follows that $\iota_{n,K}$ induces an isomorphism $(K\otimes_{K_0}\mathbf{B}^{\dag,s}_{\rig,K})/(q^k_{n,K})\cong K_n[[t]]/(t^k)$. Again, the closed ideal $\displaystyle\cap_n(q_{n,K})$ is principal; we fix a generator $t_K$ of it.

\begin{prop}\label{prop:localization-ramified}
The following are true.
\begin{enumerate}
\item[(1)]The ideal $(q_{n,K})$ is a prime factor of $(\varphi^{fn-1}(q))$.
\item[(2)]The map $\iota_{n,K}$ induces an isomorphism $(K\otimes_{K_0}\D^{\dag,s}_{\rig}(V_S))/(q^k_{n,K})\cong \D_{\dif}^{+,fn}(V_S)/(t^k)$ for any $k\geq 1$ and $s\leq r_{fn}$.
\item[(3)]We have $\varphi^{f}((q_{n,K}))=(q_{n+1,K})$.
\item[(4)]For any $k\geq 1$, the natural map $\prod_{fn\geq n(s)}\iota_{n,K}:K\otimes_{K_0}\D^{\dag,s}_{\rig}(V_S)\ra\prod_{fn\geq n(s)}\D_\dif^{+,fn}(V_S)$ induces an isomorphism
\[
(K\otimes_{K_0}\D^{\dag,s}_{\rig}(V_S))/(t_K^k)\cong\prod_{fn\geq n(s)}\D_{\dif}^{+,fn}(V_S)/(t^k).
\]
\item[(5)]The valuation of $\varphi^f(t_K)/t_K$, which is viewed as an element of $K\otimes_{K_0}\mathbf{B}^{\dag}_{K}$, is $1$; here we put the valuation of $\pi_K$ in $K\otimes_{K_0}\mathbf{B}^{\dag}_{K}$ to be 1.
\end{enumerate}
\end{prop}
\begin{proof}
By Proposition \ref{prop:brig-mod-phi-t}(1), $\iota_{fn}$ induces an isomorphism
\[
(K\otimes_{K_0}\D^{\dag,s}_{\rig}(V_S))/(\varphi^{fn-1}(q)^k)\cong K\otimes_{K_0}\D_{\dif}^{+,fn}(V_S)/(t^k).
\]
The map $K\otimes_{K_0}\D^{\dag,s}_{\rig}(V_S)\ra\D_{\dif}^{+,fn}(V_S)/(t^k)$ is then just the composite 
\[
K\otimes_{K_0}\D^{\dag,s}_{\rig}(V_S)\ra (K\otimes_{K_0}\D^{\dag,s}_{\rig}(V_S))/(\varphi^{fn-1}(q)^k)
\cong K\otimes_{K_0}\D_{\dif}^{+,fn}(V_S)/(t^k)\ra\D_{\dif}^{+,fn}(V_S)/(t^k).
\]
This implies the first two statements. We deduce (3) from Proposition \ref{prop:brig-mod-phi-t}(3). Note that the ideal $(t_K)$ is the product of all $(q_{n,K})$ which are mutually prime by (1). We then deduce (4) using a similar argument as in the proof of Proposition \ref{prop:brig-mod-phi-t}(2). For (5), we choose a generator $t_f\in \mathbf{B}^{\dag,s}_{\rig,K}$ of the closed ideal $\cap_{n}(\varphi^{fn-1}(q))$. By Proposition \ref{prop:brig-mod-phi-t}(1), (2), it is straightforward to see that 
\[
\prod_{i=0}^{f-1}\varphi^i((t_f))=(t).
\] 
This yields that $\varphi^f(t_f)/t_f$ belongs to $(\mathbf{B}^{\dag,s}_{\rig,K})^\times\subset\mathbf{B}^{\dag}_{K}$ and has valuation $1$ as $\varphi(t)=pt$; hence its valuation in $K\otimes_{K_0}\mathbf{B}_K^\dag$ is equal to the ramification index $e=[K:\Q]/f$. 
After a suitable base change, we may assume that $K$ is Galois over $K_0$. A short computation shows that 
\[
\prod_{\sigma\in\mathrm{Gal}(K/K_0)}\sigma((t_K))=(t_f)
\] 
as ideals of $K\otimes_{K_0}\mathbf{B}^{\dag,s}_{\rig,K}$. It follows that 
\[
\varphi^f(\sigma(t_K))/\sigma(t_K)\in (K\otimes_{K_0}\mathbf{B}^\dag_{\rig,K})^\times\subset K\otimes_{K_0}\mathbf{B}^\dag_{K}
\]
for each $\sigma\in\mathrm{Gal}(K/K_0)$, and their product is $\varphi^f(t_f)/t_f$ up to a unit of $K\otimes_{K_0}\mathbf{B}_K^\dag$. Since they all have the same valuation, we conclude (5) by the fact that $\varphi^f(t_f)/t_f$ has valuation $e$ in $K\otimes_{K_0}\mathbf{B}_K^\dag$. 
\end{proof}

\subsection{The sheaf $(\mathscr{D}_{\mathrm{dif}}^{+,n}(V_S)/(t^k))^{\Gamma}$}
\begin{defn}
Define the presheaves $\mathscr{D}_{\mathrm{dif}}^{+,n}(V_S)$ and $\mathscr{D}_{\mathrm{dif}}^{+,n}(V_S)/(t^k)$ on the weak $G$-topology of $M(S)$ by setting
\[
(\mathscr{D}_{\mathrm{dif}}^{+,n}(V_S))(M(S'))=\mathrm{D}_{\mathrm{dif}}^{+,n}(V_{S'}), \quad (\mathscr{D}_{\mathrm{dif}}^{+,n}(V_S)/(t^k))(M(S'))=\mathrm{D}_{\mathrm{dif}}^{+,n}(V_{S'})/(t^k)
\]
for any affinoid subdomain $M(S')$ of $M(S)$. Define the presheaves
\[
\mathscr{D}_{\mathrm{dif}}^{+}(V_S)=\limind_{n\ra\infty}\mathscr{D}_{\mathrm{dif}}^{+,n}(V_S)\quad\text{and}\quad
\mathscr{D}_{\mathrm{dif}}^{+}(V_S)/(t^k)=\limind_{n\ra\infty}\mathscr{D}_{\mathrm{dif}}^{+,n}(V_S)/(t^k),
\]
where the transition maps on any affinoid subdomain $M(S')$ of $M(S)$ are the natural $K_n[[t]]$-linear morphisms 
$\mathrm{D}_{\mathrm{dif}}^{+,n}(V_{S'})\ra\mathrm{D}_{\mathrm{dif}}^{+,n+1}(V_{S'})$ and $\mathrm{D}_{\mathrm{dif}}^{+,n}(V_{S'})/(t^k)\ra\mathrm{D}_{\mathrm{dif}}^{+,n+1}(V_{S'})/(t^k)$ introduced in \S1.3. 
\end{defn}

\begin{prop}\label{prop:sheaf-dif}
The presheaves $\mathscr{D}_\dif^{+,n}(V_S)$ and $\mathscr{D}_\dif^{+}(V_S)$ are sheaves for the weak $G$-topology of $M(S)$, and hence extend uniquely to the strong $G$-topology.
\end{prop}
\begin{proof}
As in the proof of Proposition \ref{prop:sheaf-rig}, by the base change property of the functor $\mathrm{D}^{+,n}_{\mathrm{dif}}$, it reduces to show that the presheaf
\[
M(S')\mapsto K_n[[t]]\widehat{\otimes}_{\Q}S'
\] 
is a sheaf. The latter is clear. 
\end{proof}

\begin{lemma}\label{lem:flat-invariant}
Let $G$ be a topologically finitely generated group. Let $A$ be a commutative Hausdoff topological ring, and let $M$ be a finite $A$-module equipped with a continuous $A$-linear action of $G$. Suppose $B$ is a commutative Hausdoff topological ring with a continuous flat morphism $A\ra B$. Then $(M\otimes_AB)^G=M^G\otimes_AB$.
\end{lemma}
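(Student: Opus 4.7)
My plan is to express $G$-invariants as the kernel of finitely many operators $g_i - 1$, and then to invoke flatness of $A \to B$ to interchange tensor product with this kernel.

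First I would choose topological generators $g_1,\dots,g_r$ of $G$, and establish the following subclaim: for any $A$-module $N$ carrying a Hausdorff topology and a continuous $G$-action,
\[
N^G \;=\; \ker\bigl(N \xrightarrow{\,(g_i-1)_{i=1}^r\,} N^{\oplus r}\bigr).
\]
The inclusion $\subseteq$ is trivial. Conversely, an element $n$ killed by all $g_i-1$ is fixed by the abstract subgroup $H=\langle g_1,\dots,g_r\rangle$; the stabilizer of $n$ is closed in $G$ because $g\mapsto gn$ is continuous and $N$ is Hausdorff, so the closure $\overline{H}=G$ also fixes $n$.

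Applying the subclaim to $N = M$ gives a left exact sequence
\[
0 \longrightarrow M^G \longrightarrow M \xrightarrow{\,(g_i-1)_i\,} M^{\oplus r},
\]
and flatness of $B$ over $A$ preserves left exactness of this sequence, so
\[
M^G \otimes_A B \;=\; \ker\bigl(M\otimes_A B \xrightarrow{\,(g_i-1)_i\,} (M\otimes_A B)^{\oplus r}\bigr).
\]
Applying the subclaim a second time, now to $N = M\otimes_A B$ with the diagonal $G$-action and trivial $G$-action on $B$, identifies this kernel with $(M\otimes_A B)^G$, which yields the desired equality.

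The only delicate point, and what I expect to be the main obstacle, is justifying that the hypotheses of the subclaim are satisfied by $M\otimes_A B$. To handle this, I would pick $A$-module generators $m_1,\dots,m_n$ of $M$; then $m_1\otimes 1,\dots,m_n\otimes 1$ generate $M\otimes_A B$ as a $B$-module, and I topologize $M\otimes_A B$ as a quotient of $B^n$. For $x = \sum (m_i\otimes 1)\,b_i$, the orbit map $g\mapsto gx$ factors as $g\mapsto (gm_1,\dots,gm_n) \in M^{\oplus n}$ composed with the $B$-linear quotient map $M^{\oplus n}\to M\otimes_A B$ sending $(y_1,\dots,y_n)\mapsto \sum y_i\otimes b_i$, which is continuous through the continuous morphism $A\to B$. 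Hence the stabilizer of $x$ is closed in $G$, and the subclaim applies.
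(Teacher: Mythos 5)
Your proposal is correct and takes essentially the same route as the paper: write $M^G$ as the kernel of $\oplus_{i}(g_i-1)$ for a finite set of topological generators and use flatness of $A\ra B$ to preserve this left-exact sequence after tensoring, identifying the resulting kernel with $(M\otimes_A B)^G$. The only difference is that you spell out the details the paper leaves implicit (why invariance under topological generators suffices, and continuity of the action on $M\otimes_A B$); just note that closedness of the stabilizer uses not only continuity of the orbit map but also that points are closed, i.e.\ the Hausdorff hypothesis of your subclaim for the quotient topology on $M\otimes_A B$, which does hold for the canonical topologies on finite modules over the affinoid algebras arising in the paper's applications.
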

\begin{proof}
Choose a finite set of topological generators $g_1,\dots,g_n$ of $G$. Consider the exact sequence
\[
0\longrightarrow M^{G}\longrightarrow M
\longrightarrow\oplus_{i=1}^nM
\]
where the last map is $m\mapsto\oplus_{i=1}^n(g_i-1)m$. Since $B$ is flat over $A$, tensoring up with $B$, we get
\[
0\longrightarrow M^G\otimes_AB
\longrightarrow M\otimes_AB
\longrightarrow\oplus_{i=1}^mM\otimes_AB.
\]
This yields the lemma.
\end{proof}

\begin{convention}
Let $X$ be a rigid analytic space over $\Q$. Let $G$ be a group, and let $M$ be a presheaf on $X$ equipped with a $G$-action. We denote by $M^{G}$ the presheaf  on $X$ defined by
$M^{G}(U)=M(U)^G$ for any admissible open subset $U$ of $X$.
\end{convention}

\begin{prop}\label{prop:dif-torsion-sheaf}
The following are true.
\begin{enumerate}
\item[(1)] The presheaf $\mathscr{D}_{\mathrm{dif}}^{+,n}(V_S)/(t^k)$ is a locally free coherent sheaf.
\item[(2)]The presheaf $\mathscr{D}_{\mathrm{dif}}^{+}(V_S)/(t^k)$ is a sheaf. 
\item[(3)] The presheaf $(\mathscr{D}_{\mathrm{dif}}^{+,n}(V_S)/(t^k))^{\Gamma}$ is a coherent sheaf.
\item[(4)] The presheaf $(\mathscr{D}_{\mathrm{dif}}^{+}(V_S)/(t^k))^{\Gamma}$ is a sheaf.
\end{enumerate}
\end{prop}

\begin{proof}
By the previous lemma, (1) implies (3).  By the same argument as in the proof of Proposition \ref{prop:sheaf-rig}, (1) implies (2) and (3) implies (4) respectively. Thus it suffices to prove (1). Note that $\mathrm{D}^{+,n}_{\mathrm{dif}}(V_{S'})/(t^k)$ is a locally free $S'$-module of finite rank for any affinoid subdomain $M(S')$ of $M(S)$.  We only need to show that $\mathscr{D}_{\mathrm{dif}}^{+,n}(V_S)/(t^k)$ satisfies the sheaf properties. The latter follows from the fact that the presheaf
\[
M(S')\mapsto (K_n[[t]]\widehat{\otimes}_{\Q}S')/(t^k)
\] 
is a sheaf as in the proof of Proposition \ref{prop:sheaf-rig}.
\end{proof}

We also denote the sheaf $\mathscr{D}_{\mathrm{dif}}^+(V_S)/(t)$ by $\mathscr{D}_{\mathrm{Sen}}(V_S)$.

\begin{defn}
Let $X$ be a rigid analytic space over $\Q$, and let $V_X$ be a locally free coherent $\OO_X$-module equipped with a continuous $\OO_X$-linear $G_K$-action. We choose an admissible covering of $X$ by affinoid subdomains $\{M(S_i)\}_{i\in I}$. We then define the sheaf $\mathscr{D}_{\dif}^{+}(V_X)$ (resp. $\mathscr{D}_{\dif}^{+}(V_X)/(t^k)$, $\mathscr{D}_{\Sen}(V_X)$) by gluing the sheaves  $\mathscr{D}_{\dif}^{+}(V_{S_i})$ (resp. $\mathscr{D}_{\dif}^{+}(V_{S_i})/(t^k)$, $\mathscr{D}_{\Sen}(V_{S_i})$)
for all $i\in I$; this construction is independent of the choice of the covering $\{M(S_i)\}_{i\in I}$.
\end{defn}

We need the following result in \S4.
\begin{prop}
\label{prop:torsion-free}
If $S$ is torsion-free, then both 
\[
(\D_{\mathrm{dif}}^{+,n}(V_S)/(t^k))^{\Gamma}\quad\text{and} \quad
(\D_{\mathrm{dif}}^{+,n}(V_S)/(t^k))/(\D_{\mathrm{dif}}^{+,n}(V_S)/(t^k))^{\Gamma}
\] 
are torsion-free $S$-modules.
\end{prop}
\begin{proof}
Since $\D_{\mathrm{dif}}^{+,n}(V_S)/(t^k)$ is a finitely generated locally free $S$-module, it is torsion-free by the assumption on $S$. So $(\D_{\mathrm{dif}}^{+,n}(V_S)/(t^k))^\Gamma$ is torsion-free as well. On the other hand, for $s\in S, a\in \D_{\mathrm{dif}}^{+,n}(V_S)/(t^k)$ and $\gamma\in \Gamma$, if $\gamma(sa)=sa$, then $\gamma(a)=a$ because $\gamma$ acts trivially on $S$ and $\D_{\mathrm{dif}}^{+,n}(V_S)/(t^k)$ is torsion-free. This yields that $(\D_{\mathrm{dif}}^{+,n}(V_S)/(t^k))/(\D_{\mathrm{dif}}^{+,n}(V_S)/(t^k))^{\Gamma}$ is a torison-free $S$-module.
\end{proof}

\subsection{Sen operator}
Let $V_S$ be a free $S$-linear representation of $G_K$ of rank $d$. Let $T_S$ and $L$ be as in the construction of $\D^{\dagger}_K(V_S)$, and let $n\geq n(V_S)$. By its construction, the module $\D_{\Sen}^{L_n}(V_S)$ is free of rank $d$ over $L_n\otimes_{\Q}S$. Furthermore, for any $\gamma\in\Gamma_L$ satisfying $n(\gamma)\geq n$, we may choose some $L_n\otimes_{\Q}S$-basis of $\D_{\Sen}^{L_n}(V_S)$ so that the matrix $M_\gamma$ of $\gamma$ under this basis satisfies $|M_\gamma-1|<1$. We then define $\log\gamma\in\mathrm{End}_{L_n\otimes_{\Q}S}(\D_{\Sen}^{L_n}(V_S))$ by setting
\[
\log\gamma=-\sum_{m\geq1}\frac{(1-\gamma)^m}{m}.
\]
The convergence of the right hand side follows from the condition $|M_\gamma-1|<1$.  Since $\Gamma_L$ is a 1-dimensional $p$-adic Lie group, the operator 
\[
\Theta=\log\gamma/\log_p\chi(\gamma)\in\mathrm{End}_{L_n\otimes_{\Q}S}(\D_{\Sen}^{L_n}(V_S))
\]
is independent of the choice of $\gamma$; hence it is well-defined. Note that 
\[
\D_{\mathrm{Sen}}^{L_n}(V_S)=\D_{\mathrm{Sen}}^{K_n}(V_S)\otimes_{K_n}L_n,
\] 
and $\gamma$ carries $\D_{\mathrm{Sen}}^{K_n}(V_S)$ into itself. Hence we may view $\Theta$  as an element of $\mathrm{End}_{K_n\otimes_{\Q}S}(\D_{\Sen}^{K_n}(V_S))$.  Furthermore, since $\Gamma$ is commutative,  $\Theta$ commutes with $\Gamma$; hence its characteristic polynomial has coefficients in $(K_n\otimes_{\Q}S)^\Gamma=K\otimes_{\Q}S$.

\begin{defn}\label{def:Sen-operator}
Let $X$ be a rigid analytic space over $\Q$, and let $V_X$ be a locally free coherent $\OO_X$-module equipped with a continuous $\OO_X$-linear $G_K$-action. We choose an admissible covering of $X$ by affinoid subdomains $\{M(S_i)\}_{i\in I}$ such that $V_{S_i}$ is free for each $i\in I$. We glue the operators $\Theta\in \mathrm{End}_{K_\infty\otimes_{\Q}S_i}(\mathscr{D}_{\Sen}(V_{S_i}))$ for all $i\in I$ to obtain an operator  $\Theta\in\mathrm{End}_{K_\infty\otimes_{\Q}\OO_X}(\mathscr{D}_{\Sen}(V_X))$; this is independent of the choice of the covering.  We call $\Theta$ the \emph{Sen operator} for $V_X$. 

We also glue the characteristic polynomials of $\Theta\in \mathrm{End}_{K_\infty\otimes_{\Q}S_i}(\mathscr{D}_{\Sen}(V_{S_i}))$ for all $i\in I$ to get an element of $(K\otimes_{\Q}\OO(X))[T]$; this is independent of the choice of the covering, and it is called the \emph{Sen polynomial} for $V_X$. 

\end{defn}

\begin{remark}
By their constructions and base change property of $\D_{\mathrm{Sen}}$, it is clear that the notions of Sen operator and Sen polynomial compatible with base change. That is, given a morphism $f:X'\ra X$ of $\Q$-rigid analytic spaces, the Sen operator and Sen polynomial of $f^*V_X$ are naturally isomorphic to the pullback of the 
Sen operator and Sen polynomial of $V_X$ via $f$ respectively.
\end{remark}
The rest of this subsection is a $\m$-module theoretical interpretation of \cite[(2.3)-(2.6)]{Ki03}.

\begin{prop}\label{prop:Theta-kill}
Let $V_S$ be a finite free $S$-linear representation. Then for any $n\geq n(V_S)$, both $H^0(\Gamma, \D^n_{\mathrm{Sen}}(V_S))$ and $H^1(\Gamma, \D^n_{\mathrm{Sen}}(V_S))$ are killed by $\det(\Theta)$.
\end{prop}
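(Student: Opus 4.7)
My plan is to reduce the assertion to showing that $\Theta$ itself annihilates both $H^0(\Gamma_K,\D^n_{\Sen}(V_S))$ and $H^1(\Gamma_K,\D^n_{\Sen}(V_S))$, and then conclude via Cayley--Hamilton. Since $\D^n_{\Sen}(V_S)$ is a finite locally free $K_n\otimes_\Q S$-module of rank $d$ and $\Theta$ is $K_n\otimes_\Q S$-linear with characteristic polynomial in $(K\otimes_\Q S)[T]$, Cayley--Hamilton yields
\[
\Theta^d + a_{d-1}\Theta^{d-1} + \cdots + a_1\Theta + (-1)^d\det(\Theta)\cdot\mathrm{id} = 0
\]
in $\mathrm{End}_{K_n\otimes_\Q S}(\D^n_{\Sen}(V_S))$, i.e.\ $\det(\Theta)\cdot\mathrm{id} = \pm\Theta\cdot R$ with $R$ a polynomial in $\Theta$ over $K\otimes_\Q S$. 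Since $\Theta$ commutes with $\Gamma_K$, so does $R$, and the identity descends to cohomology; thus once $\Theta$ is shown to act as zero on $H^i(\Gamma_K,\D^n_{\Sen}(V_S))$ for $i=0,1$, so does $\det(\Theta)$.

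For the $H^0$ vanishing, any $v\in\D^n_{\Sen}(V_S)^{\Gamma_K}$ is killed by every $\gamma-1$ with $\gamma\in\Gamma_K$. Choosing $\gamma$ close enough to $1$ on $\D^n_{\Sen}(V_S)$ (possible by the almost-invariance built into Proposition \ref{prop:Tate-Sen} once $n\geq n(V_S)$) makes $\log\gamma = -\sum_{m\geq 1}(1-\gamma)^m/m$ converge as an endomorphism; every partial sum kills $v$, so $\log\gamma(v)=0$ and hence $\Theta v=0$.

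For the $H^1$ vanishing, first treat an open subgroup $\Gamma_L\subseteq\Gamma_K$ that is topologically procyclic, generated by some $\gamma_0$ whose action on $\D^n_{\Sen}(V_S)$ satisfies $|M_{\gamma_0}-1|$ sufficiently small; such $L$ exists by taking $L\supseteq K(\mu_{p^2})$ for $p>2$ (resp.\ $L\supseteq K(\mu_8)$ for $p=2$) and possibly enlarging $n$. In $\mathrm{End}_{K_n\otimes_\Q S}(\D^n_{\Sen}(V_S))$ we may then factor
\[
\log\gamma_0 = (\gamma_0-1)\cdot u,\qquad u=\sum_{m\geq 0}\frac{(-1)^m(\gamma_0-1)^m}{m+1},
\]
where $u$ is invertible (its leading term is $\mathrm{id}$ and the higher terms have operator norm $<1$ when $\gamma_0$ is chosen close enough to $1$). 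Because $(\gamma_0-1)\D^n_{\Sen}(V_S)$ is a finitely generated submodule of the finitely generated $S$-module $\D^n_{\Sen}(V_S)$ over the Noetherian $S$, it is closed in the canonical Banach-module topology, so continuous cohomology of the procyclic group $\Gamma_L$ is $H^1(\Gamma_L,\D^n_{\Sen}(V_S)) = \D^n_{\Sen}(V_S)/(\gamma_0-1)\D^n_{\Sen}(V_S)$. For any $m$ in this quotient,
\[
\Theta\cdot m = \chi(\gamma_0)^{-1}(\gamma_0-1)\,u\,m \in (\gamma_0-1)\D^n_{\Sen}(V_S),
\]
so $\Theta$ annihilates $H^1(\Gamma_L,\D^n_{\Sen}(V_S))$. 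Finally, $\Gamma_K/\Gamma_L$ is finite and its order is invertible in the $\Qp$-algebra $S$, so the Hochschild--Serre spectral sequence collapses to give $H^1(\Gamma_K,\D^n_{\Sen}(V_S))\cong H^1(\Gamma_L,\D^n_{\Sen}(V_S))^{\Gamma_K/\Gamma_L}$, a submodule of an already $\Theta$-annihilated module, hence itself killed by $\Theta$.

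The principal technical point is arranging the choice of $L/K$ finite and $n\geq n(V_S)$ so that $\Gamma_L$ is procyclic with a topological generator acting on $\D^n_{\Sen}(V_S)$ with $|M_{\gamma_0}-1|$ small enough for the $\log$-factorization to produce a genuine unit $u$; the almost-invariance in Proposition \ref{prop:Tate-Sen} is exactly what enables this, and all the remaining pieces (the Cayley--Hamilton reduction, the closedness of $(\gamma_0-1)\D^n_{\Sen}(V_S)$, and the collapsing of Hochschild--Serre) are formal.
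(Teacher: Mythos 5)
Your argument is correct, and its skeleton is the same as the paper's: restrict to a small open subgroup $\Gamma_L$, show $\Theta$ itself annihilates $H^0$ and $H^1$ there, deduce annihilation by $\det(\Theta)$ via Cayley--Hamilton (the paper leaves this last reduction implicit), and descend to $\Gamma_K$ using that the index $[\Gamma_K:\Gamma_L]$ is invertible. The difference is in the $H^1$ step. The paper argues in one line that both $H^0(\Gamma_L,\D^n_{\Sen}(V_S))$ and $H^1(\Gamma_L,\D^n_{\Sen}(V_S))$ are killed by $\gamma-1$ (a group acts trivially on its own cohomology), hence by the series $\Theta$, and then notes $H^0(\Gamma_K,\cdot)\subseteq H^0(\Gamma_L,\cdot)$ while $H^1(\Gamma_K,\cdot)$ is a quotient of $H^1(\Gamma_L,\cdot)$. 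You instead take $\Gamma_L$ procyclic, map $H^1(\Gamma_L,M)$ to $M/(\gamma_0-1)M$ via $[c]\mapsto c(\gamma_0)$, and use the explicit factorization $\log\gamma_0=(\gamma_0-1)u$; this is essentially the content of the paper's Proposition \ref{prop:Theta-gamma}, and it has the genuine merit of sidestepping the limit issue the paper glosses over (each partial sum of the series defining $\Theta$ visibly induces $0$ on $H^1$, but passing to the limit through the quotient by coboundaries is not automatic). Note that for this you need neither the invertibility of $u$ nor the closedness of $(\gamma_0-1)M$ (so the exact identification $H^1(\Gamma_L,M)=M/(\gamma_0-1)M$ is not load-bearing): injectivity of evaluation at $\gamma_0$ modulo $(\gamma_0-1)M$ already gives $\Theta\cdot H^1(\Gamma_L,M)=0$. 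One small repair: since $n\geq n(V_S)$ is fixed in the statement you cannot ``enlarge $n$''; what you should do is enlarge $L$ (for instance so that $\Gamma_L\subseteq\langle\gamma^{p^k}\rangle$ with $\gamma$ as in the definition of $\Theta$ and $k$ large, exactly as in Proposition \ref{prop:Theta-gamma}), which guarantees a topological generator $\gamma_0$ with $n(\gamma_0)\geq n$, acting linearly on the coefficients and with $|M_{\gamma_0}-1|$ small enough to dominate the denominators in $u$. With that adjustment your proof is complete, and your inflation--restriction descent (valid since $\Gamma_K$ is abelian and the finite quotient has order invertible in a $\Q$-algebra) is an equally good substitute for the paper's restriction/corestriction remark.
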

\begin{proof}
Let $L, \gamma$ be as above. Since $H^0(\Gamma_{L}, \D^{n}_{\mathrm{Sen}}(V_S))$ and $H^1(\Gamma_{L}, \D^{n}_{\mathrm{Sen}}(V_S))$ are computed by the complex 
\[
0\ra\D^{n}_{\mathrm{Sen}}(V_S)\stackrel{\gamma-1}\rightarrow\D^{n}_{\mathrm{Sen}}(V_S)\ra0,
\]
both of them are killed by $\gamma-1$. Thus both of them are killed by $\Theta$; hence both of them are killed by $\det(\Theta)$. This yields the desired result since $H^0(\Gamma, \D^{n}_{\mathrm{Sen}}(V_S))\subseteq H^0(\Gamma_L, \D^{n}_{\mathrm{Sen}}(V_S))$ and $H^1(\Gamma, \D^{n}_{\mathrm{Sen}}(V_S))$ is a quotient of $H^1(\Gamma_L, \D^{n}_{\mathrm{Sen}}(V_S))$.
\end{proof}

From now on, let $V_S$ be only locally free over $S$.

\begin{cor}\label{cor:t-descent}
For any $k\geq1$ and $n\geq n(V_S)$,
the natural map
\[
(\mathrm{D}_{\mathrm{dif}}^{+,n}(V_S)/(t^{k}))^{\Gamma}\ra(\mathrm{D}^n_{\mathrm{Sen}}(V_S))^{\Gamma}
\]
has kernel and cokernel killed by $\prod_{i=1}^{k-1}\det(\Theta+iI)$.
\end{cor}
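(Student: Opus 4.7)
The plan is to carry out a devissage on the $t$-adic filtration of $\mathrm{D}_{\dif}^{+,n}(V_S)/(t^k)$. For each integer $j$ with $1 \leq j \leq k-1$, there is a short exact sequence of $\Gamma_K$-modules
\[
0 \longrightarrow t^{j}\mathrm{D}_{\dif}^{+,n}(V_S)/(t^{j+1}) \longrightarrow \mathrm{D}_{\dif}^{+,n}(V_S)/(t^{j+1}) \longrightarrow \mathrm{D}_{\dif}^{+,n}(V_S)/(t^{j}) \longrightarrow 0.
\]
Since $\gamma \in \Gamma_K$ acts on $t$ by the scalar $\chi(\gamma)$, the identification $m \leftrightarrow t^{j}m$ exhibits the leftmost term as $\mathrm{D}_{\Sen}^{n}(V_S)$ with its $\Gamma_K$-action twisted by $\chi^{j}$; Sen's operator on this twisted module is then $\Theta + jI$.

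Applying Proposition \ref{prop:Theta-kill} to this twist, both $H^{0}(\Gamma_K, t^{j}\mathrm{D}_{\dif}^{+,n}(V_S)/(t^{j+1}))$ and $H^{1}(\Gamma_K, t^{j}\mathrm{D}_{\dif}^{+,n}(V_S)/(t^{j+1}))$ are killed by $\det(\Theta + jI)$. The long exact sequence in $\Gamma_K$-cohomology attached to the above short exact sequence then forces the reduction map
\[
(\mathrm{D}_{\dif}^{+,n}(V_S)/(t^{j+1}))^{\Gamma_K} \longrightarrow (\mathrm{D}_{\dif}^{+,n}(V_S)/(t^{j}))^{\Gamma_K}
\]
to have both kernel and cokernel killed by $\det(\Theta + jI)$. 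Composing these reduction maps successively for $j = k-1, k-2, \ldots, 1$ recovers the map stated in the corollary; a routine verification shows that the kernel and cokernel of a composition are killed by the product of the individual annihilators, which yields the bound $\prod_{i=1}^{k-1}\det(\Theta + iI)$.

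The main technical point requiring care is the assertion that twisting by $t$ shifts Sen's operator by $+I$, which follows from the scalar-versus-linear additivity of the logarithm $\log(\chi(\gamma)\gamma_M) = \log\chi(\gamma) + \log\gamma_M$ combined with the definition of $\Theta$. A secondary but minor concern is that Proposition \ref{prop:Theta-kill} is formulated for finite \emph{free} $S$-linear representations whereas $V_S$ is here only locally free; since the assertion of the corollary is local on $M(S)$, one reduces to the free case by passing to an admissible affinoid cover of $M(S)$ trivializing $V_S$. Beyond these two observations the proof is a standard cohomological devissage.
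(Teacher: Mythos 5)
Your proof is correct and follows essentially the same route as the paper: the same $t$-adic devissage, identification of each graded piece $t^{i}\mathrm{D}_{\dif}^{+,n}(V_S)/(t^{i+1})$ with the Sen module of a Tate twist whose Sen operator is $\Theta+iI$, an appeal to Proposition \ref{prop:Theta-kill} together with the long exact sequence in $\Gamma_K$-cohomology, and the reduction to free $V_S$ via the coherence/base-change properties of the relevant sheaves. Nothing essential differs from the paper's argument.
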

\begin{proof}
Since $(\mathscr{D}_{\mathrm{dif}}^{+,n}(V_S)/(t^{k}))^{\Gamma}$ and $(\mathscr{D}^n_{\mathrm{Sen}}(V_S))^{\Gamma}$
 are coherent sheaves, by restricting on a finite covering of $M(S)$, it suffices to treat the case that $V_S$ is free over $S$. It then suffices to show that
the natural map
\[
(\mathrm{D}_{\mathrm{dif}}^{+,n}(V_S)/(t^{i+1}))^{\Gamma}\ra(\mathrm{D}_{\mathrm{dif}}^{+,n}(V_S)/(t^i))^{\Gamma}
\]
has kernel and cokernel killed by $\det(\Theta+iI)$ for each $i\geq1$. By the short exact sequence
\[
0\longrightarrow\D^n_{\Sen}(V_S(i))\longrightarrow\mathrm{D}_{\mathrm{dif}}^{+,n}(V_S)/(t^{i+1})\longrightarrow
\mathrm{D}_{\mathrm{dif}}^{+,n}(V_S)/(t^{i})\longrightarrow0,
\]
we get the exact sequence
\[
0\ra(\D^n_{\Sen}(V_S(i)))^{\Gamma}\ra(\mathrm{D}_{\mathrm{dif}}^{+,n}(V_S)/(t^{i+1}))^{\Gamma}
\ra(\mathrm{D}_{\mathrm{dif}}^{+,n}(V_S)/(t^i))^{\Gamma}
\ra H^1(\Gamma,\D^n_{\Sen}(V_S(i))).
\]
We thus conclude from Proposition \ref{prop:Theta-kill} and the fact that Sen operator for $V_S(i)$ is $\Theta+iI$.
\end{proof}

\begin{prop}\label{prop:Theta-gamma}
Keep notations as above. Then there exists a finite Galois extension $L'$ of $K$ containing $L$ such that $\Theta/(\gamma'-1)$ is invertible on $\D^{n}_{\Sen}(V_S)$ for any $\gamma'\in\Gamma_{L'}$.
\end{prop}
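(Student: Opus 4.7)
The plan is to enlarge $L$ to a finite Galois extension $L'/K$ such that for every $\gamma'\in\Gamma_{L'}$, the operator norm of $1-\gamma'$ on $\D^n_\Sen(V_S)$ is small enough that the auxiliary operator
\[
U_{\gamma'} := \sum_{m\geq 0}\frac{(\log\gamma')^m}{(m+1)!}
\]
converges and satisfies $|U_{\gamma'}-1|<1$, where $\log\gamma' = -\sum_{m\geq 1}(1-\gamma')^m/m$ as in \S1.5. The formal identity $\gamma' = \exp\log\gamma'$ then yields $\gamma' - 1 = \log\gamma'\cdot U_{\gamma'}$, and combining with $\log\gamma' = \chi(\gamma')\Theta$ (which holds by the independence of $\Theta = (\log\gamma)/\chi(\gamma)$ from the choice of $\gamma$) gives $\gamma' - 1 = \chi(\gamma')\Theta\cdot U_{\gamma'}$. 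Since $\chi(\gamma')\in\Zp^\times$ is a unit and $U_{\gamma'}$ is invertible, we read off $\Theta/(\gamma'-1) = (\chi(\gamma')U_{\gamma'})^{-1}$, which is invertible on $\D^n_\Sen(V_S)$.

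To produce $L'$, I would equip the finite $K_n\otimes_{\Q}S$-module $\D^n_\Sen(V_S)$ with a Banach norm and consider the induced operator norms. By continuity of the $\Gamma_K$-action on this finite module, for any prescribed $\varepsilon>0$ there is an open subgroup $H\subseteq\Gamma_L$ on which $|1-\gamma'|<\varepsilon$ uniformly. Fixing $\varepsilon$ small enough (anything strictly below $p^{-1/(p-1)}$ with a little room to spare) ensures both the convergence of $\log\gamma'$ and the bound $|U_{\gamma'}-1|<1$ uniformly on $H$. Then any finite Galois extension $L'/K$ with $L\subseteq L'$ and $\Gamma_{L'}\subseteq H$ works; such an $L'$ exists because $H$ contains an open normal subgroup of $\Gamma_K$, whose fixed field compositum with $L$ has the required property.

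The only substantive step is the uniform smallness of $|1-\gamma'|$ on an open subgroup, which is nothing more than continuity of the $\Gamma_K$-action on the finite Banach $S$-module $\D^n_\Sen(V_S)$. Once that is secured, the invertibility of $\Theta/(\gamma'-1)$ is a purely formal consequence of the $\log$-$\exp$ identity together with the defining relation $\log\gamma' = \chi(\gamma')\Theta$, so I do not anticipate any serious obstacle beyond bookkeeping of the convergence estimates for $\log\gamma'$ and $U_{\gamma'}$.
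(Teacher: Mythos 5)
Your proof is correct and is essentially the paper's argument: shrink to a subgroup of $\Gamma_L$ on which $\gamma'-1$ is small in operator norm and then identify $\Theta/(\gamma'-1)$ with $\chi(\gamma')^{-1}$ times an invertible power-series operator — the paper gets the smallness from the matrix identity $M_{\gamma^{p^k}}=(M_\gamma)^{p^k}$ and takes $\Gamma_{L'}\subseteq\overline{\langle\gamma^{p^k}\rangle}$, then inverts $u=\sum_{m\geq0}(1-\gamma')^m/(m+1)$ directly, whereas you obtain the smallness from continuity of the action and invert the reciprocal series $U_{\gamma'}$ via the $\exp$/$\log$ identity. The only point to make explicit is that your uniform bound $|1-\gamma'|<\varepsilon$ can only hold for $\gamma'$ fixing $K_n$ (a $\gamma'$ acting nontrivially on $K_n$ moves $\varepsilon_n$ by a fixed amount), so $H$ must be chosen inside $\Gamma_{L_n}$ — which is automatic for sufficiently small open subgroups and is built into the paper's choice of $\gamma$ with $n(\gamma)\geq n$.
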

\begin{proof}
It suffices to treat the case that $V_S$ is free. Let $\gamma\in \Gamma_L$ such that $\chi(\gamma)\in 1+p^n\mathbb{Z}_p$. It follows that $\gamma$ acts $L_n\otimes_{\Q}S$-linearly on $\D^{L_n}_{\Sen}(V_S)$. Thus for any positive integer $k$, the matrix $M_{\gamma^{p^k}}$ of $\gamma^{p^k}$ is just $M_\gamma^{p^k}$. Therefore we may choose a sufficiently large $k$ so that $|M_{\gamma^{p^k}}-1|<p^{-1}$. Let $L'$ be a finite Galois extension of $K$ so that $\Gamma_{L'}\subseteq\langle\gamma^{p^k}\rangle$. Then for any $\gamma'\in \Gamma_{L'}$, we have $|M_{\gamma'}-1|\leq |M_{\gamma^{p^k}}-1|<p^{-1}$. It follows that 
\[
|(M_{\gamma'}-1)^m/(m+1)|<\frac{p^{-m}}{|m+1|}\leq p^{-1}
\] 
for any $m\geq1$. Let $u=\sum_{m\geq0}(1-\gamma')^m/(m+1)$. It follows that the matrix of $u-1$, which is 
\[
\sum_{m\geq1}(1-M_{\gamma'})^m/(m+1),
\]
has positive valuation. This yields that $u$ is invertible. Hence
$\Theta/(\gamma'-1)=\chi(\gamma')^{-1}u$
is invertible.
\end{proof}

In the following, we further suppose $\det(\Theta)=0$, and write $\det(TI-\Theta)=TQ(T)$ for some $Q(T)\in (K\otimes_{\Q}S)[T]$. Put $P(i)=\prod_{j=0}^{i-1}Q(-j)$ for every integer $i\geq1$.

\begin{prop}\label{prop:Gamma-base-change}
If $f:S\ra R$ is a map of affinoid algebras over $\Q$, for each $n\geq n(V_S)$, the natural map
\begin{equation}
(\D^n_{\mathrm{Sen}}(V_S))^{\Gamma}\otimes_SR\ra(\D^n_{\mathrm{Sen}}(V_R))^{\Gamma}
\end{equation}
has kernel and cokernel killed by a power of $f(Q(0))$. In particular, if $f(Q(0))$ is a unit, this map is an isomorphism.
\end{prop}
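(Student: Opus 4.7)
My plan is to reduce to the case when $Q(0)$ is a unit in $K \otimes_\Q S$, where the natural map should be an isomorphism. Since $(\mathscr{D}^n_{\mathrm{Sen}}(V_S))^{\Gamma_K}$ is a coherent $\mathcal{O}_{M(S)}$-module by Proposition \ref{prop:dif-torsion-sheaf}(2), both its base change to $R$ and the target $(\D^n_{\mathrm{Sen}}(V_R))^{\Gamma_K}$ are finite over $R$; by Noetherianity the kernel and cokernel of the natural map are also finite $R$-modules. The claim that these are killed by a power of $f(Q(0))$ is therefore equivalent to their vanishing after inverting $f(Q(0))$, which reduces the problem to proving the map is an isomorphism whenever $Q(0)$ is a unit. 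Passing to a finite admissible cover over which $V_S$ is free, set $M := \D^n_{\mathrm{Sen}}(V_S)$; by functoriality and local freeness of $M$ over $K_n \otimes_\Q S$ we identify $\D^n_{\mathrm{Sen}}(V_R) = M \otimes_S R =: M_R$.

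Assume now $Q(0) \in (K \otimes_\Q S)^\times$. Cayley--Hamilton gives $\Theta Q(\Theta) = 0$, and the Bezout identity $Q(0) = T a(T) + Q(T)$ with $a(T) = (Q(0) - Q(T))/T \in (K \otimes_\Q S)[T]$ produces orthogonal idempotents $\pi_1 = Q(\Theta)/Q(0)$ and $\pi_2 = \Theta a(\Theta)/Q(0)$ in $\mathrm{End}(M)$. Being polynomials in $\Theta$, these commute with $\Gamma_K$, yielding a $\Gamma_K$-stable decomposition
\[
M = \ker(\Theta) \oplus \ker(Q(\Theta)),
\]
whose summands are finite projective over $S$ and compatible with arbitrary base change. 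Applying Proposition \ref{prop:Theta-gamma} after enlarging $L$ to contain $K_n$ (and shrinking $\Gamma_{L'}$ so that it is topologically procyclic, generated by some $\gamma$), we have $\Theta = u(\gamma-1)$ on $M$ for some invertible $u \in \mathrm{End}(M)$. Since $\Theta$ is invertible on $\ker(Q(\Theta))$ (inverse $-a(\Theta)/Q(0)$) and zero on $\ker(\Theta)$, the same is true for $\gamma-1$; hence $\ker(Q(\Theta))^{\Gamma_{L'}} = 0$ and $\Gamma_{L'}$ acts trivially on $\ker(\Theta)$. Therefore $M^{\Gamma_K} = (\ker\Theta)^G$ with $G := \Gamma_K/\Gamma_{L'}$ finite, and likewise $M_R^{\Gamma_K} = (\ker\Theta \otimes_S R)^G$.

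The main remaining obstacle is to verify that $(\ker \Theta)^G \otimes_S R \to ((\ker \Theta) \otimes_S R)^G$ is an isomorphism, which is delicate because $|G|$ need not be invertible in $S$. My strategy is a two-step descent. First, Galois descent along the finite Galois extension $K_n/K$ produces a $K \otimes_\Q S$-form $N_0 := (\ker \Theta)^{\Gal(K_n/K)}$, so that $\ker(\Theta) = N_0 \otimes_K K_n$, and this step commutes with any base change $S \to R$ by flatness of $K_n/K$. Second, the residual action of $H := \ker(G \twoheadrightarrow \Gal(K_n/K))$ on $N_0$ is $K \otimes_\Q S$-linear, and by enlarging $L'$ further within $K_\infty/K_n$ one invokes Lemma \ref{lem:flat-invariant} together with the flatness of $N_0$ to conclude that $H$-invariants commute with base change. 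Composing the two steps then yields the required isomorphism for $G$-invariants, completing the proof.
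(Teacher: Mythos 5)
Up to the identification $M^{\Gamma_K}=(\ker\Theta)^G$ your argument is sound and runs parallel to the paper's: the Cayley--Hamilton decomposition $M=\ker\Theta\oplus\ker Q(\Theta)$ via the idempotents $Q(\Theta)/Q(0)$ and $\Theta a(\Theta)/Q(0)$ (the paper does the same thing without inverting $Q(0)$, getting kernel and cokernel killed by $Q(0)$ and then localizing), and Proposition \ref{prop:Theta-gamma} to see that $\Gamma_{L'}$ acts trivially on $\ker\Theta$ and has no invariants in $\ker Q(\Theta)$ (small slip: the inverse of $\Theta$ on $\ker Q(\Theta)$ is $+a(\Theta)/Q(0)$). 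The genuine gap is in what you call the main remaining obstacle. First, the obstacle is illusory: $S$ is a $\Q$-algebra, so $|G|$ \emph{is} invertible, the averaging operator $e=|G|^{-1}\sum_{g\in G}g$ is an $S$-linear idempotent on $\ker\Theta$ with image $(\ker\Theta)^{G}$, and the image of an idempotent commutes with \emph{arbitrary} base change $S\ra R$. This one-line observation is exactly how the paper finishes (it expresses $(\D^n_{\Sen}(V_S))^{\Gamma_K}$ as the image of $a\mapsto\sum_{g\in\Gamma_K/\Gamma_{L'_n}}ga$ on $(\D^n_{\Sen}(V_S))^{\Gamma_{L'_n}}$), and by discarding it you are forced into a detour that does not work.

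Concretely, the substitute two-step descent fails on two counts. (i) The group theory is reversed: $\Gal(K_n/K)$ is a \emph{quotient} of $G$, not a subgroup, so "$(\ker\Theta)^{\Gal(K_n/K)}$" is not defined as a first step and there is no residual action of $H=\ker(G\twoheadrightarrow\Gal(K_n/K))$ on it; one would have to take $H$-invariants first and then let $\Gal(K_n/K)$ act on those. (ii) More seriously, the base-change compatibility of your second step is justified by Lemma \ref{lem:flat-invariant}, which requires the morphism $S\ra R$ to be \emph{flat}. But the whole content of the proposition lies in non-flat $f$ (artinian quotients of local rings are exactly how it is used later); if $S\ra R$ were flat, the statement would follow at once from Lemma \ref{lem:flat-invariant} with no hypothesis on $Q(0)$ at all, so invoking it here is circular. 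Nor does projectivity of $N_0$ over $K\otimes_{\Q}S$ help: invariants are a kernel (of $\oplus_g(g-1)$), and kernels of maps between projective modules do not commute with non-flat base change in general — which is precisely why one must realize the invariants as the image of the characteristic-zero averaging idempotent, as the paper does. Your opening reduction (kernel and cokernel are finite, so killed by a power of $f(Q(0))$ iff they vanish after inverting it) is fine, provided you note that inverting $Q(0)$ is a flat base change so that it passes through the $\Gamma_K$-invariants; but the final descent step as written does not close.
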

\begin{proof}
Write $Q(T)=\sum_{i=0}^{d-1}a_iT^i$. First note that $Q(\Theta)\Theta=0$ in $\mathrm{End}(\D_{\mathrm{Sen}}(V_S))$ by Cayley's theorem.
Hence
\[
\Theta(\D^n_{\mathrm{Sen}}(V_S))\subseteq \ker(Q(\Theta)|\D^n_\Sen(V_S))
\quad\text{and}\quad Q(\Theta)(\D^n_{\mathrm{Sen}}(V_S))\subseteq \ker(\Theta|\D^n_\Sen(V_S)).
\]
By the equality $a_0=Q(\Theta)-\Theta(\sum_{i=1}^{d-1}a_i\Theta^{i-1})$, we deduce that both the kernel and cokernel of the natural map
\[
\ker(\Theta|\D^n_{\Sen}(V_S))\oplus\ker(Q(\Theta)|\D^n_{\Sen}(V_S))\longrightarrow \D^n_{\mathrm{Sen}}(V_S)
\]
are killed by $a_0=Q(0)$. Hence the natural map
\begin{equation}\label{eq:ker+cokernel}
(\ker(\Theta|\D^n_{\Sen}(V_S)))_{a_0}\oplus(\ker(Q(\Theta)|\D^n_{\Sen}(V_S)))_{a_0}\longrightarrow (\D^n_{\mathrm{Sen}}(V_S))_{a_0}
\end{equation}
is an isomorphism. By the same reasoning, the natural map
\[
(\ker(\Theta|\D^n_{\Sen}(V_R)))_{f(a_0)}\oplus(\ker(Q(\Theta)|\D^n_{\Sen}(V_R)))_{f(a_0)}\longrightarrow (\D^n_{\mathrm{Sen}}(V_R))_{f(a_0)}
\]
is also an isomorphism. Consider the following commutative diagram
\[
\xymatrix{
\ar[d]\ker(\Theta|\D^n_{\Sen}(V_S))\otimes_SR_{f(a_0)}\oplus \ker(Q(\Theta)|\D^n_{\Sen}(V_S))\otimes_SR_{f(a_0)}
\ar[r] & \D^n_{\mathrm{Sen}}(V_S)\otimes_SR_{f(a_0)}\ar[d] \\
 (\ker(\Theta|\D^n_{\Sen}(V_R)))_{f(a_0)}
\oplus(\ker(Q(\Theta)|\D^n_{\Sen}(V_R)))_{f(a_0)}\ar[r] & (\D^n_{\mathrm{Sen}}(V_R))_{f(a_0)}
}
\]
where the upper map, which is obtained by tensoring up (\ref{eq:ker+cokernel}) with $R$ over $S$, is an isomorphism. Note that the right map is an isomorphism because $\D^n_\Sen(\cdot)$ is functorial in $V_S$. We thus deduce that both the natural maps
\[
\ker(\Theta|\D^n_{\Sen}(V_S))\otimes_SR_{f(a_0)}\ra(\ker(\Theta|\D^n_{\Sen}(V_R)))_{f(a_0)}
\]
and
\[
\ker(Q(\Theta)|\D^n_{\Sen}(V_S))\otimes_SR_{f(a_0)}\ra(\ker(Q(\Theta)|\D^n_{\Sen}(V_R)))_{f(a_0)}
\]
are isomorphisms.
Let $L'$ be a finite Galois extension of $K$ given by Proposition \ref{prop:Theta-gamma}. It then follows from Proposition \ref{prop:Theta-gamma} that 
\[
\ker(\Theta|\D^{n}_{\Sen}(V_S))=(\D^{n}_{\Sen}(V_S))^{\Gamma_{L'_n}}
\]
and 
\[
\ker(\Theta|\D^{n}_{\Sen}(V_R))=(\D^{n}_{\Sen}(V_R))^{\Gamma_{L'_n}}.
\] 
Note that $(\D^{n}_{\Sen}(V_S))^{\Gamma}$ (resp. $(\D^{n}_{\Sen}(V_R))^{\Gamma}$) is the image of the endomorphism 
\[
a\mapsto \sum_{g\in \Gamma/\Gamma_{L'_n}}ga
\] 
on $(\D^{n}_{\Sen}(V_S))^{\Gamma_{L'_n}}$ (resp. $(\D^{n}_{\Sen}(V_R))^{\Gamma_{L'_n}}$).
We therefore conclude immediately that the natural map
\[
(\D^n_{\Sen}(V_S))^{\Gamma_{K}}\otimes_SR_{f(a_0)}\longrightarrow(\D^n_{\Sen}(V_R))^{\Gamma_{K}}_{f(a_0)}
\]
is an isomorphism.
\end{proof}

\begin{cor}\label{cor:base-change}
If $f:S\ra R$ is a map of affinoid algebras over $\Q$, for each $n\geq n(V_S)$, the natural map
\[
(\mathrm{D}_{\mathrm{dif}}^{+,n}(V_S)/(t^k))^{\Gamma}\otimes_SR\ra
(\mathrm{D}_{\mathrm{dif}}^{+,n}(V_R)/(t^k))^{\Gamma}
\]
has kernel and cokernel killed by a power of $f(P(k))$. In particular, if $f(P(k))$ is a unit, this map is an isomorphism.
\end{cor}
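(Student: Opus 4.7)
The plan is to combine Corollary \ref{cor:t-descent} with Proposition \ref{prop:Gamma-base-change} through a commutative square and a diagram chase. The first step is to rewrite the annihilator in Corollary \ref{cor:t-descent} in terms of $P(k)$. Since $\det(TI-\Theta)=TQ(T)$, evaluating at $T=-i$ gives $\det(\Theta+iI)=(-1)^{d+1}\,i\,Q(-i)$, and because $i$ and $(k-1)!$ are units in any $\Q$-affinoid algebra, $\prod_{i=1}^{k-1}\det(\Theta+iI)$ equals a unit multiple of $\prod_{i=1}^{k-1}Q(-i)=P(k)/Q(0)$, which in particular divides $P(k)$. Thus Corollary \ref{cor:t-descent} yields that the natural map $(\D_\dif^{+,n}(V_S)/(t^k))^{\Gamma_K}\to(\D^n_\Sen(V_S))^{\Gamma_K}$ has kernel and cokernel killed by $P(k)$, and similarly the analogous map for $V_R$ has kernel and cokernel killed by $f(P(k))$.

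Next I would form the commutative square
\[
\xymatrix{
(\D_\dif^{+,n}(V_S)/(t^k))^{\Gamma_K}\otimes_S R\ar[r]^-{\alpha}\ar[d]^{\gamma} & (\D^n_\Sen(V_S))^{\Gamma_K}\otimes_S R\ar[d]^{\delta} \\
(\D_\dif^{+,n}(V_R)/(t^k))^{\Gamma_K}\ar[r]^-{\beta} & (\D^n_\Sen(V_R))^{\Gamma_K}
}
\]
in which $\gamma$ is the map to be controlled. A brief analysis of how a possibly non-flat base change acts on kernels and cokernels (using the long exact $\mathrm{Tor}$-sequence together with the fact that $\mathrm{Tor}_1(C,R)$ is annihilated by any annihilator of $C$) shows that if the kernel and cokernel of a map $\phi$ of $S$-modules are each killed by $a\in S$, then $\ker(\phi\otimes_S R)$ is killed by $a^2$ and $\mathrm{coker}(\phi\otimes_S R)$ by $a$. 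Hence $\alpha$ has kernel and cokernel killed by a power of $f(P(k))$, as does $\beta$ (directly from the first step), while by Proposition \ref{prop:Gamma-base-change} $\delta$ has kernel and cokernel killed by a power of $f(Q(0))$, which is killed by the same power of $f(P(k))$ since $Q(0)\mid P(k)$.

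The conclusion then follows from a standard four-term diagram-chase lemma: in a commutative square whose top, bottom, and right maps have kernels and cokernels killed by ring elements $a$, $b$, $d$ respectively, the left map has kernel and cokernel killed by $abd$. Indeed, for the kernel, $x\in\ker\gamma$ gives $\alpha(x)\in\ker\delta$, so $dx\in\ker\alpha$, so $adx=0$; for the cokernel, given $y$ in the target of $\gamma$, choose $u$ with $d\beta(y)=\delta(u)$, then $x$ with $au=\alpha(x)$, and the identity $\beta(\gamma(x)-ady)=0$ combined with multiplication by $b$ yields $abd\cdot y\in\mathrm{Im}\,\gamma$. Applied in our setting, where every annihilator is a power of $f(P(k))$, this gives that $\ker\gamma$ and $\mathrm{coker}\,\gamma$ are killed by a fixed power of $f(P(k))$, proving the corollary; the isomorphism statement when $f(P(k))$ is a unit is then immediate. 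The only mildly nontrivial step is the first one (the identification of $\prod_{i=1}^{k-1}\det(\Theta+iI)$ with a divisor of $P(k)$ up to units); the rest is formal bookkeeping.
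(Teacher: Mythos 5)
Your proof is correct and follows essentially the same route as the paper: the same commutative square comparing $(\mathrm{D}_{\mathrm{dif}}^{+,n}/(t^k))^{\Gamma_K}$ with $(\mathrm{D}^n_{\mathrm{Sen}})^{\Gamma_K}$, with Corollary \ref{cor:t-descent} controlling the two horizontal (descent) maps and Proposition \ref{prop:Gamma-base-change} controlling the Sen-side base change. The only difference is presentational: where you track explicit annihilators (including the identification $\prod_{i=1}^{k-1}\det(\Theta+iI)=\text{unit}\cdot P(k)/Q(0)$, the $\mathrm{Tor}$ argument for the non-flat tensor, and the four-term chase), the paper simply localizes everything at $f(P(k))$, observes that the other three maps become isomorphisms, and concludes the top map is one too.
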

\begin{proof}
Consider the following commutative diagram
\[
\xymatrix{
\ar[d](\mathrm{D}_{\mathrm{dif}}^{+,n}(V_S)/(t^k))^{\Gamma}\otimes_SR_{f(P(k))}
\ar[r]&(\mathrm{D}_{\mathrm{dif}}^{+,n}(V_R)/(t^k))^{\Gamma}_{f(P(k))}
\ar[d]\\
(\mathrm{D}^n_{\Sen}(V_S))^{\Gamma}\otimes_SR_{f(P(k))}\ar[r]&(\mathrm{D}^n_{\Sen}(V_R))_{f(P(k))}^{\Gamma}.
}
\]
The bottom map is an isomorphism by Proposition \ref{prop:Gamma-base-change}. The left map and right map are isomorphisms by Corollary \ref{cor:t-descent}. Hence the upper map is an isomorphism; this yields the desired result.
\end{proof}



\section{The extended Robba ring}
\subsection{Definitions}
Let $B$ be a $\Q$-Banach algebra with $|B|$ discrete. Set $v(x)=-\log_p(|x|)$ for any $x\in B$.
\begin{defn}\label{def:Robba}
For any interval
$I\subseteq(0,\infty)$, let $\r_B^{I}$ be the ring of Laurent series
\[
f=\sum_{i\in\mathbb{Z}}a_i T^i
\]
for which $a_i\in B$
and $v(a_i)+si\ra\infty$ as $i\ra\pm\infty$ for all $s\in I$.
For any $s\in I$,
define $w_s:\r_B^I\ra\mathbb{R}$ as
$$w_s(f)=\min_{i\in\mathbb{Z}}\{v(a_i)+si\}$$
and the norm $|\cdot|_s$ on $\r_B^I$ as
\[
|f|_s=\max_{i\in\mathbb{Z}}\{|a_i|p^{-si}\}=p^{-w_s(f)}.
\]
We denote $\r_B^{(0,r]}$ by $\r_B^r$ for simplicity. Let $\r_B^{\bd,r}$ be the subring of $\r_B^r$
consisting of elements with $\{v(a_i)\}_{i\in\mathbb{Z}}$ bounded below. Define $w:\r^{\bd,r}_B\ra\mathbb{R}$ as
\[
w(f)=\min_{i\in\mathbb{Z}}\{v(a_i)\}.
\]
Let $\r_B^{\int,r}$ be the subring of $\r_S^{\bd,r}$ consisting of $f$ with $w(f)\geq0$. We call $\r_B=\cup_{r>0}\r_B^r$ the \emph{Robba ring over $B$}, and call $\r^{\bd}_B=\cup_{r>0}\r_B^r$ the \emph{bounded Robba ring over $B$}.
\end{defn}

\begin{defn}\label{def:extended-Robba}
For any interval $I\subseteq (0,\infty)$, let $\widetilde{\r}_B^I$ be the set of formal sums
\[
f=\sum_{i\in\mathbb{Q}}a_iu^i
\]
with $a_i\in B$ satisfying the following conditions.
\begin{enumerate}
\item[(1)]For any $c>0$, the set of $i\in\mathbb{Q}$ so that $|a_i|\geq c$ is well-ordered (i.e. has no infinite decreasing subsequence).
\item[(2)]For all $s\in I$, $v(a_i)+si\ra\infty$ as $i\ra\pm\infty$, and $\inf_{i\in\mathbb{Q}}\{v(a_i)+si\}>-\infty$.
\end{enumerate}
These series form a ring under formal series addition and multiplication. For any $s\in I$, set $w_s:\widetilde{\r}^I_B\ra\mathbb{R}$ as
\[
w_s(f)=\inf_{i\in\mathbb{Q}}\{v(a_i)+si\}
\]
and the norm $|f|_s$ on $\widetilde{\r}_B^I$ as
\[
|f|_s=\sup_{i\in\mathbb{Q}}\{|a_i|p^{-si}\}=p^{-w_s(f)}.
\]
We denote $\widetilde{\r}_B^{(0,r]}$ by $\widetilde{\r}_B^r$ for simplicity.
Let $\widetilde{\r}_B^{\bd,r}$ be the subring of $\widetilde{\r}_B^r$
consisting of elements $f$ with $\{v(a_i)\}_{i\in\mathbb{Q}}$ bounded below. Define $w:\widetilde{\r}^{\bd,r}_B\ra\mathbb{R}$ as
\[
w(f)=\min_{i\in\mathbb{Q}}\{v(a_i)\}.
\]
 We call $\widetilde{\r}_B=\cup_{r>0}\widetilde{\r}_B^r$ the \emph{extended Robba ring over $B$},
and call $\widetilde{\r}^{\bd}_B=\cup_{r>0}\widetilde{\r}_B^r$ the \emph{extended bounded Robba ring over $B$}.
\end{defn}

We refer the reader to \S2.3 for more discussion of the extended Robba ring. 

\begin{remark}
Since $|B|$ is discrete, it follows from condition $(1)$ that $\inf_{i\in\mathbb{Q}}\{v(a_i)+si\}$ (hence also $\sup_{i\in\mathbb{Q}}\{|a_i|p^{-si}\}$) is attained at some $i\in\mathbb{Q}$.
\end{remark}
We equip $\r_B^I$ (resp. $\widetilde{\r}_B^I$) with the Fr\'echet topology defined by $\{w_s\}_{s\in I}$; then $\r_B^I$ (resp. $\widetilde{\r}_B^I$) is a complete Fr\'echet algebra over $\Q$. Furthermore, in the case that $I=[a,b]$ is a closed interval, $\r_B^I$ (resp. $\widetilde{\r}_B^I$) is a Banach algebra over $\Q$ with the norm $\max\{w_a,w_b\}$. We equip $\r_B^{\bd,r}$ (resp. $\widetilde{\r}_B^{\bd,r}$) with the norm $\max\{w,w_r\}$; then $\r_B^{\bd,r}$ (resp. $\widetilde{\r}_B^{\bd,r}$) is a Banach algebra over $\Q$.

\begin{defn}\label{def:extended-calE}
Let $\widetilde{\calE}_B$ be the ring of
formal sums $f=\sum_{i\in\mathbb{Q}}a_iu^i$ with
$a_i\in B$ satisfying the following conditions.
\begin{enumerate}
\item[(1)] For each $c>0$, the set
of $i\in\mathbb{Q}$ such that $|a_i|\geq c$ is well-ordered.
\item[(2)]The set $\{v(a_i)\}_{i\in\mathbb{Q}}$ is bounded below and
$v(a_i)\ra\infty$ as $i\ra-\infty$.
\end{enumerate}
Set $w:\widetilde{\calE}_B\ra\mathbb{R}$ as
\[
w(f)=\min_{i\in\mathbb{Q}}\{v(a_i)\}.
\]
\end{defn}
We equip $\widetilde{\calE}_B$ with the topology defined by $w$; then $\widetilde{\calE}_B$ is complete for this topoology.

In the following, let $L$ be a $p$-adic field in the sense that it is a complete discretely valued field  equipped with the structure of an $\Q$-algebra in such a way that the map $\Q\ra L$ is continuous. Put $B_L=L\widehat{\otimes}_{\Q}B$.

\begin{prop}\label{prop:comparison}
For $R\in\{\r^{\bd,r},\r^I\}$ or  $R=\r^r$,  where $I\subset(0,\infty)$ is a closed interval, the natural map
$R_L\otimes_{\Q}B\rightarrow R_{B_L}$
induces an isometric isomorphism 
\[
R_L\widehat{\otimes}_{\Q}B\cong R_{B_L}
\] 
of $L$-Banach algebras or Fr\'echet algebras. For $\widetilde{R}\in \{\widetilde{\calE},\widetilde{\r}^{\bd,r},\widetilde{\r}^I\}$ or $\widetilde{R}=\widetilde{\r}^{r}$, the natural map $\widetilde{R}_L\otimes_{\Q}B\rightarrow\widetilde{R}_{B_L}$ induces an isometric embedding 
\[
\widetilde{R}_L\widehat{\otimes}_{\Q}B\hookrightarrow \widetilde{R}_{B_L}
\]
of $L$-Banach algebras or Fr\'echet algebras.
\end{prop}
\begin{proof}
This follows from \cite[Lemma 2.1.6]{L08}.
\end{proof}

\begin{prop}\label{prop:intersection}
If $B$ is of countable type, then
\[
(\widetilde{\calE}_{L}\widehat{\otimes}_{\Q}B)\cap\widetilde{\r}^{\bd,r}_{B_L}=\widetilde{\r}^{\bd,r}_L\widehat{\otimes}_{\Q}B.
\]
\end{prop}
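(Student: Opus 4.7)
The inclusion $\supseteq$ is immediate from the embeddings provided by Proposition \ref{prop:comparison}: $\widetilde{\r}^{\bd,r}_L \widehat{\otimes}_{\Q} B$ sits inside both $\widetilde{\calE}_L \widehat{\otimes}_{\Q} B$ and $\widetilde{\r}^{\bd,r}_{B_L}$. For the nontrivial direction $\subseteq$, the first step is to use the countable-type hypothesis to fix an orthonormal Schauder basis $\{e_j\}_{j \geq 0}$ of $B$ over $\Q$; this remains an orthonormal basis of $B_L = L \widehat{\otimes}_{\Q} B$ over $L$. Any element of $\widetilde{\calE}_L \widehat{\otimes}_{\Q} B$ then has a unique expansion $f = \sum_j f_j \otimes e_j$ with $f_j \in \widetilde{\calE}_L$ and $w(f_j) \to \infty$. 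On the other hand, expanding $f = \sum_{i\in\mathbb{Q}} a_i u^i$ inside $\widetilde{\calE}_{B_L}$ with $a_i = \sum_j a_{i,j} e_j$ and $a_{i,j}\in L$, one has $f_j = \sum_i a_{i,j} u^i$, and orthonormality gives the identity $v(a_i) = \inf_j v(a_{i,j})$.

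Now suppose $f \in \widetilde{\r}^{\bd,r}_{B_L}$. First, each $f_j$ individually lies in $\widetilde{\r}^{\bd,r}_L$: the inequalities $v(a_{i,j}) \geq v(a_i)$ combined with the defining properties of $f$ yield $w(f_j) \geq w(f) > -\infty$ and $w_s(f_j) \geq w_s(f) > -\infty$ for $s \in (0,r]$, and also $v(a_{i,j}) + s i \geq v(a_i) + s i \to \infty$ as $i \to \pm\infty$. The problem thus reduces to showing that the series $\sum_j f_j \otimes e_j$ converges in $\widetilde{\r}^{\bd,r}_L \widehat{\otimes}_{\Q} B$, whose norm is $\max(|\cdot|_w, |\cdot|_{w_r})$. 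Since we already have $w(f_j) \to \infty$, the only remaining assertion is $w_r(f_j) \to \infty$ as $j \to \infty$.

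This last assertion is the main step, and I would prove it by contradiction. If it fails, there is a constant $M$ and a subsequence $j_1 < j_2 < \cdots$ with $w_r(f_{j_k}) \leq M$, and after passing to a further subsequence we may assume $w(f_{j_k}) \geq M + k$. Pick $i_k \in \mathbb{Q}$ with $v(a_{i_k,j_k}) + r i_k \leq M$; then $v(a_{i_k,j_k}) \geq w(f_{j_k}) \geq M + k$ forces $r i_k \leq -k$, so $i_k \to -\infty$. But $v(a_{i_k}) \leq v(a_{i_k,j_k})$ gives $v(a_{i_k}) + r i_k \leq M$ for all $k$, contradicting $f \in \widetilde{\r}^{\bd,r}_{B_L}$, which requires $v(a_i) + r i \to \infty$ as $i \to -\infty$.

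I expect this final contradiction to be the subtle point: the mere membership in $\widetilde{\calE}_L \widehat{\otimes}_{\Q} B$ gives only $w$-decay of the components $f_j$, and it is the additional ``two-sided'' growth of $v(a_i) + r i$ at $-\infty$ inherited from $f \in \widetilde{\r}^{\bd,r}_{B_L}$ that must be leveraged to upgrade this to $w_r$-decay. Once both decays are in hand, $\sum_j f_j \otimes e_j$ converges in $\widetilde{\r}^{\bd,r}_L \widehat{\otimes}_{\Q} B$ and gives the required representation of $f$.
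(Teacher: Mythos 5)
Your proof is correct, but it takes a different route from the paper: the paper disposes of this proposition in one line, by invoking \cite[Lemma 2.1.8]{L08} with $S=\widetilde{\calE}_L$, i.e.\ it outsources the work to a general intersection lemma for completed tensor products proved in the earlier paper, whereas you give a self-contained argument. Your key step --- decomposing $f=\sum_j f_j\otimes e_j$ along a basis of $B$ coming from the countable-type hypothesis, checking coefficientwise that each $f_j\in\widetilde{\r}^{\bd,r}_L$ (note also that the well-ordering condition for $f_j$ is inherited, since $\{i:|a_{i,j}|\geq c\}\subseteq\{i:|a_i|\geq c\}$), and then upgrading the $w$-decay $w(f_j)\to\infty$ to $w_r$-decay by contradiction, using that $v(a_i)+ri\to\infty$ as $i\to-\infty$ for $f\in\widetilde{\r}^{\bd,r}_{B_L}$ --- is exactly the right leverage point, and the contradiction argument (forcing $i_k\to-\infty$ while $v(a_{i_k})+ri_k\leq M$) is sound. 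One small technical adjustment: since $|B|$ is only assumed discrete, not contained in $|\Q^\times|$, you should work with an \emph{orthogonal} basis $\{e_j\}$ (which exists for Banach spaces of countable type over a discretely valued field, cf.\ \cite{S02}), normalized so that $|e_j|\in(|p|,1]$; then $v(a_i)=\min_j(v(a_{i,j})+v(e_j))$, so the identities $v(a_{i,j})\geq v(a_i)$ and $v(a_{i_k})\leq v(a_{i_k,j_k})$ hold only up to a constant less than $1$, which changes nothing in the limiting arguments. What your approach buys is transparency and independence from \cite{L08}; what the paper's citation buys is brevity and a statement in a generality (arbitrary closed coefficient ring $S$ in place of $\widetilde{\calE}_L$) that is reused elsewhere in that earlier work.
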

\begin{proof}
This follows from \cite[Lemma 2.1.8]{L08} by taking $S=\widetilde{\calE}_{L}$.
\end{proof}

\begin{lemma}\label{lem:projection}
Let $S$ be an affinoid algebra over $\Q$. Then for any $x\in M(S)$,
the natural map $\r^r_S\otimes_Sk(x)\ra\r^r_{k(x)}$ is an isomorphism.
\end{lemma}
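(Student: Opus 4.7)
The plan is to identify $\r^r_S\otimes_S k(x)$ with the quotient $\r^r_S/\mathfrak{m}_x\r^r_S$ and verify that the natural coefficient-wise reduction map $\r^r_S\to\r^r_{k(x)}$ is surjective with kernel exactly $\mathfrak{m}_x\r^r_S$. Here $\mathfrak{m}_x\subset S$ is the maximal ideal at $x$, and by hypothesis $k(x)=S/\mathfrak{m}_x$ is a finite extension of $\Qp$.

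For surjectivity, given $\bar f=\sum_{i\in\ZZ}\bar a_i T^i\in\r^r_{k(x)}$, I would lift each coefficient $\bar a_i\in k(x)$ to $a_i\in S$. Since $k(x)$ is a finite-dimensional $\Qp$-Banach space, any two Banach norms on $k(x)$ are equivalent; in particular the quotient norm induced by the surjection $S\twoheadrightarrow k(x)$ and the canonical norm on $k(x)$ differ by a uniform factor $C>0$, so one can arrange $|a_i|_S\le C|\bar a_i|_{k(x)}$. The convergence condition $|\bar a_i|_{k(x)}p^{-si}\to 0$ as $|i|\to\infty$ for each $s\in(0,r]$ then propagates to $|a_i|_S p^{-si}\to 0$, giving $f=\sum a_i T^i\in\r^r_S$ with image $\bar f$.

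For the kernel computation, let $f=\sum a_i T^i\in\r^r_S$ satisfy $a_i\in\mathfrak{m}_x$ for every $i$. Since $S$ is Noetherian, pick finite generators $g_1,\dots,g_n$ of $\mathfrak{m}_x$. The $\Qp$-linear map $\psi\colon S^n\to\mathfrak{m}_x$, $(h_1,\dots,h_n)\mapsto\sum_j g_j h_j$, is a surjection of $\Qp$-Banach spaces, so by the open mapping theorem there exists a constant $C>0$ such that every $a\in\mathfrak{m}_x$ admits a representation $a=\sum_j g_j h_j$ with $\max_j|h_j|_S\le C|a|_S$. Applying this to each $a_i$ produces $h_{i,j}\in S$ with $a_i=\sum_j g_j h_{i,j}$ and $|h_{i,j}|_S\le C|a_i|_S$. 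The crucial point is that this bound is uniform in $i$, so for each $j$ the series $f_j:=\sum_i h_{i,j}T^i$ inherits the convergence conditions of $f$, whence $f_j\in\r^r_S$ and $f=\sum_j g_j f_j\in\mathfrak{m}_x\r^r_S$.

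The slightly delicate step is the injectivity argument, because we need the decomposition constant produced by the open mapping theorem to be independent of the individual coefficient $a_i$; without this uniformity the lifted Laurent series would not be guaranteed to lie in $\r^r_S$. Once this uniform $C$ is in hand, the rest is a formal verification, and combining the two halves yields the asserted isomorphism $\r^r_S\otimes_S k(x)\cong\r^r_{k(x)}$.
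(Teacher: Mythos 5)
Your proof is correct, and the kernel computation is essentially the paper's own argument: both apply the open mapping theorem for Banach spaces over discretely valued fields to the surjection $S^m\twoheadrightarrow\mathfrak{m}_x$ to obtain a decomposition constant that is uniform in the coefficient index, which is exactly the point needed to keep the series $f_j$ inside $\r^r_S$. Where you diverge is the surjectivity step. You lift coefficients one at a time, using the fact that $k(x)$ is finite-dimensional over $\Qp$, so the quotient norm from $S\twoheadrightarrow k(x)$ is equivalent to the intrinsic norm and lifts can be chosen with $|a_i|_S\leq C|\bar a_i|_{k(x)}$ uniformly in $i$; the convergence conditions then transfer termwise. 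The paper instead invokes the Hahn--Banach theorem over discretely valued fields to split $0\to\mathfrak{m}_x\to S\to k(x)\to 0$ as $\Qp$-Banach spaces, tensors with $\r^r_{\Qp}$ to get an exact sequence of completed tensor products, and then uses the identification $\r^r_{\Qp}\widehat{\otimes}_{\Qp}B\cong\r^r_B$ from Proposition \ref{prop:comparison}. Your route is more elementary and self-contained for this particular statement, since it never leaves the explicit Laurent-series description; the paper's route has the advantage of not using finite-dimensionality of the quotient (Hahn--Banach splits off any closed subspace), so it adapts verbatim to more general closed ideals and to the other rings covered by Proposition \ref{prop:comparison}. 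The only minor point to tidy in your write-up is that the infimum defining the quotient norm need not literally be attained, so you should allow a fixed loss factor in the bound $|a_i|_S\leq C|\bar a_i|_{k(x)}$ (or note that discreteness of $|S|$ makes the infimum attained); this does not affect the argument.
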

\begin{proof}
It reduces to show that the natural map $\rho_x:\r_S^r\ra\r^r_{k(x)}$ is surjective and its kernel is $\mathfrak{m}_x\r_S^r$. By \cite[Proposition A.2.2]{Bel13}, the exact sequence
\[
0\longrightarrow\mathfrak{m}_x\longrightarrow S\longrightarrow k(x)\longrightarrow0
\]
induces the exact sequence
\[
0\longrightarrow\r^r_{\Q}\widehat{\otimes}_{\Q}\mathfrak{m}_x\longrightarrow \r_{\Q}^r\widehat{\otimes}_{\Q}S
\longrightarrow \r_{\Q}^r\widehat{\otimes}_{\Q}k(x)\longrightarrow0.
\]
Using Proposition \ref{prop:comparison}, we get that $\rho_x$ is surjective. Choose a finite set of generators $b_1,\dots,b_m$ of $\mathfrak{m}_x$. By the open mapping theorem for Banach spaces over discretely valued fields, the surjective map of $\Q$-Banach spaces $S^m\rightarrow \mathfrak{m}_x$ defined by $(a_1,\dots,a_m)\mapsto\sum_{i=1}^ma_ib_i$ is open. Hence there exists $c>0$ such that for any $a\in\mathfrak{m}_x$, there exist $a_1,\dots,a_m\in S$ with $|a_i|\leq c|a|$ such that $a=\sum_{i=1}^ma_ib_i$. Now let $f=\sum_{i\in\mathbb{Q}}a_iu^i$ belongs to kernel of $\rho_x$; so $a_i\in\mathfrak{m}_x$ for all $i$. For each $i\in\mathbb{Q}$, choose $a_{ij}\in S$ with $|a_{ij}|\leq c|a_i|$ for $1\leq j\leq m$  such that $a_i=\sum_{j=1}^ma_{ij}b_j$. Let $f_j=\sum_{i\in\mathbb{Q}}a_{ij}u^i$ for $1\leq j\leq m$. It is then clear that $f_j\in\r_S^r$ and $f=\sum_{j=1}^mb_jf_j$; hence $f\in \mathfrak{m}_x\r_S^r$.
\end{proof}

\subsection{Key lemma}
From now on, suppose that $L$ is equipped with an isometric automorphism $\varphi_L$ such that its restriction on $\Q$ is the identity. Let $S$ be an affinoid algebra over $\Q$, and let $\varphi$ be the continuous extension of $\varphi_L\otimes{\mathrm{id}}$ to $S_L$. We fix a positive integer $q>1$, and we extend $\varphi$ to automorphisms on $\widetilde{\r}_{S_L}$ and $\widetilde{\calE}_{S_L}$ by setting
\[
\varphi(\sum_{i\in\mathbb{Q}}a_iu^i)=\sum_{i\in\mathbb{Q}}\varphi(a_i)u^{qi}.
\]
It is obvious that $\varphi$ restricts to automorphisms on $\widetilde{\r}_L\widehat{\otimes}_{\Q}S$ and $\widetilde{\calE}_L\widehat{\otimes}_{\Q}S$.

Let $\alpha\in S^\times$. Consider the following Frobenius equation
\begin{equation}\label{eq:Frobenius-equation-family}
\varphi(b)-\alpha b=a.
\end{equation}
The following is a variant of \cite[Lemma 2.3.5(3)]{L08}.
\begin{lemma}\label{lemma:Frobenius-equation}
Suppose $|\alpha^{-1}|<1$. Then for $a=\sum_{i\in\mathbb{Q}}a_iu^i\in\widetilde{\r}^{r}_{S_L}$, the following are true.
\begin{enumerate}
\item[(1)](\ref{eq:Frobenius-equation-family}) admits at most one solution $b\in\widetilde{\r}_{S_L}$.
\item[(2)](\ref{eq:Frobenius-equation-family}) has a solution $b\in\widetilde{\r}_{S_L}$ if and only if
\begin{equation}\label{eq:criterion}
\sum_{m\in\mathbb{Z}}\alpha^{-(m+1)}\varphi^m(a_{iq^{-m}})=0
\end{equation}
for all $i<0$. Furthermore, in this case the unique solution $b$ is given by
\[
b=-\sum_{i\in\mathbb{Q}}(\sum_{m\in\mathbb{N}} \alpha^{-(m+1)}\varphi^m(a_{iq^{-m}}))u^i
\]
and belongs to $\widetilde{\r}^{qr}_{S_L}$, and it satisfies $w_r(b)\geq w_r(a)-C(r,\alpha)$ where $C(r,\alpha)$ is some constant only depending on $r,\alpha$.
\item[(3)]
Suppose $a\in \widetilde{\r}^r_L\widehat{\otimes}_{\Q}S$. If $b\in\widetilde{\r}_{S_L}$ is a solution of  (\ref{lemma:Frobenius-equation}), then $b\in \widetilde{\r}^r_L\widehat{\otimes}_{\Q}S$.
\end{enumerate}
\end{lemma}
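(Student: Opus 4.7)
The plan is to handle uniqueness first, then derive the closed-form formula and the criterion from an assumed solution, construct the solution conversely from the criterion, and finally descend to $\widetilde{\r}^r_L\widehat{\otimes}_\Q S$. For part (1), if $b_1,b_2\in\widetilde{\r}_{S_L}$ are two solutions, set $c=b_1-b_2$, so $\varphi(c)=\alpha c$. Expanding $c=\sum c_i u^i$ and matching coefficients gives $c_i=\alpha^{-1}\varphi(c_{i/p})$, and iterating yields $c_i=\alpha^{-N}\varphi^N(c_{i/p^N})$. Since $b_1,b_2\in\widetilde{\r}^{r'}_{S_L}$ for some $r'>0$ and $i/p^N\to 0$ as $N\to\infty$, the coefficients $c_{i/p^N}$ stay uniformly bounded; combined with $|\alpha^{-1}|<1$ and the isometry of $\varphi$ on $S_L$, letting $N\to\infty$ forces $c_i=0$.

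For part (2), if $b$ solves the equation, the recursion $b_i=\alpha^{-1}(\varphi(b_{i/p})-a_i)$ iterates to the telescoping identity $b_i=-\sum_{m=0}^{N-1}\alpha^{-(m+1)}\varphi^m(a_{i/p^m})+\alpha^{-N}\varphi^N(b_{i/p^N})$; the remainder vanishes as $N\to\infty$ by the estimate of (1), yielding the closed-form formula. The bi-infinite criterion (\ref{eq:criterion}) is obtained by iterating backwards via $\varphi^{-1}$: one gets $b_i=\alpha^N\varphi^{-N}(b_{p^N i})+\sum_{k=1}^N\alpha^{k-1}\varphi^{-k}(a_{p^k i})$, and for $i<0$ the indices $p^N i\to-\infty$ force $v(b_{p^N i})$ to grow faster than $|\alpha|^N$, so the remainder vanishes; equating both expressions for $b_i$ produces the identity. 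Conversely, given (\ref{eq:criterion}), define $b_i$ via the forward formula (it converges in $S_L$ from $|\alpha^{-1}|<1$ and boundedness of $a_{i/p^m}$ for $m$ large), so $\varphi(b)-\alpha b=a$ by telescoping. To verify $b\in\widetilde{\r}^{pr}_{S_L}$ with $w_r(b)\geq w_r(a)-C(r,\alpha)$, I split by sign of $i$: for $i\geq 0$, applying $v(a_{i/p^m})\geq w_t(a)-t i/p^m$ at $m=0$ in the forward formula (with $t<s\leq pr$) gives $v(b_i)+s i\geq v(\alpha^{-1})+w_t(a)+i(s-t)$, bounded below and tending to $\infty$ as $i\to+\infty$; for $i<0$, the forward bound diverges, so I use the criterion to rewrite $b_i=\sum_{k\geq 1}\alpha^{k-1}\varphi^{-k}(a_{p^k i})$, and the asymptotic $v(a_j)+s j\to\infty$ as $j\to-\infty$ supplies both the required growth of $v(b_i)+s i$ and the bound $v(b_i)+r i\geq w_r(a)$ (from the $k=1$ term).

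For part (3), each coefficient $b_i$ from the formula lies in $L\widehat{\otimes}_\Q S$ because $\alpha\in S$ and $\varphi$ preserves the subring $L\widehat{\otimes}_\Q S\subset S_L$. To upgrade this to $b\in\widetilde{\r}^r_L\widehat{\otimes}_\Q S$ (which embeds isometrically into $\widetilde{\r}^r_{S_L}$ by Proposition \ref{prop:comparison}), I would approximate the coefficients $b_i$ by elements of $L\otimes_\Q S$, then use the density statement in Proposition \ref{prop:comparison} together with the uniform norm bound from (2) to assemble the approximations into a limit sitting inside $\widetilde{\r}^r_L\widehat{\otimes}_\Q S$; alternatively, one can extend Proposition \ref{prop:intersection} to the unbounded setting.

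The main technical obstacle is verifying $b\in\widetilde{\r}^{pr}_{S_L}$ for $i<0$: the forward formula's lower bound on $v(b_i)+s i$ tends to $-\infty$, and only by pivoting to the backward expression enabled by (\ref{eq:criterion}) can one trade the asymptotic decay of $a_j$ at $-\infty$ against the exponential growth of $|\alpha|^k$ to recover the required growth estimates.
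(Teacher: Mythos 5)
Parts (1) and the derivation of the criterion (your backward iteration is just the paper's computation of $\varphi^k$ applied to the tails, and your forward formula matches the paper's), as well as the construction of $b$ from the criterion, are sound. The genuine gap is in your verification that $b\in\widetilde{\r}^{pr}_{S_L}$ with $w_r(b)\geq w_r(a)-C(r,\alpha)$. You use the backward series for \emph{all} $i<0$ and justify the bound ``from the $k=1$ term''. A lower bound on one term of a series does not bound the valuation of the sum: you must take the infimum over all $k$, and the factor $\alpha^{k-1}$ contributes $(k-1)v(\alpha)$, which is negative and linear in $k$ (note $|\alpha^{-1}|<1$ forces $|\alpha|>1$). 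This loss is beaten by the gain $r|i|(p^{k}-1)$ coming from $v(a_{p^k i})+rp^k i\geq w_r(a)$ only when $|i|$ is bounded away from $0$; for $-1<i<0$ the infimum over $k$ of your term-wise bounds tends to $-\infty$ as $i\to 0^-$ (take $k\approx\log_p(1/|i|)$), so your argument gives neither the membership nor any uniform bound in that range, and even for $i\leq-1$ the honest conclusion is a bound with a constant loss $C_1(r,\alpha)$, not $w_r(a)$ itself. The paper splits at $i=-1$, not at $i=0$: for $i>-1$ (including small negative $i$) the \emph{forward} series works because the loss $r|i|(1-p^{-m})\leq r$ is bounded, and only for $i\leq-1$ does one switch to the backward series, where $|i|\geq1$ makes the exponential gain dominate up to a constant. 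You also never check condition (1) of Definition \ref{def:extended-Robba} (well-ordered support of $b$); the paper obtains it by exhibiting $b^{+}$ as a limit in $\widetilde{\r}^{r}_{S_L}$ and $b^{-}$ as a limit in $\widetilde{\calE}_{S_L}$.

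Part (3) as proposed is not a proof. Every coefficient of $b$ automatically lies in $S_L=L\widehat{\otimes}_{\Q}S$, so that observation has no content, and Proposition \ref{prop:comparison} gives density of the image only for the ordinary Robba rings: for the extended rings the map $\widetilde{\r}^{r}_L\widehat{\otimes}_{\Q}S\ra\widetilde{\r}^{r}_{S_L}$ is merely an isometric embedding, so you cannot ``assemble coefficient-wise approximations'' into an element of the completed tensor product without a uniformity across the (possibly non-discrete) set of exponents --- that is exactly the point at issue. The paper's route is different: split $a=a^{+}+a^{-}$, note $a^{+}\in\widetilde{\r}^{r}_L\widehat{\otimes}_{\Q}S$ and $a^{-}\in\widetilde{\r}^{\bd,r}_L\widehat{\otimes}_{\Q}S$, realize $b^{+}=-\sum_{m\geq0}\alpha^{-(m+1)}\varphi^{m}(a^{+})$ as a convergent series in the complete space $\widetilde{\r}^{r}_L\widehat{\otimes}_{\Q}S$ and $b^{-}$ as a convergent series in $\widetilde{\calE}_L\widehat{\otimes}_{\Q}S$, and then apply Proposition \ref{prop:intersection} (valid for the bounded rings, with $S$ of countable type) together with the estimate $b^{-}\in\widetilde{\r}^{\bd,r}_{S_L}$ to get $b^{-}\in\widetilde{\r}^{\bd,r}_L\widehat{\otimes}_{\Q}S$. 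Your fallback of ``extending Proposition \ref{prop:intersection} to the unbounded setting'' is precisely the missing ingredient, not an available tool; the bounded-part trick is what makes the descent work.
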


\begin{proof}
Suppose that $b=\sum_{i\in\mathbb{Q}}b_iu^i\in \widetilde{\r}^{r'}_{S_L}$ is a solution of (\ref{eq:Frobenius-equation-family}). By comparing coefficients, we get
\[
\varphi(b_{i/q})-\alpha b_i=a_i,
\]
yielding
\begin{equation}\label{eq:iteration}
b_i=\alpha^{-1}\varphi(b_{i/q})-\alpha^{-1}a_i
\end{equation}
for every $i\in\mathbb{Q}$. Since $|\alpha^{-1}|_{\mathrm{sp}}<1$ and $\{|a_{iq^{-m}}|\}_{m\in\mathbb{N}}$ are bounded, we get
\[
b_i=-\sum_{m\in\mathbb{N}} \alpha^{-(m+1)}\varphi^m(a_{iq^{-m}})
\]
by iterating (\ref{eq:iteration}). Thus $b_i$ is uniquely determined by $\alpha$ and $a$. This proves (1). Furthermore, for any $k\in\mathbb{N}$,
\[
\varphi^k(\sum_{m=-k}^\infty\alpha^{-(m+1)}\varphi^m(a_{iq^{-m}}))
=\alpha^{k}\sum_{m=0}^\infty\alpha^{-(m+1)}\varphi^m(a_{(iq^k)p^{-m}})=-\alpha^{k}b_{iq^k}.
\]
Hence
\[
v(\sum_{m=-k}^\infty\alpha^{-(m+1)}\varphi^m(a_{iq^{-m}}))=v(\alpha^{k}b_{iq^k})\geq kv(\alpha)+v(b_{iq^k})\geq kv(\alpha)+w_{r'}(b)-r'iq^k.
\]
It follows that if $i<0$, then $v(\sum_{m=-k}^\infty \alpha^{-(m+1)}\varphi^m(a_{iq^{-m}}))\ra \infty$ as $k\ra\infty$; this yields (\ref{eq:criterion}), proving the ``only if'' part of (2).

To prove the ``if'' part of (2), for $f=\sum_{i\in\mathbb{Q}}a_iu^i\in\widetilde{\r}^r_{S_L}$ and $c\in\mathbb{R}$,
we set
\[
w_r^{c,-}(f)=\min_{i\leq c}\{v(a_i)+ri\}.
\]
It is clear that $w_r^{c,-}(f)\ra\infty$ as $c\ra-\infty$. Now suppose that (\ref{eq:criterion}) holds for all $i<0$. If $i\leq-1$, then for each $m\leq-1$,
\begin{equation*}
\begin{split}
v(\alpha^{-(m+1)}\varphi^m(a_{iq^{-m}}))&\geq v(a_{iq^{-m}})-(m+1)v(\alpha)=(v(a_{iq^{-m}})+riq^{-m})-riq^{-m}-(m+1)v(\alpha)\\
&\geq(w_r^{i,-}(a)-ri)+ri(1-q^{-m})-(m+1)v(\alpha)\\
&\geq (w_r^{i,-}(a)-ri)+r(q^{-m}-1)-(m+1)v(\alpha)\\
&\geq w_r^{i,-}(a)-ri-C_1(r,\alpha),
\end{split}
\end{equation*}
where $C_1(r,\alpha)$ is some constant depending on $r,\alpha$. Hence
\begin{equation}\label{eq:i<-1}
\begin{split}
w_r((\sum_{m=0}^\infty \alpha^{-(m+1)}\varphi^m(a_{iq^{-m}}))u^i)&=v(-\sum_{m=-1}^{-\infty}\alpha^{-(m+1)}\varphi^m(a_{iq^{-m}}))+ri\\
&\geq  w_r^{i,-}(a)-C_1(r,\alpha)
\end{split}
\end{equation}
for each $i\leq-1$. Note that $v(\alpha^{-1})>0$ by assumption. Thus if $-1<i<0$, for any $m\geq0$,
\begin{equation}
\begin{split}
v(\alpha^{-(m+1)}\varphi^m(a_{iq^{-m}}))&\geq w_r(a)-riq^{-m}+v(\alpha^{-1})(m+1)\\
&>w_r(a)-riq^{-m}\\
&>w_r(a)-ri.\\
\end{split}
\end{equation}
Hence
\begin{equation}\label{eq:i>-1}
w_r((\sum_{m=0}^\infty \alpha^{-(m+1)}\varphi^m(a_{iq^{-m}}))u^i)\geq w_r(a)
\end{equation}
for all $i>-1$. 

Now put
\[
a^+=\sum_{i\geq0}a_iu^i, \qquad a^-=\sum_{i<0}a_iu^i
\]
and
\[
b^+=-\sum_{i\geq0}(\sum_{m=0}^\infty\alpha^{-(m+1)}\varphi^m(a_{iq^{-m}}))u^i, \quad
b^-=-\sum_{i< 0}(\sum_{m=0}^\infty\alpha^{-(m+1)}\varphi^m(a_{iq^{-m}}))u^i.
\]
Since $|\alpha^{-1}|<1$, it is straightforward to see that the series $\sum_{m=0}^\infty\alpha^{-(m+1)}\varphi^m(a^+)$ is convergent in $\widetilde{\r}^{r}_{S_L}$, and
\[
w_r(\sum_{m=0}^\infty\alpha^{-(m+1)}\varphi^m(a^+))\geq w_r(a^+)\geq w_r(a).
\]
We then deduce 
\[
\sum_{m=0}^\infty\alpha^{-(m+1)}\varphi^m(a^+)=-b^+
\] 
by comparing the coefficients. We claim that $b^-$ also belongs to $\widetilde{\r}_{S_L}^r$. We first deduce from (\ref{eq:i<-1}) and (\ref{eq:i>-1}) that $b^-$ satisfies Definition \ref{def:extended-Robba}(2). On the other hand, since $|\alpha^{-1}|<1$, the series $\sum_{m=0}^\infty\alpha^{-(m+1)}\varphi^m(a^-)$ is convergent to $-b^{-}$ in $\widetilde{\calE}_{S_L}$. Hence $b^-$ also satisfies Definition \ref{def:extended-Robba}(1), yielding the claim. 

Now put $b=b^++b^-\in\widetilde{\r}_{S_L}^r$. It is then clear that $b$ is the solution of (\ref{eq:Frobenius-equation-family}). By (\ref{eq:i<-1}) and (\ref{eq:i>-1}), we get 
\[
w_r(b^-)\geq w_r(a)-C(r,\alpha).
\]
where $C(r,\alpha)=\max\{0, C_1(r,\alpha)\}$. It then follows $w_r(b)\geq w_r(a)-C(r,\alpha)$. Furthermore, since $\varphi(b)=a-\alpha b\in \widetilde{\r}^{r}_{S_L}$, we get $b\in \widetilde{\r}^{qr}_{S_L}$.

It remains to prove (3). If $a\in \widetilde{\r}^r_L\widehat{\otimes}_{\Q}S$, then $a^+\in\widetilde{\r}^r_L\widehat{\otimes}_{\Q}S$ and $a^-\in \widetilde{\r}^{\mathrm{bd},r}_L\widehat{\otimes}_{\Q}S$. We thus deduce $b^+\in \widetilde{\r}_L^r\widehat{\otimes}_{\Q}S$ and $b^-\in \widetilde{\calE}_L\widehat{\otimes}_{\Q}S$. Since $b^-\in\widetilde{\r}_L^{\bd,r}\widehat{\otimes}_{\Q}S$, by Proposition \ref{prop:intersection}, we conclude
that
\[
b^-\in (\widetilde{\calE}_L\widehat{\otimes}_{\Q}S)\cap \widetilde{\r}^{\bd,r}_{S_L}=\widetilde{\r}_L^{\bd,r}\widehat{\otimes}_{\Q}S.
\]
Hence $b=b^++b^-\in \widetilde{\r}^r_L\widehat{\otimes}_{\Q}S$.
\end{proof}
\begin{remark}
One can reformulate the above lemma using the notion of cohomology of $\varphi$-modules. For any $\alpha\in S^\times$, we define the rank 1 $\varphi$-module $\widetilde{\r}_{S_L}(\alpha)$ over $\widetilde{\r}_{S_L}$ by setting $\varphi(v)=\alpha^{-1}v$ for a generator $v$; we set $H^1(\widetilde{\r}_{S_L}(\alpha))=\widetilde{\r}_{S_L}(\alpha)/(\varphi-1)$. Then Lemma \ref{lemma:Frobenius-equation} says that if $|\alpha^{-1}|<1$, then $av$ is a coboundary if and only if $a$ satisfies (\ref{eq:criterion}).
\end{remark}
\subsection{Relations between different rings}
Recall that there exists a natural identification $\widetilde{\mathbf{B}}_\rig^\dag\cong\Gamma^{\mathrm{alg}}_{\mathrm{an}}$ which identifies $\widetilde{\mathbf{B}}_{\rig}^{\dagger,\rho(r)}$ with $\Gamma^{\mathrm{alg}}_{\mathrm{an},r}$ for any $r>0$ (see for instance \cite[\S1.1]{LB08}); here $\Gamma^{\mathrm{alg}}_{\mathrm{an}}$ and $\Gamma^{\mathrm{alg}}_{\mathrm{an},r}$ are relative extended Robba rings\footnote{See \cite[\S5]{KL15} for a uniform treatment about relative extended Robba rings associated to analytic fields of characteristic $p$.} associated to the residue field $\widehat{\mathbb{F}_p((u))^{\mathrm{alg}}}$\footnote{The completion is taken with respect to the $u$-adic topology.} introduced by Kedlaya (see \cite[\S2]{Ke05} for more details). On the other hand, $\widetilde{\r}_{\widehat{\Q^{\mathrm{ur}}}}$ and $\widetilde{\r}_{\widehat{\Q^{\mathrm{ur}}}}^r$ (together with the $\varphi$-action for $q=p$) are relative extended Robba rings with residue field $\mathbb{F}_p^{\mathrm{alg}}((u^{\mathbb{Q}}))$; here $\mathbb{F}_p^{\mathrm{alg}}((u^{\mathbb{Q}}))$ is the Hahn-Mal'cev-Neumann algebra with coefficients in $\mathbb{F}_p^{\mathrm{alg}}$ (see for instance \cite[Definition 4.5.4]{Ke05}). By \cite[Theorem 8]{Ke01}, $\widehat{\mathbb{F}_p((u))^{\mathrm{alg}}}$ is a closed subfield of $\mathbb{F}^{\mathrm{alg}}_p((u^{\mathbb{Q}}))$. This leads to natural closed embeddings of $\Qp$-Fr\'echet algebras
\[
\Gamma^{\mathrm{alg}}_{\mathrm{an,r}}\hookrightarrow\widetilde{\r}^r_{\widehat{\Q^{\mathrm{ur}}}}
\]
for all $r>0$. By the above identifications, we therefore get closed embeddings
\begin{equation}\label{eq:embedding}
\widetilde{\mathbf{B}}_{\rig}^{\dagger,\rho(r)}\hookrightarrow\widetilde{\r}^r_{\widehat{\Q^{\mathrm{ur}}}}
\end{equation}
which respect the $\varphi$-action. Henceforth we regard $\widetilde{\mathbf{B}}_{\rig}^{\dagger,\rho(r)}$ as a subring of $\widetilde{\r}^r_{\widehat{\Q^{\mathrm{ur}}}}$; we therefore regard $K\otimes_{K_0}\widetilde{\mathbf{B}}_{\rig}^{\dagger,\rho(r)}$ as a subring  of $\widetilde{\r}^r_{\widehat{K^{\mathrm{ur}}}}$

We will need the following results in \S3.

\begin{lemma}\label{lem:coefficient-vanishing}
For any $a\in S_L$, there exists an analytic subspace $M(S(a))$ of $M(S)$  such that for any map $g:S\ra R$ of affinoid algebras over $\Q$,  $g_L(a)=0$ if and only if the map $M(R)\ra M(S)$ factors through $M(S(a))$.
\end{lemma}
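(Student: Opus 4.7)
The plan is to expand $a$ coefficient-wise against an orthonormal basis of $L$ over $\Q$ and cut out the vanishing locus by the ideal generated by those coefficients, invoking Noetherianity of $S$ to ensure only finitely many generators are needed. First I fix a topological orthonormal basis $\{e_i\}_{i \in I}$ of $L$ as a $\Q$-Banach space; such a basis exists because $L = \widehat{\Q^{\mathrm{unr}}}$ has the same discrete valuation as $\Q$ (for instance, any complete set of Teichm\"uller lifts of an $\mathbb{F}_p$-basis of the residue field $\overline{\mathbb{F}_p}$ of $L$ furnishes a topological basis of $\OO_L$ over $\Zp$, hence of $L$ over $\Q$). By the Hahn-Banach splitting of exact sequences of $\Q$-Banach spaces over discretely valued fields, already invoked in the proof of Lemma \ref{lem:projection}, this yields an isometric identification of $S_L = S \widehat{\otimes}_{\Q} L$ with the Banach space of families $(a_i)_{i\in I}$ in $S$ with $|a_i| \to 0$. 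Consequently every $a \in S_L$ admits a unique expansion $a = \sum_{i \in I} a_i e_i$ with $a_i \in S$ and $|a_i| \to 0$.

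Given such an expansion, I let $I(a) \subseteq S$ be the ideal generated by the coefficients $\{a_i\}_{i \in I}$. Since $S$ is a Noetherian $\Q$-affinoid algebra, $I(a)$ is in fact generated by finitely many $a_{i_1}, \dots, a_{i_n}$. I then take $M(S(a))$ to be the Zariski closed subspace $\Sp(S/I(a))$ of $M(S)$ cut out by these generators, and set $S(a) = S/I(a)$.

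To verify the universal property, let $g : S \to R$ be a morphism of affinoid algebras, with continuous extension $g_L : S_L \to R_L$. Since $\{e_i\}$ remains a topological orthonormal basis of $R_L$ over $R$ after base change, the uniqueness of expansion yields $g_L(a) = \sum_i g(a_i) e_i$, so $g_L(a) = 0$ if and only if $g(a_i) = 0$ for every $i \in I$, equivalently $g(I(a)) = 0$, equivalently $g$ factors through $S(a)$, equivalently the induced map $M(R) \to M(S)$ factors through $M(S(a))$. The only real technical ingredient is the existence of the orthonormal basis of $L$ over $\Q$ and the resulting description of $S_L$ as null sequences; this is standard in non-archimedean functional analysis over discretely valued fields, and the rest of the argument is formal, with Noetherianity used purely for bookkeeping.
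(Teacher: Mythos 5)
Your proof is correct and follows essentially the same route as the paper: choose an orthogonal (orthonormal) basis of $L$ over $\Q$, expand $a$ with coefficients in $S$, and take $S(a)=S/I(a)$ with $I(a)$ the ideal generated by those coefficients. The extra remarks (Teichm\"uller-lift construction of the basis, Noetherianity to get finitely many generators, the explicit check of the universal property) are harmless elaborations of what the paper leaves implicit.
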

\begin{proof}
Choose an orthonormal basis $\{e_j\}_{j\in J}$ of $L$ over $\Q$; then it is also an orthonormal basis of $S_L$ as an $S$-Banach module. Let $I(a)$ be the ideal of $S$ generated by the coefficients of $a$. It is then clear that one can take $S(a)=S/I(a)$.
\end{proof}

\begin{lemma}\label{lem:solution-descent}
Let $a\in \widetilde{\r}^r_{\widehat{K^{\mathrm{ur}}}}\widehat{\otimes}_{\Q}S$. Then there exists an analytic subspace $M(S(a,r))$ of $M(S)$ such that for any map $g:S\ra R$ of $\Qp$-affinoid algebras,
$g(a)\in (K\otimes_{K_0}\mathbf{B}_{\rig,K}^{\dagger,\rho(r)})\widehat{\otimes}_{\Q}R$ if and only if the map $M(R)\ra M(S)$ factors through $M(S(a,r))$.
\end{lemma}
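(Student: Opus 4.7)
The plan is to reduce to Lemma~\ref{lem:coefficient-vanishing} by expanding $a$ along a closed $\Q$-linear complement of $\mathbf{B}_{\rig,K}^{\dagger,\rho(r)}$ inside $\widetilde{\r}^r_L$. Since both rings are Fr\'echet, I first pass to Banach norms: for each $0<s\le r$, restricting \eqref{eq:embedding} yields a closed Banach embedding $\mathbf{B}_K^{[\rho(r),\rho(s)]}\hookrightarrow\widetilde{\r}^{[s,r]}_L$, where the target is the completion of $\widetilde{\r}^r_L$ with respect to $\max(w_s,w_r)$. Because $|\Q|$ is discrete, Hahn--Banach furnishes an orthogonal direct-sum decomposition $\widetilde{\r}^{[s,r]}_L=\mathbf{B}_K^{[\rho(r),\rho(s)]}\oplus V_s$ of $\Q$-Banach spaces, and $V_s$ admits an orthogonal Schauder basis $\{e_{j,s}\}_{j\in J_s}$.

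The completed tensor product with $S$ preserves this splitting. Given $a\in\widetilde{\r}^r_L\widehat{\otimes}_\Q S$, its image in $\widetilde{\r}^{[s,r]}_L\widehat{\otimes}_\Q S$ projects to $\pi_s(a)=\sum_{j\in J_s}c_{j,s}(a)\,e_{j,s}\in V_s\widehat{\otimes}_\Q S$ with $c_{j,s}(a)\in S$ tending to zero in norm. Let $I_s\subset S$ be the ideal generated by $\{c_{j,s}(a)\}_{j\in J_s}$ and set $I:=\sum_{s\in(0,r]}I_s$; since $S$ is Noetherian, $I$ is finitely generated (with generators coming from finitely many values of $s$). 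I then define $S(a,r):=S/I$.

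For any $g:S\to R$, base change commutes with the orthogonal splittings, so $g(\pi_s(a))=0$ if and only if $g(I_s)=0$ if and only if $g(a)\in\mathbf{B}_K^{[\rho(r),\rho(s)]}\widehat{\otimes}_\Q R$. Applying the analogue of \eqref{eq:intersection},
\[
\mathbf{B}_{\rig,K}^{\dagger,\rho(r)}\widehat{\otimes}_\Q R=\bigcap_{s\in(0,r]}\mathbf{B}_K^{[\rho(r),\rho(s)]}\widehat{\otimes}_\Q R,
\]
we conclude that $g(a)\in\mathbf{B}_{\rig,K}^{\dagger,\rho(r)}\widehat{\otimes}_\Q R$ if and only if $g(I)=0$ if and only if $g$ factors through $M(S/I)$. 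The main obstacle is securing an orthogonal complement with a Schauder basis at the Banach level; this is why I work with closed subintervals $[s,r]$ rather than the full Fr\'echet space, and it relies crucially on $|\Q|$ being discrete, exactly as in the proof of Lemma~\ref{lem:coefficient-vanishing}.
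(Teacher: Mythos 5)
Your argument is essentially correct, but it follows a different route than the paper. The paper makes a single Hahn--Banach splitting at the Fr\'echet level, $\widetilde{\r}^{r}_{L}\cong\mathbf{B}_{\rig,K}^{\dagger,\rho(r)}\oplus V$, writes $a=a_1+a_2$ accordingly, and then invokes Proposition \ref{prop:comparison} to regard $a_2$ as an element of $\widetilde{\r}^r_{S_L}$, where it has a canonical expansion $a_2=\sum_{i\in\mathbb{Q}}c_iu^i$ with $c_i\in S_L$; the ideal is then $\sum_i I(c_i)$ with $I(c_i)$ as in Lemma \ref{lem:coefficient-vanishing}, so no basis of the complement is ever needed and no limit over subintervals occurs. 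You instead split at the Banach level on each closed subinterval $[s,r]$, extract coefficient ideals in $S$ from an orthogonal basis of the complement $V_s$, and recover the Fr\'echet statement by intersecting over $s$ via the analogue of (\ref{eq:intersection}); what this buys is that you only need Hahn--Banach for Banach spaces and you bypass Proposition \ref{prop:comparison}, at the cost of two extra inputs that you should at least flag: (i) $\widetilde{\r}^{[s,r]}_L$ and hence $V_s$ are not of countable type, so ``Schauder basis'' is a misnomer --- what you need (and what is true, since $\Q$ is discretely valued, after an equivalent renormalization with values in $|\Q|$) is an orthogonal basis indexed by a possibly uncountable set, which still gives the $c_0$-type description of $V_s\widehat{\otimes}_{\Q}S$ you use; and (ii) the final step, passing from ``the image of $g(a)$ lies in $\mathbf{B}_K^{[\rho(r),\rho(s)]}\widehat{\otimes}_{\Q}R$ for every $s$'' to ``$g(a)\in\mathbf{B}_{\rig,K}^{\dagger,\rho(r)}\widehat{\otimes}_{\Q}R$ inside $\widetilde{\r}^r_L\widehat{\otimes}_{\Q}R$,'' needs the injectivity of $\widetilde{\r}^r_L\widehat{\otimes}_{\Q}R$ into the Banach-level completions and the compatibility of the intersections, which is proved exactly as (\ref{eq:intersection}) is, by expanding along a Schauder basis of $R$ over $\Q$ and reducing to the scalar identities. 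With those two sentences added, your proof is complete and of comparable length to the paper's; the paper's version is somewhat cleaner because the monomial coefficients $c_i\in S_L$ of the extended Robba ring make the coefficient ideals concrete in one stroke.
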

\begin{proof}
Note that $\mathbf{B}_{\rig,K}^{\dagger,\rho(r)}$ may be identified with the intersections of all $\mathbf{B}^{[\rho(r),\rho(s)]}_K$'s with $0<s<r$ (one way to see this fact is to use the identification of $\mathbf{B}_{\rig,K}^{\dagger,\rho(r)}$ with the Robba ring $\r^r_{K_0'}$). 
To prove the lemma, it then suffices to show that for any $0<s<r$, there exists an analytic subspace $M(S(a,s,r))$ of $M(S)$ such that for any map $g:S\ra R$ of $\Qp$-affinoid algebras,
$g(a)\in (K\otimes_{K_0}\mathbf{B}_{K}^{[\rho(r),\rho(s)]})\widehat{\otimes}_{\Q}R$ if and only if the map $M(R)\ra M(S)$ factors through $M(S(a,s,r))$; we then take $M(S(a,r))$ to be the intersections of all $M(S(a,s,r))$'s. 

Let $\widetilde{\mathbf{B}}^{[\rho(r),\rho(s)]}$ be the completion of $\widetilde{\mathbf{B}}_{\rig}^{\dagger,\rho(r)}$  with respect to $\max\{\mathrm{val}^{(0,\rho(r)]},\mathrm{val}^{(0,\rho(s)]}\}$. One may therefore identify $\mathbf{B}^{[\rho(r),\rho(s)]}_K$ as a closed subspace of $\widetilde{\mathbf{B}}^{[\rho(r),\rho(s)]}$. On the other hand, the closed embedding $\widetilde{\mathbf{B}}_{\rig}^{\dagger,\rho(r)}\hookrightarrow\widetilde{\r}^r_{\widehat{\Q^{\mathrm{ur}}}}$ of $\Qp$-Fr\'echet algebras induces a closed embedding $\widetilde{\mathbf{B}}^{[\rho(r),\rho(s)]}\hookrightarrow\widetilde{\r}^{[s,r]}_{\widehat{\Q^{\mathrm{ur}}}}$ of $\Qp$-Banach spaces. Therefore, by Hahn-Banach theorem for Banach spaces over discretely valued fields, we deduce that there exists a closed $\Qp$-subspace $V$ of $\widetilde{\r}^{[s,r]}_{\widehat{K^{\mathrm{ur}}}}$ so that
$\widetilde{\r}^{[s,r]}_{\widehat{K^{\mathrm{ur}}}}\cong K\otimes_{K_0}\mathbf{B}_K^{[\rho(r),\rho(s)]}\oplus V$. Hence we have
\[
\widetilde{\r}^{[s,r]}_{\widehat{K^{\mathrm{ur}}}}\widehat{\otimes}_{\Q}S
\cong (K\otimes_{K_0}\mathbf{B}_K^{[\rho(r),\rho(s)]})\widehat{\otimes}_{\Q}S\oplus V\widehat{\otimes}_{\Q}S
\]
and
\[
\widetilde{\r}^{[s,r]}_{\widehat{K^{\mathrm{ur}}}}\widehat{\otimes}_{\Q}R
\cong (K\otimes_{K_0}\mathbf{B}_K^{[\rho(r),\rho(s)]})\widehat{\otimes}_{\Q}R\oplus V\widehat{\otimes}_{\Q}R.
\]

Write $a=a_1+a_2$ with $a_1\in (K\otimes_{K_0}\mathbf{B}_{K}^{[\rho(r),\rho(s)]})\widehat{\otimes}_{\Q}S$ and $a_2\in V\widehat{\otimes}_{\Q}S$. It is then obvious that $g(a)\in (K\otimes_{K_0}\mathbf{B}_{K}^{[\rho(r),\rho(s)]})\widehat{\otimes}_{\Q}R$ if and only if $g(a_2)=0$. By Proposition \ref{prop:comparison}, we may regard $a_2$ as an element of $\tilde{\r}^{[s,r]}_{S_{\widehat{K^{\mathrm{ur}}}}}$; then $g(a_2)=0$ in $\widetilde{\r}_{\widehat{K^{\mathrm{ur}}}}^{[s,r]}\widehat{\otimes}_{\Q}R$ if and only if $g(a_2)=0$ in $\widetilde{\r}^{[s,r]}_{R_{\widehat{K^{\mathrm{ur}}}}}$. Write $a_2=\sum_{i\in\mathbb{Q}}c_iu^i$ with $c_i\in S_{\widehat{K^{\mathrm{ur}}}}$. Let
\[
I(a,s,r)=\sum_{i\in\mathbb{Q}}I(c_i)
\]
where $I(c_i)$ is the ideal defined in the proof of Lemma \ref{lem:coefficient-vanishing}. It is then clear that one can take $S(a,s,r)=S/I(a,s,r)$.
\end{proof}

\section{Construction of finite slope subspaces}


Throughout this section, let $X$, $V_X$ and $\alpha$ be as in \S0.1.  For any morphism  $X'\ra X$ of rigid analytic spaces over $\Q$, we denote by $V_{X'}$ the pullback of $V_X$ on $X'$ which is a locally free coherent $\OO_{X'}$-module of rank $d$ with a continuous $\OO_{X'}$-linear $G_K$-action. In the case when $X=M(S)$ is an affinoid space, we denote $V_X$ by $V_S$ instead. We have defined finite slope subspaces of $X$ with respect to $(\alpha,V_X)$ in Definition \ref{def:fs-space}. The goal of this section is to prove that $X$ has a unique finite slope subspace (which may well be empty).

\subsection{Prelude}

\begin{prop}\label{prop:flat-base-change}
The formation of $X_{fs}$ commutes with flat base change. Namely, if $h:X'\ra X$ is a flat morphism of separated and reduced rigid analytic spaces over $\Q$, and if $X_{fs}$ is a finite slope subspace of $X$ with respect to $(\alpha,V_X)$, then the base change $X_{fs}'$ of $X_{fs}$ via $h$ is a finite slope subspace of $X'$ with respect to $(h^*(\alpha), V_{X'})$.
\end{prop}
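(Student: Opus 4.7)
The plan is to take $X'_{fs}:=f^{-1}(X_{fs})=X'\times_X X_{fs}$, which is automatically Zariski closed in $X'$ because closed immersions of rigid analytic spaces are preserved under arbitrary base change. It then suffices to verify the two conditions of Definition \ref{def:fs-space} for the triple $(X',f^*\alpha,V_{X'})$. Note first that Sen's polynomial is functorial in $V_X$, so the Sen polynomial for $V_{X'}$ is $T\cdot f^*Q(T)$, and consequently $X'_{f^*Q(j)}=f^{-1}(X_{Q(j)})$ for every integer $j\leq 0$.

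For condition (1), I would check scheme-theoretic density locally on an admissible affinoid covering, reducing to the case $X=M(S)$, $X_{fs}=M(S_{fs})$, $X'=M(S')$. By the description of scheme-theoretic density given just before Proposition \ref{prop:flat-base-change}, the density of $(X_{fs})_{Q(j)}$ in $X_{fs}$ means that $Q(j)$ is not a zero-divisor in $S_{fs}$. Since $S\to S'$ is flat by hypothesis, so is the base-changed map $S_{fs}\to S_{fs}\widehat{\otimes}_S S'$; flatness preserves injectivity of multiplication by $Q(j)$, so $f^*Q(j)$ remains a nonzero-divisor in $S_{fs}\widehat{\otimes}_S S'$, which is exactly the scheme-theoretic density of $(X'_{fs})_{f^*Q(j)}$ inside $X'_{fs}$.

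For condition (2), let $g':M(R)\to X'$ be a morphism factoring through $X'_{f^*Q(j)}$ for every integer $j\leq 0$, and set $g=f\circ g':M(R)\to X$. Then $g$ factors through $X_{Q(j)}$ for every $j\leq 0$, the representation $V_R$ obtained by pullback agrees whether computed from $g$ or $g'$, and $(g')^*f^*\alpha=g^*\alpha$. Hence the natural map
\[
(\mathrm{D}^{\dag}_{\rig}(V_R))^{\varphi=(g')^*f^*\alpha,\Gamma=1}\longrightarrow(\mathrm{D}_{\dif}^{+,n}(V_R))^{\Gamma}
\]
is literally the same map whether viewed as attached to $g'$ or to $g$. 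By the defining property (2) of $X_{fs}$, this map is an isomorphism for all sufficiently large $n$ if and only if $g$ factors through $X_{fs}$; and by the universal property of the fiber product, $g=f\circ g'$ factors through $X_{fs}$ if and only if $g'$ factors through $X'\times_X X_{fs}=X'_{fs}$. Combining these equivalences gives condition (2).

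The argument is essentially formal, so I do not anticipate any serious obstacle: the only nontrivial inputs are the functoriality of Sen's polynomial under arbitrary base change (so that the $Q(j)$'s for $X'$ really are the pullbacks of those for $X$) and the preservation of nonzero-divisors under flat base change, both of which are standard. A minor point to keep track of is that one should phrase the compatibility of $\D^\dag_{\rig}$ and $\D^{+,n}_{\dif}$ with the composition $g=f\circ g'$ via Theorem \ref{thm:BC-generalization}(2)--(3) and Theorem \ref{thm:rig-locally-free}, so that the identification of the two versions of the map in condition (2) is rigorous.
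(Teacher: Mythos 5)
Your proposal is correct and follows essentially the same route as the paper: define $X'_{fs}$ as the pullback, use functoriality of Sen's polynomial plus flatness to transfer the nonzero-divisor property of $Q(j)$ for condition (1), and for condition (2) reduce a test morphism $g':M(R)\to X'$ to $g=f\circ g':M(R)\to X$ and invoke the universal property of $X_{fs}$ together with the fiber-product characterization of factoring through $X'_{fs}$. The paper's proof is exactly this argument, only stated more tersely.
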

\begin{proof}
Note that the Sen polynomial for $V_{X'}$ is $Th^{*}(Q(T))$. By Definition \ref{def:fs-space}(1), we have that $Q(j)_{\tau}$ is a nonzero divisor in $X_{fs}$ for every integer $j\leq 0$ and $\tau\in\mathrm{H}_K$. The flatness of $h$ then implies that $h^*(Q(j)_\tau)$ is a nonzero divisor in $X'_{fs}$. Hence $X'_{fs}$ satisfies (1) of Definition \ref{def:fs-space}. Now let $g:M(R)\ra X'$ be a map of rigid spaces over $\Q$ which factors through $X'_{Q(j)_\tau}$ for every integer $j\leq0$ and $\tau\in\mathrm{H}_K$. Then $h\circ g$ factors through $X_{Q(j)_\tau}$ for every integer $j\leq0$ and $\tau\in\mathrm{H}_K$. By the universal property of $X_{fs}$, we know that for $n$ sufficiently large,
the natural map
\[
(K\otimes_{K_0}\mathrm{D}^{\dag}_{\rig}(V_R))^{\varphi^f=g^*(h^*(\alpha)),\Gamma=1}\ra (\mathrm{D}_{\dif}^{+,fn}(V_R))^{\Gamma}
\]
is an isomorphism if and only if $h\circ g$ factors through $X_{fs}$, i.e. if and only if $g$ factors through $X_{fs}'$. This implies that $X_{fs}'$ satisfies (2) of Definition \ref{def:fs-space}.
\end{proof}

\begin{prop}\label{prop:fs-unique}
There exists at most one finite slope subspace of $X$.
\end{prop}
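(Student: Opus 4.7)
The plan is to reduce to the affinoid case, exploit the symmetry of condition (2) of Definition \ref{def:fs-space}, and then pass to the scheme-theoretic equality via a dense family of test morphisms supplied by condition (1).

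\textbf{Reduction to the affinoid case.} By Proposition \ref{prop:flat-base-change} applied to the inclusions of an admissible affinoid covering, we may assume $X=M(S)$ is affinoid. Then any finite slope subspace is cut out by a closed ideal, so write $X_{fs}=V(I)$ and $X_{fs}'=V(I')$ with $I,I'\subseteq S$. By symmetry, it suffices to show $I'\subseteq I$; equivalently, that the closed immersion $X_{fs}\hookrightarrow X$ factors through $X_{fs}'$.

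\textbf{Symmetric criterion from condition (2).} For any affinoid $\Q$-algebra $R$ and morphism $g:M(R)\to X$ which factors through $X_{Q(j)}$ for every integer $j\leq 0$, the ``only if'' and ``if'' halves of condition (2) applied to $X_{fs}$ say
\[
g \text{ factors through } X_{fs}\iff (\D^{\dag}_{\rig}(V_R))^{\varphi=g^*(\alpha),\Gamma=1}\ra(\D_{\dif}^{+,n}(V_R))^{\Gamma}\text{ is an iso for all large } n.
\]
Since the right side is intrinsic to $(V_R,g^*(\alpha))$ and makes no reference to $X_{fs}$ or $X_{fs}'$, the same biconditional holds with $X_{fs}'$ in place of $X_{fs}$. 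Hence, for every such test morphism $g$,
\[
g \text{ factors through } X_{fs}\iff g \text{ factors through } X_{fs}'.
\]

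\textbf{A plentiful family of test morphisms.} Call a classical point $x\in X$ \emph{good} if $Q(j)(x)\neq 0$ for every integer $j\leq 0$; because Sen's polynomial has finite degree, only finitely many $j\leq 0$ can fail at a given point, so good points are plentiful. For any Artinian local $\Q$-algebra $A$ and morphism $g:M(A)\to X$ supported at a good classical point, the image $g^*(Q(j))\in A$ reduces to a unit in the residue field and is therefore a unit in $A$, so $g$ factors through every $X_{Q(j)}$ and the symmetric criterion of the previous step applies.

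\textbf{Conclusion.} By condition (1), $Q(j)$ is a non-zero-divisor on $S/I$ and on $S/I'$, so no associated prime of $I$ or $I'$ contains any $Q(j)$; equivalently, every irreducible and embedded component of $X_{fs}$ and of $X_{fs}'$ meets the good locus. The affinoid algebra $S$ being Noetherian and Jacobson, the ideals $I,I'$ are then determined by the (common) Artinian thickenings they cut out at good classical points, and the equivalence established in the previous step guarantees that these Artinian thickenings coincide. Therefore $I=I'$, as desired.

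\textbf{Main obstacle.} The hardest step is the last one: converting the pointwise Artinian equivalence at good points into scheme-theoretic equality of the ideals $I$ and $I'$. The subtlety is that the hypothesis of condition~(2) asks $g$ to factor through \emph{every} $X_{Q(j)}$, a countable family of conditions; this is circumvented by testing with Artinian local algebras supported at a single good classical point, where only finitely many $Q(j)$'s are active. Condition~(1) is exactly what is needed to ensure that such tests see all of $I$ and $I'$, by ruling out components concealed within any of the $V(Q(j))$'s.
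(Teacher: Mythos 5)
There is a genuine gap at the decisive step of your ``Conclusion''. Condition (1) tells you that each individual $Q(j)$ is a non-zero-divisor on $S/I$ and $S/I'$, i.e.\ that no associated prime contains any \emph{single} $Q(j)$. But what you then assert -- ``equivalently, every irreducible and embedded component of $X_{fs}$ and of $X_{fs}'$ meets the good locus'' -- is not an equivalence: the good locus is the complement of the \emph{countable} union $\bigcup_{j\leq 0}V(Q(j))$, and knowing that a component $V(\mathfrak{p})$ is not contained in any one $V(Q(j))$ does not formally imply that it is not covered by their union. This is exactly the non-formal input the paper supplies by citing Kisin's Lemma (5.7): an irreducible reduced rigid space cannot be written as a countable union of proper Zariski-closed analytic subsets. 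Your earlier justification that ``good points are plentiful because only finitely many $j\leq0$ can fail at a given point'' is a non sequitur (finiteness of failures at each point says nothing about the existence of points with no failures, and it can even be false pointwise: at a point where all coefficients of $Q$ vanish, every $j$ fails). Without the Kisin-type lemma, your proof does not go through.

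Apart from this, your architecture is sound and is essentially a reorganization of the paper's (and Kisin's (5.8)) argument: reduce to the affinoid case by flat base change, exploit that condition (2) is intrinsic to $(V_R,g^*(\alpha))$, test with Artinian local quotients $S/(I+\mathfrak{m}_x^n)$ supported at points where all $Q(j)$ are units (so the countably many hypotheses of (2) hold automatically), and recover $I$ and $I'$ from these thickenings via Krull intersection and the Jacobson property -- this last recovery step is fine once one knows each associated component contains a good point, since for nonzero $f\in S/I$ the support of $f$ contains $V(\mathfrak p)$ for some associated prime $\mathfrak p$. The paper instead forms the support $W$ of $(I_1+I_2)/I_1$, shows good points of $X_1$ avoid $W$ by applying (2) to finite-length quotients of $\OO_{X_1,x}$, uses Kisin (5.7) to place each irreducible component of $W$ inside a single $V(Q(j_{W_0}))$, and then invokes condition (1) for those finitely many $j_{W_0}$ to see that $X_1\setminus W$ is scheme-theoretically dense, whence $X_1\subseteq X_2$. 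If you insert the countable-union lemma (applied to each associated component of $V(I)$ and $V(I')$) at the flagged step, your variant becomes a complete proof.
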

\begin{proof}
Suppose that $X_1,X_2$ are two finite slope subspaces of $X$. Let $\{U_j\}_{j\in J}$ be an admissible affinoid covering of $X$ by affinoid subdomains. It suffices to show that for any $j\in J$, the restrictions of $X_1,X_2$ on $U_j$ coincide. By Proposition \ref{prop:flat-base-change}, we see that the restrictions of $X_1,X_2$ on $U_j$ are finite slope subspaces of $U_j$. Thus it reduces to the case that $X=M(S)$ is an affinoid space. We prove this by using Kisin's argument (\cite[(5.8)]{Ki03}). Let $I_1, I_2\subset S$ be the ideals corresponding to $X_1,X_2$ respectively. Let $W$ be the support of $(I_1+I_2)/I_1$ in $X_1$ (with its reduced structure). Let $x\in X_1$ be a closed point. If $x\in X_{Q(j)_\tau}$ for every integer $j\leq 0$ and $\tau\in \mathrm{H}_K$, applying (2) of Definition \ref{def:fs-space} to any finite length quotient $R$ of $\OO_{X_1,x}$, we get that $x\in X_2$ and $\widehat{\OO_{X_1,x}}=\widehat{\OO_{X_2,x}}$. This implies that $x\notin W$. Hence, for any $w\in W$ there exists integer $j\leq0$ and $\tau\in \mathrm{H}_{K}$ such that $Q(j)_\tau(w)=0$. If $W_0$ is an irreducible component of $W$, by  \cite[Lemma (5.7)]{Ki03}, we deduce that there exists $j_{W_0}\leq 0$ and $\tau_{W_0}\in\mathrm{H}_K$ such that $Q(j_{W_0})_{\tau_{W_0}}$ vanishes in $W_0$. It follows that
$X_1\backslash W$ contains $\cap_{W_0\subseteq W}(X_1)_{Q(j_{W_0})_{\tau_{W_0}}}$. The latter is Zariski open and dense in $X_1$ since $W$ has only finitely many components. A fortiori we see that $X_1\backslash W$, which is contained in $X_2$, is Zariski open and dense in $X_1$, yielding $X_1\subset X_2$. Thus $X_2=X_1$.
\end{proof}

\begin{remark}\label{rem:fs-artinian}
The proof of Proposition \ref{prop:fs-unique} actually implies that there exists at most one analytic subspace of $X$ which satisfies Definition \ref{def:fs-space}(1) and Definition \ref{def:fs-space}(2) for all finite $\Q$-algebras $R$.\end{remark}

\begin{prop}\label{prop:fs-glue}
Let $\{U_j\}_{j\in J}$ be an admissible covering of $X$ by affinoid subdomains. Suppose that each $U_j$ has the finite slope subspace $(U_j)_{fs}$. Then $\{(U_j)_{fs}\}_{j\in J}$ glues to form the finite slope subspace of $X$.
\end{prop}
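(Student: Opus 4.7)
The plan is to glue the subspaces $(U_j)_{fs}$ to a Zariski closed subspace $X_{fs}\subseteq X$ and then verify, locally, the two axioms of Definition \ref{def:fs-space}.

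First, I would check compatibility on overlaps. For any two indices $j,k\in J$, the intersection $U_j\cap U_k$ admits an admissible covering by affinoid subdomains; each such affinoid subdomain $V$ is flat over both $U_j$ and $U_k$, so Proposition \ref{prop:flat-base-change} shows that $(U_j)_{fs}\cap V$ and $(U_k)_{fs}\cap V$ are both finite slope subspaces of $V$ with respect to $(\alpha|_V, V_V)$. By Proposition \ref{prop:fs-unique} they coincide. Hence the coherent ideal sheaves defining $(U_j)_{fs}$ in $U_j$ agree on overlaps and glue to a coherent ideal sheaf on $X$, cutting out a Zariski closed subspace $X_{fs}\subseteq X$ whose restriction to each $U_j$ is $(U_j)_{fs}$.

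Next I would verify axiom (1). Scheme-theoretic density of a Zariski open in a Zariski closed analytic subspace is a local property on an admissible cover: indeed, for an affinoid $M(S')$ a subspace is scheme-theoretically dense iff the corresponding element (here $Q(j)$) is not a zero divisor, and non-zero-divisorhood on $X_{fs}$ can be tested after restriction to the affinoid cover $\{(U_j)_{fs}\}_{j\in J}$, on each of which it holds by hypothesis.

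Finally, for axiom (2), let $g\colon M(R)\to X$ be a morphism factoring through $X_{Q(j)}$ for every $j\leq 0$. Pull back the cover to obtain an admissible covering $\{g^{-1}(U_j)\}_{j\in J}$ of $M(R)$, and refine it to an affinoid covering $\{M(R_\alpha)\}_\alpha$ such that each $g_\alpha := g|_{M(R_\alpha)}$ lands in some $U_{j(\alpha)}$. Because both $\mathscr{D}^{\dag}_{\rig}(V_R)$ and $\mathscr{D}^{+,n}_{\dif}(V_R)/(t^k)$ are sheaves on the weak $G$-topology of $M(R)$ (Propositions \ref{prop:sheaf-rig} and \ref{prop:sheaf-dif}), and taking $\Gamma$-invariants commutes with flat base change between affinoids by Lemma \ref{lem:flat-invariant}, the map
\[
(\D^{\dag}_{\rig}(V_R))^{\varphi=g^*(\alpha),\Gamma=1}\longrightarrow(\D^{+,n}_{\dif}(V_R))^{\Gamma}
\]
is an isomorphism of $R$-modules if and only if the analogous map is an isomorphism over every $R_\alpha$ (the point is that an affinoid subdomain is flat, and being an isomorphism of coherent sheaves is a local property). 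Now if $g$ factors through $X_{fs}$ then every $g_\alpha$ factors through $(U_{j(\alpha)})_{fs}$, and by axiom (2) for $(U_{j(\alpha)})_{fs}$ the localized maps are isomorphisms for all $n$ sufficiently large; taking the maximum over the finitely many pieces yields the isomorphism on $M(R)$. Conversely, if the global map is an isomorphism for $n\geq n_0$, its restriction to each $M(R_\alpha)$ is likewise an isomorphism, so by axiom (2) for each $(U_{j(\alpha)})_{fs}$ we get $g_\alpha$ factoring through $(U_{j(\alpha)})_{fs}=X_{fs}\cap U_{j(\alpha)}$; these compatible factorizations glue, by construction of $X_{fs}$, to a factorization of $g$ through $X_{fs}$.

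The only technical subtlety—and the main place to be careful—is the ``isomorphism can be tested on an affinoid cover'' argument for the map on $\Gamma$-invariants: one has to combine Lemma \ref{lem:flat-invariant} (for each affinoid inclusion $R\to R_\alpha$) with the sheaf property of $\mathscr{D}^{\dag}_{\rig}$ and $\mathscr{D}^{+,n}_{\dif}/(t^k)$, and note that passing from fixed $k$ to the colimit commutes with the finite check on the cover. Once this is in hand, uniqueness of $X_{fs}$ globally follows again from Proposition \ref{prop:fs-unique} applied on each $U_j$.
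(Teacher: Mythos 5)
Your proposal is correct and follows essentially the same route as the paper: glue the local finite slope subspaces using flat base change (Proposition \ref{prop:flat-base-change}) and uniqueness (Proposition \ref{prop:fs-unique}), observe that condition (1) is local, and verify condition (2) by refining the pulled-back covering to a finite affinoid covering and invoking the sheaf properties of $\mathscr{D}_{\rig}^{\dag}$ and $\mathscr{D}_{\dif}^{+,n}$ (Propositions \ref{prop:sheaf-rig} and \ref{prop:sheaf-dif}). You are in fact slightly more careful than the paper, which only spells out the direction where the local isomorphisms imply the global one and leaves the converse localization step (the point you flag, handled via base change as in Corollary \ref{cor:base-change}) implicit.
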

\begin{proof}
By the uniqueness of finite slope subspaces, we see that $\{(U_j)_{fs}\}_{j\in J}$ glues to form an analytic subspace $X_{fs}$ of $X$. It is then clear that $X_{fs}$ satisfies (1) of Definition \ref{def:fs-space}. Now let $g:M(R)\ra X$ be a morphism of rigid analytic spaces over $\Q$ which factors through $X_{Q(j)}$ for each integer $j\leq0$. The pullback  $\{g^{-1}(U_j)\}$ forms an admissible covering of $M(R)$. We choose a finite covering $\{M(R_i)\}_{i\in I}$ of $M(R)$  by affinoid subdomains which refines $\{g^{-1}(U_j)\}$. It then follows that for each $i\in I$, the natural map 
\[
(K\otimes_{K_0}\D_\rig^{\dag}(V_{R_i}))^{\varphi^f=g^{*}(\alpha),\Gamma=1}\ra \D^{+,fn}_{\dif}(V_{R_i})^\Gamma
\] 
is an isomorphism for all sufficiently large $n$. We deduce from Propositions \ref{prop:sheaf-dif} and \ref{prop:sheaf-rig} that the natural map
\[
(K\otimes_{K_0}\D_\rig^{\dag}(V_{R}))^{\varphi^f=g^{*}(\alpha),\Gamma=1}\ra\D^{+,fn}_{\dif}(V_{R})^\Gamma
\] 
is an isomorphism for all sufficiently large $n$. This yields that $X_{fs}$ is the finite slope subspace of $X$.
\end{proof}

\subsection{Techniques}
We start by introducing some notations. For an affinoid algebra $S$, $\alpha\in S^\times$, and $a$ as in Lemma \ref{lemma:Frobenius-equation}, using Lemma \ref{lem:coefficient-vanishing}, 
we denote by $M(S(\alpha,a))$ the intersection of
\[
M(S(\sum_{m\in\mathbb{Z}}\alpha^{-(m+1)}\varphi^m(a_{iq^{-m}})))
\]
for all rational numbers $i<0$. From now on, let $V_S$ be a locally free $S$-linear representations of $G_K$ of rank $d$. 

\begin{prop}\label{prop:solution-extended-robba}
Let $\alpha\in S^\times$, and let $\beta\in(K\otimes_{K_0}\mathbf{B}^{\dag,s}_{K})^\times$ satisfying $|\beta|>|\alpha^{-1}|$ (here $|\beta|$ denotes the $p$-adic norm of $\beta$ in $K\otimes_{K_0}\mathbf{B}^{\dag,s}_{K}$). Then for any $a\in K\otimes_{K_0}\D^{\dagger,s}_\rig(V_S)$ there exists an $E$-analytic subspace $M(S(\alpha,\beta, a))$ of $M(S)$ such that for any morphism $g :S\ra R$ of affinoid algebras over $E$, the equation
\begin{equation}\label{eq:solution-extended-robba}
\varphi^f(b)-\beta g(\alpha)b=g(a)
\end{equation}
has a solution $b\in K\otimes_{K_0}\D^{\dagger,s}_\rig(V_R)$ if and only if the map $M(R)\ra M(S)$ factors through $M(S(\alpha,\beta, a))$. Furthermore, the solution $b$ is unique in this case.
\end{prop}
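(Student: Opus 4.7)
The strategy is to reduce the proposition to the scalar Frobenius equation handled by Lemma \ref{lemma:Frobenius-equation} by trivializing the $\m$-module locally and base changing to the extended Robba ring. First, by choosing a finite cover of $M(S)$ by affinoid subdomains over which $\D^{\dagger,s}_{\rig}(V_S)$ is free, one can treat each piece separately and glue the resulting Zariski closed subspaces (this is consistent because the uniqueness part below shows that the cut-out loci are canonical). On each piece, fix a basis $e_1,\ldots,e_d$ and let $\Phi\in GL_d(\mathbf{B}^{\dagger,ps}_{\rig,K}\widehat{\otimes}_{\Q}S)$ be the matrix of $\varphi$ with respect to it (invertibility by Proposition \ref{prop:dagger-phi-compare}). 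Writing $a=\sum a_ie_i$ and seeking $b=\sum b_ie_i$, the equation $\varphi(b)-\alpha b=a$ becomes the matrix equation $\Phi\,\vec{\varphi(b)}-\alpha\vec{b}=\vec{a}$ in $(\mathbf{B}^{\dagger,s}_{\rig,K}\widehat{\otimes}_{\Q}S)^d$.

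Second, via the embedding $\widetilde{\mathbf{B}}^{\dagger,\rho(r)}_{\rig}\hookrightarrow\widetilde{\r}^r_L$ of (\ref{eq:embedding}) with $s=\rho(r)$, I would push the equation into $(\widetilde{\r}^r_L\widehat{\otimes}_{\Q}S)^d\subset(\widetilde{\r}^r_{S_L})^d$ and run the coefficient-by-coefficient iteration of Lemma \ref{lemma:Frobenius-equation} in the Hahn--Mal'cev--Neumann expansion in $u$. Exactly as in the scalar case, the recursion $\vec{b}=\alpha^{-1}(\Phi\varphi(\vec{b})-\vec{a})$ determines each $u^i$-coefficient of $\vec{b}$ from those of strictly smaller index (with $|\alpha^{-1}|<1$ driving convergence), while the requirement that this recursion terminate rather than diverge as $i\to-\infty$ forces, for each rational $i<0$, a vanishing condition analogous to (\ref{eq:criterion}) but now involving products of Frobenius pullbacks of $\Phi$. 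By Lemma \ref{lem:coefficient-vanishing}, each such condition cuts out a Zariski closed subspace of $M(S)$. Uniqueness of the solution in $(\widetilde{\r}_{S_L})^d$, and hence in $\D^{\dagger,s}_{\rig}(V_R)$, is immediate from Lemma \ref{lemma:Frobenius-equation}(1) applied componentwise.

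Third, whenever the vanishing conditions hold after a base change $g\colon S\to R$, the matrix analogue of Lemma \ref{lemma:Frobenius-equation}(2,3) produces a unique solution $\vec{b}$ lying in $(\widetilde{\r}^r_L\widehat{\otimes}_{\Q}R)^d$. To descend further into $(\mathbf{B}^{\dagger,\rho(r)}_{\rig,K}\widehat{\otimes}_{\Q}R)^d\cong\D^{\dagger,s}_{\rig}(V_R)$, I apply Lemma \ref{lem:solution-descent} coordinate by coordinate to extract further Zariski closed conditions; setting $M(S(\alpha,a))$ to be the intersection of all these subspaces (and gluing across the affinoid cover) yields the desired Zariski closed locus. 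The main obstacle is the matrix adaptation of Lemma \ref{lemma:Frobenius-equation}: the convolution with the $u$-expansion of $\Phi$ must not disturb the Frobenius-shift indexing that powers the scalar recursion. I expect this goes through because at each step the iteration convolves coefficients of $\Phi$ with already-determined lower-index coefficients of $\vec{b}$, so the indices still organize as in Lemma \ref{lemma:Frobenius-equation}, and the condition $|\alpha^{-1}|<1$ continues to dominate the norm of $\Phi$ in the relevant estimates.
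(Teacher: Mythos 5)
There is a genuine gap at the heart of your argument, namely the ``matrix adaptation of Lemma \ref{lemma:Frobenius-equation}'' that you yourself flag as the main obstacle. Once you trivialize $\D^{\dagger,s}_{\rig}(V_S)$ as a module and write the equation as $\Phi\,\varphi(\vec{b})-\alpha\vec{b}=\vec{a}$ with $\Phi$ a non-constant invertible matrix over $\mathbf{B}^{\dagger,s}_{\rig,K}\widehat{\otimes}_{\Q}S$, the coefficient-by-coefficient recursion of the scalar lemma breaks down: multiplication by $\Phi$ convolves the $u$-expansions, so the $u^i$-coefficient of $\Phi\varphi(\vec{b})$ involves all coefficients $\vec{b}_k$ with $j+pk=i$ for $j$ in the (infinite, merely well-ordered) support of $\Phi$, not just $\vec{b}_{i/p}$. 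Consequently the clean recursion $b_i=\alpha^{-1}\varphi(b_{i/p})-\alpha^{-1}a_i$, the explicit divergence conditions (\ref{eq:criterion}), and the applicability of Lemma \ref{lem:coefficient-vanishing} to those conditions all disappear. Moreover, iterating $\vec{b}=\alpha^{-1}(\Phi\varphi(\vec{b})-\vec{a})$ produces twisted products $\Phi\varphi(\Phi)\cdots\varphi^{m-1}(\Phi)$, and your claim that ``$|\alpha^{-1}|<1$ continues to dominate the norm of $\Phi$'' is unjustified: $\|\Phi\|$ is unrelated to $|\alpha^{-1}|$, $\varphi$ shifts the radii of convergence, and controlling such products is exactly a slope-theoretic question, i.e. it uses that $\D^{\dagger}_{\rig}(V_S)$ is \'etale, which your argument never invokes.

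The paper sidesteps this entirely by choosing the basis in the right place: since $\varphi$ acts as $\mathrm{id}\otimes\varphi$ on $V_S\otimes(\widetilde{\mathbf{B}}^{\dagger,s}\widehat{\otimes}_{\Zp}\OO_S)$, one expands $a$ in an $S$-basis $e_1,\dots,e_d$ of $V_S$ itself after base changing to $V_R\otimes_R(\widetilde{\r}^{\rho(s)}_{\widehat{\Q^{\mathrm{ur}}}}\widehat{\otimes}_{\Q}R)$; in that basis the matrix of $\varphi$ is the identity, the equation decouples into $d$ scalar equations $\varphi(b_i)-g(\alpha)b_i=g(a_i)$, and Lemma \ref{lemma:Frobenius-equation} applies verbatim (this is in effect the trivialization of the \'etale object that your matrix computation would have to reprove). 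The Galois descent from this trivialization back to $\D^{\dagger,s}_{\rig,K}(V_R)$ then comes for free from the uniqueness of the solution, which forces $H_K$-invariance. Your final step — expanding the solution in a basis of the $\m$-module over a finite extension $L$ and applying Lemma \ref{lem:solution-descent} coordinatewise to cut out further Zariski closed conditions — does match the second half of the paper's proof, but without the diagonalization via a basis of $V_S$ the existence, uniqueness, and Zariski-closedness assertions in your second step are not established.
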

\begin{proof}
Granting the assertion of the proposition, it is then clear that the construction of  $M(S(\alpha,\beta, a))$ is compatible with base change. Thus it suffices to prove the proposition for each affinoid subdomain of an affinoid  covering of $M(S)$. Therefore it reduces to the case that $V_S$ is free over $S$. Choose an $S$-basis $e_1,\dots,e_d$ of $V_S$, and write $a=\sum_{i=1}^da_ie_i$ with $a_i\in K\otimes_{K_0}(\widetilde{\mathbf{B}}^{\dagger,s}_{\rig}\widehat{\otimes}_{\Q}S)$. Since $\varphi$ acts trivially on $V_R$, (\ref{eq:solution-extended-robba}) admits a solution in $V_R\otimes_R(\widetilde{\r}^{\rho(s)}_{\widehat{K^{\mathrm{ur}}}}\widehat{\otimes}_{\Q}R)$ if and only if each Frobenius equation
\begin{equation}\label{eq:solution-extended-robba-i}
\varphi^f(b_i)-\beta g(\alpha)b_i=g(a_i)
\end{equation}
admits a solution $b_i$ in $\widetilde{\r}^{\rho(s)}_{\widehat{K^{\mathrm{ur}}}}\widehat{\otimes}_{\Q}R$.  By \cite[Proposition 3.3.2]{Ke05}\footnote{By the definition of $\Gamma_{\mathrm{con}}^{\mathrm{alg}}$, one may identify it with $K\otimes_{K_0}\widetilde{\mathbf{B}}^{\dagger}$.}, we may choose some 
\[
x\in (K\otimes_{K_0}\widetilde{\mathbf{B}}^{\dagger})^\times
\] 
such that $y=\beta\varphi^f(x)/x$ belongs to $K$. Using Frobenius, we see that $x$ actually lies in $(K\otimes_{K_0}\widetilde{\mathbf{B}}^{\dagger,s})^\times$. We thus rewrite (\ref{eq:solution-extended-robba-i}) as
\[
\varphi^f(xb_i)-yg(\alpha )xb_i=\varphi^f(x)g(a_i).
\]
Note that $|y|=|\beta|$. Thus $|y^{-1}g(\alpha)^{-1}|\leq|\beta^{-1}||\alpha^{-1}|<1$. We deduce from Lemma \ref{lemma:Frobenius-equation} that (\ref{eq:solution-extended-robba}) admits a solution in $V_R\otimes_R(\widetilde{\r}^{\rho(s)}_{\widehat{K^{\mathrm{ur}}}}\widehat{\otimes}_{\Q}R)$ if and only if $M(R)\ra M(S)$ factors through 
\[
M(S')=\bigcap _{1\leq i\leq d}M(y\alpha, \varphi^f(x)a_i).
\] 
Furthermore, in this case, the solution is unique. Let $b$ be the solution of (\ref{eq:solution-extended-robba})  in $V_{S'}\otimes_{S'}(\widetilde{\r}^{\rho(s)}_{\widehat{K^{\mathrm{ur}}}}\widehat{\otimes}_{\Q}S')$.
Let $L$ be a finite extension of $K$ so that $\D^{\dag,s}_{\rig,L}(V_S)$ is free over $\mathbf{B}^{\dag,s}_{\rig,L}\widehat{\otimes}_{\Q}S$.
Choose a $\mathbf{B}^{\dag,s}_{\rig,L}\widehat{\otimes}_{\Q}S'$-basis $\{f_1,\dots,f_d\}$ of $\D^{\dag,s}_{\rig,L}(V_{S'})$. Since
\[
(K\otimes_{K_0}\D_{\rig,L}^{\dag,s}(V_{S'}))\otimes_{(K\otimes_{K_0}\mathbf{B}_{\rig,L}^{\dag,s})\widehat{\otimes}_{\Q}S'}
(\widetilde{\r}^{\rho(s)}_{\widehat{K^{\mathrm{ur}}}}\widehat{\otimes}_{\Q}S')
=V_{S'}\otimes_{S'}(\widetilde{\r}^{\rho(s)}_{\widehat{K^{\mathrm{ur}}}}\widehat{\otimes}_{\Q}S'),
\]
we may write $b=\sum_{i=1}^db_if_i$ with $b_i\in \widetilde{\r}^{\rho(s)}_{\widehat{K^{\mathrm{ur}}}}\widehat{\otimes}_{\Q}S'$.
Let $S''$ be the $\Qp$-affinoid algebra defined by
\[
M(S'')=\bigcap_{1\leq i\leq d}M(S'(c_i,s))
\] 
(see Lemma \ref{lem:solution-descent} for the definition of $M(S'(c_i,s))$). By Lemma \ref{lem:solution-descent}, $g(b)$ belongs to $K\otimes_{K_0}\D^{\dag,s}_{\rig,L}(V_R)$ if and only if the map $M(R)\ra M(S')$ factors through $M(S'')$.
Furthermore, by the uniqueness of the solution of (\ref{eq:solution-extended-robba}), the image of $b$ in $K\otimes_{K_0}\D^{\dag,s}_{\rig,L}(V_{S''})$ is $H_K$-invariant; hence it is in $K\otimes_{K_0}\D^{\dag,s}_{\rig,K}(V_{S''})$
by Theorem \ref{thm:BC-generalization}(4). Therefore we can take $S(\alpha,\beta, a)=S''$.
\end{proof}


By Lemma \ref{lem:phi-rig}, $(\D^\dagger_\rig(V_S))^{\varphi^f=\alpha}$ is contained in $\D_\rig^{\dag,s}(V_S)$ for any $\alpha\in S$ and $s\geq s(V_S)$. Thus for any $n\geq n(V_S)$, we have a natural map 
\[
\D^\dagger_\rig(V_S)^{\varphi^f=\alpha}\ra\D_{\dif}^{+,n}(V_S)
\] 
via the localization map $\iota_n$. 

\begin{prop}\label{prop:cris-dif-injective}
Let $\alpha\in S^\times$. Then for any $k>\log_{|\pi_K^{-1}|}|\alpha^{-1}|$ and $n\geq n(V_S)$, the natural map
\[
\iota_{n,K}: K\otimes_{K_0}\D_\rig^\dagger(V_S)^{\varphi^f=\alpha}\ra\D^{+,fn}_\dif(V_S)/(t^k)
\]
is injective.
\end{prop}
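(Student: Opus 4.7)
The plan is to propagate the vanishing condition across all localization maps using the Frobenius relation, and then reduce to a homogeneous Frobenius equation that is killed by the uniqueness clause of Lemma \ref{lemma:Frobenius-equation}.

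Suppose $a \in (\D_\rig^\dag(V_S))^{\varphi=\alpha}$ lies in the kernel of the map to $\D_\dif^{+,n}(V_S)/(t^k)$. By Lemma \ref{lem:phi-rig}, $a \in \D_\rig^{\dag,s}(V_S)$ for some $s$ small enough that $n \geq n(s)$. Under the isomorphism of Proposition \ref{prop:brig-mod-phi-t}(2), the element $a$ corresponds to the tuple $(\iota_m(a) \bmod t^k)_{m \geq n(s)}$, and combining $\varphi(a) = \alpha a$ with the shift description of $\varphi$ in Proposition \ref{prop:brig-mod-phi-t}(3) gives $\iota_{m-1}(a) \equiv \alpha\, \iota_m(a) \pmod{t^k}$ for all $m \geq n(s)+1$. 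Since $\alpha \in S^\times$, the hypothesis $\iota_n(a) \equiv 0 \pmod{t^k}$ propagates in both directions to $\iota_m(a) \equiv 0 \pmod{t^k}$ for every $m \geq n(s)$, so Proposition \ref{prop:brig-mod-phi-t}(2) yields $a \in t^k \D_\rig^{\dag,s}(V_S)$. Write $a = t^k b$ with $b \in \D_\rig^{\dag,s}(V_S)$.

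Using $\varphi(t) = pt$, the identity $\varphi(a) = \alpha a$ becomes $\varphi(b) = \tilde\alpha b$ with $\tilde\alpha := \alpha p^{-k} \in S^\times$, and the hypothesis $k > \log_p |\alpha^{-1}|$ translates directly into $|\tilde\alpha^{-1}| = p^{-k} |\alpha^{-1}| < 1$, which is precisely the norm condition of Lemma \ref{lemma:Frobenius-equation}. Since injectivity of a sheaf morphism may be tested locally, I reduce to the case where $V_S$ is free over $S$, fix an $S$-basis $e_1,\dots,e_d$, and write $b = \sum_{i=1}^d b_i e_i$ with $b_i \in \widetilde{\textbf{B}}_\rig^{\dag,s} \widehat{\otimes}_\Q S$. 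Via the $\varphi$-equivariant embedding (\ref{eq:embedding}), each $b_i$ sits inside $\widetilde{\r}_{S_L}$ for $L$ the completion of the maximal unramified extension of $\Q$, and satisfies $\varphi(b_i) - \tilde\alpha b_i = 0$. Since $b_i = 0$ is a trivial solution of this equation, the uniqueness clause of Lemma \ref{lemma:Frobenius-equation}(1) forces $b_i = 0$ for all $i$, hence $b = 0$ and $a = 0$.

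The only real technical point is the passage of $b$ from $\D_\rig^{\dag,s}(V_S)$ into the extended Robba ring so that Lemma \ref{lemma:Frobenius-equation} becomes applicable; this is the same reduction carried out in the proof of Proposition \ref{prop:solution-extended-robba} and rests entirely on the $\varphi$-equivariant embedding (\ref{eq:embedding}). Once this reduction is in place, the remaining chain of substitutions is mechanical.
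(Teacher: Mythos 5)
Your proof is correct and follows essentially the same route as the paper: propagate the vanishing mod $t^k$ across all localization maps using $\varphi(a)=\alpha a$ and Proposition \ref{prop:brig-mod-phi-t}, factor out $t^k$, and kill the resulting homogeneous equation $\varphi(b)=p^{-k}\alpha b$ via the norm condition $|p^{-k}\alpha^{-1}|<1$. The only cosmetic difference is that the paper cites the uniqueness statement of Proposition \ref{prop:solution-extended-robba} at the last step, whereas you inline its proof by reducing to the extended Robba ring and invoking Lemma \ref{lemma:Frobenius-equation}(1) directly.
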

\begin{proof}
Let $a\in (K\otimes_{K_0}\D_\rig^\dagger(V_S))^{\varphi^f=\alpha}$, and let $a_m$ be its image in $\D_\dif^{+,fm}(V_S)/(t^k)$ via $\iota_{m,k}$
for any $m\geq n(V_S)$. The relation $\varphi^f(a)=\alpha a$ and Proposition \ref{prop:brig-mod-phi-t}(3) imply
\[
a_m=\alpha^{n-m}a_n.
\] 
Thus if $a_n=0$, then $a_m=0$ for all $m\geq n(V_S)$. This implies $t^k_K|a$ by Proposition \ref{prop:localization-ramified}. Now suppose that $a$ lies in the kernel of the map, and write $a=t_K^ka'$
for some $a'\in \D_\rig^\dagger(V_S)$.
It follows that 
\begin{equation}\label{eq:prop322}
\varphi^f(a')=(t_K/\varphi^f(t_K))^k\alpha a'.
\end{equation}
By Proposition \ref{prop:localization-ramified} and assumption, 
\[
|(t_K/\varphi^f(t_K))^{k}|=|\pi_K^{-1}|^k>|\alpha^{-1}|.
\] 
Hence
$a'=0$ is the unique solution of (\ref{eq:prop322}) by Proposition \ref{prop:solution-extended-robba}.
\end{proof}

\begin{prop}\label{prop:a-solution-locus}
For any $n\geq n(V_S)$, $k>\log_{|\pi_K^{-1}|}|\alpha^{-1}|$ and $a\in \D^{+,fn}_{\dif}(V_S)/(t^k)$,
there exists an $E$-analytic subspace $M(S(k,\alpha,a))$ of $M(S)$
such that for any map $g:S\ra R$ of affinoid algebras over $E$, $g(a)\in \D^{+,fn}_{\dif}(V_R)/(t^k)$ is contained in the image of 
\[
\iota_{n,K}: K\otimes_{K_0}\D_{\rig}^{\dag}(V_R)^{\varphi^f=g(\alpha)}\ra \D^{+,fn}_{\dif}(V_R)/(t^k)
\]
if and only if the map $M(R)\ra M(S)$ factors through $M(S(k,\alpha,a))$.
\end{prop}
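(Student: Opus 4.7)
My plan is to reduce the statement to Proposition \ref{prop:solution-extended-robba} by constructing a ``mock eigenvector'' lift of $a$ to $\D^{\dagger,s}_\rig(V_S)$ and extracting a Frobenius equation whose solvability over $R$ measures the obstruction to upgrading this mock lift to a genuine $\varphi$-eigenvector.

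First I would choose $s$ with $n(s)=n$, which is possible since $n\geq n(V_S)$. Using the isomorphism in Proposition \ref{prop:brig-mod-phi-t}(2), I pick a lift $\tilde{a}\in\D^{\dagger,s}_\rig(V_S)$ whose image under $\iota_m$ equals $\alpha^{n-m}a$ modulo $t^k$ for every $m\geq n$, where $a\in\D^{+,n}_\dif(V_S)/(t^k)$ is viewed in $\D^{+,m}_\dif(V_S)/(t^k)$ through the natural inclusion. By Proposition \ref{prop:brig-mod-phi-t}(3) together with the compatibility $\alpha^{n-(m-1)}a=\alpha\cdot\alpha^{n-m}a$, the element $\varphi(\tilde{a})-\alpha\tilde{a}$ vanishes modulo $t^k$ in each $\D^{+,m}_\dif(V_S)/(t^k)$ for $m\geq n+1$, so it equals $t^kc$ for a well-defined $c\in\D^{\dagger,ps}_\rig(V_S)$.

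Next, for any map $g:S\ra R$ of affinoid algebras, I claim $g(a)$ lies in the image of $(\D^\dagger_\rig(V_R))^{\varphi=g(\alpha)}\ra\D^{+,n}_\dif(V_R)/(t^k)$ precisely when the Frobenius equation $p^k\varphi(b)-g(\alpha)b=-g(c)$ has a solution $b\in\D^{\dagger,ps}_\rig(V_R)$. The ``if'' direction is a direct check that $g(\tilde{a})+t^kb$ is a $\varphi=g(\alpha)$-eigenvector mapping to $g(a)$ modulo $t^k$. For ``only if'', any eigenvector preimage $\tilde{a}_R^*$ automatically lies in $\D^{\dagger,s}_\rig(V_R)$ by Lemma \ref{lem:phi-rig} (applied to the trivial identity $\varphi(\tilde{a}_R^*)-g(\alpha)\tilde{a}_R^*=0$); its $\iota_m$-images agree with those of $g(\tilde{a})$ for $m\geq n$, so the kernel description in Proposition \ref{prop:brig-mod-phi-t}(2) forces $\tilde{a}_R^*=g(\tilde{a})+t^kb$ for some $b\in\D^{\dagger,s}_\rig(V_R)$, and unwinding the eigenvector condition yields the Frobenius equation.

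Finally I would rewrite the equation as $\varphi(b)-(p^{-k}\alpha)b=-p^{-k}c$ and apply Proposition \ref{prop:solution-extended-robba} with parameters $\alpha'=p^{-k}\alpha$ and $a'=-p^{-k}c\in\D^{\dagger,ps}_\rig(V_S)$. The norm hypothesis $|(\alpha')^{-1}|=p^{-k}|\alpha^{-1}|<1$ is equivalent to $k>\log_p|\alpha^{-1}|$, which is exactly our assumption; taking $M(S(k,\alpha,a))=M(S(\alpha',a'))$ completes the construction. The main obstacle is identifying the correct rescaling so that Proposition \ref{prop:solution-extended-robba} applies --- the factor $p^{-k}$ is essential and converts the hypothesis $k>\log_p|\alpha^{-1}|$ into precisely the required norm condition $|(\alpha')^{-1}|<1$. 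A secondary technical point is controlling the radius of any hypothetical preimage $\tilde{a}_R^*$, where Lemma \ref{lem:phi-rig} is what allows us to land uniformly in $\D^{\dagger,s}_\rig(V_R)$ and invoke the kernel statement of Proposition \ref{prop:brig-mod-phi-t}(2).
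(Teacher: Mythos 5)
Your proposal is correct and follows essentially the same route as the paper: lift $a$ to $\tilde{a}$ via Proposition \ref{prop:brig-mod-phi-t}(2), reduce membership in the image to solvability of the twisted Frobenius equation $\varphi(b)-p^{-k}g(\alpha)b=\pm p^{-k}g\bigl(t^{-k}(\varphi(\tilde a)-\alpha\tilde a)\bigr)$, control radii with Lemma \ref{lem:phi-rig}, and invoke Proposition \ref{prop:solution-extended-robba}, which is exactly the paper's argument. Your exponent convention $\alpha^{n-m}$ (and overall sign) is in fact the one consistent with $\iota_{m+1}\circ\varphi=\iota_m$, so the discrepancies with the printed proof are only typographical.
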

\begin{proof}
As in the proof of Proposition \ref{prop:solution-extended-robba}, it suffices to treat the case that $V_S$ is free. Using Proposition \ref{prop:localization-ramified}, we choose $\tilde{a}\in\D_{\rig}^{\dag,r_{fn}}(V_S)$ such that the image of $\iota_{m,K}(\tilde{a})$ in $\D^{+,fm}_{\dif}(V_S)/(t^k)$ is $\alpha^{m-n}a$ for each $m\geq n$.
 If $g(a)$ can be lifted to $\tilde{b}\in (K\otimes_{K_0}\D_{\rig}^{\dag}(V_R))^{\varphi^f=g(\alpha)}$, it then follows that the image of $\iota_{m,K}(\tilde{b})$ in $\D^{+,fm}_{\dif}(V_R)/(t^k)$ is $g(\alpha)^{m-n}g(a)$. Then by Proposition \ref{prop:localization-ramified}, we see that $t_K^k$ divides $\tilde{b}-g(\tilde{a})$ in $K\otimes_{K_0}\D_{\rig}^{\dag,r_{fn}}(V_R)$. Hence 
 \[
 b=(\tilde{b}-g(\tilde{a}))/t^k_K
 \] 
 is a solution of the equation
\begin{equation}\label{eq:Q(varphi)-solution-1}
(\varphi^f-g(\alpha))(g(\widetilde{a})+t_K^kb)=0. 
\end{equation}
Conversely, any solution $b\in K\otimes_{K_0}\D_\rig^{\dag,r_{fn}}(V_R)$ of (\ref{eq:Q(varphi)-solution-1}) gives rise to the desired lift $g(\widetilde{a})+t_K^kb$ of $g(a)$. 
Therefore, we conclude that $g(a)$ can be lifted to $(K\otimes_{K_0}\D_{\rig}^{\dag}(V_R))^{\varphi^f=g(\alpha)}$ if and only if  (\ref{eq:Q(varphi)-solution-1}) has a solution $b\in K\otimes_{K_0}\D_\rig^{\dagger,r_{fn}}(V_R)$. 

A short computation shows that (\ref{eq:Q(varphi)-solution-1}) can be rewritten as
\begin{equation*}
\varphi^f(t_K)^k(\varphi^f-(t_K/\varphi^f(t_K))^kg(\alpha))b
=(g(\alpha)-\varphi^f)(g(\widetilde{a})).
\end{equation*}
By the construction of $\tilde{a}$, $t_K^k$ divides $\varphi^f(\tilde{a})-\alpha\tilde{a}$ in $K\otimes_{K_0}\D^{\dag,r_{fn}}_\rig(V_S)$. Note that 
\[
(t_K)=(\varphi^f(t_K)) 
\]
in $K\otimes_{K_0}\mathbf{B}^{\dagger,r_{f(n+1)}}_{\rig,K}$
by Proposition \ref{prop:localization-ramified}(3). Hence $\varphi^f(t_K)^k$ divides $(g(\alpha)-\varphi^f(\alpha))(g(\widetilde{a}))$ in $K\otimes_{K_0}\D_\rig^{\dagger,r_{f(n+1)}}(V_R)$. We therefore deduce that (\ref{eq:Q(varphi)-solution-1}) has a solution in $\D_\rig^{\dagger,r_{fn}}(V_R)$ if and only if
the equation
\begin{equation}\label{eq:Q(varphi)-solution-2}
(\varphi^f-(t_K/\varphi^f(t_K))^kg(\alpha))b
=\varphi^f(t_K)^{-k}(g(\alpha)-\varphi^f)(g(\widetilde{a})).
\end{equation}
has a solution in $K\otimes_{K_0}\D_{\rig}^{\dagger,r_{f(n+1)}}(V_R)$.
In fact, if $b$ is such a solution, we have 
\[
b\in K\otimes_{K_0}\D^{\dag,r_{fn}}_{\rig}(V_R)
\] 
by Lemma \ref{lem:phi-rig}.
The assumption implies that $|(t_K/\varphi^f(t_K))^k|=|\pi_K^{-1}|^k>|\alpha^{-1}|$. Using Proposition \ref{prop:solution-extended-robba}, $g(a)$ can be lifted to $(K\otimes_{K_0}\D^\dag_{\rig}(V_R))^{\varphi^f=g(\alpha)}$ if and only if the map $M(R)\ra M(S)$ factors through $M(S(\alpha, (t_K/\varphi^f(t_K))^k, \varphi^f(t_K)^{-k}(\alpha-\varphi^f)(\widetilde{a}))$. Thus we can take $S(k,\alpha, a)$ to be $S(\alpha, (t_K/\varphi^f(t_K))^k, \varphi^f(t_K)^{-k}(\alpha-\varphi^f)(\widetilde{a}))$.
\end{proof}

\begin{cor}\label{cor:solution-locus-1}
For any integer $n\geq n(V_S)$ and positive integer $k>\log_{|\pi_K^{-1}|}|\alpha^{-1}|$,
there exists an $E$-analytic subspace $M(S(k,\alpha,n))$ of $M(S)$
such that for any map $g:S\ra R$ of affinoid algebras over $E$, the $R$-submodule $g((\D^{+,fn}_{\dif}(V_S)/(t^k))^{\Gamma})$ of $(\D^{+,fn}_{\dif}(V_R)/(t^k))^{\Gamma}$ is contained in the image of 
\[
(K\otimes_{K_0}\D_{\rig}^{\dag}(V_R))^{\varphi^f=g(\alpha),\Gamma=1}\ra (\D^{+,fn}_{\dif}(V_R)/(t^k))^{\Gamma}
\]
if and only if the map $M(R)\ra M(S)$ factors through $M(S(k,\alpha,n))$.
\end{cor}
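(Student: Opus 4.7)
The plan is to reduce to the single-element case already settled by Proposition \ref{prop:a-solution-locus}. By Proposition \ref{prop:dif-torsion-sheaf}(2), the module $M:=(\D^{+,n}_{\dif}(V_S)/(t^k))^{\Gamma}$ is a coherent sheaf, hence finitely generated over $S$; choose generators $a_1,\dots,a_m\in M$. Applying Proposition \ref{prop:a-solution-locus} to each $a_i$ produces Zariski closed subspaces $M(S(k,\alpha,a_i))\subseteq M(S)$, and I would define $M(S(k,\alpha,n))$ as their scheme-theoretic intersection, i.e.\ the quotient of $S$ by the sum of the ideals cutting out the $M(S(k,\alpha,a_i))$.

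The nontrivial step is the ``if'' direction. If $g:S\to R$ factors through $M(S(k,\alpha,n))$, then for each $i$ one obtains a lift $\tilde b_i\in(\D^{\dag}_{\rig}(V_R))^{\varphi=g(\alpha)}$ of $g(a_i)$. I would upgrade these lifts to be $\Gamma$-invariant: for any $\gamma\in\Gamma$, the element $\gamma(\tilde b_i)$ still lies in $(\D^{\dag}_{\rig}(V_R))^{\varphi=g(\alpha)}$ since the $\Gamma$-action commutes with $\varphi$ and fixes the scalar $g(\alpha)\in R$, and its image under $\iota_n$ modulo $t^k$ equals $\gamma(g(a_i))=g(a_i)$ because $\iota_n$ is $\Gamma$-equivariant and $g(a_i)\in (\D^{+,n}_{\dif}(V_R)/(t^k))^{\Gamma}$. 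Under the hypothesis $k>\log_p|\alpha^{-1}|$, Proposition \ref{prop:cris-dif-injective} asserts that the natural map $(\D^{\dag}_{\rig}(V_R))^{\varphi=g(\alpha)}\ra\D^{+,n}_{\dif}(V_R)/(t^k)$ is injective, forcing $\gamma(\tilde b_i)=\tilde b_i$. Consequently each $g(a_i)$ actually lifts into $(\D^{\dag}_{\rig}(V_R))^{\varphi=g(\alpha),\Gamma=1}$, and because $g(M)$, as an $R$-submodule of $(\D^{+,n}_{\dif}(V_R)/(t^k))^{\Gamma}$, is $R$-linearly generated by the $g(a_i)$, the $R$-combination $\sum r_i\tilde b_i$ lifts any $\sum r_ig(a_i)$ into the desired space.

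The ``only if'' direction is immediate: if $g(M)$ lies in the image of the natural map, then in particular each $g(a_i)$ admits a lift to $(\D^{\dag}_{\rig}(V_R))^{\varphi=g(\alpha)}$, so $g$ factors through every $M(S(k,\alpha,a_i))$ and hence through their intersection. I do not anticipate any genuine new obstacle: the two technical inputs — finite generation of $M$ and the automatic upgrade of $\varphi$-equivariant lifts to $\m$-equivariant ones via injectivity of the localization map — have already been established, so this corollary is essentially a bookkeeping synthesis of Propositions \ref{prop:dif-torsion-sheaf}, \ref{prop:cris-dif-injective}, and \ref{prop:a-solution-locus}.
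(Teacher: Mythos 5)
Your proof is correct and is essentially the paper's own argument: the paper defines $M(S(k,\alpha,n))$ as the intersection of the subspaces $M(S(k,\alpha,a))$ from Proposition \ref{prop:a-solution-locus} (taken over all $a\in(\D^{+,n}_{\dif}(V_S)/(t^k))^{\Gamma}$) and uses Proposition \ref{prop:cris-dif-injective} to see that any $\varphi$-eigenvector lift of a $\Gamma$-invariant element is automatically $\Gamma$-invariant, exactly as you spell out. Your only deviation --- intersecting over finitely many $S$-module generators (finiteness coming from Proposition \ref{prop:dif-torsion-sheaf}) and then passing to $R$-linear combinations of the lifts --- is a harmless repackaging of the same idea.
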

\begin{proof}
Recall that by Proposition \ref{prop:cris-dif-injective}, the map
\[
K\otimes_{K_0}\D_{\rig}^{\dag}(V_R)^{\varphi^f=g(\alpha)}\ra \D^{+,fn}_{\dif}(V_R)/(t^k)
\]
is injective. Since it is also $\Gamma$-equivariant, we deduce that for any $a\in(\D^{+,fn}_{\dif}(V_S)/(t^k))^{\Gamma}$, if $g(a)$ can be lifted to $K\otimes_{K_0}\D_{\rig}^{\dag}(V_R)^{\varphi^f=g(\alpha)}$, then the lift is also $\Gamma$-invariant. Thus we can take $M(S(k,\alpha,n))$ to be the intersection of $M(S(k,\alpha,a))$ for all $a\in(\D^{+,fn}_{\dif}(V_S)/(t^k))^{\Gamma}$.
\end{proof}

\begin{cor}\label{cor:solution-locus}
Keep notations as in Corollary \ref{cor:solution-locus-1}. Then
there exists an $E$-analytic subspace $M(S(k,\alpha))$ of $M(S)$
such that for any map $g:S\ra R$ of affinoid algebras over $E$, the $R$-submodule $g((\D^{+,fn}_{\dif}(V_S)/(t^k))^{\Gamma})$ of $(\D^{+,fn}_{\dif}(V_R)/(t^k))^{\Gamma}$ is contained in the image of \begin{equation}
(K\otimes_{K_0}\D^\dag_{\rig}(V_R))^{\varphi^f=g(\alpha),\Gamma=1}\ra (\D^{+,fn}_{\dif}(V_R)/(t^k))^{\Gamma}
\end{equation}
for all sufficiently large $n$ if and only if the map $M(R)\ra M(S)$ factors through $M(S(k,\alpha))$.
\end{cor}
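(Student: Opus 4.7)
The plan is to reduce the ``for all sufficiently large $n$'' condition to the single-level criterion already handled by Corollary \ref{cor:solution-locus-1}. Two ingredients suffice. First, Proposition \ref{prop:dif-torsion-sheaf}(3) supplies an integer $n_0\geq n(V_S)$ such that the inclusion $(\D^{+,n_0}_\dif(V_S)/(t^k))^{\Gamma}\hookrightarrow(\D^{+,n}_\dif(V_S)/(t^k))^{\Gamma}$ induced by $K_{n_0}\hookrightarrow K_n$ is an equality for all $n\geq n_0$; this stabilizes the source of the natural map. Second, the identity $\iota_{n+1}\circ\varphi=\iota_n$ used in the proof of Proposition \ref{prop:brig-mod-phi-t}, combined with the relation $\varphi(\tilde{a})=g(\alpha)\tilde{a}$, gives by induction
\[
\iota_m(\tilde{a})=g(\alpha)^{n-m}\iota_n(\tilde{a})
\]
for any $\tilde{a}\in(\D^\dag_\rig(V_R))^{\varphi=g(\alpha),\Gamma=1}$ and $n\geq m$, both sides being identified inside $\D^{+,n}_\dif(V_R)/(t^k)$ via the inclusion of $K_m$.

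With these at hand, I would define $M(S(k,\alpha)):=M(S(k,\alpha,n_0))$ using Corollary \ref{cor:solution-locus-1}. For any $g:S\ra R$, any $a\in(\D^{+,n_0}_\dif(V_S)/(t^k))^{\Gamma}$, and any $n\geq n_0$, the rescaling map $\tilde{a}\mapsto g(\alpha)^{n_0-n}\tilde{a}$ is an automorphism of $(\D^\dag_\rig(V_R))^{\varphi=g(\alpha),\Gamma=1}$: it preserves the $\varphi$-eigenvalue, commutes with the $\Gamma$-action, and $g(\alpha)$ is a unit in $R$. A direct computation with the key identity shows it sends a lift of $g(a)$ at level $n$ to a lift at level $n_0$, and conversely. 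Combined with the stabilization of the source, this proves that the containment condition at any level $n\geq n_0$ is equivalent to the containment condition at level $n_0$. Applying Corollary \ref{cor:solution-locus-1} then yields the desired biconditional characterization via $M(S(k,\alpha))=M(S(k,\alpha,n_0))$.

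The main subtlety is a bookkeeping one. The elements $g(a)$ viewed in $\D^{+,n}_\dif(V_R)/(t^k)$ at different levels $n$ literally coincide under $K_{n_0}\hookrightarrow K_n$, while the corresponding lifts differ by the scalar factor $g(\alpha)^{n_0-n}$; one must keep these distinct throughout. The invertibility of $g(\alpha)$, propagated from the standing assumption $\alpha\in\OO(X)^\times$, is exactly what makes this transfer between levels reversible, and without it the argument would collapse.
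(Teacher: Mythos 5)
Your proof is correct, but it takes a different (and more explicit) route than the paper. The paper's own proof is a one-liner: it defines $M(S(k,\alpha))$ to be the intersection of the Zariski closed subspaces $M(S(k,\alpha,n))$ over all $n\geq n(V_S)$ and declares the universal property clear. You instead invoke the stabilization statement of Proposition \ref{prop:dif-torsion-sheaf}(3) to fix a single level $n_0$, and then use the identity $\iota_{n+1}\circ\varphi=\iota_n$ together with $\varphi(\tilde a)=g(\alpha)\tilde a$ to show that multiplication by the unit $g(\alpha)^{n_0-n}$ transfers lifts between level $n$ and level $n_0$, so that the containment condition is literally the same for all $n\geq n_0$; you then take $M(S(k,\alpha))=M(S(k,\alpha,n_0))$. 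Both constructions are valid and in fact cut out the same subspace, but your argument makes explicit the level-comparison that the paper's ``it is clear'' suppresses: note that even with the paper's intersection definition, the converse direction (``containment for all sufficiently large $n$ implies factoring through every $M(S(k,\alpha,n))$, including small $n$'') quietly requires exactly the rescaling transfer you wrote down, since the sources $(\D^{+,n}_\dif(V_S)/(t^k))^{\Gamma}$ only increase with $n$ and the target map changes with the level. What your route buys is this transparency plus the extra information that the locus is realized at one finite level; what the paper's route buys is brevity, since an arbitrary intersection of Zariski closed subspaces is again Zariski closed and no stabilization input is needed to define the space. Your attention to the invertibility of $g(\alpha)$ and to keeping $g(a)$ (level-independent under $K_{n_0}\hookrightarrow K_n$) distinct from its lifts (which scale with the level) is exactly the right bookkeeping; the only small point left tacit, already standing in the paper via Lemma \ref{lem:phi-rig}, is that elements of $(\D^\dag_{\rig}(V_R))^{\varphi=g(\alpha)}$ lie in $\D^{\dag,r_{n_0}}_{\rig}(V_R)$, so that $\iota_{n_0}$ is defined on them.
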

\begin{proof}
It is clear that we can take
$M(S(k,\alpha))$ to be the intersection of $M(S(k,\alpha, n))$ for all $n\geq n(V_S)$.
\end{proof}

\subsection{Finite slope subspaces}

\begin{theorem}\label{thm:fs}
The rigid analytic space $X$ has a unique finite slope subspace $X_{fs}$.
\end{theorem}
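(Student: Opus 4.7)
Uniqueness is already Proposition \ref{prop:fs-unique}, and Proposition \ref{prop:fs-glue} lets us glue along an admissible affinoid covering, so it suffices to construct $X_{fs}$ when $X=M(S)$ is affinoid. The overall strategy is to cut $X_{fs}$ out in two stages: first a lifting-locus stage using Corollary \ref{cor:solution-locus}, then a ``component-removal'' stage that enforces condition~(1) of Definition \ref{def:fs-space}.

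For each integer $k>\log_p|\alpha^{-1}|$ (with the norm taken on $S$), Corollary \ref{cor:solution-locus} produces a Zariski closed subspace $M(S(k,\alpha))\subseteq M(S)$ characterizing exactly those test maps $g:M(R)\to M(S)$ along which every $\Gamma$-invariant of $\mathrm{D}^{+,n}_{\dif}(V_R)/(t^k)$ lifts through (\ref{eq:cris-dR}) for all $n$ sufficiently large. Since any lift modulo $t^{k+1}$ restricts to a lift modulo $t^k$, the family $\{M(S(k,\alpha))\}_{k\gg 0}$ is a decreasing chain of Zariski closed subspaces; by Noetherianity of $S$ it stabilizes at some $M(S_0)$, equal to $M(S(k,\alpha))$ for every $k$ large.

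Now enforce condition~(1). If an irreducible component $W\subseteq M(S_0)$ has the property that every closed point $w\in W$ lies in $V(Q(j))$ for some $j\le 0$, then by \cite[(5.7)]{Ki03} (the same Baire-type argument already invoked in the proof of Proposition \ref{prop:fs-unique}) a single $j_W\le 0$ works: $W\subseteq V(Q(j_W))$. Since $M(S_0)$ has only finitely many components, discarding those of this type yields a Zariski closed $X_{fs}\subseteq M(S_0)$ on which no $Q(j)$ with $j\le 0$ is a zero divisor, so $(X_{fs})_{Q(j)}$ is scheme-theoretically dense in $X_{fs}$ for every such $j$.

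Finally, verify condition~(2). Let $g:M(R)\to M(S)$ factor through $X_{Q(j)}$ for every integer $j\le 0$. Proposition \ref{prop:cris-dif-injective} already makes the map (\ref{eq:cris-dR}), composed with the projection to $\mathrm{D}^{+,n}_{\dif}(V_R)/(t^k)$, injective once $k>\log_p|g^*(\alpha)^{-1}|$. Surjectivity modulo $t^k$ for all $n\gg 0$ is equivalent, by Corollary \ref{cor:solution-locus}, to $g$ factoring through $M(S(k,\alpha))$, hence through $M(S_0)$; because $g^*(Q(j))$ is a unit for every $j\le 0$, the image of $g$ cannot meet any of the components discarded in the previous step, so $g$ in fact factors through $X_{fs}$. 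The upgrade from an isomorphism modulo $t^k$ to an isomorphism onto $(\mathrm{D}^{+,n}_{\dif}(V_R))^{\Gamma}$ comes from Corollary \ref{cor:t-descent}: the kernel and cokernel of $(\mathrm{D}^{+,n}_{\dif}(V_R)/(t^{k+1}))^{\Gamma}\to (\mathrm{D}^{+,n}_{\dif}(V_R)/(t^k))^{\Gamma}$ are killed by $g^*(P(k+1))=\prod_{j=0}^{k}g^*(Q(-j))$, which is a unit, forcing the increasing system $(\mathrm{D}^{+,n}_{\dif}(V_R)/(t^k))^{\Gamma}$ to stabilize and match $(\mathrm{D}^{+,n}_{\dif}(V_R))^{\Gamma}$.

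The main technical obstacle is this last upgrade: combining the $k$-truncated lifting criterion supplied by Corollary \ref{cor:solution-locus} with the vanishing of Sen-operator obstructions along $g$ (Proposition \ref{prop:Theta-kill}, Corollary \ref{cor:t-descent}) to obtain an actual isomorphism onto the full $\Gamma$-invariants, and dovetailing this with the component-removal step so that the two conditions of Definition \ref{def:fs-space} are simultaneously satisfied and sharp.
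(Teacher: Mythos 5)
Your overall route is the paper's: reduce to the affinoid case via Propositions \ref{prop:fs-unique} and \ref{prop:fs-glue}, cut out the lifting locus supplied by Corollary \ref{cor:solution-locus}, then enforce condition (1) of Definition \ref{def:fs-space} by removing the part of that locus where the $Q(j)$ degenerate, and finally verify condition (2) using Proposition \ref{prop:cris-dif-injective} together with Corollaries \ref{cor:base-change} and \ref{cor:t-descent}. But the step where you enforce condition (1) has a genuine gap. In this section $X$ is not assumed reduced, and the lifting locus $M(S_0)$ is an essentially arbitrary Zariski closed subspace of an affinoid, so it may be non-reduced and carry embedded components. Discarding the irreducible components on which some $Q(j_W)$ vanishes identically does not, in general, produce a closed subspace on which every $Q(j)$, $j\le 0$, is a non-zero-divisor: nothing in your construction rules out, say, $S_0=\Q\langle x,y\rangle/(xy,y^2)$ with $Q(0)$ restricting to $x$, where no $Q(j)$ vanishes identically on the unique irreducible component (so you discard nothing), yet $x$ is a zero divisor and condition (1) fails; moreover a set-theoretic union of kept components has no canonical scheme structure to begin with. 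The paper's construction is exactly the repair: inside $X'=\cap_{k>\log_p|\alpha^{-1}|}M(S(k,\alpha))$ one takes $X'_i$ to be the scheme-theoretic closure of $X'_{P(i)}$ and sets $X_{fs}=\cap_{i\ge1}X'_i$, which stabilizes by Noetherianity; this quotients out precisely the sections killed by a power of $P(i)$ (embedded pieces included), so $(X_{fs})_{P(i)}=X'_{P(i)}$ is scheme-theoretically dense by construction, and a test map $g$ with all $g^*(Q(j))$ units factors through the open subspace $X'_{P(i)}$ and hence through its closure — the clean substitute for your ``the image cannot meet the discarded components'' argument.

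Two smaller points. First, the claimed monotonicity of $k\mapsto M(S(k,\alpha))$ is not justified: a $\Gamma$-invariant of $\mathrm{D}_{\dif}^{+,n}(V_R)/(t^{k})$ need not be the reduction of one modulo $t^{k+1}$ (the obstruction lies in an $H^1(\Gamma_K,\D^n_{\Sen}(V_R(k)))$ killed only by $\det(\Theta+kI)$), so ``a lift mod $t^{k+1}$ restricts to a lift mod $t^{k}$'' does not give the inclusion you want; but this is harmless — simply intersect over all $k>\log_p|\alpha^{-1}|$, as the paper does, and Noetherianity again makes this a finite intersection. Second, in identifying ``surjectivity of (\ref{eq:cris-t^k}) for all $n\gg0$'' with ``$g$ factors through $M(S(k,\alpha))$'' you are implicitly using that $(\mathrm{D}_{\dif}^{+,n}(V_R)/(t^k))^{\Gamma}$ is generated by the image of $(\mathrm{D}_{\dif}^{+,n}(V_S)/(t^k))^{\Gamma}$, i.e.\ the base-change isomorphism of Corollary \ref{cor:base-change}; this is legitimate here because $g^*(P(k))$ is a unit, but it should be said, since Corollary \ref{cor:solution-locus} only speaks of the image of the $S$-level invariants.
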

\begin{proof}
By Proposition \ref{prop:fs-glue}, it suffices to treat the case that $X=M(S)$ is an affinoid space.
Let
\[
X'=\bigcap_{k>\log_{|\pi_K^{-1}|}|\alpha^{-1}|}M(S(k,\alpha)).
\]
Now for each $i\geq1$ and $\tau\in\mathrm{H}_K$, let $X'_{i}$ be the Zariski closure of 
\[
\bigcap_{\tau\in\mathrm{H}_K}X'_{P(i)_{\tau}}=\bigcap_{0\leq j\leq i-1,\tau\in\mathrm{H}_K}X'_{Q(-j)_{\tau}}.
\]
We claim that 
\[
X_{fs}=\bigcap_{i\geq 1}X'_{i}
\] 
is the finite slope subspace of $X$. First note that the decreasing sequence of closed subspaces $X'_{1}\supseteq X'_{2}\supseteq\dots$ becomes constant eventually because $S$ is Noetherian. We fix an $i_0$ such that $X'_{i}=X'_{i_0}$ for all $i\geq i_0$. Hence $X_{fs}=X'_{i}$ for all $i\geq i_0$.  Thus for any $i\geq i_0$ and $\tau\in\mathrm{H}_K$, we have
\[
(X_{fs})_{P(i)_{\tau}}\supseteq X_{fs}\cap X'_{P(i)_{\tau}}=X_{i_0}\cap X'_{P(i)_{\tau}}\supseteq \bigcap_{\tau\in\mathrm{H}_K}X'_{P(i)_{\tau}}.
\]
 Therefore the Zariski closure of $(X_{fs})_{P(i)_{\tau}}$ contains the Zariski closure of $\cap_{\tau\in\mathrm{H}_K}X'_{P(i)_{\tau}}$, which is $X'_i=X_{fs}$; this yields that $X_{fs}$ satisfies (1) of Definition \ref{def:fs-space}.

Now suppose that $g:M(R)\ra M(S)$ is a map of affinoid spaces over $E$ which factors through $X_{Q(j)}$ for every $j\leq0$. It follows from Corollary \ref{cor:t-descent} that for each $k\geq1$ and $n\geq n(V_S)$, the natural map
\[
(\mathrm{D}_{\dif}^{+,fn}(V_R))^{\Gamma}\ra(\mathrm{D}_{\dif}^{+,fn}(V_R)/(t^k))^{\Gamma}
\]
is an isomorphism. Hence (\ref{eq:cris-dR}) is an isomorphism if and only if the natural map
\begin{equation}\label{eq:cris-t^k}
(K\otimes_{K_0}\mathrm{D}^{\dag}_{\rig}(V_R))^{\varphi^f=g^*(\alpha),\Gamma=1}
\ra(\mathrm{D}_{\dif}^{+,fn}(V_R)/(t^k))^{\Gamma}
\end{equation}
is surjective for some (hence any) $k\geq1$. By Corollary \ref{cor:t-descent}, the natural map
\[
(\mathrm{D}_{\mathrm{dif}}^{+,fn}(V_S)/(t^k))^{\Gamma}\otimes_SR\ra
(\mathrm{D}_{\mathrm{dif}}^{+,fn}(V_R)/(t^k))^{\Gamma}
\]
is an isomorphism. Hence by Corollary \ref{cor:solution-locus-1}, the map (\ref{eq:cris-t^k}) is surjective if and only if the map $g:M(R)\ra M(S)$ factors through $M(S(k,\alpha,n))$ for each $k>\log_{|\pi_K^{-1}|}|\alpha^{-1}|$ by Corollary \ref{cor:solution-locus}. We thus conclude that (\ref{eq:cris-dR}) is an isomorphism for all sufficiently large $n$ if and only if $g:M(R)\ra M(S)$ factors through $X_{fs}$. This yields that $X_{fs}$ satisfies $(2)$ of Definition \ref{def:fs-space}.
\end{proof}

\begin{prop}\label{prop:fs-subdomain}
For any affinoid subdomain $M(S)$ of $X_{fs}$ and $k>\log_{|\pi_K^{-1}|}|\alpha^{-1}|$, we have $S(k,\alpha)=S$. As a consequence, for such $k$, the natural map
\[
(K\otimes_{K_0}\D^{\dag}_{\rig}(V_S))^{\varphi^f=\alpha,\Gamma=1}\ra(\D_{\dif}^{+,fn}(V_S)/(t^k))^{\Gamma}
\]
is an isomorphism for all $n\geq n(V_S)$.
\end{prop}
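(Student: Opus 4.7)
The plan is to bootstrap from Proposition \ref{prop:flat-base-change} and the very construction of $X_{fs}$ carried out in the proof of Theorem \ref{thm:fs}. The composite open immersion $M(S) \hookrightarrow X_{fs} \hookrightarrow X$ is flat, and since its image lies entirely in $X_{fs}$, the set-theoretic pullback of $X_{fs}$ is simply $M(S)$. Applying Proposition \ref{prop:flat-base-change} then shows that $M(S)$ is a finite slope subspace of $M(S)$ with respect to $(V_S,\alpha)$, so by the uniqueness result Proposition \ref{prop:fs-unique} we have $M(S)_{fs} = M(S)$.

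Next I would replay the construction from the proof of Theorem \ref{thm:fs} inside $M(S)$: form $M(S)' = \bigcap_{k > \log_p|\alpha^{-1}|} M(S(k,\alpha))$ and let $M(S)_i'$ denote the Zariski closure of $M(S)'_{P(i)}$ for each $i \geq 1$. By that proof, $M(S)_{fs} = \bigcap_{i \geq 1} M(S)_i'$. Since $M(S)$ is Noetherian, this decreasing sequence of Zariski closed subspaces stabilizes, and the equality $M(S)_{fs} = M(S)$ from the previous step forces $M(S)_i' = M(S)$ for all sufficiently large $i$. Hence $M(S)'_{P(i)}$ is scheme-theoretically dense in $M(S)$ for such $i$. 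But $M(S)'_{P(i)} \subseteq M(S)' \subseteq M(S(k,\alpha))$ for every admissible $k$, and since $M(S(k,\alpha))$ is Zariski closed we conclude $M(S(k,\alpha)) = M(S)$, i.e., $S(k,\alpha) = S$.

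The ``consequence'' then follows by unwinding the universal property. The proof of Corollary \ref{cor:solution-locus} shows $M(S(k,\alpha)) = \bigcap_{n \geq n(V_S)} M(S(k,\alpha,n))$, so the conclusion of the previous paragraph forces $M(S(k,\alpha,n)) = M(S)$ for \emph{every} $n \geq n(V_S)$. Taking $g = \mathrm{id}$ in Corollary \ref{cor:solution-locus-1} then yields surjectivity of $(\D^{\dag}_{\rig}(V_S))^{\varphi=\alpha,\Gamma=1} \to (\D^{+,n}_{\dif}(V_S)/(t^k))^{\Gamma}$ for each such $n$. Combining this with the injectivity supplied by Proposition \ref{prop:cris-dif-injective} (which holds precisely under the hypothesis $k > \log_p|\alpha^{-1}|$) gives the claimed isomorphism uniformly in $n \geq n(V_S)$. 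I do not anticipate a genuine obstacle: all of the serious technical work was absorbed into the construction of $X_{fs}$ and into Corollary \ref{cor:solution-locus}; the only point warranting care is checking that Proposition \ref{prop:flat-base-change} applies to the composite open immersion, which reduces to the obvious fact that open immersions are flat and that the pullback is identified with $M(S)$ because $M(S) \subseteq X_{fs}$.
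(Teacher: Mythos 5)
Your overall strategy is the paper's: establish $M(S)_{fs}=M(S)$ by flat base change and uniqueness, then read off $M(S)\subseteq M(S(k,\alpha))$ from the construction in Theorem \ref{thm:fs}, and finally unwind Corollaries \ref{cor:solution-locus-1}--\ref{cor:solution-locus} together with the injectivity of Proposition \ref{prop:cris-dif-injective}. However, the first step is not correct as you have written it. The composite $M(S)\hookrightarrow X_{fs}\hookrightarrow X$ is not an open immersion of rigid spaces (it is only locally closed), and it is not flat in general: $X_{fs}\hookrightarrow X$ is a Zariski \emph{closed} immersion, and closed immersions are flat only when they are inclusions of unions of connected components. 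So Proposition \ref{prop:flat-base-change}, whose hypothesis is flatness of the morphism to the ambient space, cannot be applied to this composite, and your closing remark that the verification ``reduces to the obvious fact that open immersions are flat'' conflates the flat map $M(S)\to X_{fs}$ with the non-flat map $M(S)\to X$. The correct (and the paper's) argument is: first observe that $X_{fs}$, viewed as the ambient space with the family $V_{X_{fs}}$ and the restriction of $\alpha$, is its own finite slope subspace (this is immediate from Definition \ref{def:fs-space}, since $(X_{fs})_{Q(j)}=X_{Q(j)}\cap X_{fs}$ and any $g:M(R)\to X_{fs}$ factoring through these loci automatically factors through $X_{fs}$); then apply Proposition \ref{prop:flat-base-change} to the genuinely flat open immersion $M(S)\hookrightarrow X_{fs}$, whose pullback of $X_{fs}$ is $M(S)$, to conclude $M(S)_{fs}=M(S)$.

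Once that is repaired, the remainder of your argument agrees with the paper's proof. Two small remarks: the scheme-theoretic density detour in your second paragraph is unnecessary, since each $M(S)'_{P(i)}$ lies inside the Zariski closed subspace $M(S(k,\alpha))$, so its Zariski closure $M(S)'_i$ does as well, and $M(S)=M(S)_{fs}=\cap_i M(S)'_i\subseteq M(S(k,\alpha))$ follows directly (also, each $M(S)'_i$ contains the intersection, so Noetherianity is not needed here). Your last paragraph, deducing $S(k,\alpha,n)=S$ for every $n\geq n(V_S)$ and combining surjectivity from Corollary \ref{cor:solution-locus-1} (with $g=\mathrm{id}$) with injectivity from Proposition \ref{prop:cris-dif-injective}, is exactly how the stated consequence follows.
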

\begin{proof}
It is obvious that the finite slope subspace of $X_{fs}$ is $X_{fs}$ itself. We then deduce that $(M(S))_{fs}=M(S)$ since the formation of finite slope subspaces commutes with flat base change by Proposition \ref{prop:flat-base-change}. This yields that  $M(S)\subseteq M(S(k,\alpha))$ following the construction of the finite slope subspace in Theorem \ref{thm:fs}; hence $M(S)=M(S(k,\alpha))$. This yields the surjectivity of the map. The injectivity follows from Proposition \ref{prop:cris-dif-injective}.
\end{proof}

\begin{theorem}\label{thm:fs-sheaf}
Let $M(S)$ be an affinoid subdomain of $X_{fs}$. Then
for any $n\geq n(V_S)$ and $k>\log_{|\pi_K^{-1}|}|\alpha^{-1}|_{\mathrm{sp}}$ where the norm is taken in $S$, the natural map of sheaves
\[
(K\otimes_{K_0}\mathscr{D}_{\rig}^{\dag}(V_S))^{\varphi^f=\alpha,\Gamma=1}
\ra(\mathscr{D}_{\dif}^{+,fn}(V_S)/(t^k))^{\Gamma}
\]
is an isomorphism. As a consequence,
 $(\mathscr{D}_{\rig}^{\dag}(V_{X_{fs}}))^{\varphi^f=\alpha,\Gamma=1}$
is a coherent sheaf on $X_{fs}$.
\end{theorem}
\begin{proof}
It follows from Proposition \ref{prop:fs-subdomain} that the map of sheaves
\[
(K\otimes_{K_0}\mathscr{D}_{\rig}^{\dag}(V_S))^{\varphi^f=\alpha,\Gamma=1}\ra(\mathscr{D}_{\dif}^{+,fn}(V_S)/(t^k))^{\Gamma}
\]
is an isomorphism.
By Proposition \ref{prop:dif-torsion-sheaf}, we therefore deduce that $(\mathscr{D}_{\rig}^{\dag}(V_S))^{\varphi^f=\alpha,\Gamma=1}$ is a coherent sheaf. 
\end{proof}

\begin{theorem}\label{thm:fs-sheaf-general}
For any $E$-affinoid algebra $R$ and morphism $g:M(R)\ra X_{fs}$ which factors through $X_{Q(j)}$ for every integer $j\leq0$, the natural map
\begin{equation*}
    (K\otimes_{K_0}\mathscr{D}^{\dag}_{\rig}(V_R))^{\varphi^f=g^*(\alpha),\Gamma=1}\ra (\mathscr{D}_{\dif}^{+,fn}(V_R)/(t^k))^{\Gamma}
\end{equation*}
is an isomorphism for all sufficiently large $k$. As a consequence, $(\mathscr{D}^{\dag}_{\rig}(V_R))^{\varphi^f=g^*(\alpha),\Gamma=1}$ is a coherent sheaf.
\end{theorem}
\begin{proof}
We choose an admissible affinoid covering $\{M(S_i)\}_{i\in I}$ of $X_{fs}$ by affinoid subdomains. Let $\{M(R_j)\}_{j\in J}$ be a finite covering of $M(R)$ which refines the pullback of the covering $\{M(S_i)\}_{i\in I}$ on $M(R)$. Suppose that $M(R_j)$ maps to $M(S_{i_j})$ for each $j\in J$. Let $k$ be a positive integer such that 
\[
k>\log_{|\pi_K^{-1}|}\max_{i\in I}\{|\alpha^{-1}|\hspace{2mm}\text{in $S_i$}\}.
\] 
Now for any affinoid subdomain $M(R')$ of some $M(R_j)$, 
\[
(\D^{+,fn}_{\dif}(V_{R'})/(t^k))^{\Gamma=1}=(\D^{+,fn}_{\dif}(V_{S_{i_j}})/(t^k))^{\Gamma=1}\otimes_{S_{i_j}}R'
\] 
by Corollary \ref{cor:base-change} because $M(R')$ maps to $X_{P(k)}$. On the other hand, by Proposition \ref{prop:fs-subdomain}, we have $M(S_{i_j}(k,\alpha))=M(S_{i_j})$, yielding that 
\[
(K\otimes_{K_0}\mathrm{D}^{\dag}_{\rig}(V_{R'}))^{\varphi^f=g^*(\alpha),\Gamma=1}\ra (\mathrm{D}_{\dif}^{+,fn}(V_{R'})/(t^k))^{\Gamma}
\]
is surjective. Furthermore, it is injective by Proposition \ref{prop:cris-dif-injective}; so it is an isomorphism. Hence 
\[
(K\otimes_{K_0}\mathscr{D}^{\dag}_{\rig}(V_{R_j}))^{\varphi^f=g^*(\alpha),\Gamma=1}\ra (\mathscr{D}_{\dif}^{+,fn}(V_{R_j})/(t^k))^{\Gamma}
\]
is an isomorphism. This yields the theorem.
\end{proof}

\begin{remark}\label{rem:fs-same}
Our finite slope subspace $X_{fs}$ coincides with Nakamura's generalization of Kisin's finite slope subspace \cite{N11}. In fact, as noted in Remark \ref{rem:fs-artinian}, to characterize our finite slope subspaces, it suffices to test only finite $\Q$-algebras $R$ in Definition \ref{def:fs-space}(2). By the argument in \cite[(5.8)]{Ki03}, the same thing holds for Nakamura's finite slope subspaces as well. For such $R$, we have the comparisons 
\[
(\D_{\rig}^\dag(V_R))^{\Gamma}=D^+_{\mathrm{crys}}(V_R) \hspace{5mm}\text{and}\hspace{5mm} (\D^+_{\dif}(V_R))^{\Gamma}=D^+_{\mathrm{dR}}(V_R)
\]
by \cite[Th\'eor\`{e}me 3.6]{LB02} and \cite[Th\'eor\`{e}me 3.9]{F04} respectively. Thus our Definition \ref{def:fs-space}(2) coincides with the counterpart of Nakamura's in this case; hence the claim.
\end{remark}

\section{Global triangulation of refined families}

In this section, we prove the main results of this paper.

\subsection{Weakly refined families}
From now on, let $X$ be a reduced rigid analytic space over $E$, and let $V_X$ be a family of weakly refined $p$-adic representations of $G_K$ of dimension $d$ on $X$ as in \S0.3. We further suppose $\kappa_1=0$. Therefore the Sen polynomial for $V_X$ is of the form $TQ(T)$ with $Q(T)\in K\otimes_{\Q}\OO(X)[T]$. As in \S3, we put $P(i)=\prod_{j=0}^{i-1}Q(-j)$ for $i\geq1$.
\begin{convention}
For $c\in\mathbb{R}$, $1\leq i\leq d$, $x\in X$ and $?\in\{>,<,\leq,\geq\}$, we say $\kappa_i(x)?c$ if $\kappa_i(x)_\tau?c$ for any $\tau\in\H_K$.
\end{convention}

The first goal of this subsection is to show that the finite slope subspace of $X$ with respect to the pair $(F, V_X)$ is $X$ itself (Theorem \ref{thm:weak-refined-family}). We start by collecting some basic properties about the $p$-adic representations $V_z$ for $z\in Z$. 
\begin{lemma}\label{lem:crys-Frob-eigenspace}
The following are true.
\begin{enumerate}
\item[(1)]If $x\in X_{P(k)}$ for some $k\geq1$, then 
\[
\dim_{k(x)}(\D_\dif^{+,n}(V_x)/(t^k))_\tau^{\Gamma}\leq 1
\] 
for any $\tau\in \H_K$.
\item[(2)]
For any $z\in Z$ and $\sigma\in\mathrm{Gal}(K_0/\Q)$,
$\dim_{k(z)}(\D_{\rig}^{\dag}(V_z))_{\sigma}^{\varphi^f=F(z),\Gamma=1}\geq 1$.
\item[(3)]
For any $z\in Z$, if 
$v_K(F(z))<-\kappa_i(z)$
for all $i\geq 2$, then $\D_{\rig}^{\dag}(V_z)_{\sigma}^{\varphi^f=F(z),\Gamma=1}$ has dimension 1 over $k(z)$.
Furthermore, for any $k\geq1$ satisfying 
$v_K(F(z))<k\leq-\kappa_i(z)$
for all $i\geq 2$, the natural map
\begin{equation}\label{eq:crys-Frob-eigenspace}
(K\otimes_{K_0}\D_{\rig}^{\dag}(V_z)_{\sigma})^{\varphi^f=F(z),\Gamma=1}\ra\oplus_{\tau\in \H_\sigma}(\D_\dif^{+,fn}(V_z)/(t^k))_\tau^\Gamma
\end{equation}
is an isomorphism.
\end{enumerate}
\end{lemma}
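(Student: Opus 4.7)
The three parts have different flavors, so I treat them separately. Part~(1) is the technical core and parts~(2), (3) follow from it together with standard crystalline-period inputs. For part~(1), the plan is to use the filtration of $\D_\dif^{+,n}(V_x)/(t^k)$ by powers of $t$ and iterate Corollary~\ref{cor:t-descent} along the successive quotients. The hypothesis $P(k)(x)\neq 0$ translates, via the identification $TQ(T)=\det(TI-\Theta)$, into $\det(\Theta+iI)(x)=(-1)^{d+1}iQ(-i)(x)\in k(x)^{\times}$ for $i=1,\dots,k-1$, together with $Q(0)(x)\neq 0$. Each connecting map in the tower $(\D_\dif^{+,n}(V_x)/(t^{i+1}))^{\Gamma}\to(\D_\dif^{+,n}(V_x)/(t^i))^{\Gamma}$ therefore has kernel and cokernel killed by a unit of $k(x)$, so the iteration reduces the question to bounding $\dim_{k(x)}(\D_\Sen^n(V_x))^{\Gamma}$.

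For the Sen-level bound I would reuse the decomposition argument from the proof of Proposition~\ref{prop:Gamma-base-change}: the coprimality of $T$ and $Q(T)$ at $x$ (guaranteed by $Q(0)(x)\neq 0$) yields a $\Gamma$-stable splitting $\D_\Sen^n(V_x)=\ker(\Theta)\oplus\ker(Q(\Theta))$; Proposition~\ref{prop:Theta-gamma} produces a finite Galois $L'$ so that $\Gamma_{L'_n}$-invariants on $\ker Q(\Theta)$ land in $\ker\Theta\cap\ker Q(\Theta)=0$, forcing $(\D_\Sen^n(V_x))^{\Gamma}=(\ker\Theta)^{\Gamma}$. Finally, $\ker\Theta$ is a rank one module over $K_n\otimes_{\Q}k(x)$ (since $0$ is a simple root of the Sen polynomial at $x$), and a direct semi-linear descent shows that its $\Gamma$-invariants form a $k(x)$-vector space of dimension at most one: choosing a generator $m_0$ with $\gamma(m_0)=c_\gamma m_0$ reduces the problem to the cocycle equation $\gamma(a)c_\gamma=a$, any two solutions of which differ by an element of $(K_n\otimes_{\Q}k(x))^{\Gamma}=k(x)$.

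Part~(2) is a quick consequence of Berger's identification $(\D_\rig^\dag(V_z))^{\Gamma}=D_{\cris}^+(V_z)$: at $z\in Z$ all Hodge-Tate weights are non-negative (since $\kappa_1(z)=0$ is the smallest), so $D_{\cris}^+(V_z)=D_{\cris}(V_z)$ and condition~(d) of Definition~\ref{def:weak-refined-family} supplies a nonzero $v\in D_{\cris}(V_z)^{\varphi=F(z)}$ as the desired element. For part~(3), the integrality of Hodge-Tate weights at a crystalline point and the upper bound $k\leq\kappa_i(z)$ for $i\geq 2$ place $z$ in $X_{P(k)}$, so part~(1) bounds the target of~(\ref{eq:crys-Frob-eigenspace}) by dimension one; the lower bound $k>v_p(F(z))=\log_p|F(z)^{-1}|$ invokes Proposition~\ref{prop:cris-dif-injective} to make the map injective; together with the nonvanishing from part~(2), both sides must have dimension exactly one and the map is forced to be an isomorphism. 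The main obstacle throughout is the semi-linear descent computation in part~(1): once that bound is secured, the remaining steps are linear-algebra bookkeeping.
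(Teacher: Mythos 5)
Your proof is correct and follows essentially the same route as the paper: for (1) you reduce via Corollary \ref{cor:t-descent} (whose killing element is a unit at $x\in X_{P(k)}$) to the Sen level and use that $0$ is a simple root of the Sen polynomial, for (2) you use Berger's comparison together with the nonnegativity of the weights and condition (d), and for (3) you squeeze the injection from Proposition \ref{prop:cris-dif-injective} between the bounds of (1) and (2). Your expanded Sen-level argument (splitting $\D_{\Sen}^n(V_x)=\ker\Theta\oplus\ker Q(\Theta)$ and invoking Proposition \ref{prop:Theta-gamma}) just fills in a step the paper asserts without detail; the only small fix is that "two solutions differ by an element of $k(x)$" should be replaced by the standard semilinear-descent bound $\dim_{k(x)}M^{\Gamma}\le\operatorname{rank}M$, since $K_n\otimes_{\Q}k(x)$ may have zero divisors.
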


\begin{proof}
By Corollary \ref{cor:t-descent},  the map 
\[
(\D_{\dif}^{+,n}(V_x)/(t^k))^\Gamma\ra(\D^n_{\Sen}(V_x))^\Gamma
\] 
is an isomorphism. On the other hand, since $x\in X_{P(k)}$, we see that $\kappa_1(x)=0$ is a multiplicity-one root of the Sen polynomial for $V_x$. This implies 
\[
\dim_{k(x)}(\D^n_{\Sen}(V_x))_\tau^\Gamma\leq 1
\] 
for any $\tau\in\mathrm{H}_K$. Hence $\dim_{k(x)}(\D_{\dif}^{+,n}(V_x)/(t^k))_\tau^\Gamma\leq 1$, yielding $(1)$.  For (2), note that the Hodge-Tate weights of $V_z$ are all nonpositive. Hence by Berger's dictionary (\cite[Th\'eor\`{e}me 3.6]{LB02}), 
\[
\dim_{k(z)}(\D_{\rig}^{\dag}(V_z))_\sigma^{\varphi^f=F(z),\Gamma=1}=\dim_{k(z)}(D_{\mathrm{crys}}(V_z))_\sigma^{\varphi^f=F(z)}
\geq \dim_{k(z)}(D_{F(z)})_\sigma^{\varphi^f=F(z)}=1,
\]
where the inequality follows from Definition \ref{def:weak-refined-family}(d). For (3), since 
$k>v_K(F(z))$,
we get the injectivity of (\ref{eq:crys-Frob-eigenspace}) by Proposition \ref{prop:cris-dif-injective}. On the other hand,  note that $z\in X_{P(k)}$ since $k\leq -k_i(z)$ for $2\leq i\leq d$. It follows by (1) that $\dim(\D_\dif^{+,fn}(V_z)/(t^k))_{\tau}^\Gamma\leq 1$. Hence the dimension of right hand side of (\ref{eq:crys-Frob-eigenspace}) over $k(x)$ is at most $[K:\Q]/f$. On the other hand, by (2), the left hand side of (\ref{eq:crys-Frob-eigenspace}) has $k(x)$-dimension at least $[K:\Q]/f$. Putting everything together, we conclude (\ref{eq:crys-Frob-eigenspace}) is an isomorphism, and 
\[
\dim_{k(z)}(\D_{\rig}^{\dag}(V_z))_\sigma^{\varphi^f=F(z),\Gamma=1}=1.
\]
\end{proof}

\begin{theorem}\label{thm:weak-refined-family}
The finite slope subspace of $X$ with respect to $(F, V_X)$ is $X$ itself.
\end{theorem}
\begin{proof}
Since $Z$ is Zariski dense in $X$, it suffices to prove $Z\subset X_{fs}$. Now let $z\in Z$, and let $M(S)$ be an affinoid subdomain containing $z$. Let $k$ be an integer such that 
\[
k>\log_{|\pi_K^{-1}|}|F^{-1}|,
\] 
where $|\cdot|$ means the sup norm on $M(S)$, i.e. the spectral norm of $S$. It follows that 
\[
k>\log_{|\pi_K^{-1}|}|F(x)^{-1}|
\] 
for any $x\in M(S)$. Now for $z'\in Z_k\cap M(S)$, we first have 
\[
-k_i(z')>k> v_K(F(z'))=\log_{|\pi_K^{-1}|}|F(z')^{-1}|
\] 
for $i\geq 2$ by the definition of $Z_k$.  We then deduce that the natural map 
\[
(K\otimes_{K_0}\D_\rig^{\dag}(V_{z'}))^{\varphi^f=F(z'),\Gamma=1}\ra(\D_{\dif}^{+,fn}(V_{z'})/(t^k))^\Gamma
\] 
is an isomorphism by Lemma \ref{lem:crys-Frob-eigenspace}(3). Hence $z'\in M(S(k,F))$ by Corollary \ref{cor:solution-locus-1}. Since $Z_k\cap M(S)$ is Zariski dense in $M(S)$ by Definition \ref{def:weak-refined-family}(e) and $M(S(k,F))$ is Zariski closed by its construction, we conclude $S(k,F)=S$ for all $k>\log_{|\pi_K^{-1}|}|F^{-1}|$. Furthermore, for any $i\geq1$, since $Z_i\cap M(S)$ is Zariski dense in $M(S)$, we deduce that 
$M(S)_{P(i)}\supset Z_i\cap M(S)$
is also Zariski dense in $M(S)$. We therefore conclude $M(S)_{fs}=M(S)$ following the construction of finite slope subspace. Hence $z\in M(S)_{fs}\subset X_{fs}$.
\end{proof}

The following theorem follows immediately from Theorem \ref{thm:weak-refined-family} and Theorem \ref{thm:fs-sheaf}.
\begin{theorem}\label{thm:weakly-refined}
Let $M(S)$ be an affinoid subdomain of $X$. Then for any $k>\log_{|\pi_K^{-1}|}|F^{-1}|$,
 \[
    (K\otimes_{K_0}\mathscr{D}_\rig^\dag(V_S))^{\varphi^f=F,\Gamma=1}
    \ra(\mathscr{D}^{+,fn}_{\dif}(V_S)/(t^k))^{\Gamma}
    \]
is an isomorphism. As a consequence,  $(\mathscr{D}_{\rig}^{\dag}(V_X))^{\varphi^f=F,\Gamma=1}$ is a coherent sheaf on $X$.
\end{theorem}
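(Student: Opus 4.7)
The plan is to observe that this theorem is a direct composition of two results already established in the paper, once we correctly identify the hypotheses. The strategy is to first show that the finite slope subspace of $X$ with respect to $(\alpha, V_X) = (F, V_X)$ is all of $X$, and then invoke the sheaf-theoretic statement of Theorem \ref{thm:fs-sheaf}.

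First I would cite Proposition \ref{prop:weak-refined-family}, which gives precisely $X_{fs} = X$ for this pair. (Unwinding that proof for context: the key point is that at any $z \in Z_k$ for $k > \log_p|F^{-1}|$ we have $\kappa_i(z) > k > v_p(F(z))$ for all $i \geq 2$, so Lemma \ref{lem:crys-Frob-eigenspace}(3) yields the isomorphism $(\D_\rig^\dag(V_z))^{\varphi=F(z),\Gamma=1} \cong (\D_\dif^{+,n}(V_z)/(t^k))^\Gamma$ at each such $z$; by Corollary \ref{cor:solution-locus-1} such $z$ lie in $M(S(k,F))$, and the accumulation property (e) forces Zariski density of this set in any affinoid subdomain $M(S)$, so $S(k,F) = S$.) With $X_{fs} = X$ established, any affinoid subdomain $M(S) \subseteq X$ is automatically an affinoid subdomain of $X_{fs}$, so the hypotheses of Theorem \ref{thm:fs-sheaf} are satisfied for $\alpha = F$ and any $k > \log_p|F^{-1}|$ (with the norm taken in $S$).

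Next I would apply Theorem \ref{thm:fs-sheaf} directly: it supplies the isomorphism of sheaves
\[
(\mathscr{D}_\rig^\dag(V_S))^{\varphi=F,\Gamma=1} \;\stackrel{\sim}{\longrightarrow}\; (\mathscr{D}_\dif^{+,n}(V_S)/(t^k))^{\Gamma}
\]
on the weak $G$-topology of $M(S)$, for all $n \geq n(V_S)$ and all $k > \log_p|F^{-1}|$. For the coherence statement, I would work locally: choose an admissible covering of $X$ by affinoid subdomains $\{M(S_i)\}$; on each $M(S_i)$ choose some $k_i$ large enough that $k_i > \log_p|F^{-1}|$ in $S_i$, and then the above isomorphism identifies $(\mathscr{D}_\rig^\dag(V_{S_i}))^{\varphi=F,\Gamma=1}$ with $(\mathscr{D}_\dif^{+,n}(V_{S_i})/(t^{k_i}))^{\Gamma}$, which is a coherent sheaf on $M(S_i)$ by Proposition \ref{prop:dif-torsion-sheaf}(2). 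Since coherence is a local property on a rigid analytic space, gluing yields that $(\mathscr{D}_\rig^\dag(V_X))^{\varphi=F,\Gamma=1}$ is a coherent sheaf on $X$.

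Honestly, there is no substantive obstacle in this argument: the whole machinery was developed precisely so that this theorem would follow by invoking the two cited results. The only care required is to check that the hypothesis $k > \log_p|\alpha^{-1}|$ of Theorem \ref{thm:fs-sheaf} matches the hypothesis $k > \log_p|F^{-1}|$ here (it does, with $\alpha = F$), and that in the coherence argument we may choose a single $k$ locally on each affinoid (which is fine since the resulting sheaf on the left is independent of $k$).
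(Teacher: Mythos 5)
Your proposal is correct and matches the paper's own argument, which states that the theorem "follows immediately from Proposition \ref{prop:weak-refined-family} and Theorem \ref{thm:fs-sheaf}" — exactly the two results you compose, with the coherence part handled just as in the proof of Theorem \ref{thm:fs-sheaf} via Proposition \ref{prop:dif-torsion-sheaf}. The extra care you take in matching $\alpha=F$ and choosing $k$ locally on each affinoid is consistent with the paper's intent.
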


Our next goal is to show that the saturated locus $X_s$ of $V_X$ (see Definition \ref{def:intro-good}) is Zariski open and dense. To do this, we need to investigate the specialization maps 
\[
\mathscr{D}_\rig^\dag(V_X)^{\varphi^f=F,\Gamma=1}\otimes_{\OO_X}k(x)\ra \D_\rig^\dag(V_x)^{\varphi^f=F(x),\Gamma=1}
\] 
for $x\in X$. 

\begin{prop}\label{prop:weakly-refined}
Let $M(S)$ be an affinoid subdomain of $X$, and let $k$ be a positive integer satisfying $k>\log_{|\pi_K^{-1}|}|F^{-1}|$. Then the following are true.
\begin{enumerate}
\item[(1)]
For any $x\in M(S)_{P(k)}$, the natural map
\begin{equation}\label{eq:weakly-refined-1}
(K\otimes_{K_0}\D_\rig^\dag(V_x))^{\varphi^f=F(x),\Gamma=1}\ra (\D_{\dif}^{+,fn}(V_x)/(t^k))^{\Gamma}
\end{equation}
is an isomorphism.
\item[(2)]
For any $x\in M(S)_{P(k)}$, the natural map
\begin{equation}\label{eq:weakly-refined}
\D_\rig^\dag(V_S)^{\varphi^f=F,\Gamma=1}\otimes_Sk(x)\ra \D_\rig^\dag(V_x)^{\varphi^f=F(x),\Gamma=1}
\end{equation}
is an isomorphism.
\item[(3)]
For any $x\in M(S)$, $\dim_{k(x)}(\D_\rig^\dag(V_S)_\sigma^{\varphi^f=F,\Gamma=1}\otimes_Sk(x))\geq1$.
\item[(4)]
For any $x\in M(S)_{P(k)}$, $\dim_{k(x)}(\D_\rig^\dag(V_x))_\sigma^{\varphi^f=F(x),\Gamma=1}=\dim_{k(x)}(\D_{\dif}^{+,fn}(V_x)/(t^k))_\tau^{\Gamma}=1$.
\end{enumerate}
\end{prop}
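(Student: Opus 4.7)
The plan is to establish (1) and (2) together via a single commutative square, to deduce (3) from upper semi-continuity of fiber dimension on a Zariski dense subset of $M(S)$, and finally to obtain (4) by pinning both sides between the bounds $\leq 1$ and $\geq 1$.

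For (1) and (2), fix $x \in M(S)_{P(k)}$ and form the commutative diagram
\[
\xymatrix{
(\D_\rig^\dag(V_S))^{\varphi=F,\Gamma=1}\otimes_S k(x) \ar[r]^-{\mathrm{top}} \ar[d]_-{\alpha} & (\D_\rig^\dag(V_x))^{\varphi=F(x),\Gamma=1} \ar[d]^-{\beta} \\
(\D_\dif^{+,n}(V_S)/(t^k))^\Gamma \otimes_S k(x) \ar[r]^-{\mathrm{bot}} & (\D_\dif^{+,n}(V_x)/(t^k))^\Gamma.
}
\]
The map $\alpha$ is an isomorphism, obtained by tensoring with $k(x)$ the $S$-module isomorphism furnished by Theorem \ref{thm:weakly-refined}. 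The map $\mathrm{bot}$ is an isomorphism by Corollary \ref{cor:base-change}, since $P(k)(x)$ is a unit in $k(x)$. The map $\beta$ is injective by Proposition \ref{prop:cris-dif-injective} applied to $V_x$ over $k(x)$, the required bound holding since $|F(x)^{-1}|\leq|F^{-1}|_S<p^k$. From the commutativity we get that $\beta\circ\mathrm{top}=\mathrm{bot}\circ\alpha$ is an isomorphism, so $\mathrm{top}$ is injective and $\beta$ is surjective onto $\mathrm{im}(\mathrm{top})$; combined with the injectivity of $\beta$ on all of $(\D_\rig^\dag(V_x))^{\varphi=F(x),\Gamma=1}$, this forces $\mathrm{top}$ to be surjective and $\beta$ to be bijective, simultaneously giving (1) and (2).

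For (3), Theorem \ref{thm:weakly-refined} identifies $(\D_\rig^\dag(V_S))^{\varphi=F,\Gamma=1}$ with a submodule of the finite $S$-module $\D_\dif^{+,n}(V_S)/(t^k)$, hence it is a finitely generated $S$-module and the function $x\mapsto\dim_{k(x)}((\D_\rig^\dag(V_S))^{\varphi=F,\Gamma=1}\otimes_S k(x))$ is upper semi-continuous on $M(S)$. Hypothesis (e) of Definition \ref{def:weak-refined-family} makes $Z_k\cap M(S)$ Zariski dense in $M(S)$, and for $z$ in this set one has $\kappa_i(z)>k\geq 1$ for every $i\geq 2$, so $P(k)(z)\neq 0$ and (2) applies. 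Lemma \ref{lem:crys-Frob-eigenspace}(2) then yields $\dim(\D_\rig^\dag(V_z))^{\varphi=F(z),\Gamma=1}\geq 1$, and hence the fiber dimension of the coherent sheaf is $\geq 1$ at every point of a Zariski dense set; semi-continuity propagates this lower bound to all of $M(S)$.

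For (4), the isomorphism (1) together with Lemma \ref{lem:crys-Frob-eigenspace}(1) gives the upper bound $\dim(\D_\rig^\dag(V_x))^{\varphi=F(x),\Gamma=1}\leq 1$, while the isomorphism (2) combined with (3) supplies the matching lower bound $\geq 1$. The principal concern in this plan is of a bookkeeping nature: making sure the horizontal arrows of the square above really are the natural base-change maps (which requires the Berger--Colmez functoriality of $\D_\rig^\dag$ from \S1.1 and its compatibility with the localization maps $\iota_n$ of \S1.3), and that the spectral-norm inequality $|F(x)^{-1}|\leq|F^{-1}|_S$ does indeed transport the hypothesis on $k$ from $S$ down to each residue field. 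Once these compatibilities are in place, everything else is diagrammatic or a direct citation.
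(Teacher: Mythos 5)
Your proof is correct and follows essentially the same route as the paper: the same commutative square (Theorem \ref{thm:weakly-refined}, Corollary \ref{cor:base-change}, Proposition \ref{prop:cris-dif-injective}) yields (1) and (2), Zariski density of $Z_k\cap M(S)\subseteq M(S)_{P(k)}$ gives (3), and the squeeze between the bounds from Lemma \ref{lem:crys-Frob-eigenspace}(1) and part (3) gives (4). The only cosmetic differences are that you invoke Lemma \ref{lem:crys-Frob-eigenspace}(2) plus part (2) where the paper cites Lemma \ref{lem:crys-Frob-eigenspace}(3), and you make explicit the semicontinuity/support argument and the norm inequality $|F(x)^{-1}|\leq|F^{-1}|_S$ that the paper leaves implicit.
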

\begin{proof}
Let $x\in M(S)_{P(k)}$.
Consider the following commutative diagram
 \[
\xymatrix{(K\otimes_{K_0}\D^{\dag}_{\rig}(V_S))^{\varphi^f=F,\Gamma=1}\otimes_Sk(x)\ar[d]\ar[r]&
(\D^{+,fn}_{\dif}(V_{S})/(t^k))^\Gamma\otimes_Sk(x)\ar[d]\\
(K\otimes_{K_0}\D^{\dag}_{\rig}(V_x))^{\varphi^f=F(x),\Gamma=1}\ar[r]&
(\D^{+,fn}_{\dif}(V_{x})/(t^k))^\Gamma.}
\]
The upper horizontal map is an isomorphism by Theorem \ref{thm:weakly-refined}.  The right vertical map is an isomorphism by Corollary \ref{cor:base-change}. The lower horizontal map is injective by Proposition \ref{prop:cris-dif-injective}. We thus deduce that the lower horizontal map and left vertical map are all isomorphisms. This yields (1) and (2).

We first prove (3) for $x\in Z$. In fact, by (2) and Lemma \ref{lem:crys-Frob-eigenspace}(3), we have 
\[
\dim_{k(z)}(\D_\rig^\dag(V_S)_\sigma^{\varphi^f=F,\Gamma=1}\otimes_Sk(z))=1
\] 
for any $z\in Z_k\cap M(S)$. Since $Z_k\cap M(S)$ is Zariski dense in $M(S)$ and $\D_\rig^\dag(V_S)_\sigma^{\varphi^f=F,\Gamma=1}$ is a finitely generated $S$-module, we therefore deduce that 
\[
\dim_{k(x)}(\D_\rig^\dag(V_S)_\sigma^{\varphi^f=F,\Gamma=1}\otimes_Sk(x))\geq 1
\] 
for any $x\in M(S)$. 
For (4), on one hand, we have 
\[
\dim_{k(x)}(\D_{\dif}^{+,fn}(V_x)/(t^k))_\tau^{\Gamma}\leq1
\] 
by Lemma \ref{lem:crys-Frob-eigenspace}(1). On the other hand, we have 
\[
\dim_{k(x)}(\D^{\dag}_{\rig}(V_x))_\sigma^{\varphi^f=F(x),\Gamma=1}\geq1
\] 
by (2) and (3). We then deduce (4) from (1).
\end{proof}

\begin{prop}\label{prop:good}
The subset of saturated points $X_s$ is  Zariski open in $X$.
\end{prop}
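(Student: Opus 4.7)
The plan is to combine Theorem \ref{thm:weakly-refined} (which gives us that $\mathscr{F}:=(\mathscr{D}^{\dag}_{\rig}(V_X))^{\varphi=F,\Gamma=1}$ is a coherent $\OO_X$-module) with Proposition \ref{prop:weakly-refined}(3) (which controls the fiber dimension from below) and Lemma \ref{lem:saturated-locus} (which handles the saturation condition), separating the two conditions of Definition \ref{def:good}.

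First I would reduce to the affinoid case by passing to an admissible covering $\{M(S_i)\}$ of $X$ and showing openness on each piece. Thanks to Theorem \ref{thm:weakly-refined}, $\mathscr{F}$ is coherent, and Proposition \ref{prop:weakly-refined}(3) yields $\dim_{k(x)}\mathscr{F}\otimes_{\OO_X}k(x)\geq 1$ for every $x\in X$. By upper semicontinuity of fiber dimensions of coherent sheaves, the set
\[
U:=\{x\in X:\dim_{k(x)}\mathscr{F}\otimes_{\OO_X}k(x)\leq 1\}
\]
is Zariski open in $X$, and on $U$ the fiber dimension is identically $1$. Since $X$ is reduced (as assumed at the start of \S5.1), a coherent sheaf with locally constant fiber dimension is locally free of that rank; hence $\mathscr{F}|_U$ is locally free of rank $1$, verifying condition (1) of Definition \ref{def:good} at every point of $U$.

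Second, I would show that the locus in $U$ where condition (2) holds is Zariski open in $U$. Given $x_0\in U$, shrink to an affinoid subdomain $M(S)\subset U$ so that $\mathscr{F}(M(S))$ is free of rank $1$, generated by some element $a\in (\D_\rig^\dag(V_S))^{\varphi=F,\Gamma=1}$. Writing $\delta\in\widehat{\mathscr{T}}(S)$ for the character with $\delta(p)=F$ and $\delta|_\Gamma=1$, we have $a\in (\D_\rig^\dag(V_S))^{\delta}$, so Lemma \ref{lem:saturated-locus} applies and guarantees that the set of $y\in M(S)$ at which the image of $a$ in $\D_\rig^\dag(V_y)$ generates a saturated rank $1$ $\m$-submodule is Zariski open in $M(S)$. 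This locus is independent of the choice of generator (since any other generator of the rank $1$ free module $\mathscr{F}(M(S))$ differs from $a$ by a unit of $S$, which does not affect saturation at any fiber), so these open subsets glue to a Zariski open subset of $U$, which is exactly $X_s$.

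I do not expect a serious obstacle: the coherence and fiber lower-bound inputs reduce condition (1) to an upper-semicontinuity argument on a reduced base, and condition (2) is then reduced, after exhibiting a local generator, to the already-proved openness statement of Lemma \ref{lem:saturated-locus}. The only mild care is in checking that the openness given by Lemma \ref{lem:saturated-locus} does not depend on the choice of local generator, which is immediate because generators of $\mathscr{F}(M(S))$ differ by units.
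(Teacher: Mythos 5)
Your proof is correct and takes essentially the same route as the paper: condition (1) of Definition \ref{def:good} is cut out as a Zariski open set using Proposition \ref{prop:weakly-refined}(3) together with semicontinuity of fiber ranks (which the paper leaves implicit), and condition (2) is handled by choosing a local generator of the rank-one sheaf and invoking Lemma \ref{lem:saturated-locus}. The details you add (reducedness of $X$ to pass from constant fiber rank to local freeness, and independence of the chosen generator) are precisely the points the paper's two-sentence proof glosses over.
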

\begin{proof}
For each $\tau\in \H_K$, let $Y_\tau$ be the set of $x\in X$ such that the image of the composite 
\[
\mathscr{D}_\rig^\dag(V_X)^{\varphi^f=F,\Gamma=1}\ra \D_\rig^\dag(V_x)^{\varphi^f=F(x),\Gamma=1}\ra
\D^{fn}_\Sen(V_x)
\ra\D^{fn}_\Sen(V_x)_\tau
\] 
is zero for some (hence all sufficiently large) $n$. It is clear that each $Y_\tau$ is a Zariski closed subset of $X$. By Proposition \ref{prop:weakly-refined}(3),  the condition (1) of Definition \ref{def:intro-good} cuts out a Zariski open subset $X'$ of $X$. For $x\in X'$, by Lemma \ref{lem:saturated-submodule} (this is not circular!) we see that $x$ satisfies Definition \ref{def:intro-good}(2) if and only if $x\notin Y_\tau$ for any $\tau\in \H_K$. Therefore, we conclude
\[
X_s=X'\setminus \cup_{\tau\in \H_K}Y_\tau
\] 
is a Zariski open subspace of $X$.
.

\end{proof}

\begin{prop}\label{prop:weakly-refined-1}
For $x\in X$ and $k>v_K(F(x))$, if $x\in X_{P(k)}$, then $x\in X_s$ and
\[
\dim_{k(x)}(\D_\rig^{\dag}(V_x))_\sigma^{\varphi^f=F(x),\Gamma=1}=1.
\]
\end{prop}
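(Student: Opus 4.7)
The plan is to reduce the statement to a local computation after choosing a sufficiently small affinoid neighborhood of $x$, and then to assemble results from the previous propositions in this subsection.

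First I would pick an affinoid subdomain $M(S) \subset X$ containing $x$ with two properties: $M(S) \subset X_{P(k)}$, which is possible since $X_{P(k)}$ is Zariski open, and $|F^{-1}|_S < p^k$, which is possible because $|F(x)^{-1}| = p^{v_p(F(x))} < p^k$ and $F$ is nonvanishing and hence bounded away from $0$ near $x$. With these two conditions, $k > \log_p|F^{-1}|_S$, so the hypotheses of Proposition \ref{prop:weakly-refined} hold on $M(S)$, and every $y \in M(S)$ lies in $M(S)_{P(k)}$. Proposition \ref{prop:weakly-refined}(4) immediately gives $\dim(\D_\rig^{\dag}(V_x))^{\varphi=F(x),\Gamma=1} = 1$, establishing the dimension assertion.

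For condition (1) of Definition \ref{def:good}, I would apply Proposition \ref{prop:weakly-refined}(2) combined with Proposition \ref{prop:weakly-refined}(4) at every point of $M(S)$: this shows that the coherent sheaf $(\mathscr{D}_\rig^\dag(V_S))^{\varphi=F,\Gamma=1}$ has fiber of dimension exactly $1$ at every $y \in M(S)$. On the reduced affinoid $M(S)$ this constant fiber dimension forces the sheaf to be locally free of rank $1$ around $x$.

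For condition (2), the point is to show that any local generator $a$ of $(\D_\rig^\dag(V_x))^{\varphi=F(x),\Gamma=1}$ is not divisible by $t$ in $\D_\rig^{\dag,s}(V_x)$, since by the characterization used in the proof of Lemma \ref{lem:saturated-locus} (relying on \cite[Remarque 3.3]{C07}) this is equivalent to $a$ generating a saturated rank-one $\m$-submodule of $\D_\rig^\dag(V_x)$. Here I would observe that Proposition \ref{prop:weakly-refined}(1) identifies $(\D_\rig^\dag(V_x))^{\varphi=F(x),\Gamma=1}$ with $(\D_\dif^{+,n}(V_x)/(t^k))^{\Gamma}$, and Corollary \ref{cor:t-descent} (applicable because $x \in X_{P(k)}$) further identifies the latter with $(\D_{\Sen}^n(V_x))^{\Gamma}$. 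Thus $a$ has nonzero image in $\D_\Sen^n(V_x) = \D_\dif^{+,n}(V_x)/t$, which forces $\iota_n(a) \notin t\,\D_\dif^{+,n}(V_x)$ and hence $a \notin t\,\D_\rig^{\dag,s}(V_x)$, as $\iota_n(t) = t$.

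The proof is essentially an assembly exercise; no step involves genuinely new difficulty. The only mild subtlety is the choice of $M(S)$ to simultaneously satisfy both containment in $X_{P(k)}$ and the norm bound on $F^{-1}$, but both are immediate from Zariski openness and continuity.
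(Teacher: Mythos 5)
Your proof is correct and follows essentially the same route as the paper: shrink to an affinoid neighborhood where $k>\log_p|F^{-1}|$, apply Proposition \ref{prop:weakly-refined}(2),(4) for the dimension count and condition (1) of Definition \ref{def:good}, and use the identification with $(\D_{\Sen}^n(V_x))^{\Gamma}$ (valid since $P(k)(x)\neq 0$) together with the $t$-divisibility criterion to get condition (2). Your only deviations are cosmetic: you additionally shrink $M(S)$ into $X_{P(k)}$ to get constant fiber rank instead of combining parts (2)--(4) with semicontinuity at $x$, you cite Corollary \ref{cor:t-descent} where the paper points to Corollary \ref{cor:base-change}, and you make explicit the appeal to \cite[Remarque 3.3]{C07} (via Lemma \ref{lem:saturated-locus}) that the paper leaves implicit.
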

\begin{proof}
Since $k>v_K(F(x))$, we may choose an affinoid neighborhood $M(S)$ of $x$ such that $k>\log_{|\pi_K^{-1}|}|F^{-1}|$ in $S$. By Proposition \ref{prop:weakly-refined} (2) and (4), we first deduce that $x$ satisfies Definition \ref{def:intro-good}(1), and
\[
\dim_{k(x)}(\D_\rig^{\dag}(V_x))_\sigma^{\varphi^f=F(x),\Gamma=1}=1.
\] 
Note that
\[
(\D_{\dif}^{+,fn}(V_x)/(t^k))^{\Gamma}\ra(\D_{\Sen}^{fn}(V_x))^\Gamma
\] 
is an isomorphism by Corollary \ref{cor:base-change}. Thus by Proposition \ref{prop:weakly-refined} (1) and (2), we deduce that 
\[
(K\otimes_{K_0}\D_\rig^\dag(V_S))^{\varphi^f=F,\Gamma=1}\otimes_Sk(x)\ra (\D_\Sen^{fn}(V_x))^\Gamma
\] 
is an isomorphism. Hence $x$ satisfies Definition \ref{def:intro-good}(2).
\end{proof}

\begin{cor}\label{cor:good-dense}
The subset of saturated points $X_s$ is a Zariski open and dense subspace of $X$.
\end{cor}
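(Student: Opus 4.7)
The plan is to combine the openness of $X_s$ (Proposition \ref{prop:good}) with the accumulation property of the $Z_C$'s from Definition \ref{def:weak-refined-family}(e) and then invoke Proposition \ref{prop:weakly-refined-1} at an abundant supply of points of $Z$. Since $X$ is assumed reduced and $X_s$ is Zariski open, proving scheme-theoretic density is equivalent to proving that $X_s$ meets every irreducible component of $X$. Because $Z$ is Zariski dense in $X$, it suffices to show that around every $z\in Z$ there is an affinoid neighborhood $M(S)$ such that $X_s\cap M(S)$ is Zariski dense in $M(S)$.

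Fix $z\in Z$. Since $F(z)\neq 0$ (condition (d) of Definition \ref{def:weak-refined-family}), we can choose an affinoid neighborhood $M(S)$ of $z$ on which $F\in\OO(M(S))^\times$. Let $k$ be a positive integer with $k>\log_p|F^{-1}|$ computed in $S$; in particular $k>v_p(F(x))$ for every $x\in M(S)$. By the accumulation property (e), possibly shrinking $M(S)$ to a smaller affinoid neighborhood of $z$, we may assume that $Z_{k-1}\cap M(S)$ is Zariski dense in $M(S)$.

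Now I claim $Z_{k-1}\cap M(S)\subseteq X_s$. Indeed, for $z'\in Z_{k-1}\cap M(S)$, all Hodge--Tate weights $\kappa_i(z')$ for $i\geq 2$ are integers strictly larger than $k-1$, hence $\kappa_i(z')\geq k$. Setting $k_{z'}=\max\{[v_p(F(z'))],0\}+1$, we have $k_{z'}\leq k\leq \kappa_i(z')$ for all $i\geq 2$, so the factors $Q(-j)$ of $P(k_{z'})=\prod_{j=0}^{k_{z'}-1}Q(-j)$ are all nonzero at $z'$ (recall that the roots of $TQ(T)$ at $z'$ are the $\kappa_i(z')$). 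Therefore $z'\in X_{P(k_{z'})}$, and Proposition \ref{prop:weakly-refined-1} gives $z'\in X_s$.

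Thus $X_s\cap M(S)$ contains a Zariski dense subset of $M(S)$, so $X_s$ is Zariski dense in a neighborhood of every $z\in Z$. Since $Z$ meets every irreducible component of $X$, so does $X_s$; combined with the Zariski openness of $X_s$ and the reducedness of $X$, this shows that $X_s$ is scheme-theoretically dense. The only subtlety is ensuring the accumulation property of $Z_{k-1}$ can be used after $k$ is chosen (which requires shrinking $M(S)$ after choosing $k$), but this is harmless since the condition $k>\log_p|F^{-1}|$ is preserved under passage to affinoid subdomains.
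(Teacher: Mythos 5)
Your proof is correct in substance, but it globalizes differently from the paper. The paper's proof is more direct: for an \emph{arbitrary} affinoid subdomain $M(S)$ of $X$ it picks $k>\log_p|F^{-1}|$ and observes that $M(S)_{P(k)}\subset X_s$ by Proposition \ref{prop:weakly-refined-1}; the scheme-theoretic density of $M(S)_{P(k)}$ in $M(S)$ comes for free because $X_{fs}=X$ (Proposition \ref{prop:weak-refined-family}) together with Definition \ref{def:fs-space}(1) makes each $Q(-j)$, hence $P(k)$, a non-zero-divisor. This verifies the definition of scheme-theoretic density affinoid by affinoid, with no need to pass through $Z$. You instead source density from the accumulation property: near each $z\in Z$ you shrink to an affinoid neighborhood on which $Z_{k-1}$ is Zariski dense, check that each such point lies in the non-vanishing locus of the relevant $P(\cdot)$, and apply Proposition \ref{prop:weakly-refined-1}; this local part is fine and parallels the paper's own observation $Z_i\subset M(S)_{P(i)}$ in the proof of Proposition \ref{prop:weak-refined-family}. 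The cost is the globalization step: your asserted equivalence ``scheme-theoretically dense $\Leftrightarrow$ meets every irreducible component of $X$'' for the Zariski-open $X_s$, and the passage from density near points of $Z$ to every component of $X$, rest on the global theory of irreducible components of rigid spaces (every component of an affinoid subdomain lies in a global component, irreducible spaces are equidimensional, proper closed analytic subsets have strictly smaller dimension), which is standard (Conrad) but neither developed nor cited in the paper, and in your write-up it is asserted rather than argued. So your proof does go through, but it needs the openness of $X_s$ (Proposition \ref{prop:good}) plus this component machinery, whereas the paper's route gives density on every affinoid directly---exactly what the definition asks for---using only the non-zero-divisor property already secured by the finite slope subspace construction.
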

\begin{proof}
By Proposition \ref{prop:good}, $X_s$ is Zariski open in $X$. It remains to show that it is Zariski dense. Now for any affinoid subdomain $M(S)$ of $X$, it follows from Proposition \ref{prop:weakly-refined-1} that $M(S)_{P(k)}\subset X_s$ once $k$ is sufficiently large. Since $M(S)_{P(k)}$ is Zariski dense in $M(S)$, the corollary follows.
\end{proof}

In the rest of this subsection, we will determine a large class of points $x\in X$ which is contained in the saturated locus $X_s$ (Proposition \ref{prop:multiplicity-one}). To do this, we need to employ the following flatification result. Let
\[
\pi: Y'\ra Y
\] 
be a proper and birational morphism of separated and reduced rigid analytic spaces over $E$. Here birational means that for some coherent sheaf of ideals $H$, the complement $U$ of the closed subset $V(H)$, which is defined by $H$, is Zariski dense in $Y$, the restriction of $\pi$ to $\pi^{-1}(U)$ is an isomorphism, and $\pi^{-1}(U)$ is Zariski dense in $Y'$. Let $N$ be a coherent sheaf of $\OO_X$-modules. If $H'$ is the coherent sheaf of ideal defining the closed subset $\pi^{-1}(V(H))$ of $X'$, then the \emph{strict transform $N'$} of $N$ by $\pi$ is the quotient of $\pi^*N$ by its $H'^{\infty}$-torsion. In particular, for any morphism $\pi^*N\ra M$ of coherent sheaves over $X'$, if $M$ is torsion-free, then the morphism $\pi^*N\ra M$ factors through $N'$.  The following lemma follows from \cite[Lemma 3.4.2]{BC06}.

\begin{lemma}\label{lem:blow-up}
Let $Y$ be a separated and reduced rigid analytic space over $E$. If $M$ is a torsion-free coherent sheaf of modules over $Y$, then there exists a proper and birational morphism $Y'\ra Y$ of rigid analytic spaces with $Y'$ reduced such that the strict transform of $M$ by $\pi$ is a locally free coherent sheaf of modules $N$ over $Y'$. More precisely, we may choose $\pi$ to be the blow-up along a nowhere dense Zariski closed subspace of the normalization of $Y$.
\end{lemma}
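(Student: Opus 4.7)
The plan is to flatten the torsion-free sheaf $M$ by first passing to the normalization of $Y$ and then blowing up an appropriate Fitting ideal, following the classical Raynaud--Gruson flattening strategy in its rigid analytic incarnation.

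Concretely, first take the normalization $\nu : \tilde Y \to Y$. Since $Y$ is reduced, $\nu$ is a finite birational morphism with $\tilde Y$ normal, and $\nu$ restricts to an isomorphism over the complement of the nonnormal locus of $Y$, which is nowhere dense. Replacing $Y$ by $\tilde Y$ and $M$ by its strict transform under $\nu$ reduces the problem to the case where $Y$ is normal, and in that case one may treat each connected (hence irreducible) component separately. On such a component, $M$ has a well-defined generic rank $r$, and the locus where $M$ is locally free of rank $r$ is Zariski open; torsion-freeness together with irreducibility force it to be nonempty and hence Zariski dense, so its closed complement $Z$ is nowhere dense. I would then take $\pi : Y' \to \tilde Y$ to be the admissible blow-up of the $r$-th Fitting ideal $\mathrm{Fit}_r(M)$, whose vanishing locus is exactly $Z$. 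Then $Y'$ is reduced, $\pi$ is proper and birational, and $\mathrm{Fit}_r(M)\cdot \OO_{Y'}$ is invertible. A standard Fitting-ideal computation, combined with the fact that the strict transform $N$ of $M$ under $\pi$ is torsion-free by construction, shows that $N$ is locally free of rank $r$ on $Y'$. Composing $\pi$ with $\nu$ then gives the desired proper birational morphism $Y' \to Y$ whose center is the nowhere dense closed subspace $Z$ of $\tilde Y$.

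The main technical obstacle is ensuring that the Raynaud--Gruson flattening argument transfers cleanly to the rigid analytic category. Admissible blow-ups of coherent ideal sheaves on rigid spaces exist and are locally the analytifications of the corresponding scheme-theoretic blow-ups on affinoid charts, so the Fitting-ideal statement, being local on the target, carries over: one checks on an affinoid $\mathrm{Sp}(A)$ with $A$ normal and $M$ a finitely generated torsion-free $A$-module of generic rank $r$ that the scheme-theoretic blow-up of $\mathrm{Spec}(A)$ along $\mathrm{Fit}_r(M)$, followed by taking the strict transform, renders the module locally free, and then glues the constructions over an admissible affinoid covering of $\tilde Y$. The bookkeeping that $Y'$ so obtained is again reduced, and that the strict transform globalizes to a genuine locally free coherent $\OO_{Y'}$-module, is the part where most of the actual work sits, but it is a routine adaptation of the scheme-theoretic argument.
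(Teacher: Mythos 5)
The paper offers no proof of this lemma beyond the citation \cite[Lemma 3.4.2]{BC06}, so there is no internal argument to measure yours against; what you propose is the standard flattening proof and it matches the construction that the statement itself announces (normalize, then blow up a nowhere dense center), so your route is essentially the intended one. The outline is correct, but be aware of where the actual content sits and of two small slips. The heart is the claim that after blowing up $\mathrm{Fit}_r(M)$ the strict transform is locally free of rank $r$. This is true, but it is not a formal consequence of torsion-freeness plus invertibility of the pulled-back Fitting ideal: on each irreducible component of the normalization one uses that $\mathrm{Fit}_{r-1}(M)=0$ (automatic, since $M$ is generically free of rank $r$ on a reduced irreducible space) and, crucially, that after the blow-up \emph{every} $(n-r)$-minor of a local presentation matrix is divisible by the chosen local generator $\delta$ of $\mathrm{Fit}_r(M)\cdot\OO_{Y'}$; Cramer's rule then shows that $r$ of the generators generate the strict transform, and an $(n-r+1)$-minor computation shows they satisfy no relations. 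If one only knows that some nonzero $(n-r)$-minor $\delta$ exists, the conclusion fails: for the ideal $(x,y)$ over the Tate algebra $\Q\langle x,y\rangle$ one may take $\delta=y$, yet the module modulo torsion is not free; it is precisely the blow-up that forces the divisibility. This local computation also identifies your ``quotient by torsion'' with the paper's notion of strict transform (quotient by $H'^{\infty}$-torsion), since the pulled-back center ideal and $(\delta)$ have the same radical locally. The two slips: ``admissible blow-up'' is formal-scheme terminology for blow-ups of open ideals on formal models; what you need is the rigid-analytic blow-up of a coherent ideal sheaf (relative Proj of the Rees algebra), which exists, as does the normalization of a reduced rigid space by excellence of affinoid algebras. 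Finally, the generic rank may vary over the components of the normalization, so the Fitting ideal and the center must be taken componentwise; since those components are disjoint this is harmless, and the image in $Y$ of the resulting nowhere dense center remains nowhere dense because the normalization map is finite, so the composite is still proper and birational in the sense used in the paper.
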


In the rest of this subsection let $V_Y$ be a locally free coherent $\OO_Y$-module of rank $d$ equipped with a continuous $\OO_Y$-linear $G_K$-action. We denote by $d_{n}$ the rank of $\mathscr{D}_{\dif}^{+,n}(V_{Y})_\tau/(t^k)$ as a locally free $\OO_Y$-module for any $\tau\in \H_K$ (it is independent of $\tau$).

\begin{lemma}\label{lem:blow-up-free}
Let $V_Y$ be a locally free coherent $\OO_Y$-module of rank $d$ equipped with a continuous $\OO_Y$-linear $G_K$-action. Suppose for some $n>0$, the coherent sheaf $\mathscr{D}_\dif^{+,n}(V_Y)/(t^k)$ is well-defined.  Now let $\pi: Y'\ra Y$ be as in Lemma \ref{lem:blow-up}, and suppose for some $\tau\in\mathrm{H}_K$,  the strict transforms of 
 $(\mathscr{D}_\dif^{+,n}(V_Y)/(t^k))_\tau/(\mathscr{D}_\dif^{+,n}(V_Y)_\tau/(t^k))_\tau^{\Gamma}$ and $(\mathscr{D}_\dif^{+,n}(V_Y)/(t^k))_\tau^\Gamma$ by $\pi$ are locally free over $\OO_{Y'}$ of ranks $c$ and $d_n-c$ respectively. Then
$(\mathscr{D}_\dif^{+,n}(V_{Y'})/(t^k))_\tau/(\mathscr{D}_\dif^{+,n}(V_{Y'})/(t^k))_\tau^{\Gamma}$ and $(\mathscr{D}_\dif^{+,n}(V_{Y'})/(t^k))_\tau^\Gamma$
are locally free over $\OO_{Y'}$ of ranks $c$ and $d_n-c$ respectively as well.
\end{lemma}
\begin{proof}
We denote by $\mathscr{D}_1$ the strict transform of $(\mathscr{D}_{\dif}^{+,n}(V_Y)/(t^k))_\tau^{\Gamma}$, and by $\mathscr{D}_2$ the strict transform of $(\mathscr{D}_{\dif}^{+,n}(V_Y)/(t^k))_\tau/(\mathscr{D}_{\dif}^{+,n}(V_Y)/(t^k))_\tau^{\Gamma}$.
Since $Y'$ is normal by Lemma \ref{lem:blow-up}, it is a disjoint union of irreducible components.  By Proposition \ref{prop:torsion-free}, we see both $(\mathscr{D}_{\dif}^{+,n}(V_{Y'})/(t^k))_\tau^{\Gamma}$ and $(\mathscr{D}_{\dif}^{+,n}(V_{Y'})/(t^k))_\tau/(\mathscr{D}_{\dif}^{+,n}(V_{Y'})/(t^k))_\tau^{\Gamma}$ are torsion-free on each irreducible component of $Y'$. Hence the natural maps
\[
\pi^*((\mathscr{D}_{\dif}^{+,n}(V_Y)/(t^k))_\tau^{\Gamma})\ra(\mathscr{D}_{\dif}^{+,n}(V_{Y'})/(t^k))_\tau^{\Gamma}
\]
and
\[
\pi^*((\mathscr{D}_{\dif}^{+,n}(V_Y)/(t^k))_\tau/(\mathscr{D}_{\dif}^{+,n}(V_Y)/(t^k))_\tau^{\Gamma})\ra (\mathscr{D}_{\dif}^{+,n}(V_{Y'})/(t^k))_\tau/(\mathscr{D}_{\dif}^{+,n}(V_{Y'})/(t^k))_\tau^{\Gamma}
\]
factor through $\mathscr{D}_1$ and $\mathscr{D}_2$ respectively. Similarly, since $(\mathscr{D}_{\dif}^{+,n}(V_{Y'})/(t^k))_{\tau}$ is torsion-free,  the natural map
\[
\pi^*((\mathscr{D}_{\dif}^{+,n}(V_Y)/(t^k))_\tau^{\Gamma})\ra\pi^{*}(\mathscr{D}_{\dif}^{+,n}(V_{Y})/(t^k))_{\tau}\cong
(\mathscr{D}_{\dif}^{+,n}(V_{Y'})/(t^k))_{\tau}
\]
factors through $\mathscr{D}_1$. To conclude, consider the following commutative diagram
\[
\xymatrix{
& \mathscr{D}_1\ar[d] \ar[r]
& \pi^*(\mathscr{D}_{\dif}^{+,n}(V_Y)/(t^k))_{\tau} \ar[d]^{\simeq} \ar[r]
& \mathscr{D}_2 \ar[d]  \\
0\ar[r] & (\mathscr{D}_{\dif}^{+,n}(V_{Y'})/(t^k))_\tau^{\Gamma}\ar[r] &(\mathscr{D}_{\dif}^{+,n}(V_{Y'})/(t^k))_{\tau} \ar[r] &
(\mathscr{D}_{\dif}^{+,n}(V_{Y'})/(t^k))_\tau/(\mathscr{D}_{\dif}^{+,n}(V_{Y'})/(t^k))_\tau^{\Gamma}\ar[r] & 0}
\]
where the top sequence satisfies that the second map is surjective and the composite map is zero. By diagram chasing, we see that the right vertical map is surjective, and its kernel is isomorphic to the cokernel of the left vertical map. Since $\pi$ is an isomorphism on $\pi^{-1}(U)$, the map
$\mathscr{D}_1\ra (\mathscr{D}_{\dif}^{+,n}(V_{Y'})/(t^k))_\tau^{\Gamma}$
is an isomorphism on $\pi^{-1}(U)$ by Proposition \ref{prop:dif-torsion-sheaf}.  It follows that the kernel of 
\[
\mathscr{D}_2\ra (\mathscr{D}_{\dif}^{+,n}(V_{Y'})/(t^k))_\tau/(\mathscr{D}_{\dif}^{+,n}(V_{Y'})/(t^k))_\tau^{\Gamma}
\]
is supported on $Y'\setminus \pi^{-1}(U)$, which is a nowhere dense Zariski closed subspace of $Y'$. Therefore the kernel is zero because $\mathscr{D}_2$ is locally free and $Y'$ is reduced. Hence the right vertical map is an isomorphism.  Thus  $(\mathscr{D}_{\dif}^{+,n}(V_{Y'})/(t^k))_\tau/(\mathscr{D}_{\dif}^{+,n}(V_{Y'})/(t^k))_\tau^{\Gamma}$ is locally free of rank $d_{n}-c$. This implies that $(\mathscr{D}_{\dif}^{+,n}(V_{Y'})/(t^k))_\tau^{\Gamma}$ is locally free of rank $c$.
\end{proof}

\begin{lemma}\label{lem:fs-birational}
Keep notations as above. Suppose that the Sen polynomial for $V_Y$ has no constant term, and that $Y_{fs}=Y$ with respect to the pair $(V_Y,\alpha)$ for some $\alpha\in\OO(Y)^\times$. If $\pi: Y'\ra Y$ is a proper birational morphism with $Y'$ reduced, the finite slope subspace of $Y'$ with respect to $(\pi^{*}\alpha, V_{Y'})$ is $Y'$ itself.
\end{lemma}
\begin{proof}
It is clear that $Y'$ satisfies Definition \ref{def:fs-space}(2). Furthermore, since $\pi$ is birational and $Y'$ is reduced, we also deduce that $Y'$ satisfies Definition \ref{def:fs-space}(1).
\end{proof}

\begin{lemma}\label{lem:non-vanishing-birational}
Keep assumptions as in Lemma \ref{lem:fs-birational}. Moreover, suppose there exists an integer $k$ satisfying
\[
k>\log_{|\pi_K^{-1}|}|\alpha^{-1}|. 
\]  
If $Y'\ra Y$ is a proper and birational morphism as in Lemma \ref{lem:blow-up} such that for any $\tau\in \H_K$, the strict transforms of $(\mathscr{D}_{\dif}^{+,fn}(V_Y)/(t^k))_\tau^{\Gamma}$ and $(\mathscr{D}_{\dif}^{+,fn}(V_Y)/(t^k))_\tau/(\mathscr{D}_{\dif}^{+,fn}(V_Y)/(t^k))_\tau^{\Gamma}$ by $\pi$ are locally free of rank $c$ and $d_n-c$ respectively, then $(\mathscr{D}_\rig^\dag(V_{Y'}))_\sigma^{\varphi^f=\pi^*\alpha,\Gamma=1}$ is locally free of rank $c$ and 
\[
\mathscr{D}_\rig^\dag(V_{Y'})_\sigma^{\varphi^f=\pi^*\alpha,\Gamma=1}\otimes k(y)\ra \D_\rig^\dag(V_y)_\sigma
\]
is injective for any $\sigma\in\mathrm{Gal}(K_0/\Q)$ and $y\in Y'$.
\end{lemma}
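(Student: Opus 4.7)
The plan is to reduce everything to the finite-slope sheaf isomorphism of Theorem \ref{thm:fs-sheaf}, and then to exploit the fact that the strict transforms in the hypothesis fit into a locally split short exact sequence on $Y'$ of locally free coherent sheaves.

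First I would apply Lemma \ref{lem:fs-birational} to conclude that the finite slope subspace of $Y'$ with respect to $(V_{Y'}, \pi^{*}\alpha)$ is $Y'$ itself. Since the spectral norm satisfies $|(\pi^{*}\alpha)^{-1}| \leq |\alpha^{-1}| < p^{k}$, Theorem \ref{thm:fs-sheaf} then gives a canonical isomorphism of sheaves
\[
(\mathscr{D}_{\rig}^{\dag}(V_{Y'}))^{\varphi=\pi^{*}\alpha,\Gamma=1}
\;\xrightarrow{\ \sim\ }\;
(\mathscr{D}_{\dif}^{+,n}(V_{Y'})/(t^k))^{\Gamma}
\]
for every $n\geq n(V_{Y'})$. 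By Lemma \ref{lem:blow-up-free} applied to the given $\pi$, the right hand side is locally free of rank $c$, and the quotient $(\mathscr{D}_{\dif}^{+,n}(V_{Y'})/(t^k))/(\mathscr{D}_{\dif}^{+,n}(V_{Y'})/(t^k))^{\Gamma}$ is locally free of rank $d_n - c$. This immediately yields the first claim that $(\mathscr{D}_{\rig}^{\dag}(V_{Y'}))^{\varphi=\pi^{*}\alpha,\Gamma=1}$ is locally free of rank $c$.

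For the injectivity statement, fix $y\in Y'$ and consider the commutative diagram
\[
\xymatrix{
(\mathscr{D}_{\rig}^{\dag}(V_{Y'}))^{\varphi=\pi^{*}\alpha,\Gamma=1}\otimes k(y) \ar[r] \ar[d]_{\simeq}
& \D_{\rig}^{\dag}(V_y) \ar[d]^{\iota_n} \\
(\mathscr{D}_{\dif}^{+,n}(V_{Y'})/(t^k))^{\Gamma}\otimes k(y) \ar[r]
& \D_{\dif}^{+,n}(V_y)/(t^k)
}
\]
where the left vertical map is the isomorphism from the previous step tensored with $k(y)$, and the image of the top map lands in $(\D_{\rig}^{\dag}(V_y))^{\varphi=\alpha(y),\Gamma=1}$. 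By Proposition \ref{prop:cris-dif-injective}, the restriction of $\iota_n$ to $(\D_{\rig}^{\dag}(V_y))^{\varphi=\alpha(y),\Gamma=1}$ is injective because $k>\log_p|\alpha(y)^{-1}|$. Therefore the injectivity of the top horizontal map reduces to the injectivity of the bottom horizontal map.

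The bottom map is the composition
\[
(\mathscr{D}_{\dif}^{+,n}(V_{Y'})/(t^k))^{\Gamma}\otimes k(y)
\longrightarrow (\mathscr{D}_{\dif}^{+,n}(V_{Y'})/(t^k))\otimes k(y)
\;\xrightarrow{\ \sim\ }\; \D_{\dif}^{+,n}(V_y)/(t^k),
\]
where the second arrow is an isomorphism by the functoriality of $\mathscr{D}_{\dif}^{+,n}$ in the locally free case (Proposition \ref{prop:dif-torsion-sheaf}). The first arrow is obtained by tensoring the short exact sequence of coherent sheaves
\[
0\to (\mathscr{D}_{\dif}^{+,n}(V_{Y'})/(t^k))^{\Gamma} \to \mathscr{D}_{\dif}^{+,n}(V_{Y'})/(t^k) \to
(\mathscr{D}_{\dif}^{+,n}(V_{Y'})/(t^k))/(\mathscr{D}_{\dif}^{+,n}(V_{Y'})/(t^k))^{\Gamma} \to 0
\]
with $k(y)$. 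Since the rightmost term of this sequence is locally free (hence flat) by Lemma \ref{lem:blow-up-free}, tensoring with $k(y)$ preserves exactness on the left, giving the required injectivity. The only subtle point is verifying that the norm bound transfers through $\pi$ and that the base change along $\pi^{*}$ is compatible with the invariants functor, but both are immediate once we have the local freeness of the strict transforms.
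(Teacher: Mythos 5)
Your proof is correct and follows essentially the same route as the paper: Lemma \ref{lem:fs-birational} plus Theorem \ref{thm:fs-sheaf} identify $(\mathscr{D}_\rig^\dag(V_{Y'}))^{\varphi=\pi^*\alpha,\Gamma=1}$ with $(\mathscr{D}_{\dif}^{+,n}(V_{Y'})/(t^k))^{\Gamma}$, Lemma \ref{lem:blow-up-free} gives local freeness of this sheaf and of the quotient, and flatness of the quotient yields injectivity after tensoring with $k(y)$. The only cosmetic difference is your appeal to Proposition \ref{prop:cris-dif-injective}, which is not actually needed: commutativity of your diagram together with the left vertical isomorphism and the injective bottom map already forces the top map to be injective.
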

\begin{proof}
By Lemma \ref{lem:fs-birational}, $Y'_{fs}=Y'$. Since $k>\log_{|\pi_K^{-1}|}|\alpha^{-1}|$,  the natural map
\[
(K\otimes_{K_0}\mathscr{D}^\dagger_{\rig}(V_{Y'})_\sigma)^{\varphi^f=\pi^*\alpha,\Gamma=1}\ra \oplus_{\tau\in \H_\sigma}(\mathscr{D}_{\dif}^{+,fn}(V_{Y'})/(t^k))_\tau^{\Gamma}
\]
is an isomorphism by Theorem \ref{thm:fs-sheaf}. By Lemma \ref{lem:blow-up-free}, $(\mathscr{D}_{\dif}^{+,fn}(V_{Y'})/(t^k))_\tau^{\Gamma}$ is locally free of rank $c$, so $(\mathscr{D}^\dagger_{\rig}(V_{Y'}))_\sigma^{\varphi^f=\pi^*\alpha,\Gamma=1}$ as well. Furthermore, since $(\mathscr{D}_{\dif}^{+,fn}(V_{Y'})/(t^k))_\tau/(\mathscr{D}_{\dif}^{+,fn}(V_{Y'})/(t^k))_\tau^{\Gamma}$ is also locally free by Lemma \ref{lem:blow-up-free}, we deduce that
\[
(\mathscr{D}_\dif^{+,fn}(V_{Y'})/(t^k))_\tau^{\Gamma}\otimes k(y)\ra (\mathscr{D}_{\dif}^{+,fn}(V_{Y'})/(t^k))_\tau\otimes k(y)\cong(\D_\dif^{+,fn}(V_y)/(t^k))_{\tau}
\]
is injective for any $y\in Y'$; the isomorphism follows from the fact that the functor $\D^{+,n}_{\dif}$ is compatible with base change (Proposition \ref{prop:base-change}). This implies that the map
\[
(K\otimes_{K_0}\mathscr{D}_\rig^\dag(V_{Y'}))_\sigma^{\varphi^f=\pi^*\alpha,\Gamma=1}\otimes k(y)\ra  \oplus_{\tau\in \H_\sigma}(\mathscr{D}_{\dif}^{+,fn}(V_{y})/(t^k))_\tau
\]
is injective. Note that this map factors through $(K\otimes_{K_0}\D_\rig^\dag(V_y)_\sigma)^{\varphi^f=\alpha(\pi(y)),\Gamma=1}$. The lemma follows. 
\end{proof}

\begin{prop}\label{prop:multiplicity-one}
For any $x\in X$ and $\sigma\in\mathrm{Gal}(K_0/\Q)$, if 
\[
\dim_{k(x)}\D_\rig^\dag(V^{\mathrm{ss}}_x)_\sigma^{\varphi^f=F(x),\Gamma=1}=1,
\] 
then $\mathscr{D}_\rig^\dag(V_X)_\sigma^{\varphi^f=F,\Gamma=1}$ is locally free of rank 1 around $x$, and
\[
\mathscr{D}_\rig^\dag(V_X)_\sigma^{\varphi^f=F,\Gamma=1}\otimes k(x)\ra \D_\rig^\dag(V_x)_\sigma^{\varphi^f=F(x),\Gamma=1}
\]
is an isomorphism
\end{prop}
\begin{proof}
We may suppose $X=M(S)$ is an affinoid space. Let $\widetilde{X}$ be the normalization of $X$. It follows that  $\widetilde{X}$ is the disjoint union of finitely many irreducible components. By Proposition \ref{prop:torsion-free}, both $(\mathscr{D}_{\dif}^{+,n}(V_{\widetilde{X}})/(t^k))_\tau^{\Gamma}$ and $(\mathscr{D}_{\dif}^{+,n}(V_{\widetilde{X}})/(t^k))_\tau/(\mathscr{D}_{\dif}^{+,fn}(V_{\widetilde{X}})/(t^k))_\tau^{\Gamma}$ are torsion-free on each irreducible component of $\tilde{X}$. Using Lemmas \ref{lem:blow-up},  \ref{lem:blow-up-free} and \ref{lem:non-vanishing-birational}, there exists a proper birational map $\widetilde{\pi}:X'\ra \widetilde{X}$ such that $(\mathscr{D}_\rig^\dag(V_{X'}))_\sigma^{\varphi^f=\pi^*F,\Gamma=1}$
is locally free of rank 1, and 
\[
(\mathscr{D}_\rig^\dag(V_{X'}))_\sigma^{\varphi^f=\pi^*F,\Gamma=1}\otimes k(x')\ra \D_\rig^\dag(V_{x'})_\sigma
\]
is injective for any $x'\in X'$. In particular, the map is nonzero. Therefore for any ideal $I$ of cofinite length of $\OO_{X',x'}$, the composite
\begin{equation*}
\begin{split}
(\mathscr{D}_\rig^\dag(V_{X'}))_\sigma^{\varphi^f=\pi^*F,\Gamma=1}\otimes_{\OO_{X'}}(\OO_{X', x'}/I)\ra (\D_\rig^\dag(V_{X'}&\otimes_{\OO_{X'}}(\OO_{X',x'}/I)))^{\varphi^f=\pi^*F,\Gamma=1}_\sigma\\
&\ra
(\D_\rig^\dag(V_{x'}))^{\varphi^f=(\pi^*F)(x'),\Gamma=1}_\sigma
\end{split}
\end{equation*}
is nonzero. 

Now let $\pi$ be the composite $X'\ra\widetilde{X}\ra X$, which is  also birational. If 
\[
x'\in\pi^{-1}(x),
\] 
applying \cite[Lemma 3.3.9]{BC06} to the functor $D^+_{\mathrm{crys}}(\cdot)^{\varphi^f=\pi^*F}_\sigma$, we deduce
\[
D^+_{\mathrm{crys}}(V_{X'}\otimes_{\OO_{X'}}(\OO_{X',x'}/I))^{\varphi^f=\pi^*F}_\sigma
\]
is free of rank 1 over $\OO_{X',x'}/I$.
Therefore, by \cite[Proposition 3.2.3]{BC06}, for all ideals $I$ of cofinite length of $\OO_{X,x}$, $D^+_{\mathrm{crys}}(V_{X}\otimes_{\OO_{X}}(\OO_{X,x}/I))_\sigma^{\varphi^f=F}$ is free of rank 1 over $\OO_{X, x}/I$. 

Furthermore, we claim that if $I'\supset I$ is another ideal of $\OO_x$, the natural map
\begin{equation}\label{eq:ideal-surjective}
D^+_{\mathrm{crys}}(V_{X}\otimes_{\OO_{X}}(\OO_{X,x}/I))_\sigma^{\varphi^f=F}\ra D^+_{\mathrm{crys}}(V_{X}\otimes_{\OO_{X}}(\OO_{X,x}/I'))_\sigma^{\varphi^f=F}
\end{equation}
is surjective. In fact, since $D^+_{\mathrm{crys}}$ is left exact, we first have the following exact sequence
\[
0\ra D^+_{\mathrm{crys}}(V_X\otimes_{\OO_X}(I'/I))_\sigma^{\varphi^f=F}\ra D^+_{\mathrm{crys}}(V_{X}\otimes_{\OO_{X}}(\OO_{X,x}/I))_\sigma^{\varphi^f=F}\ra D^+_{\mathrm{crys}}(V_{X}\otimes_{\OO_{X}}(\OO_{X,x}/I'))_\sigma^{\varphi^f=F}.
\]
This implies that 
\begin{equation*}
\begin{split}
l(D^+_{\mathrm{crys}}(V_X\otimes_{\OO_X}(I'/I))_\sigma^{\varphi^f=F})&\geq l(D^+_{\mathrm{crys}}(V_{X}\otimes_{\OO_{X}}(\OO_{X,x}/I))_\sigma^{\varphi^f=F})-l(D^+_{\mathrm{crys}}(V_{X}\otimes_{\OO_{X}}(\OO_{X,x}/I'))_\sigma^{\varphi^f=F})\\
&=l(\OO_{X,x}/I)-l(\OO_{X,x}/I')\\
&=l(I'/I).
\end{split}
\end{equation*}
On the other hand, we deduce from the assumption that
\begin{equation*}
l(D^+_{\mathrm{crys}}(V_X\otimes_{\OO_X}(I'/I))_\sigma^{\varphi^f=F})\leq l(I'/I).
\end{equation*}
This forces $l(D^+_{\mathrm{crys}}(V_X\otimes_{\OO_X}(I'/I))_\sigma^{\varphi^f=F})=l(I'/I)$. Hence (\ref{eq:ideal-surjective}) is surjective. 

Now choose a positive integer $k>\log_{|\pi_K^{-1}|}|F^{-1}|$. By Theorem \ref{thm:weakly-refined}, the map
\[
(K\otimes_{K_0}\D_\rig^{\dag}(V_S)_\sigma)^{\varphi^f=F,\Gamma=1}\ra \oplus_{\tau\in \H_\sigma}(\D_{\dif}^{+,n}(V_S)/(t^k))_\tau^{\Gamma}
\] 
is an isomorphism. Since $\widehat{\OO}_{X,x}$ is flat over $S$, we deduce from Lemma \ref{lem:flat-invariant} that
\[
(\D_{\dif}^{+,fn}(V_S)/(t^k))_\tau^\Gamma\otimes_S\widehat{\OO}_{X,x}\cong
((\D_{\dif}^{+,fn}(V_S)/(t^k))_{\tau}\otimes_S\widehat{\OO}_{X,x})^\Gamma.
\]
Since $(\D_{\dif}^{+,fn}(V_S)/(t^k))_{\tau}$ is finite locally free over $S$, we get
\[
(\D_{\dif}^{+,fn}(V_S)/(t^k))_{\tau}\otimes_S\widehat{\OO}_{X,x}\cong\limproj_l
(\D_{\dif}^{+,fn}(V_S)/(t^k))_{\tau}\otimes_SS/\mathfrak{m}^l_{x})\cong\limproj_l
(\D_{\dif}^{+,fn}(V_{S}/\mathfrak{m}^l_xV_S)/(t^k))_{\tau},
\]
where the last isomorphism follows from the base change property of the functor $\D_{\mathrm{dif}}^{+,fn}$.
Hence
\[
((\D_{\dif}^{+,fn}(V_S)/(t^k))_{\tau}\otimes_S\widehat{\OO}_{X,x})^\Gamma\cong\limproj_l
((\D_{\dif}^{+,fn}(V_S)/(t^k))_{\tau}\otimes_SS/\mathfrak{m}^l_x)^{\Gamma}\cong\limproj_l
(\D_{\dif}^{+,fn}(V_{S}/\mathfrak{m}^l_xV_S)/(t^k))_{\tau}^\Gamma.
\]
Now consider the following commutative diagram
\[
\xymatrix{(K\otimes_{K_0}\D^{\dagger}_{\rig}(V_S)_\sigma)^{\varphi^f=F,\Gamma=1}\otimes_S\widehat{\OO}_{X,x}\ar[d]\ar[r]&
\oplus_{\tau\in \H_\sigma}(\D_{\dif}^{+,fn}(V_S)_\tau/(t^k))^\Gamma\otimes_S\widehat{\OO}_{X,x}\ar[d]\\
\displaystyle{\limproj_l
(K\otimes_{K_0}\D_{\rig}^{\dagger}(V_{S}/\mathfrak{m}^l_xV_S)_\sigma)^{\varphi^f=F,\Gamma=1}}\ar[r]&
\displaystyle{\limproj_l
\oplus_{\tau\in \H_\sigma}(\D_{\dif}^{+,fn}(V_{S}/\mathfrak{m}^l_xV_S)_\tau/(t^k))^{\Gamma}}.}
\]
By the previous paragraph we see that $\displaystyle{\limproj_l
\D_{\rig}^{\dagger}(V_{S}/\mathfrak{m}^l_xV_S)_\sigma^{\varphi^f=F,\Gamma=1}}$
is a free $\widehat{\OO}_{X,x}$-module of rank 1. Since both the top horizontal and right vertical maps are isomorphisms, we deduce that the left vertical map embeds $(K\otimes_{K_0}\D^{\dagger}_{\rig}(V_S)_\sigma)^{\varphi^f=F,\Gamma=1}\otimes_S\widehat{\OO}_{X,x}$ as a direct summand of 
\[
\displaystyle{\limproj_l(K\otimes_{K_0}
\D_{\rig}^{\dagger}(V_{S}/\mathfrak{m}^l_xV_S)_\sigma)^{\varphi^f=F,\Gamma=1}}
=K\otimes_{K_0}\displaystyle{\limproj_l
\D_{\rig}^{\dagger}(V_{S}/\mathfrak{m}^l_xV_S)_\sigma^{\varphi^f=F,\Gamma=1}}.
\]
It follows that the map
\[
(K\otimes_{K_0}\D^{\dagger}_{\rig}(V_S))_{\sigma}^{\varphi^f=F,\Gamma=1}\otimes_Sk(x)\ra
(K\otimes_{K_0}\D^{\dagger}_{\rig}(V_x))_{\sigma}^{\varphi^f=F(x),\Gamma=1}
\]
is injective.  On the other hand, we have 
\[
\dim(\D^{\dagger}_{\rig}(V_S)_\sigma^{\varphi^f=F,\Gamma=1}\otimes_Sk(x))\geq1
\] 
by Proposition \ref{prop:weakly-refined}(3). Thus the left hand side is at least $[K:\Q]/f$-dimensional whereas the right hand side is exactly $[K:\Q]/f$-dimensional. Hence 
\[
\dim_{k(x)}(\D^{\dagger}_{\rig}(V_S)_\sigma^{\varphi^f=F,\Gamma=1}\otimes_Sk(x))=1
\]
and \[
\D^{\dagger}_{\rig}(V_S)_\sigma^{\varphi^f=F,\Gamma=1}\otimes_Sk(x)\ra
\D^{\dagger}_{\rig}(V_x)_\sigma^{\varphi^f=F(x),\Gamma=1}
\]
is an isomorphism.
\end{proof}

\subsection{Vector bundles and $\m$-modules}
In this subsection, we will recall some basic notions and properties of the theory of families $\m$-modules. Recall that we denote by $K'_0$ the maximal unramified extension of $\Q$ contained in $K_\infty$. Let $S$ be an affinoid algebra over $\Q$. 
\begin{defn}
Let $I$ be a subinterval of $(0,\infty)$. By a \emph{vector bundle} over $\r^I_{K_0'}\widehat{\otimes}_{\Q}S$ of rank $d$
we mean a locally free coherent sheaf $M^I_S$ of rank $d$ over the product
of the annulus $v_p(T)\in I$ within the affine $T$-line over $K_0'$ with $M(S)$
in the category of rigid analytic spaces over $\Q$.  We call $M_S^I$ \emph{free} if it is freely generated by its global sections.
By a \emph{vector bundle} $M_S$ over $\r_{K_0'}\widehat{\otimes}_{\Q}S$ we mean an object in the direct limit
as $r \to 0$ of the categories of vector bundles over
$\r_{K_0'}^r\widehat{\otimes}_{\Q}S$.

For a subinterval $I'$ of $I$, denote by
$M_S^{I'}$ the base change of $M_S^I$ to $\r^{I'}_{K_0'}\widehat{\otimes}_{\Q}S$. If $S\ra R$ is a map of affinoid algebras over $\Q$, we set $M_R^I$ and $M_R$ as the base changes of $M_S^I$ and $M_S$ to $\r^{I}_{K_0'}\widehat{\otimes}_{\Q}R$ and $\r_{K_0'}\widehat{\otimes}_{\Q}R$ respectively. For any $x\in M(S)$, we denote $M^I_{k(x)}$ and $M_{k(x)}$ by $M^I_x$ and $M_x$ respectively instead. 
\end{defn}

\begin{remark}
By Lemma \ref{lem:projection}, $M^r_x=M^r_S\otimes_{S}k(x)$. Hence the map $M^r_S\ra M^r_x$ is surjective.
\end{remark}

\begin{remark}
A locally free $\r^I_{K'_0}\widehat{\otimes}_{\Q}S$-module of rank $d$ naturally gives rise to a vector bundle of rank $d$ over $\r^{I}_{K_0'}\widehat{\otimes}_{\Q}S$. The converse is also true when $I$ is a closed interval.
\end{remark}

We need the following result, which is originally due to L\"utkebohmert \cite{L}, in \S4.3.

\begin{lemma}\label{lem:closed-interval}
Let $M_S^I$ be a vector bundle over $\r_{K_0'}^I\widehat{\otimes}_{\Q}S$. If $I$ is closed, then there exists a finite covering of $M(S)$ by affinoid subdomains $M(S_1),\dots, M(S_i)$ such that $M^I_{S_1},\dots, M^I_{S_i}$ are all free.
\end{lemma}

Recall that
there exists an isomorphism 
$\mathbf{B}^\dag_{\rig,K}\cong\r_{K_0'}$,
which identifies $\mathbf{B}^{\dag,\rho(r)}_{\rig,K}$ with $\r^r_{K_0'}$ for all sufficiently small $r$. 
We henceforth identify $\mathbf{B}_{\rig,K}^{\dagger}\widehat{\otimes}_{\Q}S$ with $\r_{K_0'}\widehat{\otimes}_{\Q}S$, and equip the latter with the induced $\varphi$- and $\Gamma$-actions.

\begin{defn}\label{def:phi-gamma}
By a \emph{$\m$-module} over $\r_{K_0'}\widehat{\otimes}_{\Q}S$ of rank $d$ we mean a vector bundle $D_S$ over $\r_{K_0'}\widehat{\otimes}_{\Q}S$ of rank $d$
equipped with commuting semilinear $\varphi$- and $\Gamma$-actions such that the induced map $\varphi^*D_S\ra D_S$ is an
isomorphism as vector bundles over $\r_{K_0'}\widehat{\otimes}_{\Q}S$.
We say $D_S$ \emph{free} if the underlying vector bundle is free. The morphisms of $\m$-modules over $\r_{K_0'}\widehat{\otimes}_{\Q}S$ are morphisms of the underlying vector bundles which respect $\varphi$- and $\Gamma$-actions.
\end{defn}

\begin{defn}
Let $D_S$ be a $\m$-module over $\r_{K_0'}\widehat{\otimes}_{\Q}S$. It is clear from Definition \ref{def:phi-gamma} that for $r$ sufficiently small, $D_S$ is represented by a vector bundle $D_S^r\subset D_S$ over $\r^r_{K_0'}\widehat{\otimes}_{\Q}S$ such that $\varphi$ maps $D_S^r$ to $D_S^{r/p}$, and the induced map $\varphi^*(D_S^r)\ra D_S^{r/p}$
is an isomorphism as vector bundles over  $\r^{r/p}_{K_0'}\widehat{\otimes}_{\Q}S$. We call such $D_S^r$ \emph{representative vector bundles} of $D_S$.
\end{defn}
\begin{remark}
Our definition of $\m$-modules over $\r_{K_0'}\widehat{\otimes}_{\Q}S$ is the same as the notion of \emph{families of $\m$-modules} over $\r_{S_{K'}}$ defined in \cite{KL}.
\end{remark}

\begin{remark}\label{rem:Drig-phi-Gamma}
If $V_S$ is a locally free $S$-linear representation of rank $d$ of $G_{K}$, $\D_\rig^\dag(V_S)$ is naturally a $\m$-module of rank $d$ over
$\r_{{K_0'}}\widehat{\otimes}_{\Q}S$ with representative vector bundles $\D_\rig^{\dag,\rho(r)}(V_S)$ for $r$ sufficiently small.
\end{remark}

\begin{remark}
If $S$ is a finite extension of $\Q$, then $D_S$ is free over $\r_{K'_0}\widehat{\otimes}_{\Q}S=\r_{K'_0}\otimes_{\Q}S$ by the B\'ezout property of $\r_{K'_0}\otimes_{\Q}S$. Thus our definition of $\m$-modules is compatible with the definition of classical $\m$-modules.
\end{remark}

\begin{remark}
In fact, one can also define a $\m$-module over $\r_{K_0'}\widehat{\otimes}_{\Q}S$ of rank $d$ to be a finite presented projective module over $\r_{K_0'}\widehat{\otimes}_{\Q}S$ of rank $d$ equipped with commuting semilinear $\varphi$- and $\Gamma$-actions such the induce map 
$\varphi^*(D_S)\ra D_S$ is an isomorphism. The equivalence between this definition and ours are proved in \cite{KPX} and
\cite{Bel13} independently. 
\end{remark}


\begin{lemma}\label{lem:representative-unique}
Let $L$ be a finite extension of $\Q$, and put $L'=L\otimes_{\Q}K_0'$. Let $D$ be a $\m$-module over $\r_{K_0'}\otimes_{\Q}L=\r_{L'}$ of rank $n$, and let $E$ be a $\m$-submodule of $D$ of rank $m$. Then there exists an $r_0>0$ such that if $D^r$ and $E^r$ are representative vector bundles of $D$ and $E$ for some $r\leq r_0$, then $E^r\subset D^r$. As a consequence, $D$ has at most one representative vector bundle over $\r_{L'}^r$ when $r$ is sufficiently small.
\end{lemma}
\begin{proof}
Fix some $r_0>0$ such that for any $a\in\r_{K'_0}$, if $\varphi(a)\in \r^r_{K_0'}$ for 
some $0<r\leq r_0$, then $a\in\r^{pr}_{K'}$. Now let $\mathrm{d}=(d_1,\dots,d_n)$ and $\mathrm{e}=(e_1,\dots,e_m)$ be $\r_{L'}^r$-bases of $D^r$ and $E^r$ respectively. Since $D^r$ and $E^r$ are representative vector bundles, there exist invertible matrices $A$ and $B$ defined over $\r_{L'}^{r/p}$ such that
$\varphi(\mathrm{d})=\mathrm{d}A$ and $\varphi(\mathrm{e})=\mathrm{e}B$. Write $\mathrm{e}=\mathrm{d}C$ for some
$n\times m$ matrix $C$ defined over $\r_{L'}$. It follows 
\[
\mathrm{d}CB=\mathrm{e}B=\varphi(\mathrm{e})=
\varphi(\mathrm{d})\varphi(C)=\mathrm{d}A\varphi(C),
\] 
yielding $CB=A\varphi(C)$. Hence $\varphi(C)=A^{-1}CB$. Now
suppose $C$ is defined over $\r_{L'}^s$ for some $s>0$. If $s<r/p$, then $\varphi(C)=A^{-1}CB$ is over $\r_{L'}^s$, yielding that
$C$ is defined over $\r_{L'}^{ps}$. Iterating this argument, we conclude that $C$ is defined over $\r_{L'}^{r/p}$. Thus $\varphi(C)$ is defined over
$\r_{L'}^{r/p}$, yielding $C$ is defined over $\r_{L'}^r$. This implies $E^r\subset D^r$.
\end{proof}

\begin{lemma}\label{lem:representative-saturated}
Keep notations as in the Lemma \ref{lem:representative-unique}.
Then $E$ is saturated in $D$ if and only if $E^r$ is saturated in $D^r$. Furthermore, in this case,  we have
\[
E^r=D^r\cap E,
\] 
and $D^r/E^r$ is the representative vector bundle of $D/E$ over $\r_{L'}^r$.
\end{lemma}
\begin{proof}
It is obvious that if $E^r$ is saturated in $D^r$, then $E$ is saturated in $D$. Now suppose $E$ is saturated in 
$D$. First note that $D^r/(E\cap D^r)$ is a submodule of $D/E$. Hence $D^r/(E\cap D^r)$ is finitely generated and torsion-free over $\r_{L'}^r$. This yields that it is finite free over $\r_{L'}^r$ by the B\'ezout property of $\r_{L'}^r$.  Furthermore, since it generates $D/E$ over $\r_{L'}$, we deduce
\[
\rank_{\r_{L'}^r}(D^r/(E\cap D^r))\geq \rank_{\r_{L'}}(D/E)=d-s.
\] 
On the other hand, $E\cap D^r$ is a closed $\r_{L'}^r$-submodule of $D^r$. Hence it is also finite free over 
$\r_{L'}^r$. Since $E^r\subseteq E\cap D^r$, we deduce
\[
\rank_{\r_{L'}^r}(E\cap D^r)\geq \rank_{\r^r_{L'}}E^r=s.
\] 
Since
\[
\rank(E\cap D^r)+\rank(D^r/(E\cap D^r))=\rank D^r=d,
\] 
we deduce 
\[
\rank(E\cap D^r)=s\quad \text{and}\quad \rank(D^r/(E\cap D^r))=d-s. 
\]

We claim that $E\cap D^r$ and $D^r/(E\cap D^r)$ are representative vector bundles of $E$ and $D/E$ respectively. First note that the natural map
\[
(D^r/(E\cap D^r))\otimes_{\r_{L'}^r}\r_{L'}\ra D/E
\] 
is an isomorphism because it is surjective, and both sides are finite free over $\r_{L'}$ of the same rank. 
It follows that 
\[
(E\cap D^r)\otimes_{\r_{L'}^r}\r_{L'}\ra E
\] 
is also an isomorphism. Now consider the following commutative diagram
\[
\xymatrix{
&0\ar[r] & \varphi^*(E\cap D^r)\ar[d] \ar[r]
& \varphi^*(D^r)\ar[d] \ar[r]
& \varphi^*(D^r/(E\cap D^r))\ar[d] \ar[r] &0  \\
& 0 \ar[r] & (E\cap D^{r})\otimes_{\r_{L'}^{r}}\r^{r/p}_{L'} \ar[r] &
D^{r/p} \ar[r] & D^{r/p}/((E\cap D^{r})\otimes_{\r_{L'}^{r}}\r^{r/p}_{L'})\ar[r]&0.}
\]
The middle vertical map is an isomorphism as $D^r$ is a representative vector bundle. Thus the right vertical map is surjective. Hence it is an isomorphism because both the source and target are finite free of the same rank over 
$\r^{r/p}_{L'}$. This yields that the left vertical map is also an isomorphism. The claim now follows, and we deduce the lemma from Lemma \ref{lem:representative-unique}.
\end{proof}

\begin{prop}\label{prop:quotient}
Keep notations as above. Let $D_S$ be a $\m$-module over $\r_{{K_0'}}\widehat{\otimes}_{\Q}S$ of rank $d$, and let $E_S$ be a $\m$-submodule of $D_S$ of rank $s$. Suppose $E_S^r\subset D_S^r$ are representative vector bundles of $E_S$ and $D_S$ respectively. If $E_x$ is a saturated $\m$-submodule of $D_x$ for every $x\in M(S)$, $D^r_S/E^r_S$ is a vector bundle over $\r_{{K_0'}}^r\widehat{\otimes}_{\Q}S$ of rank $d-s$. As a consequence, $D_{S}/E_{S}$ is a $\m$-module over $\r_{{K_0'}}\widehat{\otimes}_{\Q} S$ of rank $d-s$. 
\end{prop}
\begin{proof}
It suffices to show that $E_x^r$ is saturated
in $D_x^r$ for every $x\in M(S)$. The latter follows from Lemma
\ref{lem:representative-saturated}.
\end{proof}


The following lemma has been used in the proof of Proposition \ref{prop:good}.

\begin{lemma}\label{lem:saturated-submodule}
Keep notations as above. Suppose $L$ is a finite extension of $E$. Let $V$ be a $L$-linear representation of $G_K$ of dimension $d$.  Let $D_1$ be a rank 1 $\m$-submodule over $\r_{L'}$ of $D=\D_\rig^\dag(V)$. Fix an integer $n$ so that the degree of field extensions $[K_m:\Q(\epsilon_m)]$ are constant for all $m\geq n$,  and $D^{\rho(r_n)}_1\subset D^{\rho(r_n)}$. Then $D_1$ is saturated in $D$ if and only if $D^{\rho(r_n)}_1$ has nonzero image in $D_\Sen^n(V)_\tau$ via the composite
\[
\iota_n: D^{\rho(r_n)}=\D_\rig^{\dag,r_n}(V)\ra \D^{+,n}_{\dif}(V)\ra \D^n_{\Sen}(V)\ra\D^n_{\Sen}(V)_\tau
\]
for any $\tau\in \H_K$.
\end{lemma}
\begin{proof}
The ``only if'' part is obvious. It remains to prove the ``if" part. To do this, we apply induction for $\m$-modules defined in \cite{L07}.  Using the set up of \cite{L07}, $\Ind^{\Gamma_{\Q}}_{\Gamma_K}D_1$ and $\Ind^{\Gamma_{\Q}}_{\Gamma_K}D$ are $(\varphi,\Gamma_{\Q})$-modules over $\r_{\Q}\otimes_{\Q}L=\r_L$ of ranks $[K:\Q]$ and $d[K:\Q]$ respectively. Furthermore,  
\[
\Ind^{\Gamma_{\Q}}_{\Gamma_K}D=\D_{\rig,\Q}^\dag(\Ind_{G_{\Q}}^{G_K}V)
\] 
since inductions for $p$-adic representations are compatible with inductions for the associated $\m$-modules \cite[Proposition 2.1]{L07}. Then it suffices to show that $\Ind^{\Gamma_{\Q}}_{\Gamma_K}D_1$ is a saturated $(\varphi,\Gamma_{\Q})$-submodule of rank $h=[K:\Q]$ of $\Ind^{\Gamma_{\Q}}_{\Gamma_K}D$.

Suppose the contrary is true. Using \cite[Proposition 3.1]{L07}, we first deduce that as an $L\otimes_{\Q}\Q(\epsilon_n)$-module, the image of $\Ind^{\Gamma_{\Q}}_{\Gamma_K}D^{\rho(r_n)}_1$ in $\D_\Sen^n(\Ind^{\Gamma_{\Q}}_{\Gamma_K}V)$ can be generated by $h-1$ elements. This implies that the image has $L$-dimension $\leq(h-1)[\Q(\epsilon_n):\Q]$.
On the other hand, since $\Gamma_K$ acts transitively on the set of components $(L\otimes_{K}K_n)_\tau$, the image of $D^{\rho(r_n)}_1$ in $\D_\Sen^n(V)_\tau$ has $L$-dimension at least $[K_n:K]$. Now by the assumption on $n$, we have 
\[
\D_\Sen^n(\Ind^{\Gamma_{\Q}}_{\Gamma_K}V)=\Ind^{\Gamma_{\Q}}_{\Gamma_K}(\D_\Sen^n(V))=
\oplus_{\tau\in\H_K}\Ind^{\Gamma_{\Q}}_{\Gamma_K}(\D_\Sen^n(V)_{\tau}).
\] 
It follows that the image of $\Ind^{\Gamma_{\Q}}_{\Gamma_K}D^{\rho(r_n)}_1$ in $\D_{\Sen}^n(\Ind^{\Gamma_{\Q}}_{\Gamma_K}V)$ has $L$-dimension  at least 
\begin{equation*}
\begin{split}
h[\Gamma_{\Q}:\Gamma_K][K_n:K]=h(h/[K_n:\Q(\epsilon_n)])&[K_n:K]\\
&=h[K_n:\Q]/[K_n:\Q(\epsilon_n)]=h[\Q(\epsilon_n):\Q].
\end{split}
\end{equation*}
This yields a contradiction!
\end{proof}

\begin{remark}
In the case when $S=L$ is a finite extension of $\Q$, for any $\delta\in\widehat{\mathscr{T}}(S)$, Nakamura constructs a rank 1 $B$-pair $W(\delta)$ \cite{N09}. A short computation shows that $\r_L(\delta)$ is isomorphic to the $\m$-module corresponding to $W(\delta)$. Therefore, for an $L$-linear representation of $G_K$, being trianguline with parameters $(\delta_i)_{1\leq i\leq d}$ in the sense of Definition \ref{def:trianguline} is the same as being split trianguline in the sense of Nakamura with the same set of parameters.
\end{remark}

\subsection{Refined families}
\label{def:refined-representaiton}
The main goal of this subsection is prove the main result of this paper. That is, a family of refined $p$-adic representations of $G_K$ admits a global triangulation on a Zariski open and dense subspace of the base that contains all regular non-critical points. In what follows, we first give the definition of regular non-critical refined $p$-adic representations. 

\begin{defn}Let $L$ be a finite extension of $E$, and let $V$ be a $d$-dimensional crystalline $L$-linear representation of $G_K$ such that $\varphi^f$ acting on $D_{\mathrm{crys}}(V)$ has all its eigenvalues in $L^\times$.
\begin{enumerate}
\item[(1)]By a \emph{refinement} of $V$ we mean a $\varphi$-stable $K_0\otimes_{\Q}L$-filtration $\mathcal{F}=(\mathcal{F}_i)_{1\leq i\leq d}$ of $D_{\mathrm{crys}}(V)$:
\[
0=\mathcal{F}_0\subsetneq\mathcal{F}_1\cdots\subsetneq\mathcal{F}_d=D_{\mathrm{crys}}(V).
\]
In particular, $\dim\mathcal{F}_i=i$. 
\item[(2)]
For $\tau\in\H_K$, suppose the Hodge-Tate weights of $D_{\mathrm{dR}}(V)_\tau$ are
\[
k_{1,\tau}>k_{2,\tau}\cdots>k_{d,\tau}.
\]
We say the refinement $\mathcal{F}$ is \emph{$\tau$-non-critical} if
\begin{equation}\label{eq:non-critical}
D_{\mathrm{dR}}(V)_\tau=(K\otimes_{K_0}\mathcal{F}_i)_\tau\oplus \mathrm{Fil}^{k_{i+1,\tau}}(D_{\mathrm{dR}}(V)_{\tau})
\end{equation}
for all $1\leq i\leq d$. The refinement $\mathcal{F}$ is said to be \emph{non-critical} if it is $\tau$-non-critical for every $\tau\in\H_K$.
\item[(3)]
We denote by $\varphi_i$ the eigenvalue of $\varphi^f$ on $\mathcal{F}_i/\mathcal{F}_{i-1}$. We say the refinement $\mathcal{F}$ is \emph{regular} if for any $1\leq i\leq d$, $\varphi_1\cdots\varphi_i$ is an eigenvalue of $\varphi^f$ on $D_{\mathrm{crys}}(\wedge^iV)$ of multiplicity one.
\end{enumerate}
\end{defn}
The refinement $\mathcal{F}$ gives rise to an ordering $(\varphi_1,\dots,\varphi_d)$ of the  $\varphi^f$-eigenvalues on $D_{\mathrm{crys}}(V)$. If all these eigenvalues are distinct, any ordering of them uniquely gives rise to a refinement. For any $\tau\in\mathrm{H}_K$, the refinement $\mathcal{F}$ also gives rise to an ordering $(s_{1,\tau},\dots,s_{d,\tau})$ of $\{k_{1,\tau},\dots,k_{d,\tau}\}$, defined by the property that the jumps of the Hodge filtration of $D_{\mathrm{dR}}(V)_\tau$ induced on $(K\otimes_{K_0}\mathcal{F}_i)_\tau$ are $(s_{1,\tau},\dots,s_{i,\tau})$. It is straightforward to see that $\mathcal{F}$ is $\tau$-non-critical if and only if the associated ordering of the Hodge-Tate weights is $(k_{1,\tau},\dots,k_{d,\tau})$.




From now on, let $X$ be a reduced rigid analytic space over $E$, and let $E$ be a family of refined $p$-adic representations of $G_K$ of dimension $d$ over $X$ as in \S0.3. In the following, we retain the notations 
in \S0.3 and \S0.4. 

\begin{remark}\label{rmk:wedge}
Recall that in \S0.3 we define $\alpha_i=\prod_{j=1}^iF_j$ and $\eta_i=\prod_{j=1}^i\chi_j$ for $1\leq i\leq d$. If $V_X$ is a refined family of rank $d$, then for each $1\leq i\leq d$, the $i$-th exterior product $\wedge^iV_X$ is a weakly refined family with $F=\alpha_i$, the generalized Hodge-Tate weights 
\[
\{\kappa_I=\sum_{j\in I}\kappa_j\}_{|I|=i},
\]  
the biggest Hodge-Tate weight $\kappa_1+\cdots+\kappa_i$ and the same Zariski dense subset $Z$.
Hence $(\wedge^iV_X)(\eta_i^{-1})$ is a weakly refined family with generalized Hodge-Tate weights $\{\kappa_I-\kappa_{\{1,\dots,i\}}\}_{|I|=i}$ and $F=\alpha_i$. In particular, its biggest Hodge-Tate weight is 0.
\end{remark}


By Remark \ref{rmk:wedge}, we may apply Theorem \ref{thm:weakly-refined} to $(\wedge^iV_X)(\eta_i^{-1})$ to get the following:
\begin{prop}
For each $1\leq i\leq d$, the presheaf $\mathscr{D}_\rig^\dag((\wedge^iV_X)(\eta_i^{-1}))^{\varphi^f=\alpha_i, \Gamma=1}$ is a coherent sheaf on $X$.
\end{prop}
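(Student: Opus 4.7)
The strategy is a direct reduction to the weakly refined case (Theorem \ref{thm:weakly-refined}) via a Galois twist. By the remark immediately preceding the statement, for each $1 \leq i \leq d$ the twisted family $(\wedge^i V_X)(\eta_i)$ is itself a family of weakly refined $p$-adic representations on $X$, now with parameter $F = \alpha_i$ and smallest Hodge--Tate weight equal to $0$; the character $\chi_1$ required by Definition \ref{def:weak-refined-family}(f) for this family can be taken to be the trivial character. Theorem \ref{thm:weakly-refined} applied to this twisted family then immediately yields that
\[
\mathscr{F}_i := \bigl(\mathscr{D}_\rig^\dag((\wedge^i V_X)(\eta_i))\bigr)^{\varphi = \alpha_i,\, \Gamma = 1}
\]
is a coherent sheaf on $X$.

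It remains to transfer this coherence back across the twist. By functoriality of the $\m$-module construction, the Galois twist by $\eta_i$ corresponds on the $\m$-module side to tensoring $\mathscr{D}_\rig^\dag(\wedge^i V_X)$ with a rank $1$ $\m$-module $\mathscr{L}_i$ over $\r_X$ canonically associated to the twisting character; concretely, there is a $(\varphi,\Gamma)$-equivariant, $\r_X$-linear isomorphism
\[
\mathscr{D}_\rig^\dag((\wedge^i V_X)(\eta_i)) \;\cong\; \mathscr{D}_\rig^\dag(\wedge^i V_X) \otimes_{\r_X} \mathscr{L}_i.
\]
By the construction of $\Delta_i$ (with $\Delta_i(p) = \alpha_i$ and $\Delta_i|_\Gamma = \eta_i$), this identification is arranged precisely so that, after choosing a local generator of $\mathscr{L}_i$, the eigencondition ``$\varphi = \alpha_i,\,\Gamma = 1$'' on the twisted side translates to the $\Delta_i$-eigencondition on $\mathscr{D}_\rig^\dag(\wedge^i V_X)$.

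Consequently, working locally on $X$ where $\mathscr{L}_i$ is free, one obtains an $\OO_X$-linear isomorphism $\mathscr{F}_i \cong (\mathscr{D}_\rig^\dag(\wedge^i V_X))^{\Delta_i}$. Since coherence is a local property on the weak $G$-topology, $(\mathscr{D}_\rig^\dag(\wedge^i V_X))^{\Delta_i}$ is coherent, proving the proposition. The substantive analytic content is entirely contained in Theorem \ref{thm:weakly-refined} (which in turn rests on the finite slope subspace machinery of Section~3 and the Frobenius-equation technique over the extended Robba ring); the only thing that could go wrong here is a bookkeeping error in the twist, but the fact that $\Delta_i$ was defined so that $\Delta_i(p) = \alpha_i$ and $\Delta_i|_\Gamma = \eta_i$ makes the translation automatic.
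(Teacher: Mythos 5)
Your proposal is correct and matches the paper's argument: the paper likewise notes (in the remark just before the statement) that $(\wedge^i V_X)(\eta_i)$ is weakly refined with $F=\alpha_i$ and smallest Hodge--Tate weight $0$, applies Theorem \ref{thm:weakly-refined} to it, and twists back by $\eta_i^{-1}$, which is exactly your translation of the $\varphi=\alpha_i,\Gamma=1$ eigencondition into the $\Delta_i$-eigencondition. Your write-up just makes the twisting bookkeeping explicit, which the paper leaves implicit.
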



For each $1\leq i\leq d-1$, let $TQ_i(T)$ be the Sen polynomial for $(\wedge^iV_X)(\eta^{-1}_i)$, and let 
\[
P_i(k)=\prod_{j=0}^{k-1}Q_i(-j)
\] 
for $k\geq1$. 
The following proposition follows immediately from Proposition \ref{prop:weakly-refined-1}.

\begin{prop}\label{prop:good-criterion}
For $x\in X$, if there exist positive integers $k_i>v_K(\alpha_i(x))$ for each $1\leq i\leq d-1$ satisfying
\begin{equation}\label{eq:good}
(P_1(k_1)\cdots P_{d-1}(k_{d-1}))(x)\neq0,
\end{equation}
then $x\in X_s$ and $\dim\D_\rig^{\dag}((\wedge^iV_x)(\eta_i(x)^{-1}))_\sigma^{\varphi^f=\alpha_i(x),\Gamma=1}$ for each $1\leq i\leq d$ and $\sigma\in\mathrm{Gal}(K_0/\Q)$.
\end{prop}

\begin{prop}\label{prop:regular-local}
For $x\in X$, $1\leq i\leq d$ and $\sigma\in\mathrm{Gal}(K_0/\Q)$, if 
\[
\dim_{k(x)}\D_\rig^\dag((\wedge^iV_x^{\mathrm{ss}})(\eta_i(x)^{-1}))_\sigma^{\varphi^f=\alpha_i(x),\Gamma=1}=1,
\]
the coherent sheaf $\mathscr{D}_\rig^\dag((\wedge^iV_X)(\eta_i^{-1}))_\sigma^{\varphi^f=\alpha_i, \Gamma=1}$
is locally free of rank 1 around $x$, and 
\[
\mathscr{D}_\rig^\dag((\wedge^iV_X)(\eta_i^{-1}))^{\varphi^f=\alpha_i, \Gamma=1}_\sigma\otimes k(x)\ra\D_\rig^\dag((\wedge^iV_x)(\eta_i(x)^{-1})))_\sigma^{\varphi^f=\alpha_i(x),\Gamma=1}
\]
is an isomorphism.
\end{prop}
\begin{proof}
We conclude the proposition by applying Proposition \ref{prop:multiplicity-one} to the weakly refined family $(\wedge^iV_X)(\eta^{-1}_i)$.
\end{proof}

As already noted in \S0.4, the saturated locus $X_s$ of $V_X$ is a Zariski open and dense subspace of $X$. Therefore it reduces to show that $V_{X_s}$ admits a global triangulation on $X_s$ and $X_s$ contains all regular non-critical points. To this end, we will show that the triangulation locus of $V_{X_s}$ forms a reduced Zariski closed subspace of $X_s$.  The upshot is to note that for a sequence of crystalline periods of 
the successive exterior products of $V_x$ obtained by the previous results, the condition that it gives rise to a triangulation of $V_x$ is purely algebraic.  To make this statement precise, we introduce the following notions.

\begin{defn}
Let $A$ be a commutative ring with identity, and let $M$ be a free $A$-module of rank $d$.
\begin{enumerate}
\item[(1)] We call a free $A$-submodule $N\subseteq M$ of rank $c$ \emph{cofree} if $M/N$ is a free $A$-module of rank $d-c$. We call $m\in M$ \emph{cofree} if $Am$ is cofree. 

\item[(2)]Let $m\in M$ be cofree, and let $n\in \wedge^iM$ for some $1\leq i\leq d$. Suppose 
\[
m\wedge n=0
\] 
in $\wedge^{i+1}M$. Then there exists a unique $\overline{n}\in \wedge^{i-1}(M/Am)$ such that the wedge product of any lift of $\overline{n}$ in $\wedge^{i-1}M$ with $m$ is equal to $n$; we call $\overline{n}$ the \emph{quotient} of $n$ by $m$. Let $N$ be a free rank 1 $A$-submodule of $M$, and let $P$ be a free rank 1 $A$-submodule of 
$\wedge^i M$. If $N\wedge P=0$ in $\wedge^{i+1}M$, we define the quotient of $P$ by $N$ to be the $A$-submodule of $\wedge^{i-1}M$ generated by the quotient of any generator of $P$ by any generator of $N$.

\item[(3)]
For each $1\leq i\leq d$, let $N_i$ be a free rank 1 $A$-submodule of $\wedge^i M$. We say the sequence $N_1,\dots, N_{d}$ forms a \emph{chain} in $M$ if there exists an $A$-basis $e_1,\dots,e_d$ of $M$ such that 
\[
N_i=Ae_1\wedge\cdots\wedge e_i
\] 
for all $1\leq i\leq d$. In this case, the filtration
\[
\mathrm{Fil}_i(M)=\text{Span of $\{e_j\}_{0\leq j\leq i}$},\quad 1\leq i\leq d-1, 
\]
which is independent of the choice of the basis $\{e_1,\dots,e_d\}$, is called the \emph{associated filtration} of the chain $N_1,\dots, N_{d}$.

Let $m_i\in\wedge^iM$ for $1\leq i\leq d$. We say the sequence $m_1,\dots, m_{d}$ forms a \emph{chain} in $M$ if the sequence $Am_1,\dots, Am_d$ forms a chain. In this case, we call the associated filtration
of $Am_1,\dots, Am_d$ the \emph{associated filtration} of the chain $m_1,\dots, m_{d}$.
\end{enumerate}

\end{defn}

The following lemma is a simple exercise in linear algebra. 
\begin{lemma}\label{lem:chain-reduction}
The sequence $m_1,\dots,m_d$ forms a chain in $M$ if and only if the following hold.
\begin{enumerate}
\item[(1)]
$m_1$ is cofree.
\item[(2)]
$m_1\wedge m_i=0$ for $2\leq i\leq d$.
\item[(3)]
The sequence of quotients of $m_2,\dots,m_{d}$ by $m_1$ forms a chain in $M/Am_1$.
\end{enumerate}
\end{lemma}

\begin{lemma}\label{lem:chain-bezout}
Suppose $A$ is a B\'ezout domain, and let $M$ be a free $A$-module of rank $d$. For each $1\leq i\leq d$, let $m_i\in\wedge^iM$ be cofree. Now suppose $A\ra B$ is an injective map of commutative rings. Then the sequence $m_1,\dots,m_d$ forms a chain in $M$ if and only if it forms a chain in $M\otimes_AB$.
\end{lemma}
\begin{proof}
To show the ``if'' part of the lemma, we proceed by induction on $d$. The initial case is trivial. Suppose it is true for $d=k-1$ for some $k\geq2$. Now suppose $\rank M=k$ and the sequence $m_1,\dots,m_k$ forms a chain in $M\otimes_AB$. Then $m_1\wedge m_i=0$ in $\wedge^{i+1}(M\otimes_AB)$. Hence $m_1\wedge m_i=0$ in $\wedge^{i+1}M$ since the natural map 
\[
\wedge^{i+1}M\ra \wedge^{i+1}(M\otimes_AB)
\]
is injective. Furthermore, since $m_i$ is cofree in $\wedge^iM$, its quotient by $m_1$ is cofree in $\wedge^{i-1}(M/Am_1)$ by the B\'ezout property of $A$. We therefore conclude the lemma from Lemma \ref{lem:chain-reduction} and the inductive assumption.
\end{proof}

\begin{lemma}\label{lem:chain-point}
Let $L$ be a finite extension of $\Q$, and let $D$ be a $\m$-module over $\r_{L'}$ of rank $d$ (recall that $L'=L\otimes_{\Q}K_0'$). Then the following are true.
\begin{enumerate}
\item[(1)]Let $D_1$ be a rank 1 $\m$-submodule of $D$.
Then $D_1$ is cofree in $D$ if and only if $D_1^r$ is cofree in $D^r$ for some (hence all) sufficiently small $r$.
\item[(2)]
For $1\leq i\leq d$, let $D_i$ be a rank 1 $\m$-submodule of $\wedge^iD$.
Then the sequence $D_1,\dots, D_d$ forms a chain in $D$ if and only if the sequence $D_1^r,\dots, D_d^r$ forms a chain in $D^r$ for some (hence all) sufficiently small $r$.
\end{enumerate}
\end{lemma}
\begin{proof}
We deduce (1) from Lemma \ref{lem:representative-saturated}. We deduce (2) from (1) and Lemma \ref{lem:chain-bezout}.
\end{proof}

Now let $S$ be an affinoid algebra over $\Qp$.

\begin{lemma}\label{lem:chain-vector-bundle}
Let $I$ be a closed subinterval of $(0,\infty)$, and let $M^I_S$ be a vector bundle over $\r^I_{K_0'}\widehat{\otimes}_{\Q}S$ of rank $d$. For $1\leq i\leq d$, let $a_i$ be a global section of $\wedge^iM^I_S$ such that its image in $\wedge^iM^I_x$ is cofree for any $x\in M(S)$. Then the set of $x\in M(S)$ where the image of the sequence $a_1,\dots, a_{d}$ forms a chain in $M^I_x$ forms a reduced Zariski closed subspace of $M(S)$.
\end{lemma}
\begin{proof}
We proceed by induction on $d$. The case $d=1$ is trivial. Now suppose that the lemma is true for $d=k-1$ for some $k\geq2$, and that $M_S^I$ has rank $k$. By assumption, the image of $a_1$ in $M_x^I$ is cofree for any $x\in M(S)$. Hence $M_S^I/(\r^I_{K_0'}\widehat{\otimes}_{\Q}S)a_1$ is a vector bundle of rank $k-1$ over $\r^I_{K_0'}\widehat{\otimes}_{\Q}S$.  

Since $I$ is a closed interval, using Lemma \ref{lem:closed-interval}, we may suppose that both $M_S^I$ and $M_S^I/(\r^I_{K_0'}\widehat{\otimes}_{\Q}S)a_1$ are free over $\r^I_{K_0'}\widehat{\otimes}_{\Q}S$ by restricting on a finite covering of $M(S)$ by affinoid subdomains. Thus $a_1$ is cofree in $M_S^I$. Extend $\{a_1\}$ to a basis of $M_S^I$ over $\r_S^I$. We may expand $a_i$ using this basis. It is then straightforward to see that for each $2\leq i\leq k$, the set of $x\in M(S)$, where the image of $a_1\wedge a_i$ in $\wedge^{i+1}M_x^I$ is zero, forms a reduced Zariski closed subspace of $M(S_i)$ of $M(S)$. Furthermore, it follows that
\[
a_1\wedge a_i=0
\] 
in $\wedge^{i+1}M_{S_i}^I$ for $2\leq i\leq d$. 

Now let $M(S')$ be the intersection of all $M(S_i)$, and let 
$b_i\in\wedge^{i-1}(M_{S'}^I/(\r_{K_0'}^I\widehat{\otimes}_{\Q}S')a_1)$
be the quotient of $a_i$ by $a_1$. For $x\in M(S')$, since the image of $a_i$ is cofree in $\wedge^iM^I_x$, the image of $b_i$ in $\wedge^{i-1}(M_{x}^I/(\r_{K_0'}^I\otimes_{\Q}k(x))a_1(x))$ is cofree by the B\'ezout property of $\r_{K_0'}^I$. By Lemma \ref{lem:chain-reduction}, the desired subset of $M(S)$ is then the set of $x$ where the image of the sequence $b_2,\dots,b_k$ forms a chain in $(M^I_{S'}/(\r_{K_0'}^I\widehat{\otimes}_{\Q}S')a_1)_x$. We therefore conclude the case $d=k$ by the inductive assumption.
\end{proof}

\begin{lemma}\label{lem:chain-Zariski}
Let $D_S$ be a $\m$-module over $\r_{K_0'}\widehat{\otimes}_{\Q}S$ of rank $d$. For $1\leq i\leq d$, let $D_i\subset \wedge^iD_S$ be a rank 1 $\m$-submodule over $\r_{K_0'}\widehat{\otimes}_{\Q}S$.
If $D_i$ specializes to a rank 1 cofree $\r_{K_0'}\otimes_{\Q}k(x)$-submodule of $\wedge^iD_x$ for any $x\in M(S)$ and $1\leq i\leq d$, the set of $x\in M(S)$ where the image of the sequence $D_1,\dots, D_d$ forms a chain in $D_x$ forms a reduced Zariski closed subspace of $M(S)$.
\end{lemma}
\begin{proof}
By Lemma \ref{lem:chain-point}(2), the sequence $D_1,\dots, D_d$ forms a chain in $D_x$ if and only if $D_1^r,\dots,D_d^r$ forms a chain in $D_x^r$ for all sufficiently small $r$. By Lemma \ref{lem:chain-bezout}, the latter holds if and only if $D_1^{[r,r]},\dots,D_d^{[r,r]}$ forms a chain in $D_x^{[r,r]}$. We then deduce the lemma by Lemma \ref{lem:chain-vector-bundle}.
\end{proof}


In the following, for $1\leq i\leq d$, set
\[
N_{i,X}=\mathscr{D}_\rig^\dag((\wedge^iV_X)(\eta_i^{-1}))^{\varphi^f=\alpha_i,\Gamma=1}\otimes_{K_0\otimes_{\Qp}\OO_X}
\mathscr{D}_\rig^\dag(\eta_i),
\] 
which is a rank 1 $\m$-submodule of $\mathscr{D}_\rig^\dag(\wedge^iV_X)$ of type $\eta_i$.  Recall that in \S0.3 for $1\leq i\leq d$, we define the character $\delta_i:K^\times\ra\OO(X)^\times$ by setting $\delta_i|_{\OO_K^\times}=\eta_i$ and $\delta_i(\pi_K)=\alpha_i$. 

\begin{prop}\label{prop:chain-Zariski}
The triangulation locus of $V_X$ forms a reduced Zariski closed subspace of $X_s$. Furthermore, the sequence $N_1,\dots,N_d$ gives rise to a global triangulation of $V_X$ on the triangulation locus.
That is, for any affinoid subdomain $M(S)$ of the triangulation locus,  the sequence $N_1,\dots,N_d$ forms a chain in $\D_\rig^\dag(V_S)$ whose associated filtration is a triangulation of $\D_\rig^\dag(V_S)$ with parameters $(\delta_i/\delta_{i-1})_{1\leq i\leq d}$. 
\end{prop}
\begin{proof}
Note that by its definition, the triangulation locus of $V_X$ is exactly the set of $x\in X_s$ where the image of the sequence $N_1,\dots, N_d$ forms a chain in $\D_\rig^\dag(V_x)$. 
We then deduce the first statement from Lemma \ref{lem:chain-Zariski}.

Now let $M(S)$ be an affinoid subdomain of the triangulation locus of $V_X$, and suppose $\D_\rig^{\dag,s}(V_S)$ is defined for some $s>0$. Set $N_{i,X}^{(0)}=N_{i,X}$.
Since $N^{(0)}_{1,x}$ is a rank 1
saturated $\m$-submodule of $\D_\rig^\dag(V_x)$ for $x\in M(S)$, by Proposition \ref{prop:quotient},
\[
D^{(1)}_{S}=\D_\rig^\dag(V_S)/N^{(0)}_{1,S}
\] 
is a $\m$-module of rank $d-1$ over $\r_{K_0'}\widehat{\otimes}_{\Q}S$
with a representative vector bundle 
\[
D_{S}^{(1),\rho(s)}=\D_\rig^{\dag,s}(V_S)/N_{1,S}^{(0),\rho(s)}.
\]
 By Lemma \ref{lem:closed-interval}, we choose a finite covering $\{M(S_j)\}_{j\in J}$ of $M(S)$ by affinoids such that all the vector bundles $N^{(0),[\rho(s)/p^f,\rho(s)]}_{i,S_j}$ and $D_{S_j}^{(1),[\rho(s)/p^f,\rho(s)]}$ are free. Since $M(S)$ is contained in the triangulation locus, it follows that
 \[
 N^{(0),[\rho(s)/p^f,\rho(s)]}_{1,S_j}\wedge N^{(0),[\rho(s)/p^f,\rho(s)]}_{i,S_j}=0
 \]
 for $2\leq i\leq d$ and $j\in J$.
Taking the quotient of $N^{(0),[\rho(s)/p^f,\rho(s)]}_{i,S_j}$ by $N^{(0),[\rho(s)/p^f,\rho(s)]}_{1,S_j}$ for each $j$ and gluing these quotients, we obtain a vector bundle $N_{i,S}^{(1),[\rho(s)/p^f,\rho(s)]}$ over $\r_{K_0'}\widehat{\otimes}_{\Q}S$. Furthermore, note that each $N_{i,S_j}^{(1),[\rho(s)/p^f,\rho(s)]}$ admits a basis $e$ satisfying 
\[
\varphi^f(e)=(\alpha_i/\alpha_1)(e).
\] 
Therefore, we can extend $N_{i,S}^{(1),[\rho(s)/p^f,\rho(s)]}$ to a rank 1 $\m$-submodule, which is of type $\delta_i/\delta_1$, of $D_{S}^{(1)}$.

We may iterate the above procedure as follows. Suppose after the $k$-th step, we have a $\m$-module $D^{(k)}_{S}$ over $\r_{K_0'}\widehat{\otimes}_{\Q}S$ of rank $d-k$ and a rank 1 $\m$-submodule $N_{i,S}^{(k)}$ of $\wedge^{i-k}D_i$ of type ${\delta_i/\delta_k}$, which specializes to a saturated $\m$-submodule of $\wedge^{i-k}D_x^{i}$ for any $x\in X$, for each $k+1\leq i\leq d$.  Now let 
\[
D_{S}^{(k+1)}=D^{(k)}_S/N^{(k)}_{k+1,S}.
\]
It is then a $\m$-module over $\r_{K_0'}\widehat{\otimes}_{\Q}S$ of rank $d-k-1$ by Proposition \ref{prop:quotient}. Then by the same argument as above, for each $k+2\leq i\leq d$, we get a rank 1 $\m$-submodule $N_{i,S}^{(k+1)}$ of $\wedge^{i-k-1}D^{(k+1)}_S$ of type ${\delta_i/\delta_{k+1}}$, which specializes to saturated $\m$-submodule of $\wedge^{i-k-1}D^{(k+1)}_x$ for $x\in M(S)$. 

Now let $\mathrm{Fil}_i(\D_\rig^\dag(V_S))=\ker(\D_\rig^\dag(V_S)\ra D^{(i)}_{S})$ for $1\leq i\leq d$. It  follows from the above procedure that $(\mathrm{Fil}_i(\D_\rig^\dag(V_S)))_{1\leq i\leq d}$ is a triangulation of $\D_\rig^\dag(V_S)$ with successive quotients
\[
\mathrm{Fil}_{i+1}(\D_\rig^\dag(V_S))/\mathrm{Fil}_i(\D_\rig^\dag(V_S))\cong N^{(i)}_{i+1,S}
\]
for $0\leq i\leq d-1$. The yields the second statement of the theorem.
\end{proof}

It remains to show that the triangulation locus contains all regular non-critical points. 

\begin{prop}\label{prop:large-class}
For $x\in X$, if $V_x$ satisfies 
\[
\dim_{k(x)}\D_\rig^\dag((\wedge^iV_x^{\mathrm{ss}})(\eta_i(x)^{-1}))_\sigma^{\varphi^f=\alpha_i(x),\Gamma=1}=1
\]
for all $1\leq i\leq d-1$ and
$\sigma\in\mathrm{Gal}(K_0/\Q)$, and $\D_\rig^\dag(V_x)$ admits a triangulation $(\mathrm{Fil}_i(\D_\rig^\dag(V_x)))_{1\leq i\leq d}$ with parameters
$(\delta_{i}/\delta_{i-1})(x)_{1\leq i\leq d}$, then the sequence 
\[
(\mathscr{D}_\rig^\dag((\wedge^iV_X)(\eta_i^{-1}))^{\varphi^f=\alpha_i,\Gamma=1}\otimes_{K_0\otimes_{\Q}\OO_X}\D_\rig^\dag(\eta_i(x)))_{1\leq i\leq d}
\]
forms a chain in $\D_\rig^\dag(V_x)$ and its associated filtration is just $(\mathrm{Fil}_i)_{1\leq i\leq d}$.

\end{prop}
\begin{proof}
By Proposition \ref{prop:multiplicity-one}, $(\mathscr{D}_\rig^\dag((\wedge^iV_X)(\eta_i^{-1}))^{\varphi^f=\alpha_i,\Gamma=1}_\sigma$ is locally free of rank 1 around $x$, and 
\[
(\mathscr{D}_\rig^\dag((\wedge^iV_X)(\eta_i^{-1}))^{\varphi^f=\alpha_i,\Gamma=1}_\sigma\otimes k(x)
\ra \D_\rig^\dag((\wedge^iV_x)(\eta_i(x)^{-1}))^{\varphi^f=\alpha_i(x),\Gamma=1}_\sigma
\]
is an isomorphism for all $i$ and $\sigma$. Thus
\[
(\mathscr{D}_\rig^\dag((\wedge^iV_X)(\eta_i^{-1}))^{\varphi^f=\alpha_i,\Gamma=1}\otimes k(x)
\ra \D_\rig^\dag((\wedge^iV_x)(\eta_i(x)^{-1}))^{\varphi^f=\alpha_i(x),\Gamma=1}
\]
is an isomorphism. 
By assumption, $\D_\rig^\dag(V_x)$ admits a triangulation $(\mathrm{Fil}_i)_{1\leq i\leq d}$ with parameters $(\delta_i/\delta_{i-1})_{1\leq i\leq d}$. In particular, $\D_\rig^\dag(V_x)$ contains a rank 1 $\m$-submodule 
\[
D=\mathrm{Fil}_1(\D_\rig^\dag(V_x))\cong\r_{k(x)}(\delta_1).
\] 
Recall that $\r_{k(x)}(\delta_1)$ is defined to be
\[
D_{\alpha_1(x)}\otimes_{K_0\otimes_{\Q}k(x)}\D_{\rig}^\dag(\eta_1(x)). 
\]
We then deduce
\[
\dim_{k(x)}(D(\eta_1^{-1}(x)))_\sigma^{\varphi^f=\alpha_1(x),\Gamma=1}\geq1.
\] 
This forces
\[
(D(\eta_1^{-1}(x)))_\sigma^{\varphi^f=\alpha_1(x),\Gamma=1}=\D_\rig^\dag (V_x(\eta^{-1}_1(x)))^{\varphi^f=\alpha_1(x),\Gamma=1}_\sigma
\] 
for all $\sigma$. Hence
\[
(D(\eta_1^{-1}(x)))^{\varphi^f=\alpha_1(x),\Gamma=1}=\D_\rig^\dag (V_x(\eta^{-1}_1(x)))^{\varphi^f=\alpha_1(x),\Gamma=1}.
\] 
It follows that the image of the map
\[
\mathscr{D}_\rig^\dag((\wedge^iV_X)(\eta_1^{-1}))^{\varphi^f=\alpha_1,\Gamma=1}\otimes_{K_0\otimes_{\Q}k(x)}\D_\rig^\dag(\eta_1(x))\ra \D_\rig^\dag(V_x)
\] 
is exactly $\mathrm{Fil}_1(\D_\rig^\dag(V_x))$. By a similar argument, we deduce that the image of the map
\[
\mathscr{D}_\rig^\dag((\wedge^iV_X)(\eta_i^{-1}))^{\varphi^f=\alpha_i,\Gamma=1}\otimes_{K_0\otimes_{\Qp}k(x)}
\D_\rig^\dag(\eta_i(x))\ra \wedge^i\D_\rig^\dag(V_x)
\] 
is exactly $\wedge^i\mathrm{Fil}_i(\D_\rig^\dag(V_x))$. This yields the desired result.
\end{proof}

\begin{theorem}\label{thm:good-triangulation}
The triangulation locus of $V_X$ contains all the points which satisfy the assumption of Proposition \ref{prop:large-class}.  In particular, the triangulation locus of $V_X$ contains all regular non-critical points. As a consequence, the triangulation locus of $V_X$ coincides with the saturated locus $X_s$, which is a Zariski open and dense subspace of $X$, and the $\m$-modules
\[
\D_\rig^\dag((\wedge^iV_S)(\eta_i^{-1}))^{\varphi^f=\alpha_i,\Gamma=1}\otimes_{K_0\otimes_{\Qp}S}
\D_\rig^\dag(\eta_i)
\]
for $1\leq i\leq d$ give rise to a triangulation of $\D_\rig^\dag(V_S)$ with parameters $(\delta_i/\delta_{i-1})_{1\leq i\leq d}$ on any affinoid subdomain $M(S)$ of $X_s$.
\end{theorem}

\begin{proof}
The first assertion is an immediate consequence of Proposition \ref{prop:large-class}. Furthermore, it is clear that regular non-critical points satisfy the assumption of Proposition \ref{prop:large-class}. Thus they belong to the triangulation locus of $V_X$. On the other hand, note that $X_s$ is the intersections of the saturated loci of the weakly refined families $\wedge^iV_X$ for all $1\leq i\leq d$. Hence it is Zariski open by Propositions \ref{prop:good}. Since the set of regular non-critical points is Zariski dense in $X$, it follows that it is Zariski dense in $X_s$, and $X_s$ is Zariski dense in $X$. We then conclude the rest of the theorem by Proposition \ref{prop:chain-Zariski}.
\end{proof}

As mentioned in the introduction, it is expected that all non-critical points belong to the locus of global triangulation. Regarding this point, we make the following conjecture.
\begin{conj}
For $x\in X$, if $\D_\rig^\dag(V_x)$ admits a triangulation $(\mathrm{Fil}_i(\D_\rig^\dag(V_x)))_{1\leq i\leq d}$ with parameters $((\delta_{i}/\delta_{i-1})(x))_{1\leq i\leq d}$ such that 
\[
\dim_{k(x)}(\D_\rig^\dag(V_x)/\mathrm{Fil}_{i-1}(\D_\rig^\dag(V_x)))_\sigma^{\varphi^f=\alpha_i/\alpha_{i-1},\Gamma=\eta_i/\eta_{i-1}}=1
\] 
for all $1\leq i\leq d$ and $\sigma\in\mathrm{Gal}(K_0/\Q)$, then $x$ belongs to the triangulation locus. 
\end{conj}

\subsection{Specializations of refined families}

\begin{lemma}\label{lem:phi-gamma-submodule}
Let $L$ be a finite extension of $E$, and let $D$ be a $\m$-module over $\r_{L'}$ ($L'=L\otimes_{\Q}K_0'$). Let $D_1$ be a $\m$-submodule of $D$, and let $D_1'$ be its saturation in $D$. Then there exists a positive integer $k$ such that $t^kD_1'\subset D_1$.
\end{lemma}
\begin{proof}
Note that $\Ind^{\Gamma_{\Q}}_{\Gamma_K}D_1'$ is the saturation of $\Ind^{\Gamma_{\Q}}_{\Gamma_K}D_1$ in $\Ind^{\Gamma_{\Q}}_{\Gamma_K}D$. By \cite[Proposition 3.1]{L07}, there exists a positive integer $k$ such that $t^k\Ind^{\Gamma_{\Q}}_{\Gamma_K}D_1'\subset\Ind^{\Gamma_{\Q}}_{\Gamma_K}D_1$. This yields the lemma.
\end{proof}

\begin{theorem}\label{thm:trianguline}
For $x\in X$, the $p$-adic representation $V_x$ is trianguline.
\end{theorem}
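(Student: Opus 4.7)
The plan is to establish triangulinity of $V_x$ by a blow-up construction that extends the global triangulation on the saturated locus provided by Theorem \ref{thm:good-triangulation}. First I would restrict to an affinoid neighborhood $M(S)$ of $x$ chosen so that $|\alpha_i^{-1}| < p^k$ in $S$ for a fixed integer $k$ and all $i \in \{1,\ldots,d\}$. Iterating Lemma \ref{lem:blow-up} on the torsion-free coherent sheaves $(\mathscr{D}_\dif^{+,n}(\wedge^iV_S)/(t^k))^{\Gamma}$ and on their quotients by these invariant subsheaves produces a proper birational morphism $\pi\colon Y \to M(S)$, with $Y$ reduced, under which every such strict transform is locally free. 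Applying Lemma \ref{lem:non-vanishing-birational} to the weakly refined families $(\wedge^iV_Y)(\eta_i)$ and then twisting back by $\eta_i^{-1}$, each sheaf $M_i := (\mathscr{D}_\rig^\dag(\wedge^iV_Y))^{\Delta_i}$ is locally free of rank $1$ on $Y$, and the specialization map $M_i \otimes k(y) \hookrightarrow \wedge^i\D_\rig^\dag(V_y)$ is injective at every $y \in Y$. By Theorem \ref{thm:good-triangulation} the preimage $Y^\circ := \pi^{-1}(X_s)$ is Zariski open and dense in $Y$, and on $Y^\circ$ the local generators of the $M_i$ specialize to a chain; the closed wedge-vanishing relations of Lemma \ref{lem:chain-reduction} therefore hold throughout $Y$ by Zariski density.

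Next I would choose a point $y \in Y$ above $x$ with residue field $k(y) = k(x)$. This can be arranged by first restricting $\pi$ to (the strict transform of) a smooth affinoid curve $C \subset M(S)$ through $x$ whose intersection with $X_s$ is Zariski dense in $C$: then $\pi|_C$ is an isomorphism away from a nowhere dense subset of $C$, and the fiber over $x$ becomes a single rational point. Choosing local generators $a_i$ of $M_i$ near this $y$ yields nonzero specializations $a_i(y) \in (\wedge^i\D_\rig^\dag(V_x))^{\Delta_i(x)}$ satisfying the wedge-vanishing identities inherited from $Y^\circ$.

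The main obstacle is the possible failure of saturation of $a_i(y)$ when $x \notin X_s$. To resolve this, I would write $a_i(y) = t^{e_i}b_i$ with $b_i$ saturated in $\wedge^i\D_\rig^\dag(V_x)$, which is legitimate since, by \cite[Remarque 3.3]{C07}, saturation of a rank-one $\varphi,\Gamma$-submodule is equivalent to non-divisibility by $t$. The $b_i$ then lie in the twisted eigenspaces $(\wedge^i\D_\rig^\dag(V_x))^{\Delta_i(x)\cdot\chi^{-e_i}}$, and the chain relations $b_1 \wedge b_i = 0$ follow from $a_1(y) \wedge a_i(y) = 0$ by cancelling the common factor $t^{e_1+e_i}$ in the torsion-free ambient wedge modules over the B\'ezout domain $\r_{k(x)}$. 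Applying Lemma \ref{lem:chain-reduction} inductively through the resulting saturated rank-one quotients produces a chain $b_1,\ldots,b_d$ in $\D_\rig^\dag(V_x)$ whose associated filtration is a triangulation with parameters obtained from $(\Delta_i/\Delta_{i-1})(x)$ by twisting with appropriate powers of $\chi$, proving $V_x$ trianguline.
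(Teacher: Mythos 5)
Your opening moves coincide with the paper's: shrink to an affinoid $M(S)$ with $k>\log_p|\alpha_i^{-1}|$, blow up via Lemma \ref{lem:blow-up} so that the strict transforms of $(\mathscr{D}_\dif^{+,n}(\wedge^iV_S)/(t^k))^{\Gamma}$ and their quotients become locally free, and invoke Lemma \ref{lem:non-vanishing-birational} to get rank~$1$ locally free sheaves $M_i=(\mathscr{D}_\rig^\dag(\wedge^iV_{X'}))^{\Delta_i}$ with injective specializations. The gap is in your final step. You only propagate the top-level relations $a_1\wedge a_i=0$ (hence, after cancelling $t$-powers, $b_1\wedge b_i=0$) together with saturation of each $b_i$, and then claim that Lemma \ref{lem:chain-reduction} can be applied ``inductively''. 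But condition (3) of Lemma \ref{lem:chain-reduction} demands wedge-vanishing relations among the \emph{quotients} by $b_1$ (and recursively further down), and these are not consequences of what you have once $d\geq4$: in a free module with basis $e_1,\dots,e_4$, the elements $m_1=e_1$, $m_2=e_1\wedge e_2$, $m_3=e_1\wedge e_3\wedge e_4$, $m_4=e_1\wedge e_2\wedge e_3\wedge e_4$ are each saturated in the respective wedge powers and satisfy $m_i\wedge m_j=0$ for all $i,j$, yet they do not form a chain, because $\bar m_2\wedge\bar m_3\neq0$ in the quotient by $e_1$. Moreover these quotient-level relations cannot be propagated by Zariski density in your setup: forming the quotient bundle over the family requires $a_1$ to specialize to a saturated element at \emph{every} point, which is exactly what fails at $x\notin X_s$, and your pointwise division by $t^{e_i}$ at the single point $y$ comes too late to supply them.

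The paper closes exactly this gap by propagating the \emph{entire} chain condition rather than a list of wedge identities: it base-changes $\mathscr{D}_\rig^{\dag,s}(V_{X'})$ to the closed annulus $v_p(T)\in[r,r]$ with $r=\rho(s)$ and $s\neq r_n$, where $t$ is a unit. There the injectivity from Lemma \ref{lem:non-vanishing-birational}, combined with \cite[Remarque 3.3]{C07} (saturation of a rank-one eigenvector submodule fails only through $t$-divisibility), guarantees that every specialization of $M_i$ is a saturated element of $\wedge^iD^{[r,r]}_y$; hence Lemma \ref{lem:chain-vector-bundle} applies and shows that the locus where $M_1,\dots,M_d$ form a chain in $D^{[r,r]}_y$ is Zariski closed, so containing the dense set $\pi^{-1}(U\cap X_s)$ forces it to be all of $X'$. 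Lemmas \ref{lem:chain-bezout} and \ref{lem:chain-point} then descend this to a chain in $\D_\rig^{\dag,s}(V_y)[1/t]$, which gives the triangulation (with parameters twisted by powers of $\chi$, as you intended). Your recursive pointwise argument would have to be replaced by this family-level argument over the closed annulus. A secondary problem: your detour through a smooth affinoid curve $C\subset M(S)$ through $x$ to arrange $k(y)=k(x)$ is not justified either, since such a curve inside $X$ need not exist when $x$ is a singular point (for instance when $X$ itself is a singular curve, the case of eigencurves); the paper simply proves the chain property at every point of $X'$.
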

\begin{proof}
Let $M(S)$  be an affinoid neighborhood of $x$. 
By the same argument as in the proof of Proposition \ref{prop:multiplicity-one}, there exists a proper birational map $\pi:X'\ra M(S)$ such that for all $1\leq i\leq d$ and $\sigma\in\mathrm{Gal}(K_0/\Q)$, the coherent sheaves
\[
\mathscr{D}_\rig^\dag((\wedge^iV_{X'})(\pi^{*}(\eta_i^{-1})))^{\varphi^f=\pi^*(\alpha_i),\Gamma=1}_\sigma
\] 
are locally free of rank 1 and the natural map
\[
\mathscr{D}_\rig^\dag((\wedge^iV_{X'})(\pi^{*}(\eta_i^{-1})))^{\varphi^f=\pi^*(\alpha_i),\Gamma=1}_\sigma\otimes k(x')
\ra\D_\rig^\dag((\wedge^iV_{x'})(\pi^{*}(\eta_i^{-1})(x')))_\sigma
\]
is injective for any $x'\in X'$. By the previous lemma,  
\[
\mathscr{D}_\rig^\dag((\wedge^iV_{X'})(\pi^{*}(\eta_i^{-1})))^{\varphi^f=\pi^*(\alpha_i),\Gamma=1}\otimes_{K_0\otimes_{\Q}\OO_{X'}}\mathscr{D}_\rig^\dag(\pi^*(\eta_i))[1/t]
\] 
specializes to a rank 1 saturated $(\r_{K_0'}\otimes_{\Q}k(x))[1/t]$-submodule in $\D_\rig^\dag(\wedge^iV_{x'})[1/t]$.

Pick some $s\geq s(V_S)$ such that $s\neq r_n$ for any $n\in\mathbb{N}$, and put $r=\rho(s)$. It follows that $t$ has no roots in the annulus $v_p(T)=r$. Thus $t$ is invertible in $\r_{K_0'}^{[r,r]}$. Hence the natural map
\[
\r^r_{K_0'}[1/t]\ra\r_{K'}^{[r,r]}
\] 
is injective. Now let $M(S')$ be an affinoid subdomain of $X'$. Set $D_{S'}=\D_\rig^\dag(V_{S'})$ and $D_{i,S'}=\D_\rig^\dag(\pi^*(\eta_i)|_{M(S')})$ for $1\leq i\leq d$. We claim that the sequence 
\[
(\D_\rig^\dag((\wedge^iV_{S'})(\pi^{*}(\eta_i^{-1})))^{\varphi^f=\pi^*(\alpha_i),\Gamma=1}\otimes_{K_0\otimes_{\Q}S'}D^{[r,r]}_{i,x})_{1\leq i\leq d} 
\] 
forms a chain in $D_{x'}^{[r,r]}$ for any $x'\in M(S')$. In fact, by Lemma \ref{lem:chain-vector-bundle}, the set of such $x'$ forms a reduced Zariski closed subspace of $M(S')$. On the other hand, suppose $\pi$ is an isomorphism on a Zariski dense and open subset $U$ of $X'$. By Theorem \ref{thm:good-triangulation}, after shrinking $U$, we may further suppose that $\pi(U)$ is contained in the triangulation locus of $M(S)$. It is then clear that the sequence 
\[
(\D_\rig^\dag((\wedge^iV_{S'})(\pi^{*}(\eta_i^{-1})))^{\varphi^f=\pi^*(\alpha_i),\Gamma=1}\otimes_{K_0\otimes_{\Q}S'}D^{[r,r]}_{i,x})_{1\leq i\leq d}
\] 
forms a chain in $D_{x'}^{[r,r]}$ for any $x'\in U\cap M(S')$. Since $U\cap M(S')$ is Zariski dense in $M(S')$, the claim follows. 

The claim and Lemma \ref{lem:chain-point} then imply that the image of the sequence 
\[
(\D_\rig^\dag((\wedge^iV_{S'})(\pi^{*}(\eta_i^{-1})))^{\varphi^f=\pi^*(\alpha_i),\Gamma=1}\otimes_{K_0\otimes_{\Q}S'}D^{r}_{i,S'}[1/t])_{1\leq i\leq d}
\]  
forms a chain in $\D_{\rig}^{\dag,s}(V_{x'})[1/t]$ for any $x'\in M(S')$.  Hence $\D_\rig^\dag(V_{x'})[1/t]$ is triangulable (in the obvious sense). This yields that $\D_\rig^\dag(V_{x'})$ is triangulable. It remains to show that $\D_\rig^\dag(V_{x})$ is triangulable. Without loss of generality we may assume that $k(x')$ is Galois over $k(x)$ for some $x'$ in the preimage of $x$. In this case, define a $G=\mathrm{Gal}(k(x')/k(x))$-action on
\[
\D_\rig^\dag(V_{x'})=\D_\rig^\dag(V_{x})\otimes_{k(x)}k(x')
\]
by setting $g(a\otimes b)=a\otimes g(b)$. It then follows that the triangulation of $\D_\rig^\dag(V_{x'})$ maps onto a triangulation of $\D_\rig^\dag(V_{x})$ via the projection 
\[
\frac{1}{|G|}\sum_{g\in G}g: \D_\rig^\dag(V_{x'})\ra\D_\rig^\dag(V_{x}).
\]

\end{proof}

\subsection{Application to the eigencurve}
Fix a positive integer $N$ which is prime to $p$.  Let $S$ be the set of places of $\mathbb{Q}$
consisting of the infinite place and the places dividing $pN$. Let $\overline{V}$ be a two dimensional $G_{\mathbb{Q},S}$-representation over a finite field of characteristic $p$, which is $p$-modular in the sense of \cite{CM}. Let $R_{\overline{V}}$ be the universal deformation ring of the pseudo representation associated to $\overline{V}$. Let $X_{\overline{V}}$ be the generic fiber of $\mathrm{Spf}(R_{\overline{V}})$, which is a rigid analytic space over $\Q$. By the works of Coleman-Mazur \cite{CM} and Buzzard \cite{Bu}, there is a $\Q$-rigid analytic curve $\mathcal{C}\subset X_{\overline{V}}\times \mathbb{G}_m$ whose $\mathbb{C}_p$-valued points correspond bijectively to overconvergent eigenforms of tame level $N$, which are of finite slope, and whose residual Galois representation have the same semi-simplification as $\overline{V}$. We further assume that $\mathcal{C}$ belongs to the cuspidal part of the eigencurve. That is, the overconvergent modular forms parametrized by $\mathcal{C}$ are all cuspidal. 

Let 
\[
T:G_{\mathbb{Q},S}\ra \OO(\mathcal{C})
\]
be the pseudo representation obtained by pulling back the universal pseudo representation of 
$G_{\mathbb{Q},S}$ on $X_{\overline{V}}$ via the composite
\begin{equation}\label{eq:eigencurve}
\mathcal{C}\ra X_{\overline{V}}\times\mathbb{G}_m\ra X_{\overline{V}}.
\end{equation}
Let $\alpha\in\OO(\mathcal{C})^\times$ denote the function of $U_p$-eigenvalue. 
Let $\kappa: \mathcal{C}\ra\mathcal{W}$ be the weight  map. We normalize $\kappa$ in such a way that if $x\in\mathcal{C}$ is a classical eigenform of weight $k$, then $\kappa(x)=k-1$. 

Let $\widetilde{\mathcal{C}}$ denote the normalization of $\mathcal{C}$. By \cite{CM}, there exists a family of $p$-adic representations of $G_{\mathbb{Q},S}$ of dimension 2 over $\mathcal{C}$ whose associated pseudo representation is isomorphic to the pullback of $T$ via $\widetilde{\mathcal{C}}\ra\mathcal{C}$.  Let $V_{\widetilde{C}}$ be the dual of this family of $p$-adic representations. Let $\widetilde{\alpha}\in\OO(\widetilde{\mathcal{C}})^\times$ denote the pullback of $\alpha$ via $\widetilde{\mathcal{C}}\ra\mathcal{C}$. Let $\widetilde{\kappa}:\widetilde{\mathcal{C}}\ra\mathcal{C}$ be the composite of $\kappa$ with $\widetilde{\mathcal{C}}\ra\mathcal{C}$. Let $\widetilde{Z}$ be the set of classical points $z\in\mathcal{C}$ such that $V_z$ is crystalline with distinct crystalline  Frobenius eigenvalues.  By Coleman's classicality theorem, it is straightforward to see that  $V_{\widetilde{\mathcal{C}}}$ is a family of $2$-dimensional weakly refined $p$-adic representations together with $\kappa_1=0,\kappa_2=-\widetilde{\kappa}, F=\widetilde{\alpha}, Z=\widetilde{Z}$.

\begin{prop}
The coherent sheaf $\mathscr{D}_\rig^\dag(V_{\mathcal{\widetilde{C}}})^{\varphi=F,\Gamma=1}$ is invertible, and its image in $\D_\rig^\dag(V_x)$ is nonzero for any $x\in\widetilde{\mathcal{C}}$. As a consequence, $V_x$ is trianguline for any $x\in \widetilde{\mathcal{C}}$.
\end{prop}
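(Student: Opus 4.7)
The plan is to exploit the one-dimensional regular geometry of the normalized eigencurve $\tilde{\mathcal{C}}$, which collapses the sheaf-theoretic arguments of \S3 and \S5.1 to their simplest form. Write $M=(\mathscr{D}_\rig^\dag(V_{\tilde{\mathcal{C}}}))^{\varphi=\alpha,\Gamma=1}$. On an affinoid subdomain $M(S)\subseteq\tilde{\mathcal{C}}$ and a positive integer $k$ with $k>\log_p|\alpha^{-1}|$ in $S$, Theorem \ref{thm:weakly-refined} identifies $M$ with $(\mathscr{D}^{+,n}_{\dif}(V_{\tilde{\mathcal{C}}})/(t^k))^{\Gamma}$, realizing it as a torsion-free subsheaf of the locally free sheaf $\mathscr{D}^{+,n}_{\dif}(V_{\tilde{\mathcal{C}}})/(t^k)$ (Proposition \ref{prop:dif-torsion-sheaf}(1)). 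Since $\tilde{\mathcal{C}}$ is normal (being the normalization of $\mathcal{C}$) and one-dimensional, hence regular, every torsion-free coherent sheaf on it is locally free, so $M$ is locally free. The Sen polynomial of $V_{\tilde{\mathcal{C}}}$ is $T(T-(\kappa-1))$, so $P(k)=\prod_{j=0}^{k-1}(1-j-\kappa)$ does not vanish identically on any irreducible component of $\tilde{\mathcal{C}}$, as $\kappa$ is non-constant on each component. Therefore $\tilde{\mathcal{C}}_{P(k)}$ is Zariski-dense in every component, and Proposition \ref{prop:weakly-refined}(4) forces the rank of $M$ to be one on each component; hence $M$ is invertible.

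For the non-vanishing of the image in $\D_\rig^\dag(V_x)$, I would establish that the quotient sheaf $\mathscr{Q}:=(\mathscr{D}^{+,n}_{\dif}(V_{\tilde{\mathcal{C}}})/(t^k))/(\mathscr{D}^{+,n}_{\dif}(V_{\tilde{\mathcal{C}}})/(t^k))^\Gamma$ is torsion-free. Given a local section $\bar{s}$ annihilated by a non-zero-divisor $f\in\OO_{\tilde{\mathcal{C}}}$, any lift $\tilde{s}$ satisfies $f\tilde{s}\in(\mathscr{D}^{+,n}_{\dif}(V_{\tilde{\mathcal{C}}})/(t^k))^\Gamma$; since $\Gamma$ acts trivially on $\OO_{\tilde{\mathcal{C}}}$, this reads $f(\gamma(\tilde{s})-\tilde{s})=0$ for every $\gamma\in\Gamma$, and because $f$ is a non-zero-divisor on the locally free sheaf $\mathscr{D}^{+,n}_{\dif}(V_{\tilde{\mathcal{C}}})/(t^k)$, one concludes $\gamma(\tilde{s})=\tilde{s}$, so $\bar{s}=0$. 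Thus $\mathscr{Q}$ is locally free on the regular one-dimensional $\tilde{\mathcal{C}}$, and the short exact sequence
\[
0\longrightarrow M\longrightarrow\mathscr{D}^{+,n}_{\dif}(V_{\tilde{\mathcal{C}}})/(t^k)\longrightarrow\mathscr{Q}\longrightarrow 0
\]
remains exact after $\otimes k(x)$. This forces the fiber map $M\otimes k(x)\hookrightarrow\D^{+,n}_{\dif}(V_x)/(t^k)$ to be injective; since $M\otimes k(x)$ is one-dimensional, the image is nonzero. Because this injection factors through $\D_\rig^\dag(V_x)$ via the localization $\iota_n$, the image of $M\otimes k(x)$ in $\D_\rig^\dag(V_x)$ is nonzero as well.

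For the trianguline conclusion, at any $x\in\tilde{\mathcal{C}}$ pick a nonzero $v\in(\D_\rig^\dag(V_x))^{\varphi=\alpha(x),\Gamma=1}$ coming from the image of $M\otimes k(x)$, and let $N\subseteq\D_\rig^\dag(V_x)$ be the saturation of the rank-one $\m$-submodule $\r_{k(x)}v$ (which exists since $\r_{k(x)}$ has the B\'ezout property). Then $N$ is a rank-one saturated $\m$-submodule of the rank-two $\m$-module $\D_\rig^\dag(V_x)$, and by Proposition \ref{prop:quotient} applied over the field $k(x)$ the quotient $\D_\rig^\dag(V_x)/N$ is a rank-one $\m$-module. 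By the classification of rank-one $\m$-modules over the Robba ring, both $N$ and $\D_\rig^\dag(V_x)/N$ are of the form $\r_{k(x)}(\delta)$ for suitable continuous characters $\delta\in\widehat{\mathscr{T}}(k(x))$, producing a triangulation of $\D_\rig^\dag(V_x)$; thus $V_x$ is trianguline.

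The main technical point is the torsion-freeness of $\mathscr{Q}$, which is what permits the regular one-dimensional geometry of $\tilde{\mathcal{C}}$ to take over and circumvent the blow-up technique used in the proof of Theorem \ref{thm:trianguline} for higher-dimensional bases; once this is secured, invertibility, fiberwise non-vanishing, and triangulinity all follow by formal manipulations.
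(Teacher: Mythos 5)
Your proposal is correct and follows essentially the same route as the paper's own proof: identify $(\mathscr{D}_\rig^\dag(V_{\tilde{\mathcal{C}}}))^{\varphi=\alpha,\Gamma=1}$ with $(\mathscr{D}_{\dif}^{+,n}(V_{\tilde{\mathcal{C}}})/(t^k))^{\Gamma}$ via Theorem \ref{thm:weakly-refined}, use that torsion-free coherent sheaves on the normal curve $\tilde{\mathcal{C}}$ are locally free, get rank $1$ from a Zariski-dense locus of good points, and deduce fiberwise injectivity from torsion-freeness (hence local freeness) of the quotient sheaf. The only differences are cosmetic: you justify density of the rank-one locus by non-constancy of $\kappa$ on components (the paper instead uses the density of the saturated locus coming from the axioms of weakly refined families, e.g.\ density of $Z_k\subset\tilde{\mathcal{C}}_{P(k)}$), and you spell out the torsion-freeness of the quotient and the final trianguline step, which the paper leaves implicit.
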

\begin{proof}
Let $M(S)$ be an affinoid subdomain of $\tilde{\mathcal{C}}$. Let $k$ be a positive integer such that 
\[
k>\log_p|F^{-1}|
\] 
in $S$. By Theorems \ref{thm:weak-refined-family} and \ref{thm:fs-sheaf}, the map
 \[
 \D_\rig^\dag(V_S)^{\varphi=F,\Gamma=1}\ra
(\D_\dif^{+,n}(V_S)/(t^k))^\Gamma
\] 
is an isomorphism. Note that $(\D_\dif^{+,n}(V_S)/(t^k))^\Gamma$ is a finite torsion-free $S$-module by Proposition \ref{prop:torsion-free}.  Hence it is a locally free $S$-module because $S$ is smooth and 1-dimensional. Thus $ \D_\rig^\dag(V_S)^{\varphi=F,\Gamma=1}$ is a locally free $S$-module. Furthermore, by Proposition \ref{prop:good}, it is locally free of rank 1 on a Zariski open and dense subspace of $M(S)$. Hence it is locally free of rank 1 on $M(S)$, yielding the first statement of the theorem. 

For the second statement, by Proposition \ref{prop:torsion-free}, $(\D_\dif^{+,n}(V_S)/(t^k))/(\D_\dif^{+,n}(V_S)/(t^k))^\Gamma$ is finite and torsion-free over $S$ as well. This implies that for any $x\in M(S)$, the natural map
\[
(\D_\dif^{+,n}(V_S)/(t^k))^\Gamma\otimes k(x)\ra \D_\dif^{+,n}(V_x)/(t^k)
\] 
is injective. It follows that $\D_\rig^\dag(V_S)^{\varphi=F,\Gamma=1}\otimes k(x)\ra \D_\rig^\dag(V_x)$ is injective as well.
\end{proof}

\begin{prop}\label{prop:eigencurve}
For $x\in \tilde{\mathcal{C}}$, $x$ is not saturated if and only if $V_x$ satisfies one of the following two disjoint conditions:
\begin{enumerate}
\item[(1)]
The weight $\kappa(x)$ is a positive integer and $v_p(F(x))>\kappa(x)$. As a consequence, $V_x$ belongs to $\mathscr{S}^{\mathrm{ng}}_*\cap\mathscr{S}_*^{\mathrm{HT}}$ in the sense of \cite{C12}; hence $V_x$ is irreducible, Hodge-Tate and non-de Rham. Furthermore, the image of $t^{-\kappa(x)}\mathscr{D}_\rig^\dag(V_{\widetilde{\mathcal{C}}})^{\varphi=F,\Gamma=1}$ generates a rank 1 saturated $\m$-submodule in $\D_\rig^\dagger(V_x)$.
\item[(2)]The weight $\kappa(x)$ is a positive integer and $v_p(F(x))=\kappa(x)$, and $V_x$ has a rank 1 subrepresentation $V_x'$ which is crystalline with Hodge-Tate weight $-\kappa(x)$. Furthermore, in this case, the image of $\mathscr{D}_\rig^\dag(V_{\widetilde{\mathcal{C}}})^{\varphi=F,\Gamma=1}$ in $\D_\rig^\dag(V_x)$ is $k(x)\cdot t^{\kappa(x)}e'$ where $e'$ is a canonical basis of $\D_\rig^\dag(V_x')$.
\end{enumerate}
In case (2), if $x\in Z$, then it is critical. Hence $V_x$ is split. Suppose $V_x=V_1\oplus V_2$ where $V_1$ has Hodge-Tate weight $0$ and $V_2$ has Hodge-Tate weight $-\kappa(x)$. Then the image of $\mathscr{D}_\rig^\dag(V_{\widetilde{\mathcal{C}}})^{\varphi=F,\Gamma=1}$ in $\D_\rig^\dag(V_x)$ is $k(x)\cdot t^{\kappa(x)}e_2$ where $e_2$ is a canonical basis of $\D_\rig^\dag(V_2)$.
\end{prop}

\begin{proof}
Suppose that $x$ is not saturated. Let $D$ be the saturation of the rank 1 $\m$-submodule of $\D_\rig^\dagger(V_x)$ generated by
$\mathscr{D}_\rig^\dag(V_{\mathcal{\widetilde{C}}}))^{\varphi=F,\Gamma=1}\otimes k(x)$.
Suppose 
\[
\mathscr{D}_\rig^\dag(V_{\mathcal{\widetilde{C}}})^{\varphi=F,\Gamma=1}\otimes k(x)=k(x)\cdot t^ke
\] 
for some positive integer $k$ and canonical basis $e$ of $D$. Thus the Hodge-Tate weight of $D$ is $-k$, yielding that $\kappa(x)=k$ is a positive integer. By Kedlaya's slope theory, $D$ has nonnegative slope, yielding that $v_p(\alpha(x))\geq \kappa(x)$. If the inequality is strict, then $V_x$ satisfies the condition $(1)$. If $v_p(\alpha(x))= \kappa(x)$, it is straightforward to see that $V_x$ satisfies the condition (2). Furthermore, if $x\in Z$, it is clear that $x$ is critical.

For the converse, suppose $V_x$ satisfies (1). If it is saturated, then it follows from Colmez's classification of $2$-dimensional irreducible trianguline representations of $G_{\Q}$ \cite[\S3.3]{C07} that $\D_\rig^\dag(V_x)$  belongs to $\mathscr{S}_{+}^{\mathrm{ncl}}$. However, by \cite[Proposition 3.5]{C07}, we know that all $2$-dimensional triangulable $\m$-modules belonging to $\mathscr{S}_{+}^{\mathrm{ncl}}$ are non-\'etale. This makes a contradiction. Now suppose $V_x$ satisfies (2). Note that $V_x/V_x'$ has Hodge-Tate weight $0$. Thus if the image of $\mathscr{D}_\rig^\dag(V_{\mathcal{\tilde{C}}})^{\varphi=F,\Gamma=1}\otimes k(x)$ in $\D_\rig^\dagger(V_x/V_x')$ is nonzero, it generates a rank 1 $\m$-submodule which is of Hodge-Tate weight $0$ and positive slope, yielding a contradiction. Therefore  $\mathscr{D}_\rig^\dag(V_{\mathcal{\tilde{C}}})^{\varphi=\alpha,\Gamma=1}\otimes k(x)$ maps into $\D_\rig^\dagger(V'_x)$. It then follows that the image is of the given form.
\end{proof}

In the case when $\overline{V}$ is an absolutely irreducible $G_{\mathbb{Q},S}$-representation, $R_{\overline{V}}$ coincides with the universal deformation ring of $\overline{V}$\footnote{In this case, $\mathcal{C}$ automatically belongs to the cuspidal part of the eigencurve.}. Let $V_{\mathcal{C}}$ be the dual of the pullback of the universal representation of $G_{\mathbb{Q},S}$ on $R_{\overline{V}}$ via (\ref{eq:eigencurve}). Let $Z$ be the set of classical points $z\in\mathcal{C}$ such that $V_z$ is crystalline with distinct crystalline Frobenius eigenvalues.  Then $V_{\mathcal{C}}$ is a family of $2$-dimensional weakly refined $p$-adic representations over $\mathcal{C}$ together with $\kappa_1=0, \kappa_2=-\kappa, F=\alpha$ and $Z$. Similarly, we have the following result.

\begin{theorem}\label{thm:eigencurve}
For any $x\in\mathcal{C}$, $\mathscr{D}_\rig^\dag(V_{\mathcal{C}})^{\varphi=F,\Gamma=1}$ is locally free of rank 1 around $x$ unless $\kappa(x)=0$, and $V^{\mathrm{ss}}_x$ is crystalline and satisfies $\dim D_{\mathrm{crys}}(V^{\mathrm{ss}}_x)^{\varphi=F(x)}=2$. If $x$ is not of this form, it is not saturated if and only if it satisfies one of the following two disjoint conditions:
\begin{enumerate}
\item[(1)]The weight $\kappa(x)$ is a positive integer and $v_p(F(x))>\kappa(x)$. As a consequence, $V_x$ belongs to $\mathscr{S}^{\mathrm{ng}}_*\cap\mathscr{S}_*^{\mathrm{HT}}$ in the sense of \cite{C12}; hence $V_x$ is irreducible, Hodge-Tate and non-de Rham. Furthermore, in this case $t^{-\kappa(x)}\mathscr{D}_\rig^\dag(V_{\mathcal{C}})^{\varphi=F,\Gamma=1}$ generates a rank 1 saturated $\m$-submodule in $\D_\rig^\dagger(V_x)$.
\item[(2)]The weight $\kappa(x)$ is a positive integer and $v_p(F(x))=\kappa(x)$, and $V_x$ has a rank 1 subrepresentation $V_x'$ which is crystalline with Hodge-Tate weight $-\kappa(x)$. Furthermore, in this case, the image of $\mathscr{D}_\rig^\dag(V_{\mathcal{C}})^{\varphi=F,\Gamma=1}$ in $\D_\rig^\dag(V_x)$ is $k(x)\cdot t^{\kappa(x)}e'$ where $e'$ is a canonical basis of $\D_\rig^\dag(V_x')$.

\item[(2')]In case (2), if $x\in Z$, then it is critical. Furthermore, suppose that $V_x=V_1\oplus V_2$ where $V_1$ has Hodge-Tate weight $0$ and $V_2$ has Hodge-Tate weight $-\kappa(x)$. Then the image of $\mathscr{D}_\rig^\dag(V_{\mathcal{C}})^{\varphi=F,\Gamma=1}$ in $\D_\rig^\dag(V_x)$ is $k(x)\cdot t^{\kappa(x)}e_2$ where $e_2$ is a canonical basis of $\D_\rig^\dag(V_2)$.
\end{enumerate}
\end{theorem}
\begin{proof}
If $\dim_{k(x)}D_{\mathrm{crys}}(V^{\mathrm{ss}}_x)^{\varphi=F(x)}\leq 1$, $\mathscr{D}_\rig^\dag(V_{\mathcal{C}})^{\varphi=F,\Gamma=1}$ is locally free of rank 1 around $x$ by Proposition \ref{prop:multiplicity-one}. Thus if $(\mathscr{D}_\rig^\dag(V_{\mathcal{C}}))^{\varphi=F,\Gamma=1}$ is not locally free of rank 1 around $x$,  then $D_{\mathrm{crys}}(V^{\mathrm{ss}}_x)^{\varphi=F(x)}$ is of dimension 2.  Furthermore, in this case, $\kappa(x)=0$ by the weak admissibility of $D_{\mathrm{crys}}(V^{\mathrm{ss}}_x)$. We deduce the rest of the theorem by the same argument as in the proof of Proposition \ref{prop:eigencurve}.
\end{proof}

\begin{remark}
By Theorem \ref{thm:eigencurve}, if $\mathscr{D}_\rig^\dag(V_{\mathcal{C}})^{\varphi=F,\Gamma=1}$ is not locally free around $x$, then the 
weak admissibility of $D_{\mathrm{crys}}(V^{\mathrm{ss}}_x)$ implies
$v_p(F(x))=0$; hence $x$ is ordinary. Furthermore, it follows that the weight character of $x$ is crystalline (hence unramified) of Hodge-Tate weight $0$. By the spectral theory of $U_p$, we know that the set of those $x$ is finite. 

We conjecture that there is no such $x$, i.e. $\mathscr{D}_\rig^\dag(V_{\mathcal{C}})^{\varphi=F,\Gamma=1}$ is everywhere locally free of rank 1 over the eigencurve. In fact, for $p\geq 5$ and $N=1$, by the virtue of a classical result of Mazur-Wiles \cite[\S8, Proposition 2]{MW}, we see that if the weight character of $x$ is non-trivial on the torsion subgroup of $\mathbb{Z}_p$, $V_\mathcal{C}$ is an extension of a ramified infinite order character by an unramified character around $x$. It is then straightforward to see that around $x$, $\mathscr{D}_\rig^\dag(V_{\mathcal{C}})^{\varphi=F,\Gamma=1}$ is locally free of rank 1  around $x$ and gives rise to the desired  global triangulation. 
We expect that an analogue of the result of Mazur-Wiles holds for general $p$ and $N$; this would confirm most of our conjecture. 
\end{remark}


\begin{thebibliography}{99}
\bibitem{BC06}Joel Bellaiche, Ga\"{e}tan Chenevier, {\it Families of Galois representations and Selmer groups}, Ast\'erisque 324.
\bibitem{LB02}Laurent Berger, {\it Repr\'esentations p-adiques et
\'equations diff\'erentielles}, Inv. Math. 148, 2002, 219-284.

\bibitem{LB08}Laurent Berger, {\it Construction de $(\varphi,\Gamma)$-modules : repr\'entations $p$-adiques et $B$-paires}, Algebra and Number Theory 2 (2008), no. 1, 91--120.
\bibitem{BC07}Laurent Berger, Pierre Colmez, {\it Familles de repr\'esentations de de Rham et monodromie p-adique}, Ast\'erisque 319 (2008), 303--337.
\bibitem{BE}Christophe Breuil, Matthew Emerton, {\it Repr\'esentations p-adiques ordinaires de $\G$ et compatibilit\'e local-global}, Ast\'erisque 331, 2010, 255-315.

\bibitem{Bel13} Rebecca Bellovin, {\it $p$-adic Hodge theory in rigid analytic families}, Algebra and Number theory 9 (2015), no.2, 371-433.

\bibitem{Be}John Bergdall, {\it On the variation of $\m$-modules over p-adic families of automorphic forms}, Ph.D. thesis, Brandeis University, 2013.		

\bibitem{BGR}Siegfried Bosch, Ulrich G\"untzer, Reinhold Remmert, {\it Non-archimedean
analysis}, Grundlehren der Math. Wiss. 261, Springer-Verlag, Berlin, 1984.

\bibitem{Bu}Kevin Buzzard, {\it Eigenvarieties}, L-functions and Galois representations,  London Math. Soc. Lecture Note Ser. 320, Cambridge Univ. Press, Cambridge, 2007.

\bibitem{Ch13}Ga\"{e}tan Chenevier, {\it Sur la densit\'e des repr\'esentations cristallines du groupe de Galois absolu de $\Q$}, Math. Ann. 355 (2013), no.4, 1469-1525.

\bibitem{CM}Robert Coleman, Barry Mazur, {\it The eigencurve}, Galois representations in arithmetic algebraic geometry (Durham, 1996), London Math. Soc. Lecture Note Ser. 254, Cambridge Univ. Press, Cambridge, 1998.

\bibitem{Colmez07}
Pierre Colmez, {\it Espaces Vectoriels de dimension finie et repr\'esentations de de Rham}, Ast\'erisque 319 (2008), 117-186.

\bibitem{C07}Pierre Colmez, {\it Repr\'esentations triangulines de dimension 2}, Ast\'erisque 319 (2008), 213-258.

\bibitem{C12}Pierre Colmez, {\it La s\'erie principale unitaire de $\G$: vecteurs localement analytiques}, preprint 2012.
\bibitem{DL13}Hansheng Diao, Ruochuan Liu, {\it The eigencurve is proper}, preprint 2013.

\bibitem{E11}Matthew Emerton, {\it Local-global compatibility in the $p$-adic Langlands programme for $\mathrm{GL}_2/\mathbb{Q}$}, preprint 2011.

\bibitem{F04}Jean-Marc Fontaine, {\it Arithm\'etique des repr\'esentations galoisiennes $p$-adiques}, Ast\'erisque 295 (2004) 1-115.

\bibitem{H}Eugen Hellmann, {\it Families of trianguline representations and finite slope spaces}, preprint 2012.

\bibitem{Ke01} Kiran S. Kedlaya, {\it The algebraic closure of the power series field in positive characteristic}, Proceedings of the American Mathematical Society 129 (2001), 3461-3470.
\bibitem{Ke05} Kiran S. Kedlaya, {\it Slope filtrations revisited}, Documenta Mathematica 10 (2005), 447-525.
\bibitem{Ke10} Kiran S. Kedlaya, {\it Slope filtrations and $\m$-modules in families}, lecture notes for a three-lecture minicourse given at the Institut Henri Poincar\'e in January 2010, available at http://math.mit.edu/~kedlaya/papers/.
\bibitem{KL}Kiran S. Kedlaya, Ruochuan Liu, {\it On families of $\m$-modules}, Algebra and Number Theory, Vol. 4, No. 7, 2010.

\bibitem{KL15}Kiran S. Kedlaya, Ruochuan Liu, {\it Relative $p$-adic Hodge theory: Foundations},  to appear in Ast\'erisque.

\bibitem{KPX}Kiran S. Kedlaya, Jay Pottharst, Liang Xiao, {\it Cohomology of arithmetic families of $\m$-modules}, J. Amer. Math. Soc. 27 (2014), no.4, 1043-1115.

\bibitem{Ki03}Mark Kisin, {\it Overconvergent modular forms and the Fontaine-Mazur conjecture}, Invent. Math. 153(2) (2003), 373-454.
\bibitem{L07}Ruochuan Liu, {\it Cohomology and duality for $\m$-modules over the Robba ring}, Int. Math. Res. Not. 2008 no.3, Art ID. rnm150.
\bibitem{L08}Ruochuan Liu, {\it Slope filtrations in families},  Journal de l'Institut de Math\'ematiques de Jussieu 12 (2013), no. 2, 249-296.

\bibitem{L}Werner L\"utkebohmert, {\it Vektorraumb\"undel \"uber nichtarchimedischen holomorphen R\"aumen},
Math. Z. 152 (1977), 127-143.
\bibitem{MW}Barry Mazur, Andrew Wiles, {\it On $p$-adic analytic families of Galois representations}, Compositio Math. 59 (1986), no.2.
\bibitem{N09}Kentaro Nakamura, {\it Classification of two dimensonal split trianguline representations of $p$-adic fields}, Compos. Math. 145 (2009), no.4.
\bibitem{N11}Kentaro Nakamura, {\it Zariski density of crystalline representations for any $p$-adic field}, preprint 2011.
\bibitem{S02}Peter Schneider, {\it Nonarchimedean functional analysis},
Springer-Verlag, Berlin, 2002.
\bibitem{SU06}Christopher Skinner, Eric Urban, {\it Vanishing of L-functions and ranks of Selmer groups},
International Congress of Mathematicians. Vol. II, 473-500, Eur. Math. Soc., Z\"{u}rich, 2006.
\end{thebibliography}
\end{document}